\documentclass[11pt]{amsart}
\usepackage{}
\usepackage{pifont}
\usepackage{mathtools}
\usepackage{esint}
\usepackage{cite}

\usepackage{amsmath}
\usepackage{amssymb}
\usepackage{latexsym}
\usepackage{amscd}
\usepackage{mathrsfs}
\usepackage[all]{xy}

\usepackage{hyperref}
\usepackage{graphicx} 
\usepackage{amsthm}
\usepackage{xypic}
\usepackage{bm}
\usepackage{color}

\usepackage{color}




\newdimen\AAdi%
\newbox\AAbo%
%
\def\AAk#1#2{\s_etbox\AAbo=\hbox{#2}\AAdi=\wd\AAbo\kern#1\AAdi{}}%
\def\AAr#1#2#3{\s_etbox\AAbo=\hbox{#2}\AAdi=\ht\AAbo\raise#1\AAdi\hbox{#3}}%
\font\tenmsb=msbm10 at 12pt \font\sevenmsb=msbm7 at 8pt
\font\fivemsb=msbm5 at 6pt
\newfam\msbfam
\textfont\msbfam=\tenmsb \scriptfont\msbfam=\sevenmsb
\scriptscriptfont\msbfam=\fivemsb

\textwidth 15.0cm \textheight 22cm \topmargin 0cm \oddsidemargin
0.5cm \evensidemargin 0.5cm
\parindent = 5 mm
\hfuzz     = 6 pt
\parskip   = 3 mm

\newtheorem{theorem}{Theorem}

\newtheorem{remark}[theorem]{Remark}

\newtheorem{corollary}[theorem]{Corollary}

\newtheorem{lemma}[theorem]{Lemma}
\newtheorem{proposition}[theorem]{Proposition}

\numberwithin{equation}{section} \numberwithin{theorem}{section}

\setlength{\oddsidemargin}{0cm}
\renewcommand{\topmargin}{0cm}
\renewcommand{\oddsidemargin}{5mm}
\renewcommand{\evensidemargin}{5mm}
\renewcommand{\textwidth}{150mm}
\renewcommand{\textheight}{230mm}

\def\R{\mathbb R}

\def\Z{\mathbb Z}

\def\Z{\mathbb Z}

\def\n{\mathbf n}

\def\na{\nabla}
\def\bn{\overline\nabla}

\def\f#1#2{\frac{#1}{#2}}

\def\a{\alpha}
\def\be{\beta}

\def\r{\Re_{I\!V}}

\def\p#1{\partial #1}

\def\de{\delta}
\def\De{\Delta}
\def\e{\eta}
\def\ep{\epsilon}

\def\G{\Gamma}
\def\g{\gamma}

\def\la{\lambda}
\def\La{\Lambda}
\def\lan{\langle}
\def\ran{\rangle}

\def\Om{\Omega}
\def\th{\theta}
\def\Th{\Theta}
\def\si{\sigma}
\def\Si{\Sigma}

\def\r{\rho}

\begin{document}

\title
[Capacity for minimal graphs and the half-space property]
{Capacity for minimal graphs over manifolds and the half-space property}

\author{Qi Ding}
\address{Shanghai Center for Mathematical Sciences, Fudan University, Shanghai 200438, China}
\email{dingqi@fudan.edu.cn}

\begin{abstract}
In this paper, we define natural capacities using a relative volume of graphs over manifolds, which can be characterized by solutions of bounded variation to Dirichlet problems of minimal hypersurface equation.
Using the capacities, we introduce a notion '$M$-parabolicity' for ends of complete manifolds, where a parabolic end  
must be $M$-parabolic, but not vice versa in general. We study the boundary behavior of solutions associated with capacities in the measure sense, and the existence of minimal graphs over $M$-parabolic or $M$-nonparabolic manifolds outside compact sets.

For a $M$-parabolic manifold $P$, we prove a half-space theorem for complete proper minimal hypersurfaces in $P\times\R$. 
As a corollary, we immediately have a slice theorem for smooth mean concave domains in $P\times\R^+$, where the $M$-parabolic condition is sharp by our example. On the other hand, we prove that any $M$-parabolic end is indeed parabolic provided its Ricci curvature is uniformly bounded from below. Compared to harmonic functions, we get the asymptotic estimates with sharp orders for minimal graphic functions on nonparabolic manifolds of nonnegative Ricci curvature outside compact sets.

\end{abstract}

\maketitle
\tableofcontents

\section{Introduction}

Let $(\Si,\si)$ be an $n$-dimensional complete Riemannian manifold with the Levi-Civita connection $D$. Let $\Si\times\R$ denote the product manifold, which has the standard product metric $\si+dt^2$.
For an open set $\Om\subset\Si$, a closed set $K\subset\Si$ with $K\subset\Om$ and compact $\p K$, and a constant $t\ge0$, we define a capacity via a relative volume of graphs by
\begin{equation}\aligned\label{captKOm}
\mathrm{cap}_t(K,\Om)=\inf_{\phi\in\mathcal{L}_t(K,\Om)}\int_{\Om}\left(\sqrt{1+|D\phi|^2}-1\right).
\endaligned\end{equation}
Here, $\mathcal{L}_t(K,\Om)$ denotes the set containing every locally Lipschitz function $\phi$ on $\Si$ with compact $\overline{\mathrm{spt}\phi\setminus K}$ in $\Om$ such that $0\le\phi\le t$ and $\phi\big|_{K}=t$, where $\mathrm{spt}\phi$ denotes the support of $\phi$ in $\Si$. For simplicity, we denote
$\mathrm{cap}_t(K)=\mathrm{cap}_t(K,\Si)$.

The classical capacity (corresponding to harmonic functions) is given by
\begin{equation}\aligned\label{cap111KOm}
\mathrm{cap}(K,\Om)=\inf_{\phi\in\mathcal{L}_1(K,\Om)}\int_{\Om}|D\phi|^2.
\endaligned\end{equation}
Denote $\mathrm{cap}(K)=\mathrm{cap}(K,\Si)$.
Let an open set $E$ be an end of $\Si$. If $\mathrm{cap}(\Si\setminus E)=0$, then $E$ is said to be \emph{parabolic}. If $\mathrm{cap}(K)=0$ for some compact $K\subset\Si$ with the $n$-dimensional Hausdorff measure $\mathcal{H}^n(K)>0$, then $\Si$ is said to be \emph{parabolic}. 
Otherwise, $\Si$ is said to be \emph{nonparabolic}.

If we further assume that $\Om$ is bounded, and $\Om\setminus K$ has Lipschitz boundary, then
from $\S 14$ of Giusti \cite{Gi}, the capacity \eqref{captKOm} is attained by a BV solution $u$ of Dirichlet problems of the following minimal hypersurface equation  
\begin{equation}\label{u}
\mathrm{div}_\Si\left(\f{Du}{\sqrt{1+|Du|^2}}\right)=0
\end{equation}
on $\Om\setminus K$ with boundary data $u|_{\p K}=t$ and $u|_{\p\Om}=0$ (see \eqref{capttru} for more details),
where div$_{\Si}$ denotes the divergence on $\Si$. 
We call that $u$ is a (BV) solution on $\Si$ associated with $\mathrm{cap}_t(K,\Om)$.
Moreover,  the boundary of the subgraph of $u$ in $\overline{\Om\setminus K}\times\R$ is an area-minimizing hypersurface in $\overline{\Om\setminus K}\times\R$ with the boundary $(\p K\times\{t\})\cup(\p\Om\times\{0\})$ in rough speaking (see p. 75 in Lin' thesis \cite{Lin0}).
Since the boundary data here is constant (simpler than $\S14$ in \cite{Gi}), we are able to further study them in $\S4$ in the framework of geometric measure theory with the Lipschitz condition of $\p(\Om\setminus K)$ replaced by $\p\,\overline{\Om\setminus K}=\p(\Om\setminus K)$.

If there are a constant $t>0$ and a compact set $F\subset\Si$ with $\mathcal{H}^n(F)>0$ so that $\mathrm{cap}_t(F)=0$, then we call that $\Si$ is \emph{M-parabolic} as the capacity \eqref{captKOm} connects to minimal hypersurfaces. Otherwise, we call that $\Si$ is \emph{M-nonparabolic}. Moreover, let an open set $E$ be an end of $\Si$. The end $E$ is said to be \emph{M-parabolic} provided $\mathrm{cap}_t(\Si\setminus E)=0$ for some $t>0$.
Clearly, a closed manifold without boundary must be $M$-parabolic.
By the definitions, a complete parabolic manifold is $M$-parabolic automatically. However, the inverse is not true in general. For example, there is a rotationally symmetric complete $M$-parabolic manifold, but it is not parabolic (see Proposition \ref{M-par-not-par}).

We have the following equivalence for $M$-nonparabolic manifolds.
\begin{theorem}\label{main}
Let $\Si$ be a complete Riemannian manifold. The following properties are
equivalent.
  \begin{enumerate}
  \item[(1)] There are a constant $t>0$ and a compact set $K$ in $\Si$ so that $\mathrm{cap}_t(K)>0$.
  \item[(2)] For any $t>0$ and any compact set $K$ in $\Si$ with $\mathcal{H}^n(K)>0$, there holds $\mathrm{cap}_t(K)>0$.
  \item[(3)] For any compact set $K$ in $\Si$ with $\mathcal{H}^n(K)>0$, there is a smooth positive non-constant bounded solution $u$ to \eqref{u} on $\Si\setminus K$.
  \item[(4)] There is a non-flat smooth bounded graph over $\Si$ in $\Si\times\R$ with nonnegative mean curvature.
\end{enumerate}
\end{theorem}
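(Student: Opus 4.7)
The implication (2) $\Rightarrow$ (1) is immediate. My plan is to close the cycle via (1) $\Rightarrow$ (3) $\Rightarrow$ (4) $\Rightarrow$ (2), with the BV Dirichlet theory of $\S 4$ applied to geodesic-ball exhaustions as the main engine.

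For (1) $\Rightarrow$ (3), I first show that the vanishing of $\mathrm{cap}_t(K)$ is independent of $t>0$: if $\phi\in\mathcal{L}_s(K)$, then $(t/s)\phi\in\mathcal{L}_t(K)$, and the elementary inequality
\[
\sqrt{1+\lambda^2|\xi|^2}-1 \le C(\lambda)\bigl(\sqrt{1+|\xi|^2}-1\bigr), \qquad \lambda = t/s,
\]
allows the capacities at different parameters to dominate each other up to a multiplicative constant. So from (1) I obtain $\mathrm{cap}_t(K_0)>0$ for every $t>0$. After approximating $K_0$ by smooth domains, I would solve the Dirichlet problem for \eqref{u} on $B_R(p)\setminus K_0$ with boundary data $t$ on $\p K_0$ and $0$ on $\p B_R$: by the BV framework of $\S 4$ the minimizers $u_R$ exist, satisfy $0\le u_R\le t$, and are monotone non-decreasing in $R$ by comparison. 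The pointwise limit $u=\lim_R u_R$ is smooth on $\Si\setminus K_0$ by elliptic regularity, and the uniform lower bound $\int\bigl(\sqrt{1+|Du_R|^2}-1\bigr)\ge \mathrm{cap}_t(K_0,B_R)\ge \mathrm{cap}_t(K_0)>0$ forces $u$ to be non-constant (and positive via Harnack). For a general compact $K$ with $\mathcal{H}^n(K)>0$, the set-monotonicity $\mathrm{cap}_t(K\cup K_0)\ge \mathrm{cap}_t(K_0)>0$ reduces matters to the previous case on $\Si\setminus(K\cup K_0)\subset\Si\setminus K$.

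For (3) $\Rightarrow$ (4), let $u$ be a positive bounded non-constant solution on $\Si\setminus K$ with $M=\sup u$. I pick $c\in(0,M)$ such that $\{u>c\}\ne\emptyset$, define $f=\max(u,c)$ on $\Si\setminus K$ and $f=c$ on $K$, and mollify across a thin collar of $\p K$. The resulting $f$ is a smooth bounded non-constant function on $\Si$; as a smoothing of the maximum of a solution and a constant it is a subsolution of \eqref{u}, so its graph has $H\ge 0$.

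Finally, (4) $\Rightarrow$ (2) is the main obstacle. Fixing $t>0$ and a compact $K$ with $\mathcal{H}^n(K)>0$, I would argue by contradiction: if $\mathrm{cap}_t(K)=0$, then the same BV exhaustion as above produces minimizers $u_R$ on $B_R\setminus K$ whose energies tend to zero, concentrating on $\p K$ so that the BV limit collapses to $t\chi_K$. On the other hand, a suitable affine modification of $f$ from (4) (shifted downward, scaled, and truncated at $0$) yields a bounded subsolution $\tilde f\le t$ which is genuinely nontrivial on $\Si\setminus K$; applying the subsolution comparison for \eqref{u} on $B_R\setminus K$ gives $u_R\ge \tilde f$ on any fixed compact set for large $R$, contradicting the collapse. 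The delicate point is ensuring that $\tilde f$ remains nontrivial on $\Si\setminus K$ for arbitrary $K$ given only the non-flatness of $f$ on $\Si$, and then verifying the comparison in the BV setting; this likely requires invoking the boundary-behavior analysis and the strong maximum principle established earlier in the paper.
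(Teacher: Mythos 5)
Your overall cycle $(2)\Rightarrow(1)\Rightarrow(3)\Rightarrow(4)\Rightarrow(2)$ is a reasonable structure, but each of the last three arrows has a genuine gap, and the closing implication $(4)\Rightarrow(2)$ as you sketch it cannot work in the stated form.

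\emph{On $(1)\Rightarrow(3)$.} Your reduction for a general compact $K$ with $\mathcal{H}^n(K)>0$ uses set--monotonicity to pass to $K\cup K_0$, but this produces a solution of \eqref{u} only on $\Si\setminus(K\cup K_0)$, which is a \emph{strict subset} of $\Si\setminus K$ whenever $K_0\not\subset K$. Statement (3) asks for a solution on all of $\Si\setminus K$, so the reduction does not close. You need the statement $\mathrm{cap}_t(K)>0$ for $K$ itself, i.e.\ you need to invoke $(1)\Rightarrow(2)$ first; this is Proposition \ref{ParaKE}, which the paper proves separately in $\S 2$ by a BV argument that is not just set--monotonicity. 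Once you have $(2)$, the exhaustion, monotonicity, and energy lower bound you describe are correct and reproduce Theorem \ref{Uniu<t}; Harnack (\cite{D}) then gives positivity.

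\emph{On $(3)\Rightarrow(4)$.} The claim that ``a smoothing of $\max(u,c)$ is a subsolution of \eqref{u}'' is not justified. For the linear Laplacian this follows from Jensen's inequality, but the minimal surface operator is quasilinear, and mollification of a weak subsolution is not generally a subsolution. In fact $\max(u,c)$ has a singular (measure--valued) distributional mean curvature concentrated on the ridge $\{u=c\}$, and after convolution the pointwise sign of $\mathrm{div}\bigl(Dv/\sqrt{1+|Dv|^2}\bigr)$ does not come for free. The paper avoids this entirely: Theorem \ref{NONMeanCurv} produces the non-flat graph in (4) by \emph{minimizing a prescribed mean curvature functional} $\int\sqrt{1+|D\phi|^2}-\int\phi H$ for a carefully chosen bump $H\ge 0$ supported near $p$, with regularity supplied by \cite{Sp} and Lemma \ref{wWFOm**}; the resulting graph is minimal away from $B_\de(p)$ and has $H\ge 0$ everywhere, with no mollification issues.

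\emph{On $(4)\Rightarrow(2)$.} Two things go wrong. First, if $\mathrm{cap}_t(K)=0$, the minimizers $u_R$ do \emph{not} ``collapse to $t\chi_K$''; by Lemma \ref{ConMR1} they converge locally uniformly to the \emph{constant} $t$ on all of $\Si$. So the contradiction you aim for (a nontrivial subsolution $\tilde f>0$ outside $K$ versus a limit that is $0$ outside $K$) never materializes. Second, the subsolution comparison $u_R\ge\tilde f$ on $B_R\setminus K$ requires $\tilde f\le 0$ on $\p B_R$ as well as $\tilde f\le t$ on $\p K$; a bounded nontrivial $\tilde f$ obtained from (4) has no reason to vanish on $\p B_R$, so the comparison fails on the outer boundary. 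The paper obtains $(4)\Rightarrow(1)$ by a different mechanism: it uses the sliding/maximum-principle argument behind Theorem \ref{half-space} and Corollary \ref{half-space*}. The barrier graphs $M_i$ there are shown to stay on one side of the given hypersurface $S$ by sliding $M_i$ in the vertical direction and using both the interior maximum principle for minimal hypersurfaces and the boundary mean curvature sign $H_{M_i}\ge0$ on $\p\Om_i$; the collapse $M_i\to\Si\times\{\tau\}$ is then contradicted by $S$ actually touching low slices. No direct subsolution comparison on $B_R\setminus K$ is attempted, precisely because of the boundary issue you ran into. If you want to close the cycle along your route, you should either reproduce the sliding argument of $\S 5$, or adapt your subsolution so that it genuinely lies below $u_R$ on $\p B_R$ (for instance by subtracting a function that blows up near $\p B_R$), which is not automatic.
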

In $\S2$, we will investigate the basic properties of the capacity \eqref{captKOm}, including that (1) implies (2). The rest proof of  the equivalence in Theorem \ref{main} will be given in $\S7$.

Let us review the history briefly on the 'half-space property'.
In 1990, Hoffman-Meeks \cite{HM} proved a famous half-space theorem. It asserts that any complete proper immersed minimal surface in $\R^2\times\R^+$ is $\R^2\times\{c\}$ for some constant $c\in\R^+=(0,\infty)$. 
Let $\Si$ be a complete Riemannian manifold, and $M$ be a complete minimal hypersurface properly immersed in $\Si\times\R^+$. We say that $\Si$ has the \emph{half-space property} if $M$ must be a slice $\Si\times\{c\}$ for some constant $c\in\R^+$.
In 2013,
Rosenberg-Schulze-Spruck \cite{RSS} proved an interesting result that a recurrent manifold with bounded sectional curvature has the half-space property. Here, the recurrence is equivalent to the parabolicity (see Grigor'yan \cite{G}).
Recently, Colombo-Magliaro-Mari-Rigoli \cite{CMMR} proved that a complete parabolic manifold with Ricci curvature bounded below has the half-space property. 

The half-space property can be seen as a special case of Frankel-type theorems \cite{Fr} for minimal hypersurfaces in manifolds. This has a close relation to the maximum principle at infinity for minimal hypersurfaces that has been studied in \cite{LR,MR1,ER} and related references therein.
In particular, Espinar-Rosenberg \cite{ER} proved the Maximum Principle at Infinity and Tubular Neighborhood Theorem for parabolic properly embedded minimal hypersurfaces in complete manifolds under some natural assumptions.

Using the capacity in \eqref{captKOm}, we can show that a $M$-parabolic manifold has the half-space property as follows, which infers directly that all complete parabolic manifolds have the half-space property.
\begin{theorem}\label{HSp000}
For a $M$-parabolic manifold $P$, any complete minimal hypersurface properly immersed in $P\times\R^+$, must be a slice $P\times\{c\}$ for some constant $c\in\R^+$. 
\end{theorem}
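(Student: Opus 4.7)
The plan is to argue by contradiction. Let $M \subset P \times \R^+$ be a complete properly immersed minimal hypersurface, and set $c := \inf_{(x,t) \in M} t$. If $c$ is attained at an interior point of $M$, then $M$ touches the minimal slice $P \times \{c\}$ and the strong maximum principle for minimal hypersurfaces forces $M = P \times \{c\}$, proving the theorem. Hence I may assume $c$ is not attained; by properness the infimum can only be realized along sequences escaping to infinity in $P$. A vertical translation by $-c$ reduces to the case $c = 0$, so $M \subset P \times (0, \infty)$ with $\inf_M t = 0$, and the goal becomes to derive a contradiction from the $M$-parabolicity of $P$.

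Next I construct a family of approximate horizontal barriers using the minimal-surface capacity. Fix a compact $K \subset P$ with $\mathcal{H}^n(K) > 0$; properness of $M$ ensures $\delta := \inf\{t : (x,t) \in M,\ x \in K\} > 0$. Pick any $t_0 \in (0, \delta)$. By Theorem \ref{main}(2) and the $M$-parabolicity of $P$, $\mathrm{cap}_{t_0}(K) = 0$. The discussion following \eqref{u} produces an exhaustion $\{\Om_i\}$ of $P$ by bounded open sets with suitable boundary regularity containing $K$, together with BV minimizers $u_i$ of \eqref{captKOm} on $\Om_i \setminus K$ satisfying $u_i|_{\p K} = t_0$, $u_i|_{\p \Om_i} = 0$, and $0 \leq u_i \leq t_0$, whose graphs are area-minimizing in $(\Om_i \setminus K) \times \R$, and whose excess areas $\int_{\Om_i}(\sqrt{1+|Du_i|^2} - 1)$ tend to $0$.

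The contradiction then comes from a sliding barrier argument. For each $i$, let $\lambda^*_i$ be the supremum of $\lambda$ such that $u_i(x) + \lambda \leq \inf\{t > 0 : (x,t) \in M\}$ on $\Om_i \setminus K$, so that at $\lambda = \lambda^*_i$ the shifted graph of $u_i$ makes first contact with $M$ from below. Interior contact is ruled out by the strong maximum principle for minimal hypersurfaces: it would force $M$ to coincide locally with the graph of $u_i + \lambda^*_i$, and a standard continuation argument would then identify $M$ with a slice. Boundary contact on $\p K$ gives $\lambda^*_i \geq \delta - t_0 > 0$, while contact on $\p\Om_i$ gives $\lambda^*_i \geq \inf\{t : (x,t) \in M,\ x \in \p\Om_i\}$. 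The vanishing of the excess area, combined with the BV compactness developed in $\S4$, forces $u_i \to t_0$ on compact subsets of $P \setminus K$ as $i \to \infty$: the descent region from $t_0$ to $0$ is pushed out to infinity. Hence in the limit the barrier degenerates to the horizontal slice $P \times \{t_0 + \lambda^*_\infty\}$ touching $M$ from below at an interior point, and the strong maximum principle yields $M = P \times \{t_0 + \lambda^*_\infty\}$, with height $t_0 + \lambda^*_\infty \geq \delta > 0$, contradicting $\inf_M t = 0$.

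The main hurdle is making the limit $\Om_i \to P$ quantitative: one must verify that the $u_i$ converge to $t_0$ on compact subsets strongly enough, and that contacts occurring at the outer boundaries $\p \Om_i$ cannot escape in a way that spoils the interior barrier. These are precisely the properties encoded in $\mathrm{cap}_{t_0}(K) = 0$, i.e.\ in the $M$-parabolicity of $P$, and they rely on the BV compactness and interior regularity for area-minimizing graphs established in $\S4$.
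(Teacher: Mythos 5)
Your proposal is the same in spirit as the paper's proof: you build barriers from the BV solutions $u_i$ associated with $\mathrm{cap}_{t_0}(K,\Om_i)$, use $M$-parabolicity to drive the capacity to zero, invoke Lemma~\ref{ConMR1} to get $u_i\to t_0$ on compacts, and try a sliding/first-contact argument. But the execution has a genuine gap at precisely the step you flag as "the main hurdle," namely the contact along the outer boundary $\p\Om_i$.

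The barrier one must use is not just the graph of $u_i$ over $\Om_i\setminus K$ but the full boundary of the subgraph, $M_i=\p U_i\cap(\overline{\Om_i\setminus K}\times\R)$, which includes vertical wall pieces over $\p K$ and $\p\Om_i$. Your "contact on $\p\Om_i$ gives $\lambda_i^*\ge \inf\{t:(x,t)\in M,\,x\in\p\Om_i\}$" does not rule anything out: since $\inf_M t=0$ and the inf is achieved along a sequence escaping to infinity, that quantity can tend to $0$, and nothing prevents $\lambda_i^*\to 0$. The paper's proof avoids this by a different observation: because $[|M_i|]$ is area-minimizing in $\overline{\Om_i\setminus K}\times\R$ (Theorem~\ref{UniPlateau}), the first variation against one-sided perturbations shows that the wall of $M_i$ over $\p\Om_i$ has \emph{nonnegative mean curvature} with respect to the normal pointing toward $S$, and then the maximum principle rules out first contact there as well. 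That mean-curvature sign on the outer wall is the ingredient your argument is missing; without it, the contact really can occur at $\p\Om_i$ and your sliding argument gives no contradiction. The direction of sliding also matters: the paper slides $M_i$ \emph{downward} from $\Si\times(-\infty,0]$, so the slid barrier is initially disjoint from $\Si\times\R^+$, the sliding parameter set is closed and bounded, and a genuine first contact exists. Sliding upward as you do, the barrier's exterior piece sits at height $\lambda$ over $\Si\setminus\Om_i$, so for any $\lambda>0$ the set of contacts can be open and escape to infinity, and the existence of a well-defined first-contact $\lambda_i^*$ with the right properties is itself unclear.

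Finally, your limiting step ("the barrier degenerates to the horizontal slice $P\times\{t_0+\lambda_\infty^*\}$ touching $M$ from below at an interior point") is asserted but not justified: you need $\lambda_i^*$ to converge to a positive limit and the touching point to stay in a compact region, neither of which follows from what you wrote. The paper instead proves the cleaner claim $S\cap M_i=\emptyset$ for every $i$, then uses $M_i\to\Si\times\{\tau\}$ and the maximum principle once more to conclude $S\cap(\Si\times\{\tau\})=\emptyset$, contradicting the choice of $\tau$. Filling your gap essentially requires importing the paper's mean-curvature argument for the outer wall, at which point you may as well slide downward as the paper does.
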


In \cite{CM2}, Colding-Minicozzi proved that a complete embedded minimal surface with finite topology in $\R^3$ must be proper, which confirmed a Calabi's conjecture (see also \cite{CM}). Meeks-Rosenberg \cite{MR} generalized it to get a local version in Riemannian 3-manifold. 
Hence, Theorem \ref{HSp000} holds provided the properness condition is replaced by embeddedness and finite topology as well as that $P$ has dimension 2.

The proof of Theorem \ref{HSp000} will be given in $\S5$. 
In \cite{IPS}, Impera-Pigola-Setti studied slice theorems for hypersurfaces of nonnegative mean curvature in product manifolds like $N\times[0,\infty)$.
From the proof of Theorem \ref{HSp000}, we immediately have the following corollory.
\begin{corollary}\label{HSpMC000}
For a $M$-parabolic manifold $P$, any smooth mean concave domain in $P\times[0,\infty)$ must be $P\times(c_1,c_2)$ for constants $0\le c_1< c_2\le\infty$.
\end{corollary}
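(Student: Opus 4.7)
The plan is to derive the corollary directly from the argument used for Theorem \ref{HSp000}, as the author indicates. The key observation is that the proof of \ref{HSp000} invokes the minimality of the immersed hypersurface only as a one-sided mean-curvature inequality in a maximum-principle comparison with the capacity barriers from \eqref{captKOm}, and this one-sided inequality is exactly what mean concavity of $\partial U$ provides.

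Let $U\subset P\times[0,\infty)$ be a smooth mean concave domain, and set $c_1=\inf\{t\ge 0:U\cap(P\times\{t\})\ne\emptyset\}$ and $c_2=\sup\{t\ge 0:U\cap(P\times\{t\})\ne\emptyset\}$. By construction $U\subseteq P\times[c_1,c_2]$, so it will suffice to show that every connected component $N$ of $\partial U\cap(P\times(0,\infty))$ is a horizontal slice $P\times\{c\}$, since $U$ is then forced to be the slab between two such slices (or a half-slab if $c_1=0$ or $c_2=\infty$). Fix such an $N$: it inherits smoothness and proper embeddedness from $\partial U$, and mean concavity of $U$ fixes the sign of its mean curvature with respect to the outer unit normal of $U$. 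In the proof of Theorem \ref{HSp000}, BV solutions of \eqref{u} associated with $\mathrm{cap}_t$ will serve as barriers and $M$-parabolicity of $P$ will force the height $t$ in the capacity problem to be taken arbitrarily small; the comparison of the minimal hypersurface against these barriers is by the maximum principle and therefore requires only a one-sided mean-curvature inequality on the hypersurface. Replacing the minimal hypersurface in that step with $N$ leaves the argument intact, and $N$ must coincide with a horizontal slice $P\times\{c\}$. Applying this to each connected component of $\partial U\cap(P\times(0,\infty))$ yields $U=P\times(c_1,c_2)$.

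The hard part, and essentially the only content beyond quoting the statement, is to verify that the comparison step in the proof of Theorem \ref{HSp000} is genuinely one-sided in the required sense: one must go into that proof and confirm that minimality is invoked only through a mean-curvature inequality of the correct sign, matching the sign supplied by mean concavity of $\partial U$. A subsidiary technical point is to make sure the capacity barriers can be slid so as to touch $N$ at an interior point (rather than only at infinity), so that the maximum principle delivers the rigidity conclusion; this should be automatic from proper embeddedness of $N$ in $P\times(0,\infty)$ together with the boundary behaviour of capacity minimisers analysed in \S4.
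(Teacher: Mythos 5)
The deferred verification is precisely where the difficulty lies, and as presented it does not go through by a routine sign check. In the sliding-barrier step of the proof of Theorem~\ref{half-space}, a vertical translate $M_{i,t'}$ of the capacity graph touches the hypersurface from below at a point $z$ that is a first contact as the barrier rises from $t=-\infty$. The strong maximum principle for the operator $w\mapsto\mathrm{div}\bigl(Dw/\sqrt{1+|Dw|^2}\bigr)$ yields a contradiction only if at $z$ the mean curvature of $\p U$ with respect to the \emph{upward} unit normal is $\le 0$ (so that it is dominated by the nonnegative mean curvature of the barrier). Mean concavity says the mean curvature is $\ge 0$ with respect to the normal pointing \emph{out of} $U$, so the needed inequality holds precisely when $U$ lies \emph{above} $\p U$ near $z$, i.e.\ when the barrier approaches from the side of $(\Si\times\R)\setminus\overline U$. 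This is a global side condition, not a local sign convention, and you have not established it. The barrier does start outside $\overline U$ (it lies below $P\times\{0\}$ for $t\ll 0$), but $U$ is an open subset of the manifold-with-boundary $P\times[0,\infty)$ and may contain part of $P\times\{0\}$; a translate of the barrier can therefore rise through $P\times\{0\}$ into $U$ without ever crossing $\p U$, and then meet $\p U$ from the $U$-side, where the inequality runs the wrong way and the maximum principle is silent.

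A concrete configuration showing that something must be said about the side of contact: take $U=\bigl(P\times[0,\infty)\bigr)\setminus\overline B$ for a small ball $B\subset\subset P\times(0,\infty)$. Here $\p U=\p B$ is a sphere whose mean curvature vector points into $B$, i.e.\ out of $U$, so $U$ is mean concave in the sense of the paper, yet $U$ is not a slab; the capacity barrier first touches $\p B$ near its lowest point, always from the $U$-side. Your argument as written does not distinguish this case from the intended one. Before citing the deferred step you should either identify the normalization the paper implicitly relies on (a sheet of $\p U$, unbounded and with $U$ on the correct side, that the barrier approaches from its complement), or supply an argument that controls which component of $(\Si\times\R)\setminus\p U$ the family $M_{i,t}$ stays in. Relatedly, running the comparison for a single component $N$ of $\p U$ is insufficient: the side-condition argument needs $M_{i,t}$ to avoid \emph{all} of $\p U$ for $t<t'$, not only $N$, so the ``claim'' step should be phrased for $\p U$ as a whole and only afterwards restricted to $N$ to obtain the contradiction with the normalization of heights.
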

Here, a smooth mean concave domain $U$ means that $U$ is a connected open set, and $\p U$ is smooth with nonnegative mean curvature w.r.t. the normal vector pointing out $U$. Our $M$-parabolic condition in Corollary \ref{HSpMC000} is sharp from (4) of Theorem \ref{main}.

Let $u$ be a BV solution on $\Si$ associated with $\mathrm{cap}_t(K,\Om)$,
and $M$ be the boundary of the subgraph of $u$ in $\overline{\Om\setminus K}\times\R$. The boundary gradient of $u$ may be infinity even for the smooth $\p\Om\cup\p K$ unlike harmonic functions w.r.t. the capacity \eqref{cap111KOm}.
In $\S6$, we will study the co-normal vector to $\p M$ in the measure sense using Riesz representation theorem without the smooth condition on $\p\Om\cup\p K$, which is useful in estimations of the capacities and solutions associated with them.

In $\S7$, we will study the existence of minimal graphs or graphs of nonnegative mean curvature over manifolds. From Theorem 6.1 in \cite{DJX} and Corollary \ref{RicMparpar} below, entire minimal graphs may be not exist on manifolds no matter whether they are $M$-nonparabolic or not. 
On the one hand, for a constant $t>0$ and a $M$-nonparabolic manifold $\Si$ with a ball $B_\de(p)\subset\Si$, we construct a smooth entire graph $M\subset\Si\times[0,t]$ over $\Si$ so that $M$ has nonnegative mean curvature and $M\setminus(B_\de(p)\times\R)$ is minimal (see Theorem \ref{NONMeanCurv}). If $\Si$ has two $M$-nonparabolic ends at least, then for any $t>0$ we have the existence of entire minimal graphs over $\Si$ in $\Si\times[0,t]$.
On the other hand, let $P$ be a $M$-parabolic manifold with \emph{nondegenerate boundary at infinity}, which means that there are a constant $\ep>0$ and a closed set $K_\ep\subset\Si$ so that $\mathcal{H}^{n-1}(\p U)\ge\ep$ for any open set $U\supset K_\ep$ with non-empty $\Si\setminus U$. We prove the existence of minimal graphs over $P$ outside compact sets (see Theorem \ref{EntiresolM-para}).
Here, the condition 'nondegenerate boundary at infinity' is necessary (see Proposition \ref{non-const BV solutions} in Appendix II).

Let $M$ be an $n$-dimensional complete non-compact manifold of nonnegative Ricci curvature. Varopoulos \cite{V} proved that $M$ is nonparabolic if and only if
$$\int_1^\infty\f{r dr}{\mathcal{H}^n(B_r(p))}<\infty.$$
Recall that a nonparabolic manifold admits positive Green functions on it.
Li-Yau \cite{LY} proved that the minimal positive Green function $G(x,p)$ on $M$ satisfies
\begin{equation}\aligned\label{Green}
c_n^{-1}\int_{d(x,p)}^\infty\f{r dr}{\mathcal{H}^n(B_r(p))}\le G(x,p)\le c_n\int_{d(x,p)}^\infty\f{r dr}{\mathcal{H}^n(B_r(p))},
\endaligned
\end{equation}
where $c_n\ge1$ is a constant depending only on $n$. If $n\ge3$ and $M$ further has Euclidean volume growth, then Colding-Minicozzi \cite{CM2} can give the sharp asymptotic estimate of Green functions at infinity (see also Li-Tam-Wang \cite{LTW}).

A $M$-parabolic manifold may be not parabolic in general. However, we can show that a $M$-parabolic manifold is indeed parabolic provided its Ricci curvature is uniformly bounded below (see Corollary \ref{RicMparpar}). Actually, we can prove this in a more general setting. More precisely, this property holds on manifolds satisfying the following
\eqref{VDg***}\eqref{NPBrxg***} for every $r\in(0,1]$ and every point $x$ in the considered manifold  (see Theorem \ref{SiVDNPpara}). The proof relies the convexity of volume functional of graphs, and a suitable average of functions on the manifold. 

Let $\Si$ be an $n$-dimensional complete non-compact nonparabolic manifold. We further suppose that $\Si$ satisfies volume doubling property
\begin{equation}\aligned\label{VDg***}
\mathcal{H}^n(B_{2r}(x))\le C_D\mathcal{H}^n(B_{r}(x))
\endaligned
\end{equation}
and (1,1)-Poincar\'e inequality
\begin{equation}\aligned\label{NPBrxg***}
\int_{B_r(x)}|f-\bar{f}_{x,r}|\le C_N r\int_{B_r(x)}|Df|
\endaligned
\end{equation}
for each $x\in\Si$ and $r>0$, whenever $f$ is a function in the Sobolev space $W^{1,1}(\Si)$ with $\bar{f}_{x,r}=\fint_{B_r(x)}f$,
Here, $C_D,C_N$ are positive constants.

Holopainen \cite{H} proved the estimate \eqref{Green} for Green functions on $\Si$ up to the constant $c_n$ repalced by a constant depending only on $C_D$ and $C_N$. Moreover, he generalized the estimate \eqref{Green} for Green functions to $q$-Green functions on $\Si$ for each $q>1$ (see \cite{H} for more results on general manifolds). 
Combining his result (the $q=2$ case) and Harnack's estimate in \cite{D}, we can deduce the following estimate(see Theorem \ref{BdUpLowercapu*}).
\begin{theorem}
Given a compact set $K\subset\Si$ and $t>0$, let $u$ be a BV function on $\Si$ associated with $\mathrm{cap}_t(K)$.
There is a constant $\Th\ge1$ depending only on $C_D,C_N$ such that for any $R\ge\max\{t,2\mathrm{diam}(K)\}$, $p\in K$ and $x\in \p B_R(p)$ there holds
\begin{equation}\aligned
\f{\mathrm{cap}_t(K)}{\Th t}\int_R^\infty\f{r dr}{\mathcal{H}^n(B_r(p))}\le u(x)\le\f{\Th\mathrm{cap}_t(K)}{t}\int_R^\infty\f{r dr}{\mathcal{H}^n(B_r(p))}.
\endaligned\end{equation}
\end{theorem}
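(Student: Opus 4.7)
The plan is to prove that $u$ behaves, at points far from $K$, essentially as $(\mathrm{cap}_t(K)/t)\cdot G(\cdot,p)$, where $G(\cdot,p)$ is the minimal positive Green function of $\Si$ (which exists by nonparabolicity); Holopainen's two-sided estimate for $G$ in the $q=2$ case of \cite{H} then produces exactly the integral $\int_R^\infty r\,dr/\mathcal{H}^n(B_r(p))$ that appears in the statement. Two ingredients drive the argument: the Harnack inequality of \cite{D} for positive solutions of \eqref{u} (whose constant depends only on $C_D$ and $C_N$), and the observation that on a region where both $u$ and $|Du|$ are small the minimal hypersurface operator is a controlled perturbation of the Laplacian, so that suitable multiples of $G$ can serve as sub- and super-solutions of \eqref{u}.

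To implement this I would first note that $u$ is smooth and positive on $\Si\setminus K$ by the regularity theory reviewed after \eqref{u}, bounded above by $t$, and tends to $0$ at infinity; the hypothesis $R\ge\max\{t,2\mathrm{diam}(K)\}$ is exactly what ensures that $\p B_R(p)$ sits well outside $K$ and that $u<t$ there. Applying the Harnack inequality of \cite{D} on a chain of balls covering $\p B_R(p)$ yields a constant $\Th_0=\Th_0(C_D,C_N)$ with $\sup_{\p B_R(p)}u\le\Th_0\inf_{\p B_R(p)}u$, so that up to $\Th_0$ the value $u(x)$ for $x\in\p B_R(p)$ is a single scalar $\bar u(R)$. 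Next I would approximate $u$ by smooth solutions $u_j$ of \eqref{u} on $B_{R_j}(p)\setminus K$ with $u_j=t$ on $\p K$ and $u_j=0$ on $\p B_{R_j}(p)$ (so that $u_j$ attains $\mathrm{cap}_t(K,B_{R_j})$ and increases to $u$ as $R_j\to\infty$); integrating \eqref{u} against the test function $u_j/t$ identifies $\mathrm{cap}_t(K,B_{R_j})/t$ with the flux of $Du_j/\sqrt{1+|Du_j|^2}$ across $\p K$, a quantity independent of the radius at which one evaluates it.

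The final step is to convert this flux-plus-Harnack picture into a comparison with $G$. Once a gradient estimate --- which I expect to deduce from the Harnack bound above combined with standard interior MSE estimates under the VD and $(1,1)$-Poincar\'e hypotheses --- shows that $|Du|$ is small on $\Si\setminus B_R(p)$, the functions $cG(\cdot,p)$ with $c$ of order $\mathrm{cap}_t(K)/t$ become admissible sub- and super-solutions of \eqref{u} on that region, and the maximum principle traps $u$ between them. Plugging in Holopainen's estimate for $G$ produces the claimed two-sided bound with a single $\Th=\Th(C_D,C_N)$. The main obstacle I anticipate is precisely this quasilinear-to-linear comparison: one has to ensure that the smallness of $|Du|$ holds uniformly on an annular scale comparable to $R$, so that the MSE is a genuine perturbation of the Laplacian and the barrier argument with $cG$ goes through with constants depending only on $C_D$ and $C_N$.
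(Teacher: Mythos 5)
Your opening steps match the paper --- using the Harnack inequality of \cite{D} to reduce to a single scalar on $\p B_R(p)$, and invoking Holopainen's two-sided estimate --- but the crucial comparison step is where your argument breaks down, and you have in fact flagged the right place yourself.

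The gap is that a harmonic function, and in particular a multiple $cG(\cdot,p)$ of the Green function, is \emph{not} in general a sub- or supersolution of the minimal hypersurface equation. Computing directly,
\begin{equation*}
\mathrm{div}\left(\f{D(cG)}{\sqrt{1+|D(cG)|^2}}\right)
= -\,\f{c^3\,D^2G(DG,DG)}{\bigl(1+c^2|DG|^2\bigr)^{3/2}},
\end{equation*}
and the sign of $D^2G(DG,DG)$ is not controlled on a general manifold satisfying only \eqref{VDg} and \eqref{NPBrxg}; indeed, even in Euclidean space $G$ is a strict supersolution but not a subsolution, so you would at the very least need a separate lower barrier, and on a general manifold you cannot expect $cG$ to yield either inequality without a nontrivial correction. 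Making $|Du|$ small does \emph{not} by itself convert $cG$ into a valid barrier, because the defect above depends on the second derivatives of $G$, not on the size of $Du$; and since both $u$ and $G$ decay to $0$ at infinity, the comparison you would need at infinity is itself equivalent to the decay estimate you are trying to prove. So the ``quasilinear-to-linear'' step is not merely a technicality to be checked --- as written, it is incorrect, and fixing it would amount to reproving the theorem by other means.

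The paper avoids this entirely and works at the level of capacities of level sets rather than barriers. Writing $\la=\inf_{\p B_r(p)}u$ and $\la'=\sup_{\p B_r(p)}u$ (comparable by Harnack), and $F=\{u\ge\la\}$, $F'=\overline{\{u\ge\la'\}}$ with $F'\subset\overline{B}_r(p)\subset F$, the proof uses the global versions of \eqref{captKOmu} and \eqref{captKOmuDu} from Proposition \ref{utKprop}, the inequality \eqref{Dnaucomp} relating $\sqrt{1+|Du|^2}-1$ to $|\na u|^2$ on $M$, and the identity $\int_{M\cap\{\mathbf{h}<\la\}}|\na u|^2=\la\,\|\mu_{_{\p M}}\|$ from \eqref{lanaumupM}, to trap $\mathrm{cap}_\la(\overline{B}_r(p))\lesssim\la\,\|\mu_{_{\p M}}\|\lesssim\mathrm{cap}_{\la'}(\overline{B}_r(p))$. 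Corollary \ref{captthEST} (which is where Holopainen's $q=2$ estimate and \eqref{GxpBrpsi} enter, but only to control the ball capacity $\mathrm{cap}_t(\overline{B}_r(p))$, never to compare $u$ with $G$ pointwise) and the bound \eqref{capKulu} on $\|\mu_{_{\p M}}\|$ then yield the claimed two-sided estimate. Your flux heuristic (integrating \eqref{u} against $u_j/t$) is in the right spirit --- it is essentially the statement that $\|\mu_{_{\p M}}\|\sim\mathrm{cap}_t(K)/t$, which is \eqref{capKulu} --- but the mechanism turning this into pointwise bounds is the level-set capacity comparison, not a Green-function barrier.
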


$\mathbf{Notional\ convention}.$ For a point $x$ in a manifold $\Si$ defined in the paper, we let $d(x,\cdot)$ denote the distance function on $\Si$ from $x$, and $\r_V=d(\cdot,V)=\inf_{x\in V}d(\cdot,x)$ for any subset $V$ of $\Si$.
Let $B_r(x)$ denote the geodesic open ball in $\Si$ with radius $r>0$ and centered at $x$. For a subset $K\subset \Si$, 
let $B_r(K)=\{y\in\Si|\, d(y,K)<r\}$ denote $r$-tubular neighborhood of $K$ in $\Si$. Let $\overline{B}_r(x)$ denote the closure of $B_r(x)$, and $\overline{B}_r(K)$ denote the closure of $B_r(K)$.
For any two subsets $K_1,K_2\subset\Si$, $d_H(K_1,K_2)$ denotes the Hausdorff distance defined by
$d_H(K_1,K_2)=\inf\{s>0|\ K_1\subset B_s(K_2),\, K_2\subset B_s(K_1)\}.$
For a sequence of subsets $\{K_i\}_{i\ge1}\subset\Si$, $K_i\to K$ in the Hausdorff sense means $\lim_{i\to\infty}d_H(K_i,K)=0$.

For each integer $k>0$, let $\omega_k$ denote the volume of $k$-dimensional unit Euclidean ball, and $\mathcal{H}^k$ denote the $k$-dimensional Hausdorff measure.
When we write an integration on a subset of a Riemannian manifold w.r.t. some volume element, we always omit the volume element if it is associated with the Hausdorff measure of the subset with the standard metric of the given manifold. For a sequence $\{F_i\}_{i\ge1}$, we always write $F_i$ for short in the absence of ambiguity.

\section{Capacities defined by the relative volume of graphs}

In this section, we will study the basic properties of the capacity \eqref{captKOm}.
Let $\Si$ be an $n$-dimensional complete Riemannian manifold with Levi-Civita connection $D$. 
Let $\Om$ be an open set on $\Si$, and $K$ be a closed set in $\Si$ with $K\subset\Om$ and compact $\p K$. For any $t\ge0$, as before we let $\mathcal{L}_t(K,\Om)$ denote the set containing every locally Lipschitz function $\phi$ on $\Si$ with compact $\overline{\mathrm{spt}\phi\setminus K}$ in $\Om$
such that $0\le\phi\le t$ and $\phi\big|_{K}=t$.
From
$$\sqrt{1+|D\phi|^2}\le1+\f12|D\phi|^2,$$
we have
\begin{equation}\aligned\label{capt*KOm}
\mathrm{cap}_t(K,\Om)\le\f12\inf_{\mathcal{L}_t(K,\Om)}\int_{\Om}|D\phi|^2=\f{t^2}2\inf_{\mathcal{L}_1(K,\Om)}\int_{\Om}|D\phi|^2=\f{t^2}2\mathrm{cap}(K,\Om).
\endaligned
\end{equation}
For $0<t\le T$, we have
\begin{equation*}\aligned
\mathrm{cap}_T(K,\Om)=&\inf_{\mathcal{L}_t(K,\Om)}\int_{\Om}\left(\sqrt{1+\f{T^2}{t^2}|D\phi|^2}-1\right)=\inf_{\mathcal{L}_t(K,\Om)}\int_{\Om}\f{\f{T^2}{t^2}|D\phi|^2}{1+\sqrt{1+\f{T^2}{t^2}|D\phi|^2}}\\
\le&\f{T^2}{t^2}\inf_{\mathcal{L}_t(K,\Om)}\int_{\Om}\f{|D\phi|^2}{1+\sqrt{1+|D\phi|^2}}=\f{T^2}{t^2}\inf_{\mathcal{L}_t(K,\Om)}\int_{\Om}\left(\sqrt{1+|D\phi|^2}-1\right).
\endaligned\end{equation*}
Since the function $\sqrt{s^2+c^2}-s$ is monotonic non-increasing on $s>0$, it follows that
\begin{equation*}\aligned
\mathrm{cap}_t(K,\Om)=\inf_{\mathcal{L}_T(K,\Om)}\int_{\Om}\left(\sqrt{1+\f{t^2}{T^2}|D\phi|^2}-1\right)\le\f{t}{T}\inf_{\mathcal{L}_T(K,\Om)}\int_{\Om}\left(\sqrt{1+|D\phi|^2}-1\right).
\endaligned\end{equation*}
The above two inequalities give 
\begin{equation}\aligned\label{captTKOm}
\f{T}{t}\mathrm{cap}_t(K,\Om)\le\mathrm{cap}_T(K,\Om)\le\f{T^2}{t^2}\mathrm{cap}_t(K,\Om)\qquad\quad \mathrm{for\ all}\ 0<t\le T.
\endaligned\end{equation}
In particular, $\mathrm{cap}_t(K,\Om)$ is a continuous monotonic non-decreasing function on $t\in[0,\infty)$.

By the definition of $\mathrm{cap}_t(\cdot,\cdot)$, we have the following monotonicity:  
\begin{equation}\aligned\label{captmonot}
\mathrm{cap}_t(K,\Om)\le\mathrm{cap}_t\left(K',\Om'\right)
\endaligned\end{equation}
for any closed set $K'\supset K$ with compact $\p K'$ and any open $\Om'$ with $K'\subset\Om'\subset\Om$. 
For each fixed $t>0$, let us prove the semi-continuity of the capacity $\mathrm{cap}_t(\cdot,\cdot)$ in the Hausdorff topology as follows.
\begin{proposition}\label{semi-cont}
For any closed set $K\subset\Si$ with compact $\p K$, and any open set $\Om\supset K$ in $\Si$, let $K_i\subset\Si$ be a sequence of closed sets with $\p K_i\to\p K$ in the Hausdorff sense and $\Om_i\subset\Si$ be a sequence of open sets with $K_i\subset\Om_i\to \Om$ in the Hausdorff sense. Then
$\limsup_{i\to\infty}\mathrm{cap}_t\left(K_i,\Om_i\right)\le\mathrm{cap}_t(K,\Om)$.
\end{proposition}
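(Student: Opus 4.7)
\emph{Plan.} The strategy is to take a near-optimal $\phi\in\mathcal{L}_t(K,\Om)$ for $\mathrm{cap}_t(K,\Om)$ and paste in a small Lipschitz modification near $\p K$ that forces the boundary value $t$ on the perturbed set $K_i$ while keeping the support compact in $\Om_i$. Since $\mathrm{cap}_t$ is defined as an infimum, exhibiting competitors $\phi_{i,\de}\in\mathcal{L}_t(K_i,\Om_i)$ whose energies converge to that of $\phi$ is enough.

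\emph{Construction of the competitor.} Fix $\ep>0$ and choose $\phi\in\mathcal{L}_t(K,\Om)$ with $\int_\Om(\sqrt{1+|D\phi|^2}-1)\le\mathrm{cap}_t(K,\Om)+\ep$. Let $L$ be its Lipschitz constant, and note $A:=\overline{\mathrm{spt}\,\phi\setminus K}$ is a compact subset of $\Om$. For small $\de>0$ introduce the Lipschitz cutoff $\eta_{i,\de}(x):=\max\{0,\,1-d(x,K_i)/\de\}$, which equals $1$ on $K_i$, vanishes outside $B_\de(K_i)$, and satisfies $|D\eta_{i,\de}|\le1/\de$; then set
\begin{equation*}
\phi_{i,\de}:=\phi+\eta_{i,\de}(t-\phi).
\end{equation*}
Immediately $\phi_{i,\de}|_{K_i}=t$, $0\le\phi_{i,\de}\le t$, and $\mathrm{spt}\,\phi_{i,\de}\subset\mathrm{spt}\,\phi\cup\overline{B_\de(K_i)}$. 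Using the compactness of $A$ in $\Om$, the fact that the collar $\overline{B_\de(K_i)}\setminus K_i$ sits in a bounded neighborhood of $\p K$ for $i$ large (by $\p K_i\to\p K$ Hausdorff), and the Hausdorff convergence $\Om_i\to\Om$, one verifies that $\overline{\mathrm{spt}\,\phi_{i,\de}\setminus K_i}$ is compactly contained in $\Om_i$ once $i$ is sufficiently large. Hence $\phi_{i,\de}\in\mathcal{L}_t(K_i,\Om_i)$.

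\emph{Energy estimate and conclusion.} Off $\overline{B_\de(K_i)}$ one has $\phi_{i,\de}=\phi$, while on the collar $B_\de(K_i)\setminus K_i$
\begin{equation*}
|D\phi_{i,\de}|\le(1-\eta_{i,\de})|D\phi|+(t-\phi)|D\eta_{i,\de}|\le L+\f{L\cdot d(x,K)}{\de}\le C(L),
\end{equation*}
using the Lipschitz bound $t-\phi\le L\cdot d(\cdot,K)$ and $d(x,K)\le d(x,K_i)+d_H(\p K_i,\p K)\le C\de$ for $i$ large. Consequently
\begin{equation*}
\int_{\Om_i}\!\left(\sqrt{1+|D\phi_{i,\de}|^2}-1\right)\le\int_\Om\!\left(\sqrt{1+|D\phi|^2}-1\right)+\sqrt{1+C(L)^2}\cdot\mathcal{H}^n(B_\de(\p K_i)).
\end{equation*}
Compactness of $\p K$ together with the Hausdorff convergence yields $\mathcal{H}^n(B_\de(\p K_i))\to0$ as $\de\to0$, uniformly in $i$ for $i$ large. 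Taking $\limsup_i$ first, then $\de\to0$, then $\ep\to0$ gives the asserted semi-continuity.

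\emph{Main obstacle.} The energy bound is a routine Lipschitz computation; the delicate point is the admissibility condition $\phi_{i,\de}\in\mathcal{L}_t(K_i,\Om_i)$, which amounts to ensuring that $K_i$ and $K$ differ only in a compact neighborhood of $\p K$, so that the values $\phi|_{K\setminus K_i}=t$ do not introduce an unbounded component into the support of $\phi_{i,\de}$. This follows from the Hausdorff convergence of the \emph{boundaries}---not of $K_i$ themselves---combined with the compactness of $\p K$ and the matching Hausdorff convergence $\Om_i\to\Om$, but requires some careful bookkeeping.
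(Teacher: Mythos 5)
Your strategy is the same as the paper's: take a near-optimal competitor $\phi\in\mathcal{L}_t(K,\Om)$, graft on a Lipschitz cutoff near $\p K$ to obtain an admissible competitor for $\mathrm{cap}_t(K_i,\Om_i)$, and control the energy increment. The paper's competitor $\phi_\ep$ depends only on $\ep$ (pinned at $t$ on $B_\ep(K)$ and at $0$ on $B_\ep(\p\Om)$) and is then used for all large $i$; you instead construct an $i$-dependent competitor $\phi_{i,\de}=\phi+\eta_{i,\de}(t-\phi)$. Either device can be made to work, but there is a genuine gap in your quantitative error bound.

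You bound the extra energy by $\sqrt{1+C(L)^2}\cdot\mathcal{H}^n(B_\de(\p K_i))$ and assert that this tends to $0$ as $\de\to0$, uniformly in large $i$. That is false in general. For large $i$ one has $B_\de(\p K_i)\subset B_{2\de}(\p K)$, and $\mathcal{H}^n(B_{2\de}(\p K))\to\mathcal{H}^n(\p K)$ as $\de\to0$; but the proposition imposes no regularity on $\p K$, and the $\mathcal{H}^n$-measure of the boundary of a closed set can be strictly positive. As written, your estimate therefore only yields
\begin{equation*}
\limsup_{i\to\infty}\mathrm{cap}_t(K_i,\Om_i)\le\mathrm{cap}_t(K,\Om)+\bigl(\sqrt{1+C(L)^2}-1\bigr)\,\mathcal{H}^n(\p K),
\end{equation*}
which is strictly weaker than the claim whenever $\mathcal{H}^n(\p K)>0$ (and the prefactor $C(L)$ depends on $\phi$, hence on $\ep$, so the residual is not obviously small in the end).

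The paper avoids this by one extra observation that your exposition omits: the modification changes the gradient only \emph{off} $K$. Since $\phi\equiv t$ on $K$, one has $D\phi=0$ $\mathcal{H}^n$-a.e.\ on $K$ and $t-\phi\equiv 0$ there, so the modified gradient agrees with the old one a.e.\ on $K$. The paper therefore integrates the excess over $B_{2\ep}(K)\setminus K$ rather than over a tube around $\p K$, and $\mathcal{H}^n(B_{s}(K)\setminus K)\to 0$ as $s\to 0$ holds for any closed $K$ regardless of $\mathcal{H}^n(\p K)$ — this is exactly the point the paper makes explicit via \eqref{pOmKto0}. Your construction admits the same reduction: $D\phi_{i,\de}=(1-\eta_{i,\de})D\phi+(t-\phi)D\eta_{i,\de}$ vanishes a.e.\ on $K$, so the excess energy is supported in $(B_\de(K_i)\setminus K_i)\setminus K\subset B_{2\de}(K)\setminus K$ for $i$ large, whose measure does tend to $0$. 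Making this restriction closes the gap. Your admissibility verification ("one verifies that $\overline{\mathrm{spt}\,\phi_{i,\de}\setminus K_i}$ is compactly contained in $\Om_i$") is stated rather than argued, but the paper is similarly brief on that step, so I do not hold it against you as a comparative point; the measure-theoretic reduction above is the substantive omission.
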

\begin{proof}
For any $\de>0$, there is a locally Lipschitz function $\phi\in\mathcal{L}_t(K,\Om)$
such that 
\begin{equation}\aligned\label{OmKcaptde}
\int_{\Om\setminus K}\left(\sqrt{1+|D\phi|^2}-1\right)\le\mathrm{cap}_t(K,\Om)+\de.
\endaligned\end{equation}
Let $R>0$ be a constant so that $\overline{\mathrm{spt}\phi\setminus K}\subset B_R(p)$.
Hence, there is a constant $\La\ge2$ such that $|D\phi|\le\La$ on $B_1(K\cup\p\Om)$. 
Then we get
\begin{eqnarray}\label{phieppOmK}
    \left\{\begin{array}{cc}
           |\phi|\le2\La\ep   \quad\quad   \ \ \  \ \  \ \ \mathrm{on}&\ B_R(p)\cap B_{2\ep}(\p\Om) \\ [3mm]
           |t-\phi|\le2\La\ep      \quad\ \ \ \quad    \mathrm{on}&\ B_{2\ep}(K) 
     \end{array}\right.
\end{eqnarray}
for any $0<\ep\le\f12$ with $B_{2\ep}(K)\subset\Om$.

We define a Lipschitz function $\e_\ep$ on $B_{2\ep}(\Om)$ by letting $\e_\ep=1$ on $\Om\setminus B_{2\ep}(\p\Om\cup K)$, $\e_\ep=\f1\ep \r_{\p\Om\cup K}-1$ on $B_{2\ep}(\p\Om\cup K)\setminus B_{\ep}(\p\Om\cup K)$, and $\e_\ep=0$ on $B_{\ep}(\p\Om\cup K)$. Then with \eqref{phieppOmK} we have
\begin{equation}\aligned\label{pOmto0}
\int_{B_{2\ep}(\p\Om)}&\left(\sqrt{1+|D(\phi\e_\ep)|^2}-1\right)\le\int_{B_{2\ep}(\p\Om)}\f{|D(\phi\e_\ep)|^2}2\le\int_{B_{2\ep}(\p\Om)}\left(\e_\ep^2|D\phi|^2+\phi^2|D\e_\ep|^2\right)\\
\le&\int_{B_R(p)\cap B_{2\ep}(\p\Om)\cap\Om}\left(\La^2+4\La^2\right)\le 5\La^2\mathcal{H}^n\left(B_R(p)\cap B_{2\ep}(\p\Om)\cap\Om\right)
\endaligned\end{equation}
and
\begin{equation}\aligned\label{pKto0}
\int_{B_{2\ep}(K)}&\left(\sqrt{1+|D(t-(t-\phi)\e_\ep)|^2}-1\right)\le\int_{B_{2\ep}(K)}\left(\e_\ep^2|D\phi|^2+(t-\phi)^2|D\e_\ep|^2\right)\\
\le&\int_{B_{2\ep}(K)\setminus K}\left(\La^2+4\La^2\right)\le 5\La^2\mathcal{H}^n\left(B_{2\ep}(K)\setminus K\right).
\endaligned\end{equation}
Since the Hausdorff measure $\mathcal{H}^n$ is Borel-regular, it follows that
\begin{equation}\aligned
\lim_{s\to0}\mathcal{H}^n(B_s(F)\setminus F)=0\qquad \mathrm{for\ any\ compact\ set}\ F\subset\Si,
\endaligned\end{equation}
which implies 
\begin{equation}\aligned\label{pOmKto0}
\lim_{\ep\to0}\left(\mathcal{H}^n\left(B_R(p)\cap B_{2\ep}(\p\Om)\cap\Om\right)+\mathcal{H}^n\left(B_{2\ep}(K)\setminus K\right)\right)=0.
\endaligned\end{equation}
Let $\phi_\ep$ be a locally Lipschitz function on $\Si$ so that $\phi_\ep=\phi\e_\ep$ on $B_{2\ep}(\p\Om)$, $\phi_\ep=t-(t-\phi)\e_\ep$ on $B_{2\ep}(K)$, and $\phi_\ep=\phi$ on others. Then $\phi\in\mathcal{L}_t(K,\Om)$ implies $\phi_\ep\in\mathcal{L}_t(K,\Om)$. 
With \eqref{OmKcaptde}\eqref{pOmto0}\eqref{pKto0}\eqref{pOmKto0}, we get
\begin{equation}\aligned
\lim_{\ep\to0}\int_{\Om\setminus K}\left(\sqrt{1+|D\phi_\ep|^2}-1\right)\le\int_{\Om\setminus K}\left(\sqrt{1+|D\phi|^2}-1\right)\le\mathrm{cap}_t(K,\Om)+\de.
\endaligned\end{equation}
If $K_i\subset\Si$ is a sequence of closed sets with $\p K_i\to\p K$ in the Hausdorff sense and $\Om_i\subset\Si$ is a sequence of open sets with $K_i\subset\Om_i\to \Om$ in the Hausdorff sense, then
\begin{equation}\aligned
\limsup_{i\to\infty}\mathrm{cap}_t(K_i,\Om_i)\le\mathrm{cap}_t(K,\Om).
\endaligned\end{equation}
This completes the proof.
\end{proof}
As a corollary, we immediately have the following continuity in the sense of $\ep$-tubular neighborhood as $\ep\to0$.
\begin{corollary}\label{captKKiOmOmi***}
Let $K,K_i,\Om_i,\Om$ are the subsets of $\Si$ defined in Proposition \ref{semi-cont}. If $\Om_i\subset\Om$ and $K\subset K_i$ for each integer $i$, then
$\lim_{i\to\infty}\mathrm{cap}_t\left(K_i,\Om_i\right)=\mathrm{cap}_t(K,\Om)$.
In particular,
$\lim_{\ep\to0}\mathrm{cap}_t\left(\overline{B}_\ep(K),\Om\setminus \overline{B}_\ep(\p\Om)\right)=\mathrm{cap}_t(K,\Om)$.
\end{corollary}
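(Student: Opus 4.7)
The plan is to sandwich $\mathrm{cap}_t(K_i,\Om_i)$ between two bounds already available in the paper: the monotonicity \eqref{captmonot} and the semi-continuity furnished by Proposition \ref{semi-cont}. The added hypotheses $K\subset K_i$ and $\Om_i\subset\Om$ are precisely what converts the one-sided inequality of Proposition \ref{semi-cont} into an equality in the limit.

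Concretely, I would first apply \eqref{captmonot} with $(K',\Om')=(K_i,\Om_i)$: since $K\subset K_i$, $\p K_i$ is compact, and $K_i\subset\Om_i\subset\Om$, the monotonicity yields $\mathrm{cap}_t(K,\Om)\le\mathrm{cap}_t(K_i,\Om_i)$ for every $i$, hence $\mathrm{cap}_t(K,\Om)\le\liminf_{i\to\infty}\mathrm{cap}_t(K_i,\Om_i)$. Proposition \ref{semi-cont}, whose Hausdorff-convergence hypotheses are built into the statement of the corollary, provides the matching bound $\limsup_{i\to\infty}\mathrm{cap}_t(K_i,\Om_i)\le\mathrm{cap}_t(K,\Om)$. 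Squeezing then gives $\lim_{i\to\infty}\mathrm{cap}_t(K_i,\Om_i)=\mathrm{cap}_t(K,\Om)$.

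For the particular $\ep$-tubular assertion, I would set $K_\ep=\overline{B}_\ep(K)$ and $\Om_\ep=\Om\setminus\overline{B}_\ep(\p\Om)$ and verify the hypotheses of the first part along any sequence $\ep_i\downarrow 0$. The containments $K\subset K_\ep$ and $\Om_\ep\subset\Om$ are immediate, and $K_\ep\subset\Om_\ep$ holds for all sufficiently small $\ep$ since $\p K$ is compact and disjoint from $\p\Om$. The inclusion $B_\ep(K)\setminus K\subset B_\ep(\p K)$ (any point outside $K$ within distance $\ep$ of $K$ must lie within $\ep$ of $\p K$) forces $\p K_\ep\subset\overline{B}_\ep(\p K)$, so $\p K_\ep$ is compact and $\p K_\ep\to\p K$ in the Hausdorff sense as $\ep\to 0$.

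The only mildly subtle point is the Hausdorff convergence $\Om_\ep\to\Om$ when $\p\Om$ is noncompact; here one must produce, for each $x\in\Om\cap B_\ep(\p\Om)$, a point of $\Om_\ep$ within distance $O(\ep)$, which is achieved by pushing $x$ slightly along the direction of increasing distance to $\p\Om$. Modulo this routine verification, the corollary is essentially formal: it records the fact that the inequality in Proposition \ref{semi-cont} is forced to be sharp once monotonicity is available in the opposite direction.
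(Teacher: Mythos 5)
Your proposal is correct and is precisely the ``immediate'' argument the paper intends: the added hypotheses $K\subset K_i$ and $\Om_i\subset\Om$ let the monotonicity \eqref{captmonot} furnish the lower bound $\mathrm{cap}_t(K,\Om)\le\liminf_i\mathrm{cap}_t(K_i,\Om_i)$, Proposition \ref{semi-cont} supplies the matching $\limsup$ bound, and the squeeze gives the limit. The verification of the Hausdorff hypotheses for the tubular case $K_\ep=\overline{B}_\ep(K)$, $\Om_\ep=\Om\setminus\overline{B}_\ep(\p\Om)$ is also along the lines the paper treats as routine; one small point worth noting is that your inclusion $\p K_\ep\subset\overline{B}_\ep(\p K)$ establishes only one of the two inclusions in the definition of Hausdorff distance, and the reverse containment $\p K\subset\overline{B}_{\ep}(\p K_\ep)$ (which holds for $\ep$ small by a connectedness/IVT argument along short geodesics exiting $K$) should be observed as well for the application of Proposition \ref{semi-cont} to be fully justified.
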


\begin{remark}\label{capt0Ki}
In Proposition \ref{semi-cont}, even if the limit $\lim_{i\to\infty}\mathrm{cap}_t(K_i,\Om_i)$ exists, it still may be not equal to $\mathrm{cap}_t(K,\Om)$. 
For example, let  
$I_0=\{(s,0)\in\R^2|\ |s|\le1/2\}$ and $B$ be the open disk (with radius 1) in $\R^2$ centered at the origin. 
For each integer $i\ge1$, let $I_i=\{(\f{j}{2i},0)\in\R^2|\, |j|\le i,\, j\in\Z\}$, then $I_i\to I_0$ in the Hausdorff sense.  Given $t>0$, let $\phi_{i}$ be a Lipschitz function defined by $\phi_{i}(x)=t-i^{2}t\inf_{y\in I_i}|x-y|$ on $B_{i^{-2}}(I_i)$ and $\phi_{i}=0$ on $B\setminus B_{i^{-2}}(I_i)$. Then
\begin{equation*}\aligned
\int_{B\setminus I_i}\left(\sqrt{1+|D\phi_{i}|^2}-1\right)=\int_{B_{i^{-2}}(I_i)}\left(\sqrt{1+i^4t^2}-1\right)\le(2i+1)\pi i^{-4}i^2t,
\endaligned\end{equation*}
which implies $\lim_{i\to\infty}\mathrm{cap}_t(I_i,B)=0<\mathrm{cap}_t(I_0,B)$.
\end{remark}

Let $\{\Om_i\}_{i\ge1}$ be a sequence of open sets  in $\Si$ satisfying $\Om_i\subset\Om_{i+1}$ and $d(p,\p\Om_i)\rightarrow\infty$ for some fixed point $p\in\Si$, then from \eqref{captmonot} we have
$$\mathrm{cap}_t(K,\Si)=\lim_{i\rightarrow\infty}\mathrm{cap}_t(K,\Om_i).$$
For simplicity, we denote
$\mathrm{cap}_t(K)=\mathrm{cap}_t(K,\Si)$ as before.

From Proposition \ref{semi-cont}, we immediately have the following conclusion.
\begin{corollary}\label{semi-contSi}
For any compact set $K\subset\Si$,  let $K_i$ be a sequence of compact sets in $\Si$ with $K_i\to K$ in the Hausdorff sense. Then
$\limsup_{i\to\infty}\mathrm{cap}_t(K_i)\le\mathrm{cap}_t(K)$.
\end{corollary}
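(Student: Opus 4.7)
The plan is to bypass the sharper hypothesis of Proposition \ref{semi-cont} (which requires $\partial K_i\to\partial K$ in the Hausdorff sense) by sandwiching the $K_i$ inside $\varepsilon$-tubular neighborhoods of $K$ and invoking the monotonicity \eqref{captmonot} together with Corollary \ref{captKKiOmOmi***}. The virtue of this detour is that for the tubular neighborhoods we \emph{do} have the convergence statement we need, namely $\mathrm{cap}_t(\overline{B}_\varepsilon(K))\to\mathrm{cap}_t(K)$ as $\varepsilon\to 0$, which follows from Corollary \ref{captKKiOmOmi***} by taking $\Omega=\Sigma$ (so that the $\overline{B}_\varepsilon(\partial\Omega)$-part disappears).

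More precisely, I would fix $\varepsilon>0$. Since $K_i\to K$ in the Hausdorff sense, there exists $i_0=i_0(\varepsilon)$ such that $K_i\subset\overline{B}_\varepsilon(K)$ for every $i\ge i_0$. Since $K$ is compact and $\overline{B}_\varepsilon(K)$ is a closed set containing $K_i$ with compact boundary, the monotonicity \eqref{captmonot} (applied with $\Omega=\Omega'=\Sigma$) yields
\begin{equation*}
\mathrm{cap}_t(K_i)\le\mathrm{cap}_t\!\left(\overline{B}_\varepsilon(K)\right)\qquad\text{for all }i\ge i_0.
\end{equation*}
Taking $\limsup_{i\to\infty}$ of the left-hand side, we obtain $\limsup_{i\to\infty}\mathrm{cap}_t(K_i)\le\mathrm{cap}_t(\overline{B}_\varepsilon(K))$ for every $\varepsilon>0$.

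Finally I would let $\varepsilon\to 0$ and apply Corollary \ref{captKKiOmOmi***} with $\Omega=\Sigma$, which gives $\lim_{\varepsilon\to 0}\mathrm{cap}_t(\overline{B}_\varepsilon(K))=\mathrm{cap}_t(K)$. This yields the desired inequality $\limsup_{i\to\infty}\mathrm{cap}_t(K_i)\le\mathrm{cap}_t(K)$. There is no real obstacle: the only subtlety to watch is that Proposition \ref{semi-cont} does not apply directly because its hypothesis on the convergence $\partial K_i\to\partial K$ is strictly stronger than Hausdorff convergence of the sets $K_i\to K$ themselves, and this is precisely the reason the tubular-neighborhood bypass (via Corollary \ref{captKKiOmOmi***}) is needed rather than a one-line invocation of the proposition.
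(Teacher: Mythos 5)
Your proof is correct, and it is a genuinely different route from the one the paper intends. The paper asserts that Corollary \ref{semi-contSi} follows ``immediately'' from Proposition \ref{semi-cont} (taking $\Om_i=\Om=\Si$), whereas you sandwich $K_i$ inside $\overline{B}_\varepsilon(K)$, invoke the monotonicity \eqref{captmonot}, and finish with the tubular-neighborhood limit from Corollary \ref{captKKiOmOmi***}. Both arguments use the same underlying machinery --- Corollary \ref{captKKiOmOmi***} is itself a consequence of Proposition \ref{semi-cont} --- but the crucial difference is \emph{which} sequence Proposition \ref{semi-cont} is applied to: the paper applies it directly to $K_i$, while you apply it only to $\overline{B}_\varepsilon(K)$, for which $\p\overline{B}_\varepsilon(K)\to\p K$ is actually verifiable (any $x\in\p\overline{B}_\varepsilon(K)$ lies within $\varepsilon$ of $\p K$ along a minimizing geodesic to $K$, and conversely any $y\in\p K$ admits a nearby point $z\notin\overline{B}_\varepsilon(K)$, so the segment from $y$ to $z$ crosses $\p\overline{B}_\varepsilon(K)$ within distance $d(y,z)$). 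This detour is worth making: your observation that Proposition \ref{semi-cont} does not literally apply is accurate. An annulus provides a concrete instance: for $K=\overline{B}_1(0)\subset\R^2$ and $K_i=\overline{B}_1(0)\setminus B_{1/i}(0)$ one has $K_i\to K$ in the Hausdorff sense, yet $\p K_i=\p B_1(0)\cup\p B_{1/i}(0)$ accumulates at the origin and therefore does \emph{not} converge in the Hausdorff sense to $\p K=\p B_1(0)$. One small imprecision: you describe the hypothesis $\p K_i\to\p K$ as ``strictly stronger'' than $K_i\to K$. In fact the two conditions are incomparable --- for instance $K=\overline{B}_1(0)$ and $K_i=\p B_1(0)$ (for all $i$) satisfy $\p K_i\to\p K$ trivially but $K_i\not\to K$, since $d_H(K_i,K)=1$. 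The correct statement is simply that neither implies the other, so a one-line invocation of Proposition \ref{semi-cont} is indeed not available, and the monotonicity bypass is the clean fix.
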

Now let us prove by contradiction that (1) implies (2) in Theorem \ref{main}.
\begin{proposition}\label{ParaKE}
If there are a constant $t>0$ and a compact set $K$ in $\Si$ with $\mathcal{H}^{n}(K)>0$ so that $\mathrm{cap}_t(K)=0$, then $\mathrm{cap}_T(F)=0$ for any compact set $F\subset\Si$ and $T>0$.
\end{proposition}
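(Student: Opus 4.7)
The plan divides into two stages.

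\emph{Stage 1 (removing the height $t$).} Using the scaling inequality \eqref{captTKOm} together with the monotonicity of $t\mapsto\mathrm{cap}_t(K)$ (which also follows from \eqref{captTKOm}), the hypothesis $\mathrm{cap}_t(K)=0$ immediately propagates to $\mathrm{cap}_{T'}(K)=0$ for every $T'>0$: if $T'\ge t$ then $\mathrm{cap}_{T'}(K)\le(T'/t)^2\mathrm{cap}_t(K)=0$, and if $T'<t$ then $\mathrm{cap}_{T'}(K)\le\mathrm{cap}_t(K)=0$.

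\emph{Stage 2 (from $K$ to an arbitrary compact $F$).} I would argue by contradiction, as suggested by the excerpt. Suppose there exist $T>0$ and a compact $F\subset\Si$ with $\mathrm{cap}_T(F)>0$. Fix $T'>T$ and, on a smooth bounded exhaustion $\Om_j\uparrow\Si$, take the BV minimizers $u_j\in\mathcal{L}_{T'}(K,\Om_j)$ guaranteed by $\S14$ of \cite{Gi}, so that $\int_{\Om_j}(\sqrt{1+|Du_j|^2}-1)=\mathrm{cap}_{T'}(K,\Om_j)\to 0$. Since $\|u_j\|_\infty\le T'$ and $\int_V|Du_j|\le\mathcal{H}^n(V)+\int(\sqrt{1+|Du_j|^2}-1)$ is locally bounded, standard BV compactness yields an $L^1_{\mathrm{loc}}$ limit $u_\infty$, and lower semi-continuity of the area functional forces $|Du_\infty|\equiv 0$. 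Because $u_j\equiv T'$ on $K$ and $\mathcal{H}^n(K)>0$ — this is where the hypothesis on $K$ enters essentially — the locally constant function $u_\infty$ must equal $T'$ on the connected component of $\Si$ containing $K$.

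Consequently $u_j\to T'$ in $L^1(V)$ for every compact $V\supset F$. By Egorov's theorem, for each $\delta>0$ there is an open set $E_\delta\subset V$ with $\mathcal{H}^n(E_\delta)<\delta$ on which $u_j\to T'$ uniformly on $V\setminus E_\delta$; in particular $u_j\ge T$ on $V\setminus E_\delta$ for all sufficiently large $j$. The truncation $\min(u_j,T)$ then lies in $\mathcal{L}_T(\{u_j\ge T\},\Si)$ with energy $\le\int(\sqrt{1+|Du_j|^2}-1)\to 0$, so by the monotonicity \eqref{captmonot}, $\mathrm{cap}_T(F\setminus E_\delta)=0$ for every $\delta>0$.

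The principal obstacle is then upgrading $\mathrm{cap}_T(F\setminus E_\delta)=0$ to $\mathrm{cap}_T(F)=0$ as $\delta\to 0$: Remark \ref{capt0Ki} warns that mere Hausdorff approximation can fail this passage. My workaround is to exploit the openness and $\mathcal{H}^n$-smallness of $E_\delta$: given an admissible function of small energy for $\mathrm{cap}_T(F\setminus E_\delta,\Si)$, one adds a Lipschitz bump supported in a thin neighborhood of $E_\delta$, whose additional energy cost is controlled, by the same cutoff estimate as in the proof of Proposition \ref{semi-cont}, by a constant times $\mathcal{H}^n(E_\delta)<\delta$. Letting $\delta\to 0$ then produces admissible competitors for $\mathrm{cap}_T(F,\Si)$ of vanishing energy, giving $\mathrm{cap}_T(F)=0$ and the desired contradiction.
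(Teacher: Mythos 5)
Stage 1 is fine, and the first two-thirds of Stage 2 (BV compactness, lower semicontinuity forcing $|Du_\infty|\equiv 0$, $u_\infty\equiv T'$ on the component of $K$, Egorov, truncation) are sound modulo the mild imprecision that the BV minimizer of \cite{Gi} is not itself in $\mathcal{L}_{T'}(K,\Om_j)$ (it is only $BV$; one should work with a minimizing sequence of locally Lipschitz competitors or mollify, as in the paper's \eqref{capttru}). The problem is the last step.

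Your passage from $\mathrm{cap}_T(F\setminus E_\delta)=0$ to $\mathrm{cap}_T(F)=0$ via a "Lipschitz bump supported in a thin neighborhood of $E_\delta$" does not have the energy bound you claim. The cutoff estimate in the proof of Proposition \ref{semi-cont} produces the factor $\mathcal{H}^n(\cdot)$ precisely because there the admissible function $\phi$ is already small of order $\Lambda\epsilon$ near the set being cut, so the cross term $\phi^2|D\eta_\epsilon|^2\le(2\Lambda\epsilon)^2\epsilon^{-2}$ stays \emph{bounded} and is then integrated over a set of small volume. In your setting the bump must \emph{rise to the full height} $T$ on $F\cap E_\delta$, so if it decays to $0$ over an outer shell of thickness $\epsilon$ the gradient there is of order $T/\epsilon$ and the relevant energy density $\sqrt{1+|D\beta|^2}-1$ is of order $T/\epsilon$, not bounded. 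The resulting cost is of order $T\cdot\mathcal{H}^{n-1}$ of the shell's lateral boundary, independent of $\epsilon$, and small volume $\mathcal{H}^n(E_\delta)<\delta$ gives no control whatsoever on this perimeter-type quantity — small measure sets can have arbitrarily large boundary measure. So the claimed bound "a constant times $\mathcal{H}^n(E_\delta)$" is simply false, and the contradiction does not close. This is essentially the same phenomenon that Remark \ref{capt0Ki} warns about.

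The paper avoids this trap by cutting on a \emph{fixed} annulus $B_j(p)\setminus B_{j-1}(p)$ (with a fixed Lipschitz cutoff $\eta_i$ of Lipschitz constant $O(1)$), after first upgrading the vanishing of the energy to $\int_{B_{2j}(p)}|D\phi_i|\to 0$ via the elementary split $\{|D\phi_i|\le 1\}\cup\{|D\phi_i|\ge 1\}$ and Cauchy–Schwarz, and then using Neumann–Poincar\'e to get $\phi_i\to t$ in $L^1(B_j(p))$. With those two pieces of information the cost of the annulus cutoff is $\int_{B_j\setminus B_{j-1}}\bigl((t-\phi_i)|D\eta_i|+\eta_i|D\phi_i|\bigr)$, which tends to $0$. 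If you want to rescue your version, replace Egorov by the observation that the $u_j$ solve \eqref{u} on $\Om_j\setminus K$ and hence converge to the constant $T'$ \emph{locally in $C^0$} on compacts of $\Si\setminus K$ (interior regularity, or Lemma \ref{ConMR1}); combined with $u_j\equiv T'$ on $K$ and a fixed annular cutoff, this closes the argument without any exceptional set.
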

\begin{proof}
Suppose $\mathrm{cap}_t(K)=0$ for a constant $t>0$ and a compact set $K$ in $\Si$ with $\mathcal{H}^{n}(K)>0$.
Then there is a sequence of functions $\phi_i\in\mathcal{L}_t(K,B_i(p))$ such that
\begin{equation}\aligned\label{Bipphii-1}
\lim_{i\rightarrow\infty}\int_{B_i(p)}\left(\sqrt{1+|D\phi_i|^2}-1\right)=0.
\endaligned\end{equation}
For a compact set $F$ with $K\subset F\subset\Si$, we fix the integer $j$ so that $F\subset B_{j/2}(p)$. For each integer $i\ge j$, with Cauchy inequality we have
\begin{equation}\aligned\label{Bipphii-1*}
\int_{B_i(p)}\left(\sqrt{1+|D\phi_i|^2}-1\right)&\ge\int_{\{|D\phi_i|\le1\}}\left(\sqrt{1+2(\sqrt{2}-1)|D\phi_i|^2+(\sqrt{2}-1)^2|D\phi_i|^4}-1\right)\\
&+\int_{\{|D\phi_i|\ge1\}}\left(\sqrt{1+2(\sqrt{2}-1)|D\phi_i|+(\sqrt{2}-1)^2|D\phi_i|^2}-1\right)\\
=(\sqrt{2}&-1)\int_{\{|D\phi_i|\le1\}}|D\phi_i|^2+(\sqrt{2}-1)\int_{\{|D\phi_i|\ge1\}}|D\phi_i|\\
\ge\f{\sqrt{2}-1}{\mathcal{H}^n(B_{2j}(p))}&\left(\int_{\{|D\phi_i|\le1\}\cap B_{2j}(p)}|D\phi_i|\right)^2+(\sqrt{2}-1)\int_{\{|D\phi_i|\ge1\}}|D\phi_i|.
\endaligned\end{equation}
Combining \eqref{Bipphii-1}\eqref{Bipphii-1*}, we get
\begin{equation}\aligned\label{BjphiDphiij000}
\lim_{i\rightarrow\infty}\int_{B_{2j}(p)}|D\phi_i|=0.
\endaligned\end{equation}
With Neumann-Poincar$\mathrm{\acute{e}}$ inequality on $B_{j}(p)$, we have
\begin{equation}\aligned
\lim_{i\rightarrow\infty}\fint_{B_{j}(p)}\left|\phi_i-\fint_{B_{j}(p)}\phi_i\right|=0.
\endaligned\end{equation}
From $\phi_i\equiv t$ on $K$, we conclude that $\phi_i\rightarrow t$ a.e. on $B_{j}(p)$. 
Let $\e_{i}$ be a Lipschitz function defined on $\Si$ so that $\e_{i}\equiv1$ on $\Si\setminus B_{j}(p)$, $\e_{i}\equiv0$ on $B_{j-1}(p)$, $\e_{i}=t+1-j$ on $\p B_s(p)$ for each $s\in[j-1,j)$. From \eqref{BjphiDphiij000} and $\phi_i\rightarrow t$ a.e. on $B_{j}(p)$, we have
\begin{equation}\aligned\label{BsiphiDphiij000}
\int_{B_{j}(p)\setminus B_{j-1}(p)}&\left(\sqrt{1+|D((t-\phi_i)\e_{i})|^2}-1\right)\le\int_{B_{j}(p)\setminus B_{j-1}(p)}|D((t-\phi_i)\e_{i})|\\
\le&\int_{B_{j}(p)\setminus B_{j-1}(p)}(t-\phi_i)+\int_{B_{j}(p)\setminus B_{j-1}(p)}|D\phi_i|\to0
\endaligned\end{equation}
as $i\to\infty$.
Let $\phi^*_i=t-(t-\phi_i)\e_{i}$, then $\phi^*_i\in\mathcal{L}_t(F,B_i(p))$.
Combining \eqref{Bipphii-1}\eqref{BsiphiDphiij000}, it follows that
\begin{equation}\aligned
\lim_{i\rightarrow\infty}\int_{B_i(p)}&\left(\sqrt{1+|D\phi^*_i|^2}-1\right)\\
=\lim_{i\rightarrow\infty}\int_{B_i(p)\setminus B_{j}(p)}&\left(\sqrt{1+|D\phi_i|^2}-1\right)+\lim_{i\rightarrow\infty}\int_{B_{j}(p)\setminus B_{j-1}(p)}\left(\sqrt{1+|D\phi^*_i|^2}-1\right)=0,
\endaligned
\end{equation}
which implies $\mathrm{cap}_t(F)=0$. Combining \eqref{captTKOm}, we complete the proof.
\end{proof}

From \eqref{capt*KOm}, a complete parabolic manifold $\Si$ is $M$-parabolic automatically. Now we provide an example, which is $M$-parabolic, but not parabolic.
\begin{proposition}\label{M-par-not-par}
For each integer $n\ge2$, there is an $n$-dimensional complete noncompact rotationally symmetric manifold $S$ such that $\mathrm{cap}(F)>0$ and $\mathrm{cap}_t(F)=0$ for each $t>0$ and every compact set $F\subset\Si$.
\end{proposition}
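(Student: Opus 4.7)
I realise $S$ as a smooth complete rotationally symmetric manifold of the form $[0,\infty)\times S^{n-1}$ with warped metric $dr^2+\psi(r)^2g_{S^{n-1}}$, where $\psi$ is thick enough at infinity to give classical nonparabolicity but exhibits a sparse sequence of arbitrarily narrow ``necks'' at radii $r_k\to\infty$ where $\psi(r)$ dips close to $0$. The decoupling of the two capacities exploits the asymmetry that $\sqrt{1+|D\phi|^2}-1\le|D\phi|$ grows only linearly in the slope while $|D\phi|^2$ grows quadratically; thus a radial competitor that drops linearly from $t$ to $0$ across a single neck of half-width $\delta_k$ and cross-section bounded by $\eta_k$ pays $M$-capacity of order $t\eta_k^{n-1}$, which we can send to $0$, whereas the classical capacity, governed by $\int\psi^{-(n-1)}dr$, is unaffected provided the necks contribute only a summable tail to that integral.

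Concretely I take sequences $r_k=2^k$, $\eta_k=2^{-k}$, $\delta_k=2^{-kn}$ and a fixed exponent $\beta>1/(n-1)$, and build via standard bump-function techniques a smooth $\psi\colon[0,\infty)\to[0,\infty)$ which is odd near $0$ with $\psi'(0)=1$ (so that $S$ is smooth at the pole $o$), equals $r^\beta$ on $[1,\infty)\setminus\bigcup_k[r_k-2\delta_k,r_k+2\delta_k]$, and satisfies $\psi\le\eta_k$ on $[r_k-\delta_k,r_k+\delta_k]$. Since a rotationally symmetric manifold is nonparabolic if and only if $\int_1^\infty\psi^{-(n-1)}dr<\infty$, the estimate
\begin{equation*}
\int_1^\infty\psi^{-(n-1)}dr\le\int_1^\infty r^{-\beta(n-1)}dr+4\sum_{k\ge1}\delta_k\eta_k^{-(n-1)}=\int_1^\infty r^{-\beta(n-1)}dr+4\sum_{k\ge1}2^{-k}<\infty
\end{equation*}
forces $\mathrm{cap}(\overline{B}_R(o))>0$ for every $R\ge1$, and monotonicity upgrades this to $\mathrm{cap}(F)>0$ for every compact $F\subset S$ with $\mathcal{H}^n(F)>0$.

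For $M$-parabolicity, given any compact $F\subset S$ and any $t>0$, I pick $k$ large enough that $F\subset B_{r_k-\delta_k}(o)$ and take the piecewise-linear radial competitor $\phi_k$ equal to $t$ on $\{r\le r_k-\delta_k\}$, dropping linearly to $0$ across the neck $[r_k-\delta_k,r_k+\delta_k]$, and vanishing outside. This lies in $\mathcal{L}_t(F,S)$ and has $|D\phi_k|=t/(2\delta_k)$ supported on the neck, so using $\sqrt{1+x^2}-1\le x$ together with $\psi\le\eta_k$ on the neck,
\begin{equation*}
\mathrm{cap}_t(F)\le\int_S|D\phi_k|\le n\omega_n\cdot\tfrac{t}{2\delta_k}\cdot 2\delta_k\cdot\eta_k^{n-1}=n\omega_n\,t\,\eta_k^{n-1}\longrightarrow 0,
\end{equation*}
whence $\mathrm{cap}_t(F)=0$. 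The only real technical issue is producing $\psi$ smoothly at the pole and across each dip, both of which are routine via mollification of a piecewise-smooth profile without disturbing the above estimates; the substance of the argument is exactly the linear-versus-quadratic growth of the two capacity integrands in $|D\phi|$.
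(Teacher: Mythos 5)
Your approach is essentially the paper's: both build a rotationally symmetric profile that dips arbitrarily close to zero along a sparse sequence of radii, and both exploit the linear-versus-quadratic asymmetry of the two integrands by pushing a radial competitor linearly to zero across a narrow pinch, giving $M$-capacity of order $t\cdot(\text{neck cross-section})$ while leaving $\int^\infty\psi^{1-n}$ finite. (The paper realizes the dips with the closed formula $f(r)=r(1+r)\bigl(\sin^2 r+(1+r^2)^{-4(n+1)}\bigr)^{1/(4n)}$ rather than discrete necks, but the content is identical.) Two small corrections are warranted. First, the monotonicity you invoke runs the wrong way: $F\subset\overline{B}_R(o)$ gives $\mathrm{cap}(F)\le\mathrm{cap}(\overline{B}_R(o))$, so positivity of the latter does not by itself yield positivity of the former; what you actually need is the standard fact, which the paper also relies on after exhibiting one compact set of positive capacity, that nonparabolicity (here equivalent to $\int_1^\infty\psi^{1-n}\,dr<\infty$) already implies $\mathrm{cap}(F)>0$ for every compact $F$ with $\mathcal{H}^n(F)>0$. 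Second, for the bound $\int_{r_k-2\delta_k}^{r_k+2\delta_k}\psi^{1-n}\,dr\le 4\delta_k\eta_k^{1-n}$ you need $\psi\ge\eta_k$ on the entire neck, not merely $\psi\le\eta_k$ on the inner half; you should therefore make explicit that the profile satisfies $\psi\equiv\eta_k$ on $[r_k-\delta_k,r_k+\delta_k]$ and $\eta_k\le\psi\le r^\beta$ on the transition zones, so the two inequalities you use are simultaneously available.
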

\begin{proof}
Let $n\ge2$, and $f$ be a smooth function on $[0,\infty)$ satisfying $f(0)=0$, $f'(0)=1$, $f>0$ on $(0,\infty)$, $\liminf_{r\rightarrow\infty}f(r)=0$, $\int_1^\infty f(r)dr=\infty$, $\int_1^\infty f^{-n}(r)dr<\infty$ and $\int_1^\infty f^{-n-1}(r)dr<\infty$.
For example, we may choose
$$f(r)=r(1+r)\left(\sin^2r+(1+r^2)^{-4(n+1)}\right)^{\f1{4n}}\qquad \mathrm{for\ all}\ r\ge0.$$
Then clearly $\lim_{k\rightarrow\infty}f(k\pi)=k\pi(1+k\pi)(1+k^2\pi^2)^{-\f{n+1}{n}}=0$, 
\begin{equation}\aligned
\int_\pi^\infty f^{-n}(r)dr<\int_\pi^\infty r^{-2n}|\sin r|^{-\f12}dr\le\sum_{k\ge1}(k\pi)^{-2n}\int_{k\pi}^{(k+1)\pi}|\sin r|^{-\f12}dr<\infty
\endaligned
\end{equation}
and similarly
\begin{equation}\aligned
\int_\pi^\infty f^{-n-1}(r)dr<\int_\pi^\infty r^{-2(n+1)}|\sin r|^{-\f{n+1}{2n}}dr<\int_\pi^\infty r^{-2(n+1)}|\sin r|^{-\f{3}{4}}dr<\infty.
\endaligned
\end{equation}

Let $S=(\R^n,\si_S)$ be an $n$-dimensional complete noncompact manifold with the metric
$$\si_S=dr^2+f^2(r)d\th^2$$
in the polar coordinate, where $d\th^2$ is a standard metric on the $(n-1)$-dimensional unit sphere $\mathbb{S}^{n-1}$.
Let $\De_S$ be the Laplacian on $S$, then
$$\De_S=\f{\p^2}{\p r^2}+\f{nf'}{f}\f{\p}{\p r}+\f1{f^2}\De_{\mathbb{S}^{n-1}}$$
in the polar coordinate, where $\De_{\mathbb{S}^{n-1}}$ is the Laplacian of $\mathbb{S}^{n-1}$.
For any $r>0$, let $B_{r}$ denote the geodesic ball in $\Si$ with radius $r$ and centered at the vertex (w.r.t. the polar coordinate).
Given a constant $r_0\ge1$, let $\phi(r,\th)=\phi(r)$ be a nonnegative harmonic function on $S\setminus B_{r_0}$ with $\phi=1$ on $B_{r_0}$ and $\lim_{r\to\infty}\phi(r)=0$. From
\begin{equation}\aligned
0=\De_S\phi=\f{\p^2\phi}{\p r^2}+\f{nf'}{f}\f{\p\phi}{\p r}\qquad\qquad \mathrm{on}\ (r_0,\infty),
\endaligned
\end{equation}
we get 
\begin{equation*}\aligned
\f{\p\phi}{\p r}=-cf^{-n},\qquad \phi(r)=1-c\int_{r_0}^rf^{-n}(s)ds
\endaligned
\end{equation*}
for the constant $c=\left(\int_{r_0}^\infty f^{-n}(r)dr\right)^{-1}>0$.
Moreover,
\begin{equation*}\aligned
\mathrm{cap}(\overline{B}_{r_0})=\int_{S\setminus B_{r_0}}|D\phi|^2=n\omega_n\int_{r_0}^\infty\left(\f{\p\phi}{\p r}\right)^2f^{n-1}(r)dr=c^2n\omega_n\int_{r_0}^\infty f^{-n-1}(r)dr<\infty.
\endaligned
\end{equation*}
Combining \eqref{captTKOm}, it remains to prove $\mathrm{cap}_1(K)=0$ for any compact set $K\subset\Si$. 

By the definition of $f$, there is a sequence $r_i\rightarrow\infty$ with $f(r_i)\rightarrow0$ as $i\rightarrow\infty$. Then there is a sequence $\de_i\rightarrow0$ with $0<\de_i<1$ such that $f(r)\le 2f(r_i)$ for all $r\in(r_i,r_i+\de_i]$.
Given a compact $F$ in $S$,
for each integer $i$ with $F\subset B_{r_i}$, we define a function $u_i$ on $B_{r_i+\de_i}$ by letting $u_i=1$ on $B_{r_i}$ and $u_i(r,\th)=1-\f{r-r_i}{\de_i}$ on $B_{r_i+\de_i}\setminus B_{r_i}$. Then we have
\begin{equation}\aligned
\mathrm{cap}_1(F,B_{r_i+\de_i})\le\int_{B_{r_i+\de_i}\setminus F}&\left(\sqrt{1+|D u_i|^2}-1\right)\\
=n\omega_n\int_{r_i}^{r_i+\de_i}(1+\de_i^{-2})^{1/2}f^{n-1}(r)d&r\le n\omega_n(1+\de_i^{-2})^{1/2}\int_{r_i}^{r_i+\de_i}(2f)^{n-1}(r_i)dr\\
=n\omega_n2^{n-1}f^{n-1}(r_i)(1+\de_i^2)^{1/2}\le n&\omega_n2^{n}f^{n-1}(r_i),
\endaligned
\end{equation}
which implies
\begin{equation}\aligned
\mathrm{cap}_1(F)=\lim_{i\rightarrow\infty}\mathrm{cap}_1(F,B_{r_i+\de_i})=0.
\endaligned
\end{equation}
This completes the proof.
\end{proof}

\section{Definitions from geometric measure theory}

Let $(\Si,\si)$ be an $n$-dimensional complete manifold with Levi-Civita connection $D$. The product manifold $\Si\times\R$ admits a standard product metric $\si+dt^2$. Let $\bn$ denote the Levi-Civita connection of $\Si\times\R$ w.r.t. $\si+dt^2$.
Let $E_{n+1}$ denote the constant unit vector in $\Si\times\R$ perpendicular to $\Si\times\{0\}$.

Let $\overline{\mathrm{div}}$ denote the divergence of $\Si\times\R$.
For an open set $U\subset\Si\times\R$ and a function $f\in L^1(U)$, we define the variation of $f$ on $U$ by (see \cite{AFP}\cite{Gi} for the Euclidean case)
\begin{equation}\aligned
\int_U|\overline{\na}f|=\sup\left\{\int_U f\, \overline{\mathrm{div}}(Y)\Big|\, Y\in \G^1_c(TU),\, |Y|\le1\ \mathrm{on}\ \Si\times\R\right\},
\endaligned
\end{equation}
where $\G^1_c(TU)$ denotes the space containing all $C^1$ tangent vector fields on $U$ with compact supports in $U$.
Let $BV(U)$ denote the space of all functions in $L^1(U)$ with bounded variation, and $BV_{loc}(U)$ denote the space of all functions in $L^1_{loc}(U)$ with bounded variation on compact subsets. For a Borel set $\mathscr{B}$ in $\Si\times\R$, let 
$$\mathrm{Per}(\mathscr{B},U)=\int_U|\overline{\na}\chi_{_\mathscr{B}}|$$ 
denote the perimeter of $\mathscr{B}$ in $U$, where $\chi_{_{\mathscr{B}}}$ is the characteristic function on $\mathscr{B}$. 
If $\mathscr{B}$ has locally finite perimeter, i.e., $\mathrm{Per}(\mathscr{B},U')<\infty$ for every bounded open $U'\subset\subset U$, then $\mathscr{B}$ is called \emph{a Caccioppoli set}.
For each open subset $\Om\subset\Si$ and functions in $L^1(\Om)$, we may define perimeters and the space of BV functions similar to the ones on $\Si\times\R$.

Suppose that $U$ is properly embedded in $\R^{n+m}$ for some integer $m\ge1$.
For a set $S$ in $U$ and an integer $1\le k\le n+1$, $S$ is said to be \emph{countably $k$-rectifiable}
if $S\subset S_0\cup\bigcup_{j=1}^\infty F_j(\R^k)$, where $\mathcal{H}^k(S_0)=0$, and $F_j:\, \R^k\rightarrow \R^{n+m}$ are Lipschitz mappings for all integers $j\ge1$.
Let $\mathcal{D}^n(U)$ denote the set including all smooth $n$-forms on $U$ with compact supports in $U$. Denote $\mathcal{D}_n(U)$ be the set of $n$-currents in $U$, which are continuous linear functionals on $\mathcal{D}^n(U)$.
For each $T\in \mathcal{D}_n(U)$, one defines the mass of $T$ by
\begin{equation*}\aligned
\mathbf{M}(T)=\sup_{|\omega|_U\le1,\omega\in\mathcal{D}^n(U)}T(\omega)
\endaligned
\end{equation*}
with $|\omega|_U=\sup_{x\in U}\lan\omega(x),\omega(x)\ran^{1/2}$.
Let $\p T$ be the boundary of $T$ defined by $\p T(\omega')=T(d\omega')$ for any $\omega'\in\mathcal{D}^{n-1}(U)$ (see \cite{S} for instance). For the Borel set $\mathscr{B}$ in $\Si\times\R$, let $T\llcorner\mathscr{B}$ denote a current in $\mathcal{D}_n(U)$ by
$(T\llcorner\mathscr{B})(\omega)=T(\omega\chi_{_\mathscr{B}})$ for each $\omega\in\mathcal{D}^n(U)$.
Denote $T\setminus\mathscr{B}=T\llcorner((\Si\times\R)\setminus\mathscr{B})$.
A current $T\in\mathcal{D}_n(U)$ is said to be an \emph{integer multiplicity current} if it can be expressed as
$$T(\omega)=\int_M\th\lan \omega,\xi_T\ran\qquad \mathrm{for\ each}\ \omega\in \mathcal{D}^n(U),$$
where $M$ is a countably $n$-rectifiable subset of $U$, $\th$ is a locally $\mathcal{H}^n$-integrable positive integer-valued function, and $\xi_T$ is an orientation on $M$, i.e.,  $\xi_T(x)$ is an $n$-vector representing the approximate tangent space $T_xM$ for $\mathcal{H}^n$-a.e. $x$.

For a countably $n$-rectifiable set $S\subset U$ with orientation $\xi$, 
there is an $n$-current $[|S|]$ associated with $M$, i.e.,
$$[|S|](\omega)=\int_M\lan \omega,\xi\ran,  \qquad  \forall\omega\in\mathcal{D}^n(U).$$
Let $\xi_\Si$ be an orientation on $\Si$ 
so that the dual form of $\xi_\Si$ is the standard volume element on $\Si$. By parallel transport, we can assume that $\xi_\Si$ is defined on $\Si\times\R$.
For an $n$-dimensional Borel set $V\subset\Si$ and $s\in\R$, let $[|V|]\times\{s\}$ denote the current associated with $V\times\{s\}$, i.e., 
\begin{equation}\aligned
([|V|]\times\{s\})(\omega)=\int_{V\times\{s\}}\lan\omega,\xi_\Si\ran, \qquad  \forall\omega\in\mathcal{D}^n(\Si\times\R).
\endaligned
\end{equation}
Denote $\p([|V|]\times\{s\})=\p[|V|]\times\{s\}$.
For an $(n+1)$-dimensional Borel set $\widetilde{V}\subset\Si\times\R$, let $[|\widetilde{V}|]$ denote the current associated with $\widetilde{V}$, i.e., 
\begin{equation}\aligned
\big[\big|\widetilde{V}\big|\big](\widetilde{\omega})=\int_{\widetilde{V}}\lan\widetilde{\omega},\xi_\Si\wedge E_{n+1}\ran, \qquad  \forall\widetilde{\omega}\in\mathcal{D}^{n+1}(\Si\times\R).
\endaligned\end{equation}

For a subset $A\subset U$,
an integer multiplicity current $T\in\mathcal{D}_n(U)$ is said to be \emph{minimizing} in $A$ if
$\mathbf{M}(T)\le \mathbf{M}(T')$ whenever $W\subset\subset U$, $\p T'=\p T$ in $U$, spt$(T'-T)$ is a compact subset of $A\cap W$ (see \cite{S} for instance). A countably $n$-rectifiable set $S$ (with a natural orientation $\xi$) is said to be an \emph{area-minimizing} hypersurface (or \emph{area-minimizing}) in $U$ if the associated current $[|S|]$ is minimizing in $U$.

\section{BV solutions associated with the capacities}

Let $(\Si,\si)$ be an $n$-dimensional complete manifold with Levi-Civita connection $D$.
\begin{lemma}\label{MiniSmooth}
Let $\Om$ be an open set in $\Si$, and $M$ be an area-minimizing hypersurface in $\Om\times(-R,R)$ for some constant $0<R<\infty$. If there is a bounded measurable function $u$ on $\Om$ such that
$$M=\p\{(x,t)\in\Om\times\R|\, t<u(x)\}\cap(\Om\times\R),$$
then $u$ is smooth on $\Om$ with locally bounded gradient.
\end{lemma}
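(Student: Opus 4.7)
The plan is to reduce Lemma \ref{MiniSmooth} to the non-parametric regularity theory for area-minimizing subgraphs (Miranda, Bombieri--De Giorgi--Giusti), transferred to the Riemannian setting along the lines of \cite{Gi}. The key is that the area-minimizing hypothesis on $M$, together with the fact that $M$ is the topological boundary of $U=\{(x,t)\in\Om\times\R:\,t<u(x)\}$, forces $U$ itself to be a perimeter-minimizing Caccioppoli set in $\Om\times(-R,R)$, and then the graph structure allows classical interior estimates to kick in.

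Since $u$ is bounded, say $\|u\|_{L^\infty(\Om)}\le R'<R$, the surface $M$ is confined to $\Om\times[-R',R']$. First I would verify that $U\cap(\Om\times(-R,R))$ has locally finite perimeter and that, for any Caccioppoli competitor $U'$ agreeing with $U$ outside a compact $W\subset\subset\Om\times(-R,R)$, one has $\mathrm{Per}(U,W)\le\mathrm{Per}(U',W)$. This follows by applying the minimizing property of the integer multiplicity current $[|\p U|]$ to $[|\p U'|]$, since the two currents have the same boundary in $\Om\times(-R,R)$.

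The central step is a sliding argument exploiting vertical translation invariance. For small $s\in\R$, the translate $U+sE_{n+1}$ is again a perimeter minimizer whose boundary is $M+sE_{n+1}$. By De Giorgi's interior regularity, the reduced boundary of $U$ is a smooth hypersurface away from a closed singular set of Hausdorff dimension at most $n-7$. At any regular point, if the tangent $n$-plane contained $E_{n+1}$, then $M$ and $M+sE_{n+1}$ would come arbitrarily close to tangential contact; the strong maximum principle for area-minimizing hypersurfaces would then force $U$ and $U+sE_{n+1}$ to coincide locally, contradicting the boundedness of $u$. Hence no tangent plane on the regular part is vertical, so the reduced boundary is locally a $C^1$ graph with locally bounded gradient. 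Identifying this graph with $u$ modulo null sets, and using that codimension-one graph minimizers have no interior singularities (a dimension reduction together with openness of the projection to $\Om$), I would conclude that $u$ satisfies the minimal surface equation \eqref{u} in the classical sense on $\Om$ with locally bounded gradient. Standard Schauder theory then bootstraps $u$ to $C^\infty$.

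The main obstacle I anticipate is excluding vertical portions of $M$, since the mere measurability hypothesis on $u$ permits, a priori, jump sets that would produce vertical segments in the topological boundary and destroy the graph structure. The sliding argument combined with the strong maximum principle on the regular part of $\p U$ is precisely engineered to rule this out; once vertical tangents are excluded at regular points, the graph nature of the resulting hypersurface forbids singular fibers, and the remaining regularity statements are routine consequences of elliptic theory for \eqref{u}.
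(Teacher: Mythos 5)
Your overall strategy --- show that $U$ is a perimeter minimizer in $\Om\times(-R,R)$, invoke De Giorgi's regularity for $\p U$, exclude vertical tangent planes at regular points, and bootstrap via the non-parametric equation --- is the same reduction the paper makes: the paper cites De Giorgi \cite{Dg} and Theorem 14.13 in Giusti \cite{Gi}, and for the manifold setting isolates the Jacobi identity \eqref{DeMEn1nu} for $w:=\lan E_{n+1},\mathbf{n}_M\ran$ as the replacement for the Euclidean interior gradient estimate. You are in fact more explicit than the paper about why $U$ inherits perimeter minimality from the minimizing current $[|M|]$.

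The sliding step, however, has a real gap. You argue that a vertical tangent plane at a regular point $p$ makes $M$ and $M+sE_{n+1}$ ``come arbitrarily close to tangential contact,'' and then invoke the strong maximum principle; but that principle requires \emph{actual} contact of the nested boundaries, and a vertical tangent at $p$ by itself does not produce a point of $\p U\cap\p(U+sE_{n+1})$ for small $s>0$. Writing $\p U$ locally as a graph $\{(v,\psi(v,t),t)\}$ over its (vertical) tangent plane, the subgraph property gives $\p_t\psi\le 0$, and a vertical tangent at $p$ is exactly $\p_t\psi(0,t_0)=0$; contact of $\p U$ with its vertical translate requires $\p_t\psi\equiv 0$ along an entire vertical segment, which is the conclusion you are reaching for, not a hypothesis you may assume. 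The missing step is precisely the one the paper highlights: apply the strong maximum principle (or Bombieri--Giusti Harnack) to the nonnegative Jacobi field $w=\lan\mathbf{n}_M,E_{n+1}\ran$ solving \eqref{DeMEn1nu} on the regular part $M_*$, so that on each component either $w\equiv 0$ (forcing a vertical cylinder, impossible since $u$ is bounded) or $w>0$ everywhere. Two further points are glossed over: the ``locally bounded gradient'' in the conclusion is itself the Harnack estimate for $w$ (not merely a consequence of no vertical tangents), and ``the graph nature of the resulting hypersurface forbids singular fibers'' is an assertion, not an argument --- removing the codimension-$\ge 7$ singular set is the dimension-reduction step carried out in Theorem 14.13 of \cite{Gi}, to which the paper ultimately defers.
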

\begin{proof}
In the Euclidean case, De Giorgi has proved $u\in C^\infty(\Om)$ in \cite{Dg} (see also Theorem 14.13 in \cite{Gi}). In our situation of the manifold $\Si$, the proof is essentially same, where the interior gradient estimate is derived with the help of the following formula (see \cite{Sp} or (2.4) in \cite{DJX} for instance)
\begin{equation}\aligned\label{DeMEn1nu}
\De_M\lan E_{n+1},\mathbf{n}_M\ran=-\left(|A_M|^2+\overline{Ric}(\mathbf{n}_M,\mathbf{n}_M)\right)\lan E_{n+1},\mathbf{n}_M\ran\qquad 
\endaligned
\end{equation}
on the regular part $M_*$ of $M$. Here, $\De_M$ denotes the Laplacian on $M_*$, $A_M$ denotes the second fundamental form of $M_*$, $\overline{Ric}$ denotes the Ricci curvature of $\Si\times\R$, and $\mathbf{n}_M$ denotes the unit normal vector field to $M_*$.
\end{proof}

Let $\Om$ be a bounded open set in $\Si$ and $K$ be a compact set in $\Om$
such that $\p\Om$ and $\p K$ are Lipschitz-continuous.
Let $\varphi_t$ be a function on $\p\Om\cup\p K$ with $\varphi_t=t$ on $\p K$ and $\varphi=0$ on $\p\Om$.
For a function $f$ of bounded variation on $\Om\setminus K$, let $\mathrm{tr}f$ denote the trace of $f$ on $\p\Om\cup\p K$ (see $\S$2 in \cite{Gi} for details) defined by
\begin{equation}\aligned\label{DefTr}
\mathrm{tr}f(x)=\lim_{\r\to0}\fint_{B_\r(x)\cap\Om\setminus K}f\qquad\mathrm{for}\ \mathcal{H}^{n-1}\mathrm{-almost\ all}\ x\in\p\Om\cup\p K.
\endaligned
\end{equation}
From Proposition 14.3 and its proof in \cite{Gi} (the case of manifolds is the same as the Euclidean case essentially),
\begin{equation*}\aligned
\mathrm{cap}_t(K,\Om)=&\inf\left\{\int_{\Om\setminus K}\left(\sqrt{1+|Df|^2}-1\right)\Big|\, f\in BV(\Om\setminus K),\, \mathrm{tr}f=\varphi_t\ \mathrm{on}\ \p\Om\cup\p K\right\}\\
=&\inf\left\{\int_{\Om\setminus K}\left(\sqrt{1+|Df|^2}-1\right)+\int_{\p\Om\cup\p K}|\mathrm{tr}f-\varphi_t|\Big|\, f\in BV(\Om\setminus K)\right\}.
\endaligned
\end{equation*}
From Theorem 14.5 in \cite{Gi}, the functional
\begin{equation}\aligned
\int_{\Om\setminus K}\left(\sqrt{1+|Df|^2}-1\right)+\int_{\p\Om\cup\p K}|\mathrm{tr}f-\varphi_t|
\endaligned
\end{equation}
attains its minimum in $BV(\Om\setminus K)$. Moreover, with Theorem 14.13 in \cite{Gi} there exists a function $u\in BV(\Om\setminus K)$ so that $u$ is a smooth solution to \eqref{u} on $\Om\setminus K$ and
\begin{equation}\aligned\label{capttru}
\mathrm{cap}_t(K,\Om)=\int_{\Om\setminus K}\left(\sqrt{1+|Du|^2}-1\right)+\int_{\p\Om\cup\p K}|\mathrm{tr}u-\varphi_t|.
\endaligned
\end{equation}

Noting that our boundary data $\varphi_t$ are constants, which is simpler than the one in $\S14$ in \cite{Gi}. So we can consider a more general boundary condition.
Using geometric measure theory, we have an analogous representation as \eqref{capttru} in the following with Lipschitz condition of $\p(\Om\setminus K)$ replaced by $\p\,\overline{\Om\setminus K}=\p(\Om\setminus K)$. 
The condition $\p\,\overline{\Om\setminus K}=\p(\Om\setminus K)$ is equivalent to that 
$$\mathcal{H}^n(B_r(x)\cap\Om\setminus K)<\mathcal{H}^n(B_r(x))$$
for any $x\in\p(\Om\setminus K)$ and any $r>0$. See Proposition 3.1 in \cite{Gi} for more discussions on Borel sets satisfying the above inequality.
\begin{theorem}\label{wExist}
Let $\Om$ be a bounded open set in $\Si$, and $K$ be a compact set in $\Om$ with $\p\,\overline{\Om\setminus K}=\p(\Om\setminus K)$. 
For any $t>0$, there is a function $u$ of bounded variation on $\Si$ with $0\le u\le t$  such that $u$ is a smooth solution to \eqref{u} on $\Om\setminus K$, $u=t$ on $K\setminus\p K$, $u=0$ on $\Si\setminus\overline{\Om}$,  the boundary of the subgraph $U=\{(x,s)\in\Si\times\R|\, s< u(x)\}$ is countably $n$-rectifiable set in $\Si\times\R$,
and
\begin{equation}\aligned\label{captKOmu}
\mathrm{cap}_t(K,\Om)=&\mathcal{H}^n\left((\overline{\Om}\times\R)\cap\p U\right)-\mathcal{H}^n(\Om)=\mathcal{H}^n(M)-\mathcal{H}^n(\Om\setminus K)\\
=&\int_{\Om\setminus K}\left(\sqrt{1+|Du|^2}-1\right)+\int_{\p K}(t-u)+\int_{\p\Om}u.
\endaligned
\end{equation}
Here, $M=\p U\cap\left(\overline{\Om\setminus K}\times\R\right)$. Moreover, $\p_+\Om=\{x\in\p\Om|\, u(x)>0\}$, $\p_-K=\{x\in\p K|\, u(x)<t\}$ are both countably $(n-1)$-rectifiable, $u$ is lower semi-continuous on a small neighborhood of $\p K$, and upper semi-continuous on a small neighborhood of $\p\Om$.
\end{theorem}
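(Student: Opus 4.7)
My plan is to construct $u$ not via direct BV minimization---since $\p(\Om\setminus K)$ is only assumed to satisfy the weak separation $\p\,\overline{\Om\setminus K}=\p(\Om\setminus K)$ rather than being Lipschitz---but through an underlying perimeter minimization in $\Si\times\R$, using the geometric measure theory framework of \S3. Fix $L>t+1$ and set
$$F_0:=(K\times(-\infty,t))\cup((\Si\setminus K)\times(-\infty,0)).$$
Let $\cA$ denote the class of Caccioppoli sets $E\subset\Si\times\R$ whose symmetric difference $E\triangle F_0$ has compact closure in $\overline{\Om\setminus K}\times[-L,L]$, and minimize $\mathrm{Per}(E,\Si\times\R)$ over $\cA$. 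Since $F_0\in\cA$ gives a finite upper bound, BV compactness and lower semi-continuity of perimeter produce a minimizer $E_*$.

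\textbf{Subgraph property and interior regularity.} Write $T_h$ for downward vertical translation by $h>0$. The key monotonicity $T_hF_0\subset F_0$ places $E_*\cup T_hE_*$ in $\cA$ and $E_*\cap T_hE_*$ in the $h$-translated class $T_h\cA$. Combining the standard inequality
$$\mathrm{Per}(E_*\cup T_hE_*)+\mathrm{Per}(E_*\cap T_hE_*)\le\mathrm{Per}(E_*)+\mathrm{Per}(T_hE_*)$$
with the minimality of $E_*$ (and translation invariance, which makes $T_hE_*$ a minimizer in $T_h\cA$) forces equality throughout and $T_hE_*\subset E_*$ for every $h>0$. Equivalently $E_*=\{(x,s):s<u(x)\}$ up to null sets, for some Borel function $u$; the constraint on $E_*\triangle F_0$ forces $u=t$ on $K\setminus\p K$, $u=0$ on $\Si\setminus\overline{\Om}$, and $0\le u\le t$ everywhere, while the finite perimeter of $E_*$ makes $u\in BV(\Si)$. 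Interior regularity of area-minimizing boundaries combined with Lemma \ref{MiniSmooth} then implies that $u$ is smooth on $\Om\setminus K$ and solves \eqref{u}; countable $n$-rectifiability of $\p U=\p E_*$ is standard for perimeter minimizers.

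\textbf{Capacity identity and boundary fine structure.} Let $M:=\p E_*\cap(\overline{\Om\setminus K}\times\R)$ and decompose it into (i) the smooth graph of $u$ over $\Om\setminus K$, contributing $\int_{\Om\setminus K}\sqrt{1+|Du|^2}$; (ii) a vertical cylinder over $\p K$ of height $t-\mathrm{tr}\,u$; (iii) a vertical cylinder over $\p\Om$ of height $\mathrm{tr}\,u$. This yields the integral identity for $\mathcal{H}^n(M)-\mathcal{H}^n(\Om\setminus K)$ in \eqref{captKOmu}. Any $\phi\in\mathcal{L}_t(K,\Om)$ defines a Caccioppoli competitor in $\cA$ of perimeter $\int_\Om\sqrt{1+|D\phi|^2}$, giving $\mathcal{H}^n(M)-\mathcal{H}^n(\Om\setminus K)\le\mathrm{cap}_t(K,\Om)$. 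The reverse inequality is the main obstacle, since $\p(\Om\setminus K)$ need not be Lipschitz; I would handle it by applying Giusti's Lipschitz-boundary theorems in \cite{Gi} to the pair $(\overline{B}_\ep(K),\Om\setminus\overline{B}_\ep(\p\Om))$ for small generic $\ep>0$, obtaining approximating minimizers $u_\ep$, and then passing $\ep\to0$ using Corollary \ref{captKKiOmOmi***} together with BV compactness to extract a subsequential limit realizing the infimum. Crucially, the hypothesis $\p\,\overline{\Om\setminus K}=\p(\Om\setminus K)$ is exactly what guarantees that these tubular neighborhoods shrink to the correct limit without absorbing positive $\mathcal{H}^n$-mass. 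Countable $(n-1)$-rectifiability of $\p_+\Om$ and $\p_-K$ is inherited via vertical projection from $\p E_*$, and the one-sided semi-continuity of $u$ near $\p K$ (resp. $\p\Om$) follows from the closedness of $\p E_*$: choosing $u$ at each boundary point to be the lower (resp. upper) vertical extreme of $\p E_*$ above that point realizes $u$ as an lsc (resp. usc) function there.
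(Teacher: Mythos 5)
Your construction is a genuinely different route from the paper's. The paper starts with a minimizing sequence $u_i\in\mathcal{L}_t(K,\Om)$ for the capacity functional itself, takes the $L^1_{loc}$-limit $U$ of the subgraphs $U_i$ via BV compactness, and from there reads off the subgraph structure for free (a limit of subgraphs is a subgraph) and the one-sided energy bound $\mathcal{H}^n(M)-\mathcal{H}^n(\Om\setminus K)\le\mathrm{cap}_t(K,\Om)$ from lower semi-continuity of perimeter. You instead set up an abstract perimeter-minimization problem over a class $\mathcal{A}$ of Caccioppoli sets in $\Si\times\R$ and then recover the subgraph structure via the vertical-translation argument ($T_hE_*\subset E_*$ from the set-theoretic submodularity of perimeter). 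That part of your argument is a well-known and legitimate alternative; it does require you to first obtain an a priori bound $0\le u\le t$ (e.g.\ by vertical truncation of competitors) before the translation argument can run within the fixed slab $[-L,L]$, a point your sketch elides but which is fixable.

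The genuine gap is in your ``reverse inequality,'' and it is more than a technical loose end. You need $\mathrm{Per}(E_*,\overline{\Om\setminus K}\times\R)\ge\mathrm{cap}_t(K,\Om)+\mathcal{H}^n(\Om\setminus K)$, equivalently that \emph{every} Caccioppoli competitor in $\mathcal{A}$ has perimeter at least $\mathrm{cap}_t(K,\Om)+\mathcal{H}^n(\Om\setminus K)$, which amounts to showing that BV subgraph competitors can be approximated by Lipschitz functions in $\mathcal{L}_t(K,\Om)$ with convergence of the functional. Your $\ep$-approximation, as written, instead produces functions $u_\ep$ realizing $\mathrm{cap}_t(\overline B_\ep(K),\Om\setminus\overline B_\ep(\p\Om))$ and then passes to a subsequential BV limit $v$ with energy $\le\mathrm{cap}_t(K,\Om)$ (lower semi-continuity plus Corollary \ref{captKKiOmOmi***}). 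This shows \emph{some} function achieves energy $\le\mathrm{cap}_t(K,\Om)$, which is again the \emph{easy} direction. To transfer this to a lower bound on $\mathrm{Per}(E_*)$ you would need to identify $v$ with the graphing function of $E_*$, which at this stage requires uniqueness (Proposition \ref{UNIu}), but that proposition is proved \emph{using} Theorem \ref{wExist}, so invoking it would be circular. What is actually needed is an argument that the specific $u$ graphing $E_*$ can be approximated from within $\mathcal{L}_t(K,\Om)$ with energies converging to $\mathcal{H}^n(M)-\mathcal{H}^n(\Om\setminus K)$; this is precisely where the hypothesis $\p\,\overline{\Om\setminus K}=\p(\Om\setminus K)$ earns its keep (cf.\ Remark \ref{capt0Ki}), and is the step the paper dispatches in its final ``Clearly, $u$ can be approached by Lipschitz functions'' sentence. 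Both proofs lean on this approximation, but in the paper's setup the minimizing sequence already consists of elements of $\mathcal{L}_t(K,\Om)$, so the required approximants are close at hand; in your setup they must be manufactured from scratch, and the sketch does not supply them.

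Smaller remarks: your decomposition of $M$ into graph plus two vertical cylinders is the right picture but should be phrased in terms of the trace structure derived from the slicing theorem, as in the paper's \eqref{pOm+-}, rather than asserted; and the semicontinuity claim at the end needs to specify, as the paper does implicitly through $U=U^*\cup(\p U^*\cap(\p\Om\times\R))$, exactly which representative of $u$ is chosen on $\p\Om$ versus $\p K$, since the vertical fibers of $\p E_*$ over boundary points may be whole segments.
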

\begin{remark}
The function $u$ here is said to be a (BV) solution on $\Si$ associated with $\mathrm{cap}_t(K,\Om)$ as in the introduction.
The condition $\p\,\overline{\Om\setminus K}=\p(\Om\setminus K)$ is necessary in view of Remark \ref{capt0Ki}.
\end{remark}
\begin{proof}
The proof is in the same spirit of the proof of \eqref{capttru} in $\S14$ of \cite{Gi}.
For any fixed $t>0$, there is a sequence of Lipschitz functions $u_i$ on $\Si$ with supports in $\overline{\Om}$ such that $0\le u_i\le t$, $u_i\big|_{K}=t$ and
\begin{equation}\aligned\label{captKOmui}
\mathrm{cap}_t(K,\Om)=\lim_{i\rightarrow\infty}\int_{\Om}\left(\sqrt{1+|Du_i|^2}-1\right).
\endaligned\end{equation}
Let $U_i$ denote the subgraph of $u_i$ defined by
$$U_i=\{(x,\tau)\in\Si\times\R|\, \tau< u_i(x)\}.$$
Let $V$ be a bounded open set in $\Si$ containing $\overline{\Om}$. Then
\begin{equation}\aligned\label{UDchiVi}
\int_{V\times\R}\left|\overline{\na}\chi_{_{U_i}}\right|=\int_{V}\sqrt{1+|Du_i|^2},
\endaligned\end{equation}
which means that $\chi_{_{V_i}}$ has uniform bounded variation on $V$ from \eqref{captKOmui}.
From compactness theorem for functions of bounded variation (see Theorem 2.6 in \cite{S} for instance), there is a function $\phi$ in $BV(V\times\R)$ such that
\begin{equation}\aligned\label{VphichiUi}
\lim_{i\rightarrow\infty}\int_{V\times\R}|\phi-\chi_{_{U_i}}|=0,
\endaligned\end{equation}
and
\begin{equation}\aligned
\int_{V\times\R}\left|\overline{\na}\phi\right|\le\liminf_{i\rightarrow\infty}\int_{V\times\R}\left|\overline{\na}\chi_{_{U_i}}\right|
\endaligned\end{equation}
by semicontinuity up to the subsequence. Combining \eqref{captKOmui}\eqref{UDchiVi} we have
\begin{equation}\aligned\label{UDphichiVi}
\int_{V\times\R}\left|\overline{\na}\phi\right|\le&\liminf_{i\rightarrow\infty}\int_{V}\sqrt{1+|Du_i|^2}=\mathcal{H}^n(V\setminus\Om)+\liminf_{i\rightarrow\infty}\int_{\Om}\sqrt{1+|Du_i|^2}\\
=&\mathcal{H}^n(V\setminus\Om)+\mathcal{H}^n(\Om)+\mathrm{cap}_t(K,\Om)=\mathcal{H}^n(V)+\mathrm{cap}_t(K,\Om).
\endaligned\end{equation}
Since $\chi_{_{U_i}}\rightarrow\phi$ a.e. in $V\times\R$, we may assume that $\phi$ is a characteristic function on an open set $U^*\subset\Si\times\R$. Set 
$$U=U^*\cup(\p U^*\cap(\p\Om\times\R)).$$ 
Moreover, there is a function $u:\,\Si\to[0,t]$ with $u\big|_{K\setminus\p K}=t$ and $u=0$ outside $\overline{\Om}$ such that
\begin{equation}\aligned\label{defUu}
U=\{(x,\tau)\in\Si\times\R|\, \tau< u(x)\}.
\endaligned\end{equation}
Let $x_i$ be a sequence of points in $\Om\setminus K$ and $x_i\to x_0\in\p K\cup\p\Om$. Since any limit of $(x_i,u(x_i))$ belongs to $\p U$ and $(x_0,u(x_0))\in\p U$, we get $u(x_i)\to u(x_0)$. It follows that $\limsup_{x\to x_0}u(x)\ge u(x_0)$ for any $x_0\in\p K$, and $\liminf_{x\to y_0}u(x)\le u(y_0)$ for any $y_0\in\p \Om$. In other words,
$u$ is lower semi-continuous on a small neighborhood of $\p K$, and upper semi-continuous on a small neighborhood of $\p\Om$.

From \eqref{VphichiUi}\eqref{UDphichiVi}, we can assume $\phi=\chi_{_U}$, and
\begin{equation}\aligned\label{PUVtimesR}
\mathrm{Per}(U,V\times\R)=\int_{V\times\R}\left|\overline{\na}\phi\right|\le\mathcal{H}^n(V)+\mathrm{cap}_t(K,\Om).
\endaligned\end{equation}
Hence, $U$ has locally finite perimeter in $\Si\times\R$, namely, $U$ is a Caccioppoli set in $\Si\times\R$.
Then $u$ has bounded variation on any compact set of $\Si$. From the countably $n$-rectifiable $\p U$, both
\begin{equation*}\aligned
\p U\cap(\p\Om\times\R)=&\{(x,\tau)|\, x\in\p\Om,\ 0\le\tau\le u(x)\},\\
\mathrm{and}\ \ \p U\cap(\p K\times\R)=&\{(x,\tau)|\, x\in\p K,\ u(x)\le\tau\le t\}
\endaligned\end{equation*}
are countably $n$-rectifiable. 
In particular, we have (see Proposition 3.1 in \cite{Gi})
\begin{equation}\aligned\label{01pU}
0<\mathcal{H}^n(U\cap B_\r(x))<\mathcal{H}^n(B_\r(x))\qquad \mathrm{for\ any}\ x\in\p U,\ \r>0.
\endaligned\end{equation}
Suppose $\mathcal{H}^n(\p U\cap(\p\Om\times\R))>0$. By the definition of $U$, $\{(x,\tau_2)|\, x\in\p\Om\}\cap\p U\subset\{(x,\tau_1)|\, x\in\p\Om\}\cap\p U$ for any $0<\tau_1<\tau_2$.
By the slicing theorem \cite{S}, $\{(x,\tau)|\, x\in\p\Om\}\cap\p U$ is countably $(n-1)$-rectifiable for almost all $\tau>0$.
This infers that $\{x\in\p\Om|\, u(x)>0\}$ is countably $(n-1)$-rectifiable. Similarly, $\{x\in\p K|\, u(x)<t\}$ is countably $(n-1)$-rectifiable. 
Set $\p\Om_s=\{x\in\p\Om|\, u(x)>s\}$, $\p K_s=\{x\in\p K|\, u(x)<s\}$,
$\p_+\Om=\lim_{s\to0+}\p\Om_s=\{x\in\p\Om|\, u(x)>0\}$ and $\p_-K=\lim_{s\to t-}\p K_s=\{x\in\p K|\, u(x)<t\}$. Since
$$\lim_{t\to0}\mathcal{H}^n\left(\{(x,\tau)|\, x\in\p\Om,\,0<\tau<t\}\cap\p U\right)=0,$$
from Levi's theorem (for monotonic functions) we have
\begin{equation}\aligned\label{pOm+-}
&\mathcal{H}^n\left(\p U\cap(\p\Om\times\R)\right)+\mathcal{H}^n\left(\p U\cap(\p K\times\R)\right)\\
=&\lim_{s\to0+}\mathcal{H}^n\left(\p U\cap(\p\Om\times[s,\infty))\right)+\lim_{s\to t-}\mathcal{H}^n\left(\p U\cap(\p K\times(-\infty,s])\right)\\
=&\lim_{s\to0+}\int_{\p\Om_s}(u-s)+\lim_{s\to t-}\int_{\p K_s}(s-u)\\
=&\lim_{s\to0+}\int_{\p_+\Om}(u-s)\chi_{_{\p\Om_s}}+\lim_{s\to t-}\int_{\p_- K}(s-u)\chi_{_{\p K_s}}\\
=&\int_{\p_+\Om}u+\int_{\p_- K}(t-u).
\endaligned\end{equation}

By the choice of $u_i$, for any open set $\Om_*\subset\subset \Om\setminus K$ we have
\begin{equation}\aligned\label{PerUOm*MIN}
\mathrm{Per}(U,\Om_*\times\R)=\int_{\Om_*\times\R}\left|\overline{\na}\phi\right|\le\int_{\Om_*}\sqrt{1+|Dw|^2},
\endaligned\end{equation}
whenever $w$ is a Lipschitz function on $\Om$ with $w=u$ on $\Om\setminus\Om_*$. With Theorem 14.8 in \cite{Gi} and \eqref{01pU}\eqref{PerUOm*MIN}, we conclude that $\p U$ is an area-minimizing hypersurface in $\Om_*\times\R$.
From Lemma \ref{MiniSmooth},  $u$ is a smooth solution to \eqref{u} on $\Om_*$. Thus, $u$ is a smooth solution to \eqref{u} on $\Om\setminus K$. 
Denote $M=\p U\cap\left(\overline{\Om\setminus K}\times\R\right)$.
Combining with \eqref{PUVtimesR}\eqref{pOm+-},  we have
\begin{equation}\aligned
\mathrm{cap}_t&(K,\Om)\ge\mathcal{H}^n\left((\overline{\Om}\times\R)\cap\p U\right)-\mathcal{H}^n(\Om)=\mathcal{H}^n(M)-\mathcal{H}^n(\Om\setminus K)\\
=&\int_{\Om\setminus K}\left(\sqrt{1+|Du|^2}-1\right)+\mathcal{H}^n(\p U\cap(\p K\times\R))+\mathcal{H}^n(\p U\cap(\p\Om\times\R))\\
=&\int_{\Om\setminus K}\left(\sqrt{1+|Du|^2}-1\right)+\int_{\p_- K}(t-u)+\int_{\p_+\Om}u.
\endaligned
\end{equation}
Clearly, $u$ can be approached by Lipschitz functions on $\Si$ taking values in $[0,t]$ with supports in $\overline{\Om}$ and equal to $t$ on $K$.
This completes the proof by combining with \eqref{captKOmui}.
\end{proof}

\begin{remark} For the case of mean convex $\Om\setminus K$, the Dirichlet problem for minimal hypersurface equation on $\Om\setminus K$ is solvable for classic solutions from \cite{JS}\cite{Sp}.
In general, it may not have a classic solution $u$ with the prescribed boundary data $u=t$ on $\p K$ and $u=0$ on $\p\Om$ (see Proposition \ref{non-bded BV solutions} in Appendix II for instance). 
\end{remark}

From the strictly convex volume functional of graphs, solutions associated with $\mathrm{cap}_t(K,\Om)$ are unique in the following sense (refer Lemma 12.5 and Proposition 14.2 both in \cite{Gi}). 
\begin{proposition}\label{UNIu}
Let $K,\Om$ be defined as in Theorem \ref{wExist}.
The solution on $\Si$ associated with $\mathrm{cap}_t(K,\Om)$ is unique up to a constant. Moreover, if there are two different solutions $u_1,u_2$ associated with $\mathrm{cap}_t(K,\Om)$, then $\mathcal{H}^{n-1}(\p K)=\mathcal{H}^{n-1}(\p\Om)<\infty$.
\end{proposition}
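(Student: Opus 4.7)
The plan is to exploit the strict convexity of the area integrand $p\mapsto\sqrt{1+|p|^2}$ combined with the linearity of the boundary terms in the representation \eqref{captKOmu}. Write
\begin{equation*}
F(u) \;=\; \int_{\Om\setminus K}\bigl(\sqrt{1+|Du|^2}-1\bigr) \;+\; \int_{\p K}(t-u) \;+\; \int_{\p\Om} u,
\end{equation*}
so that by Theorem \ref{wExist} any BV solution $u$ achieves $F(u)=\mathrm{cap}_t(K,\Om)$. The admissible class, cut out by the affine constraints $0\le u\le t$, $u=t$ on $K\setminus\p K$, $u=0$ on $\Si\setminus\overline{\Om}$, is convex, so for two solutions $u_1,u_2$ the convex combination $u_\theta=\theta u_1+(1-\theta)u_2$ is itself admissible for every $\theta\in(0,1)$.

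By convexity of $\sqrt{1+|p|^2}$ and linearity of the boundary terms, $F(u_\theta)\le\theta F(u_1)+(1-\theta)F(u_2)=\mathrm{cap}_t(K,\Om)$, while minimality gives the reverse inequality, hence equality throughout. Strict convexity of $p\mapsto\sqrt{1+|p|^2}$ then forces $Du_1=Du_2$ almost everywhere on $\Om\setminus K$, so $u_1-u_2$ is locally constant on $\Om\setminus K$. This establishes uniqueness up to an additive constant.

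For the second assertion, suppose $u_1\ne u_2$, and let $V$ be a connected component of $\Om\setminus K$ on which $u_1-u_2=c$ with $c\ne 0$; without loss of generality $c>0$. The lower semicontinuity of the $u_i$ near $\p K$ and the upper semicontinuity near $\p\Om$ from Theorem \ref{wExist} allow the identity $u_1-u_2=c$ to pass to the traces on $(\p K\cup\p\Om)\cap\overline V$. Subtracting the equal values $F(u_1)=F(u_2)=\mathrm{cap}_t(K,\Om)$ and cancelling the now identical volume integrals, the boundary terms must balance, giving
\begin{equation*}
c\,\mathcal{H}^{n-1}(\p K) \;=\; c\,\mathcal{H}^{n-1}(\p\Om),
\end{equation*}
i.e. the desired equality of perimeters. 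Finiteness is then automatic: since $u_1\le t$ one has $t-u_2\ge c$ on $\p K$, and since $u_2\ge 0$ one has $u_1\ge c$ on $\p\Om$, so the finiteness of $F(u_i)$ forces $\mathcal{H}^{n-1}(\p K),\mathcal{H}^{n-1}(\p\Om)<\infty$.

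The main obstacle I anticipate is the trace step: transferring the a.e. identity $u_1-u_2=c$ from the interior of $\Om\setminus K$ to an $L^1$ trace identity on $\p K$ and $\p\Om$ requires the one-sided semicontinuity from Theorem \ref{wExist} to control the boundary limits carefully. A secondary subtlety is the bookkeeping when $\Om\setminus K$ has several connected components with distinct component-wise constants $c_V$, which reduces to applying the same argument on each component where $c_V\ne 0$.
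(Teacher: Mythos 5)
Your argument is correct and is essentially the same as the paper's: both rely on strict convexity of $p\mapsto\sqrt{1+|p|^2}$ together with the representation formula \eqref{captKOmu} to force $Du_1=Du_2$, hence $u_1-u_2$ constant on $\Om\setminus K$, and then read off the balance $c\,\mathcal{H}^{n-1}(\p K)=c\,\mathcal{H}^{n-1}(\p\Om)$ and the finiteness from the boundary terms. The paper uses the midpoint $u_*=\tfrac12(u_1+u_2)$ rather than general $u_\theta$ and passes silently over the trace-transfer and component-by-component issues you flag; your slightly more careful bookkeeping there is a cosmetic refinement rather than a different route.
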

\begin{proof}
Suppose that there are two solutions $u_1,u_2$ on $\Si$ associated with $\mathrm{cap}_t(K,\Om)$.
We consider a function $u_*=\f12(u_1+u_2)$. 
By the strictly convexity of $\sqrt{1+s^2}$, it follows that
\begin{equation}\aligned
\sqrt{1+|Du_*|^2}\le\f12\sum_{i=1}^2\sqrt{1+|Du_i|^2}\qquad \mathrm{on}\ \Om\setminus K,
\endaligned
\end{equation}
where the 'equality' attains only if $|D(u_1-u_2)|=0$. From \eqref{captKOmu}, we have
\begin{equation}\aligned
\int_{\Om\setminus K}&\left(\sqrt{1+|Du_*|^2}-1\right)+\int_{\p K}(t-u_*)+\int_{\p\Om}u_*\\
\le\f12\sum_{i=1}^2&\int_{\Om\setminus K}\left(\sqrt{1+|Du_i|^2}-1\right)+\int_{\p K}(t-u_i)+\int_{\p\Om}u_i=\mathrm{cap}_t(K,\Om).
\endaligned
\end{equation}
Since
\begin{equation}\aligned
\mathrm{cap}_t(K,\Om)\le\int_{\Om\setminus K}\left(\sqrt{1+|Du_*|^2}-1\right)+\int_{\p K}(t-u_*)+\int_{\p\Om}u_*,
\endaligned
\end{equation}
we get $|D(u_1-u_2)|\equiv0$ on $\Om\setminus K$, and then there is a constant $c$ so that $u_1=u_2+c$. If $c\neq0$, then from \eqref{captKOmu} we conclude that $\mathcal{H}^{n-1}(\p K)=\mathcal{H}^{n-1}(\p\Om)<\infty$.
\end{proof}

F.H. Lin proved the $C^{1,\a}$-regularity for minimizing currents in a closed set of $C^{1,\a}$-boundary in Euclidean space in his thesis \cite{Lin0}. In particular, the following boundary regularity holds (the case of manifolds is essentially the same as the Euclidean case). 
\begin{lemma}\label{C1a-reg}
Let $t>0$ be a constant, $\Om$ be a bounded open set in $\Si$ with $\p\Om\in C^{1,\a}$, and $K$ be a compact set in $\Om$ with $\p K\in C^{1,\a}$ for some $\a\in(0,1)$. If an integer multiplicity current $T$ is minimizing in $\overline{\Om\setminus K}\times\R$ with boundary $\p [|\Om|]\times\{0\}-\p [|K|]\times\{t\}$. 
Then spt$T$ is $C^{1,\a}$ in a small neighborhood of spt$T\cap((\p\Om\cup\p K)\times\R)$.
\end{lemma}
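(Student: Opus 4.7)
The plan is to localize the problem near each boundary point of $\mathrm{spt}\,T$ and reduce to Lin's Euclidean boundary regularity theorem from \cite{Lin0}, which handles area-minimizing currents whose boundary is supported on a $C^{1,\alpha}$ hypersurface in $\mathbb{R}^{n+1}$. Since the statement is local and the two pieces $\partial\Omega\times\{0\}$ and $\partial K\times\{t\}$ sit at different heights, it suffices to argue in a small neighborhood of a single point $y_0=(x_0,s_0)\in\mathrm{spt}\,T\cap((\partial\Omega\cup\partial K)\times\mathbb{R})$; by symmetry we may assume $x_0\in\partial\Omega$ and $s_0=0$.

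The first step is to set up a good chart. Since $\partial\Omega$ is $C^{1,\alpha}$, I would choose coordinates on $\Sigma$ centered at $x_0$ in which $\partial\Omega$ is locally the graph of a $C^{1,\alpha}$ function over its tangent hyperplane at $x_0$, and in which the Riemannian metric $\sigma$ differs from the flat metric by an $O(|x|)$ error in the $C^{0,\alpha}$ topology. Combined with the identity factor on $\mathbb{R}$, this yields a $C^{1,\alpha}$ diffeomorphism $\Phi$ from a neighborhood of $y_0$ in $\Sigma\times\mathbb{R}$ onto a Euclidean ball $B\subset\mathbb{R}^{n+1}$, under which $(\partial\Omega)\times\mathbb{R}$ becomes a $C^{1,\alpha}$ hypersurface $\Gamma\subset B$ and the boundary piece $\partial[|\Omega|]\times\{0\}$ pushes forward to a multiplicity-one integer current supported on $\Gamma$.

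The second step is to check that $\Phi_*T$ is almost-minimizing in $B$ with respect to the Euclidean metric. For any competitor $S\in\mathcal{D}_n(B)$ with $\partial S=\partial(\Phi_*T)$ and $\mathrm{spt}(S-\Phi_*T)\subset\subset B_r\cap\overline{\Phi(\Omega\setminus K)}$, the pullback $\Phi^{-1}_*S$ is an admissible competitor for $T$ in $\Sigma\times\mathbb{R}$, and the comparison of Euclidean mass and Riemannian mass is controlled by $(1+Cr^\alpha)$. Minimality of $T$ in the manifold then translates to the quasi-minimizing inequality $\mathbf{M}(\Phi_*T\llcorner B_r)\le(1+Cr^\alpha)\mathbf{M}(S)$, which is exactly the hypothesis of Lin's boundary theorem. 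The multiplicity-one condition along $\Gamma$ follows from the explicit prescription $\partial T=\partial[|\Omega|]\times\{0\}-\partial[|K|]\times\{t\}$ near $y_0$.

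Third, Lin's $C^{1,\alpha}$ boundary regularity result in \cite{Lin0} then gives that $\mathrm{spt}(\Phi_*T)$ is a $C^{1,\alpha}$ hypersurface-with-boundary in a smaller ball centered at $\Phi(y_0)$, and pulling back by $\Phi^{-1}$ transfers this regularity to $\mathrm{spt}\,T$ near $y_0$. A covering of the compact set $\mathrm{spt}\,T\cap((\partial\Omega\cup\partial K)\times\mathbb{R})$ by such charts concludes the proof. The main obstacle is the quasi-minimizing reduction: one must verify that the $C^{1,\alpha}$ diffeomorphism $\Phi$ distorts the area functional by at most a multiplicative $(1+Cr^\alpha)$ factor and does not alter the multiplicity structure of $\partial T$ near $\Gamma$. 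Once this is in place, the monotonicity formula, blow-up analysis, tangent-cone uniqueness, and excess decay carried out by Lin in the Euclidean setting all go through verbatim, since none of them uses more than the almost-minimizing property and the $C^{1,\alpha}$ boundary hypothesis.
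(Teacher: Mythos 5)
Your proposal follows the same route as the paper: the paper's proof is simply the citation of Lin's thesis \cite{Lin0} plus the one-line assertion that ``the case of manifolds is essentially the same as the Euclidean case,'' and your chart-plus-almost-minimizing reduction is the standard way to make that assertion precise. Two points should be adjusted. First, the operative result of Lin is $C^{1,\alpha}$ regularity for an obstacle problem---integer currents minimizing mass subject to the constraint of lying in a closed region with $C^{1,\alpha}$ boundary---rather than boundary regularity for minimizers with prescribed $C^{1,\alpha}$ boundary data in the Allard/Hardt--Simon sense. Most of $\mathrm{spt}\,T\cap\left((\partial\Omega\cup\partial K)\times\mathbb{R}\right)$ consists of free-boundary contact points at which $T$ touches the wall $\partial(\Omega\setminus K)\times\mathbb{R}$ but carries no prescribed boundary; only $\partial\Omega\times\{0\}$ and $\partial K\times\{t\}$ are genuine boundary components. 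Your competitor class $\mathrm{spt}(S-\Phi_*T)\subset\subset B_r\cap\overline{\Phi(\Omega\setminus K)}$ does encode the constraint correctly, so the argument is unaffected, but the opening description of Lin's theorem should be restated to reflect the obstacle nature of the problem. Second, the chart $\Phi$ need not and should not be taken merely $C^{1,\alpha}$: the ambient metric $\sigma+dt^2$ is smooth, so smooth geodesic normal coordinates centered at $x_0$ give an area distortion of order $(1+Cr^2)$, better than $(1+Cr^\alpha)$, and the $C^{1,\alpha}$ regularity enters only through the image of $\partial\Omega$ (resp.\ $\partial K$), which is precisely the $C^{1,\alpha}$ constraint hypersurface Lin's theorem requires. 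With these two corrections, your reduction is exactly what the paper's citation implicitly relies on.
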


In \cite{Lin0}(p. 75), F.H. Lin also proved the uniqueness (on integral currents) for Dirichlet problem on bounded Lipschitz domains of Euclidean space. By the peculiarities of boundary data, the uniqueness (up to constants) holds without Lipschitz boundary condition as follows.  
\begin{theorem}\label{UniPlateau}
Let $\Om$ be a bounded open set in $\Si$, and $K$ be a compact set in $\Om$. Suppose that $\p\,\overline{\Om\setminus K}=\p(\Om\setminus K)$. If an integer multiplicity current $T$ is minimizing in $\overline{\Om\setminus K}\times\R$ with $T=[|K|]\times\{t\}+[|\Si\setminus\Om|]\times\{0\}$ in $\left(\Si\setminus\overline{\Om\setminus K}\right)\times\R$ for some $t>0$. 
Then spt$T$ is the boundary of the subgraph of a solution $u$ associated with $\mathrm{cap}_t(K,\Om)$, i.e.,
$T=\p[|\{(x,s)\in\Si\times\R|\, s< u(x)\}|]$.
\end{theorem}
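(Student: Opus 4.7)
The plan is to represent $T$ as the boundary of a Caccioppoli set in $\Si\times\R$, show via a vertical rearrangement that this set is a subgraph, and then identify the defining function as a BV solution associated with $\mathrm{cap}_t(K,\Om)$ via Theorem \ref{wExist}. To set things up, let $u_0$ be a BV solution furnished by Theorem \ref{wExist} and $U_0:=\{(x,s)\in\Si\times\R : s<u_0(x)\}$, so that $T_0:=\partial[|U_0|]$ is minimizing in $\overline{\Om\setminus K}\times\R$ and coincides with $T$ outside this slab. Comparing $T$ with the horizontal hypersurfaces $\partial[|\Si\times(-\infty,s]|]$ for $s\le0$ and $s\ge t$ (each minimizing, as calibrated by $E_{n+1}$), the maximum principle for area-minimizing currents confines $\mathrm{spt}\,T\cap(\overline{\Om\setminus K}\times\R)$ to $\overline{\Om\setminus K}\times[0,t]$. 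Since $T$ and $T_0$ carry identical data off the slab and are interior-boundary-free by minimality, $T-T_0$ is an integer multiplicity $n$-cycle with compact vertical support; the constancy/decomposition theorem for integer multiplicity currents in $\Si\times\R$ then yields a Caccioppoli set $V$ with $T-T_0=\partial[|V|]$, so $T=\partial[|W|]$ for a Caccioppoli set $W\subset\Si\times\R$ equal to $K\times(-\infty,t)\cup(\Si\setminus\Om)\times(-\infty,0)$ outside the slab.

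Next, I would show $W$ is itself a subgraph. Define $u(x):=\sup\{s\in\R : (x,s)\in W\}$ in the measure-theoretic sense, and set $W^*:=\{(x,s)\in\Si\times\R : s<u(x)\}$. The vertical confinement from the previous step gives $0\le u\le t$ almost everywhere with $u=t$ on $K$ and $u=0$ off $\overline\Om$, and $W^*=W$ off the slab. A Miranda-type vertical rearrangement inequality in the product metric $\si+dt^2$,
\begin{equation*}
\mathrm{Per}(W^*,A\times\R)\le\mathrm{Per}(W,A\times\R)\qquad\text{for every open }A\subset\Si,
\end{equation*}
holds with equality if and only if $W=W^*$ (modulo a null set) in $A\times\R$. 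Because $W^*$ coincides with $W$ outside the slab, $\partial[|W^*|]$ is an admissible competitor for the minimizer $T=\partial[|W|]$; minimality forces the equality case, so $W=W^*$ is the subgraph of $u$.

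With $T=\partial[|\{(x,s)\in\Si\times\R : s<u(x)\}|]$ minimizing in $\overline{\Om\setminus K}\times\R$ and carrying the prescribed boundary data, the formula \eqref{captKOmu} of Theorem \ref{wExist} identifies $u$ as a BV solution on $\Si$ associated with $\mathrm{cap}_t(K,\Om)$. The main obstacle is the rearrangement inequality in the Riemannian product: although classical in $\R^n\times\R$ via Giusti's coarea-based argument, one must verify that the slicing by level sets of the vertical coordinate and the corresponding pointwise measure inequality $|\bn\chi_{W^*}|\le|\bn\chi_W|$ carry over to the metric $\si+dt^2$. Since $dt$ is a parallel unit $1$-form on $\Si\times\R$, the Euclidean proof adapts essentially verbatim.
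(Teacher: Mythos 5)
Your approach is genuinely different from the paper's and has a real gap. The paper approximates $\Om,K$ by smooth $\Om_i,K_i$, takes the BV minimizer $u_i$ associated with $\mathrm{cap}_t(K_i,\Om_i)$ with graph $M_i$, and uses the divergence-free vertical extension $X_i$ of the unit normal $\mathbf{n}_{M_i}$ as a calibration: Stokes' theorem in the region between $\mathrm{spt}\,T_i$ and $M_i$ gives $\mathcal{H}^n(M_i)\le\mathbf{M}(T_i)+\mathbf{M}(\G_i)$ (inequality \eqref{M_iTiGi}); letting $i\to\infty$ and invoking minimality of $T$ forces $\mathbf{M}(T_*)=\mathrm{cap}_t(K,\Om)+\mathcal{H}^n(\Om\setminus K)$, and the rigidity in the calibration inequality identifies $\mathrm{spt}\,T$ as the subgraph boundary. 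The paper never needs to produce a Caccioppoli set $W$ with $T=\p[|W|]$; it compares masses directly. Your confinement step (maximum principle, $\mathrm{spt}\,T\subset\Si\times[0,t]$) matches the paper, and the Miranda vertical rearrangement you invoke does transfer to $\Si\times\R$ — the paper records the relevant estimate as Lemma \ref{wWFOm**}, after Lemma 14.7 of Giusti — though there you should define $u$ by the vertical slice measure, $u(x)=\mathcal{H}^1\{s>0:(x,s)\in W\}$, rather than by an essential supremum, since that is the quantity Giusti's inequality controls.

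The gap is your step 2, the passage from $T$ to $\p[|W|]$. First, $\p(T-T_0)=0$ is not automatic: minimality only kills $\p T$ on the open set $(\Om\setminus K)\times\R$, and coincidence with $T_0$ kills it on $(\Si\setminus\overline{\Om\setminus K})\times\R$, but both $T$ and $T_0$ can carry positive $n$-dimensional mass on the cylinders $\p(\Om\setminus K)\times\R$, where the boundary operator has not been controlled. Second, even granting $T-T_0$ is a compactly supported integral $n$-cycle, the constancy/isoperimetric machinery yields $T-T_0=\p Q$ for an integral $(n+1)$-current $Q$, i.e.\ for a $\Z$-valued BV function, not for a characteristic function; $T-T_0$ may a priori have density $2$ on a set of positive measure. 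Third, even if $T-T_0=\p[|V|]$, what you then get is $T=\p\bigl([|U_0|]+[|V|]\bigr)$, and $[|U_0|]+[|V|]=[|U_0\cup V|]+[|U_0\cap V|]$ is not of the form $[|W|]$ unless $U_0\cap V$ is null. So the claimed representation $T=\p[|W|]$ does not follow from the decomposition as stated, and without it the Miranda competitor $W^*$ cannot be formed. To repair this line of argument one would need the codimension-one decomposition theorem for area-minimizing cycles together with a multiplicity-one and single-sheet argument tailored to the free-boundary walls, at which point it is no simpler than the paper's calibration proof.
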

\begin{proof}
We provide a proof here using Stokes' formula for completeness.
From the maximum principle, spt$T\subset\Si\times[0,t]$. 
There are a sequence of open sets $\Om_i\subset\subset\Om$, and a sequence compact set $K_i\subset\Om_i$ with $K\subset K_i\setminus\p K_i$ such that $\p\Om_i,\p K_i\in C^\infty$, and $\Om_i\to\Om$, $K_i\to K$ both in the Hausdorff sense as $i\to\infty$. In particular,
\begin{equation}\aligned\label{OmOmiKiK}
\limsup_{i\to\infty}\mathcal{H}^n(\Om\setminus\Om_i)+\limsup_{i\to\infty}\mathcal{H}^n(K_i\setminus K)=0.
\endaligned\end{equation}
From Corollary \ref{captKKiOmOmi***}, we have
\begin{equation}\aligned\label{KiKOmicaplim}
\lim_{i\to\infty}\mathrm{cap}_t\left(K_i,\Om_i\right)=\mathrm{cap}_t(K,\Om).
\endaligned\end{equation}
Let $T_*=T\llcorner\left(\overline{\Om\setminus K}\times\R\right)$ and $T_i=T\llcorner\left((\Om_i\setminus K_i)\times\R\right)$ for each $i$. 
From Federer-Fleming compactness theorem, there is a minimizing current $\G_i$ in $\p(\Om_i\setminus K_i)\times[0,t]$ with boundary $T\llcorner\left(\p(\Om_i\setminus K_i)\times\R\right)-\p[|\Om_i|]\times\{0\}+\p[|K_i|]\times\{t\}$.
Since $\p T_i=T\llcorner\left(\p(\Om_i\setminus K_i)\times\R\right)$, it follows that
\begin{equation}\aligned
&\p(\G_i+T_*\setminus T_i)=\p\G_i+\p T_*-\p T_i\\
=&-\p[|\Om_i|]\times\{0\}+\p[|K_i|]\times\{t\}+\p[|\Om|]\times\{0\}-\p[|K|]\times\{t\}\\
=&\p[|\Om\setminus\Om_i|]\times\{0\}+\p[|K_i\setminus K|]\times\{t\}.
\endaligned\end{equation}
Noting spt$T\cap((\p K\cup\p\Om)\times\R)$ is countably $n$-rectifiable. With \eqref{OmOmiKiK} and the minimizing property of $\G_i$, we have
\begin{equation}\aligned\label{GiTTTi0}
\limsup_{i\to\infty}\left(\mathbf{M}(\G_i)-\mathbf{M}(T_*\setminus T_i)\right)\le\limsup_{i\to\infty}\mathcal{H}^n(\Om\setminus\Om_i)+\limsup_{i\to\infty}\mathcal{H}^n(K_i\setminus K)=0.
\endaligned\end{equation}

Let $u_i$ be a solution on $\Si$ associated with $\mathrm{cap}_t(K_i,\Om_i)$, and $M_i$ be the boundary of the subgraph of $u_i$ over $\overline{\Om_i\setminus K_i}$, i.e., $M_i=\p U_i\cap\left(\overline{\Om_i\setminus K_i}\times\R\right)$. Then
$\p M_i=\mathrm{spt}(\p(T_i+\G_i))$.
Let $$\mathbf{n}_{M_i}=\f{1}{\sqrt{1+|Du_i|^2}}\left(-Du_i+E_{n+1}\right)$$ 
denote the unit normal vector field to $M_i$.
From Lemma \ref{C1a-reg}, the vector field $\mathbf{n}_{M_i}$ can be extended to $M_i\cap\p\left(\overline{\Om_i\setminus K_i}\times\R\right)$ without confusion.
Let $X_i$ denote the vector field on $(\Om_i\setminus K_i)\times\R$ by translating $\mathbf{n}_{M_i}$ along $E_{n+1}$.  From \eqref{u}, we have
\begin{equation}\aligned\label{bdivX_i}
\overline{\mathrm{div}}(X_i)=0\qquad\qquad \mathrm{on}\ (\Om_i\setminus K_i)\times\R,
\endaligned\end{equation}
where $\overline{\mathrm{div}}$ denotes the divergence of $\Si\times\R$ as before.
Since spt$T$ is embedded (outside its singular set),
let $U_i$ denote the open set enclosed by spt$T_i$ and graph$_{u_i}$ (over $\overline{\Om_i\setminus K_i}$). 
Since $\p M_i=\mathrm{spt}(\p(T_i+\G_i))$,
from Stokes' formula for $X_i$ in $U_i$, with \eqref{bdivX_i} we get 
\begin{equation}\aligned\label{M_iTiGi}
\mathcal{H}^n(M_i)\le\int_{U_i}\overline{\mathrm{div}}(X_i)+\left|\int_{\mathrm{spt}(T_i+\G_i)}\lan\mathbf{n}_{M_i},X_i\ran\right|\le\mathbf{M}(T_i)+\mathbf{M}(\G_i).
\endaligned\end{equation}
From \eqref{GiTTTi0}, for any given $\ep>0$ there is a constant $i_\ep>0$ so that $\mathbf{M}(\G_i)\le\mathbf{M}(T_*\setminus T_i)+\ep$ for all $i\ge i_\ep$. Combining \eqref{M_iTiGi} and the definition of $M_i$, one gets
\begin{equation}\aligned\label{KiOkiTep}
\mathrm{cap}_t(K_i,\Om_i)+\mathcal{H}^n(\Om_i\setminus K_i)=&\mathcal{H}^n(M_i)\le\mathbf{M}(T_i)+\mathbf{M}(\G_i)\\
\le&\mathbf{M}(T_i)+\mathbf{M}(T_*\setminus T_i)+\ep=\mathbf{M}(T_*)+\ep.
\endaligned\end{equation}
Letting $i\to\infty$ in the above inequality, with \eqref{OmOmiKiK}\eqref{KiKOmicaplim} we get
\begin{equation}\aligned
\mathrm{cap}_t(K,\Om)+\mathcal{H}^n(\Om\setminus K)=\lim_{i\to\infty}\left(\mathrm{cap}_t(K_i,\Om_i)+\mathcal{H}^n(\Om_i\setminus K_i)\right)\le\mathbf{M}(T_*)+\ep.
\endaligned\end{equation}
Since $T$ is minimizing in $\overline{\Om\setminus K}\times\R$ and $\ep>0$ is arbitrary, we get 
$$\mathrm{cap}_t(K,\Om)+\mathcal{H}^n(\Om\setminus K)=\mathbf{M}(T_*),$$ 
which infers that
spt$T\cap((\Om\setminus K)\times\R)$ is a graph from \eqref{M_iTiGi}.
We complete the proof.
\end{proof}
\begin{remark}
Since $\mathrm{spt}T\cap(\p(\Om\setminus K)\times\R)$ in Theorem \ref{UniPlateau} may be not a graph over $\p(\Om\setminus K)$, we can not deduce directly that $\mathrm{spt}T$ is a graph by Schoen's theorem (see \cite{Sc} or Theorem 1.29 in \cite{CM}).
\end{remark}

By proposition \ref{UNIu} and the maximum principle, we have the following comparison.
\begin{proposition}\label{KK'OmOm'UNIQ}
Let $K,\Om,t,u$ be defined as in Theorem \ref{wExist}.
Let $\Om'\supset\Om$ be a bounded open set in $\Si$, and $K'\supset K$ be a compact set in $\Om$ with $\p\,\overline{\Om'\setminus K'}=\p(\Om'\setminus K')$.
For $0<t'\le t$, there is a BV solution $u'$ on $\Si$ associated with $\mathrm{cap}_{t'}(K',\Om')$ such that 
\begin{equation}\aligned
u\le u'+t-t'\qquad \mathrm{on}\ \Si.
\endaligned
\end{equation}
\end{proposition}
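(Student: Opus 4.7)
\emph{Proof proposal.} The plan is to construct $u'$ explicitly via a maximum. Let $u'_0$ be any BV solution associated with $\mathrm{cap}_{t'}(K',\Om')$ (which exists by Theorem \ref{wExist}), and set
$$u':=\max\{u'_0,\,u-(t-t')\}\qquad\text{on }\Si.$$
The inequality $u\le u'+(t-t')$ holds by construction. I first check admissibility of $u'$ for the $(K',\Om',t')$ problem: on $K'\setminus\p K'$, $u'_0=t'$ while $u-(t-t')\le t'$ (from $u\le t$), so $u'=t'$; on $\Si\setminus\overline{\Om'}\subset\Si\setminus\overline\Om$, both $u'_0$ and $u$ vanish, giving $u'=\max\{0,-(t-t')\}=0$; and $0\le u'\le t'$ is automatic. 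Hence $u'$ is a legitimate candidate.

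The main step is to show $u'$ is a minimizer. Pair it with $z:=\min\{u'_0,u-(t-t')\}$, and pass to the shifted functions $F:=u'+(t-t')=\max\{u,u'_0+(t-t')\}$ and $G:=z+(t-t')=\min\{u,u'_0+(t-t')\}$. The key tool is the $\mathcal{H}^n$-a.e.\ pointwise identity
$$\sqrt{1+|D\max(f,g)|^2}+\sqrt{1+|D\min(f,g)|^2}=\sqrt{1+|Df|^2}+\sqrt{1+|Dg|^2}\qquad\text{for BV }f,g,$$
which is equivalent to $\mathrm{Per}(U_f\cup U_g,W)+\mathrm{Per}(U_f\cap U_g,W)=\mathrm{Per}(U_f,W)+\mathrm{Per}(U_g,W)$ for the subgraphs, together with its trace companion $|\max(a,b)-c|+|\min(a,b)-c|=|a-c|+|b-c|$ controlling jump contributions on $\p K,\p K',\p\Om,\p\Om'$. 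Working with the global BV functional $\hat E(f):=\int_\Si(\sqrt{1+|Df|^2}-1)$, representation \eqref{captKOmu} and the translation invariance of $\hat E$ give $\hat E(u)=\mathrm{cap}_t(K,\Om)$ and $\hat E(u'_0+(t-t'))=\mathrm{cap}_{t'}(K',\Om')$. A bookkeeping check using the admissibility of $u'$ identifies $\hat E(F)$ with the Giusti energy of $u'$ in the $(K',\Om',t')$ problem (so $\hat E(F)\ge\mathrm{cap}_{t'}(K',\Om')$ by minimality of $u'_0$), and similarly $\hat E(G)$ with the Giusti energy of $G|_{\Om\setminus K}$ in the $(K,\Om,t)$ problem (so $\hat E(G)\ge\mathrm{cap}_t(K,\Om)$ by minimality of $u$). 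Integrating the pointwise identity then produces the balance
$$\hat E(F)+\hat E(G)=\mathrm{cap}_t(K,\Om)+\mathrm{cap}_{t'}(K',\Om').$$

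The two lower bounds together with this balance force equality $\hat E(F)=\mathrm{cap}_{t'}(K',\Om')$, so $u'$ is itself a BV solution associated with $\mathrm{cap}_{t'}(K',\Om')$, concluding the proof. The main technical obstacle is the careful bookkeeping of BV traces and jump contributions on $\p K,\p K',\p\Om,\p\Om'$ when decomposing $\Si$ into the regions $K$, $K'\setminus K$, $\Om\setminus K'$, $\Om'\setminus\overline\Om$, $\Si\setminus\overline{\Om'}$ in order to identify $\hat E(F)$ and $\hat E(G)$ correctly with the respective Giusti energies of the two capacity problems; Proposition \ref{UNIu} supplies the required rigidity in the degenerate case where the minimizer is non-unique up to additive constants.
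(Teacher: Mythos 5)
Your proposal takes a genuinely different route from the paper's. The paper argues at the level of area-minimizing currents: using Theorem \ref{UniPlateau} it identifies the boundary $M'$ of the subgraph of $u'+(t-t')$ as an area-minimizing hypersurface in $\overline{\Om\setminus K'}\times\R$, assumes $M$ enters the region above $M'$, runs a cut-and-paste argument to produce a new minimizer $T$, and derives a contradiction from unique continuation for the minimal hypersurface equation at a boundary point of $M\cap U_+$ where the graphic function of $T$ agrees with $u$ on one side and with $u'$ on the other. You instead stay entirely within the BV variational formulation and exploit the lattice structure of the area integrand: set $u':=\max\{u'_0,\,u-(t-t')\}$, which makes $u\le u'+(t-t')$ automatic, and then show $u'$ is a minimizer by pairing it with $\min\{u'_0,\,u-(t-t')\}$ and balancing energies. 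This bypasses Theorem \ref{UniPlateau} and unique continuation entirely, which is a clean simplification in spirit.

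There is, however, an error in the stated key tool. The claimed $\mathcal{H}^n$-a.e.\ pointwise identity
\begin{equation*}
\sqrt{1+|D\max(f,g)|^2}+\sqrt{1+|D\min(f,g)|^2}=\sqrt{1+|Df|^2}+\sqrt{1+|Dg|^2}
\end{equation*}
is \emph{not} an identity for general BV $f,g$: the correct statement is the submodularity \emph{inequality}
\begin{equation*}
\mathrm{Per}(U_f\cup U_g,W)+\mathrm{Per}(U_f\cap U_g,W)\le\mathrm{Per}(U_f,W)+\mathrm{Per}(U_g,W),
\end{equation*}
which translates to $\le$ for the area functionals. Equality holds a.e.\ for the absolutely continuous part of $Df,Dg$ (your reasoning via the set $\{f=g\}$ is fine there), but the singular/jump parts can cancel: e.g.\ $f=\chi_{(0,\infty)}$, $g=\chi_{(-\infty,0)}$ on $(-1,1)$ gives $\max\equiv 1$, $\min\equiv 0$ with no jump, while $Df,Dg$ each have unit jump mass, so the inequality is strict. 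Fortunately, the inequality is all you need: together with the two lower bounds $\hat E(F)\ge\mathrm{cap}_{t'}(K',\Om')$ and $\hat E(G)\ge\mathrm{cap}_t(K,\Om)$ (which your admissibility bookkeeping establishes), the chain
\begin{equation*}
\mathrm{cap}_{t'}(K',\Om')+\mathrm{cap}_t(K,\Om)\le \hat E(F)+\hat E(G)\le \mathrm{cap}_t(K,\Om)+\mathrm{cap}_{t'}(K',\Om')
\end{equation*}
is saturated, forcing $\hat E(F)=\mathrm{cap}_{t'}(K',\Om')$. So the conclusion survives once you replace the claimed identity with the inequality; also note that Proposition \ref{UNIu} is not actually needed, since the statement only requires exhibiting \emph{some} BV solution $u'$, and your construction produces one directly.
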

\begin{proof}
We only need to prove $u\le u'+t-t'$ on $\overline{\Om\setminus K'}$. Let $M'$ be the boundary of the subgraph of the solution $u'+t-t'$ in $\overline{\Om\setminus K'}\times\R$, which is an area-minimizing hypersurface in $\overline{\Om\setminus K'}\times\R$ by Theorem \ref{UniPlateau}.
Let $U_+$ and $U_-$ denote two components in $\overline{\Om\setminus K'}\times\R$ divided by $M'$, where $\overline{\Om\setminus K'}\times[t,\infty)\subset U_+$. 

Suppose $M\cap U_+\neq\emptyset$. Let $W_+$ be an open set enclosed by $M$ and $M'$ in $U_+$. Let $T$ be an $n$-current defined by $T=[|M'|]+\p[|W_+|]$. Then $T$ has multiplicity one with $\p T=\p[|M'|]$. From the minimizing $M$ and $M'$ in $\overline{\Om\setminus K'}\times\R$, we get $\mathbf{M}(T)=\mathcal{H}^n(M')$, and then conclude that $T$ is also minimizing in $\overline{\Om\setminus K'}\times\R$. Let $w$ be the graphic function of spt$T$, then $w$ satisfies the minimal hypersurface equation \eqref{u}. Let $z=(x,s)$ be a point in $\p(M\cap U_+)$ with $x\in\Om\setminus K'$ and $s\in\R$. 
Since both $|Du|$ and $|Du'|$ are finite at $x$, there are two non-empty open sets $B_+$ and $B_-$ in a ball $B\subset \Om\setminus K'$ with $x\in B$ so that $w=u$ on $B_+$ and $w=u'$ on $B_-$. This contradicts to the uniqueness of solutions to \eqref{u}. Hence, we get $M\cap U_+=\emptyset$. This completes the proof.
\end{proof}

\section{Half-space properties for minimal hypersurfaces in $\Si\times\R$}

\begin{lemma}\label{ConMR1}
Let $\Si$ be a $M$-parabolic Riemannian manifold.
Let $K$ be a compact set in $\Si$ and $\Om_i\supset K$ be a sequence of open sets with $\p\,\overline{\Om_i\setminus K}=\p(\Om_i\setminus K)$ and $d(p,\p\Om_i)\to\infty$ for some $p\in\Si$. If $u_i$ is a BV solution on $\Si$ associated with $\mathrm{cap}_t(K,\Om_i)$ for some $t>0$ and every integer $i\ge1$, then
$u_i\rightarrow t$ locally on $\Si$ in the $C^0$-sense.
\end{lemma}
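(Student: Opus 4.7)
The plan is to follow the strategy of Proposition \ref{ParaKE}: use $M$-parabolicity to drive the defining capacity to zero, extract an $L^1$-gradient bound, promote it to $L^1$-convergence via a Poincar\'e inequality anchored on the set where $u_i = t$, and finally upgrade to local $C^0$ convergence via interior regularity for the minimal hypersurface equation.

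First, the $M$-parabolicity of $\Sigma$ and Proposition \ref{ParaKE} give $\mathrm{cap}_t(K) = 0$. Pick $q \in K$; since $d(p, \partial \Omega_i) \to \infty$ and $K$ is compact, $R_i := d(q, \partial \Omega_i) \to \infty$, so $B_{R_i}(q) \subset \Omega_i$. The monotonicity \eqref{captmonot} yields
\[
\mathrm{cap}_t(K, \Omega_i) \le \mathrm{cap}_t\bigl(K, B_{R_i}(q)\bigr) \longrightarrow \mathrm{cap}_t(K) = 0.
\]
By the identity \eqref{captKOmu} of Theorem \ref{wExist}, the three nonnegative summands on the right-hand side all tend to zero; in particular $\int_{\Omega_i \setminus K}\bigl(\sqrt{1+|Du_i|^2}-1\bigr) \to 0$ and $\int_{\partial_- K}(t - u_i) \to 0$. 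Splitting the first integral on $\{|Du_i| \le 1\}$ and $\{|Du_i| \ge 1\}$ exactly as in \eqref{Bipphii-1*}, we obtain $\int_{B_R(p)\setminus K}|Du_i| \to 0$ for every $R > 0$.

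Next, setting $v_i := t - u_i$, we have $0 \le v_i \le t$ and $v_i \equiv 0$ on $K \setminus \partial K$. The full BV variation of $v_i$ on $B_R(p)$ splits into the smooth gradient part $\int_{B_R(p)\setminus K}|Du_i|$ and the jump across $\partial K$, the latter being dominated by $\int_{\partial_- K}(t - u_i)$; hence the BV variation of $v_i$ on $B_R(p)$ tends to zero. Assuming $\mathcal{H}^n(K) > 0$ (otherwise one enlarges $K$ slightly and uses Proposition \ref{KK'OmOm'UNIQ} to compare solutions), the Poincar\'e inequality for BV functions vanishing on a set of positive measure yields $v_i \to 0$ in $L^1_{\mathrm{loc}}(\Sigma)$, that is, $u_i \to t$ in $L^1_{\mathrm{loc}}(\Sigma)$.

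Finally, on $\Sigma \setminus \overline{K}$ each $u_i$ is a smooth solution of \eqref{u} with $0 \le u_i \le t$, and an interior gradient estimate for bounded minimal graphs (obtained from the Jacobi identity \eqref{DeMEn1nu} as exploited in Lemma \ref{MiniSmooth}) bounds $|Du_i|$ uniformly on compact subsets of $\Sigma \setminus \overline{K}$, the bound depending only on $t$ and the local geometry of $\Sigma$. Arzel\`a-Ascoli produces subsequential locally uniform limits that must equal the constant $t$ by the $L^1$ convergence; uniqueness of the limit then promotes the convergence to the full sequence, and combined with $u_i \equiv t$ on $K \setminus \partial K$ this yields the claimed local $C^0$ statement. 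The main obstacle is the third step: extracting the specific constant $t$ from a vanishing-gradient estimate requires an anchor of positive measure on which $u_i = t$, and the simultaneous control of the BV jumps of $u_i$ across $\partial K$ is precisely what the boundary term $\int_{\partial_- K}(t-u_i)$ in \eqref{captKOmu} provides.
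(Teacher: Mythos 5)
Your proof is correct and (appropriately) parallel to the paper's approach through the step that establishes $u_i\to t$ smoothly on compact subsets of $\Si\setminus\overline{K}$ — your Poincar\'e argument is a valid alternative to the paper's use of BV compactness of the subgraph characteristic functions (the paper extracts a limiting characteristic function $\chi_U$ with $\p U=\Si\times\{t\}$; you instead reuse the energy-splitting trick of \eqref{Bipphii-1*} and a Poincar\'e inequality anchored on $K$). Either route yields $L^1_{\mathrm{loc}}$ convergence and then, by interior estimates for the minimal hypersurface equation, locally uniform convergence away from $\overline{K}$. But there is a genuine gap in your final sentence.

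Having $u_i\to t$ uniformly on compact subsets of the open set $\Si\setminus\overline{K}$, together with $u_i\equiv t$ on $K\setminus\p K$, does not give uniform convergence on a compact set $F$ that meets $\p K$. For such an $F$ you must rule out a sequence $x_i\in F\cap(\Si\setminus K)$ with $x_i\to x_0\in\p K$ and $u_i(x_i)\le t-\ep$. Your interior gradient estimate degenerates as $x_i$ approaches $\p K$, your $L^1_{\mathrm{loc}}$ convergence only rules out a drop on a set of positive measure, and the vanishing of $\int_{B_R(p)\setminus K}|Du_i|$ is compatible, via the coarea formula, with $u_i$ dipping to $t-\ep$ on a shrinking neighborhood of a single boundary point whose level sets have tiny $\mathcal{H}^{n-1}$ measure (recall $\p K$ is not assumed smooth). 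So these soft compactness arguments cannot close the boundary case. The paper handles it with a variational step that uses minimality in an essential way: for a bounded open $\Om\supset\supset K$, if $t_*:=\liminf_i\inf_\Om u_i<t$, one truncates $u_i^*:=\max\{u_i,(t+t_*)/2\}$ on the (eventually nonempty, precompact) sublevel set $\{u_i<(t+t_*)/2\}\cap\Om$; this leaves the $\p\Om_i$ trace term unchanged, weakly decreases the $\p K$ trace term, and strictly decreases the Dirichlet term, contradicting the minimality expressed by \eqref{captKOmu}. This step, or some substitute exploiting the minimality of $u_i$ near $\p K$, is what your proposal is missing, and without it the claimed local $C^0$ convergence on all of $\Si$ does not follow.
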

\begin{proof}
From the definition of cap$_t(K)$ and the $M$-parabolicity of $\Si$, we have
\begin{equation}\aligned\label{Omiconv0}
\lim_{i\rightarrow\infty}\mathrm{cap}_t(K,\Om_i)=0.
\endaligned
\end{equation}
Let $U_i$ denote the subgraph of the BV solutions $u_i$ defined by
$$U_i=\{(x,t)\in\Si\times\R|\, t\le u_i(x)\},$$
and $\chi_{_{U_i}}$ be the characteristic function on $U_i$.
For any bounded open set $V\subset\Si$ with $K\subset V$, we have
\begin{equation}\aligned\label{UDVi*}
&\int_{V\times\R}\left|\overline{\na}\chi_{_{U_i}}\right|=\mathcal{H}^n(K)+\int_{\p K}(t-u_i)+\int_{V\setminus K}\sqrt{1+|Du_i|^2}\\
\le& \mathcal{H}^n(K)+\mathcal{H}^n(V\setminus K)+\mathrm{cap}_t(K,\Om_i)=\mathcal{H}^n(V)+\mathrm{cap}_t(K,\Om_i)
\endaligned\end{equation}
for each $i$ with $V\subset \Om_i$.
From compactness theorem for functions of bounded variation (see Theorem 2.6 in \cite{S} for instance), there is a function $\phi$ in $BV_{loc}(\Si\times\R)$ such that
\begin{equation}\aligned\label{phichiVi}
\lim_{i\rightarrow\infty}\int_{V\times\R}|\phi-\chi_{_{U_i}}|=0
\endaligned\end{equation}
 up to a choice of the subsequence, and
\begin{equation}\aligned\label{UDphi*}
\int_{V\times\R}\left|\overline{\na}\phi\right|\le\liminf_{i\rightarrow\infty}\int_{V\times\R}\left|\overline{\na}\chi_{_{U_i}}\right|\le\mathcal{H}^n(V),
\endaligned\end{equation}
where we have used \eqref{Omiconv0}\eqref{UDVi*} in \eqref{UDphi*}. From \eqref{phichiVi}, $\chi_{_{U_i}}\rightarrow\phi$ for $\mathcal{H}^{n+1}$-almost all the points. So we can assume that $\phi$ is a characteristic function $\chi_{_U}$ for some open set $U\subset\Si\times\R$. From \eqref{UDphi*}, it follows that
\begin{equation}\aligned\label{PerUVVV}
\mathrm{Per}(U,V\times\R)=\int_{V\times\R}\left|\overline{\na}\chi_{_U}\right|\le\mathcal{H}^n(V).
\endaligned\end{equation}
With \eqref{PerUVVV} and $\p U\cap((K\setminus\p K)\times\R)=\{(x,t)|\, x\in K\setminus\p K\}$, we conclude that 
\begin{equation}\aligned\label{Vuxs}
\p U=\Si\times\{t\}.
\endaligned
\end{equation}
Since $u_i$ satisfies \eqref{u} on $\Si\setminus K$, $u_i$ converges smoothly to the constant $t$ on any compact set in $\Si\setminus K$.
For a bounded open set $\Om\supset\supset K$, we assume 
$$t_*:=\liminf_{i\to\infty}\inf_\Om u_i<t.$$
Then there is a sequence of points $x_i\in\overline{\Om\setminus K}$ with $\lim_{i\rightarrow\infty}u_i(x_i)=t_*$, whose limit belongs to $\p K$.
We define a function $u_i^*$ on $\Si$ by 
\begin{equation}
u_i^*=
\left\{\begin{split}
u_i\ \ \qquad \mathrm{if}\ u_i>(t+t_*)/2\\
(t+t_*)/2\qquad \mathrm{if}\ u_i\le (t+t_*)/2\\
\end{split}\right..
\end{equation}
Hence, for each large $i$ there is an nonempty open set $\Om_i'\subset\Om$ so that $u_i^*=(t+t_*)/2>u_i$ on $\Om'_i$ and $u_i^*=u_i$ on $\Om\setminus\overline{\Om'_i}$.
Combining with \eqref{captKOmu}, we get
\begin{equation}\aligned
\int_{\Om_i\setminus K}\left(\sqrt{1+|Du_i^*|^2}-1\right)&+\int_{\p K}(t-u_i^*)+\int_{\p\Om_i}u_i^*\\
<\int_{\Om_i\setminus K}\left(\sqrt{1+|Du_i|^2}-1\right)&+\int_{\p K}(t-u_i)+\int_{\p\Om_i}u_i=\mathrm{cap}_t(K,\Om_i)
\endaligned
\end{equation}
for the large $i$.
It's a contradiction. This completes the proof.
\end{proof}
As an application, we prove a half-space theorem for minimal hypersurfaces in $\Si\times\R$.
\begin{theorem}\label{half-space}
Let $\Si$ be a $M$-parabolic Riemannian manifold. Then any minimal hypersurface properly immersed in $\Si\times\R^+$, must be a slice $\Si\times\{c\}$ for some constant $c\in\R$.
\end{theorem}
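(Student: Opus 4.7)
The plan is a contradiction argument combining Lemma \ref{ConMR1} with the strong maximum principle for minimal hypersurfaces. Assume $M$ is connected and set $c := \inf\{t : (x,t) \in M\} \ge 0$. If $c$ is attained at some point $(x_0, c) \in M$, then $M \subset \Si \times [c, \infty)$ is tangent from above at $(x_0, c)$ to the minimal slice $\Si \times \{c\}$; the interior strong maximum principle, together with analytic continuation and the connectedness of $M$, forces $M = \Si \times \{c\}$. Thus the substance of the proof is to show that $c$ must be attained.

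Suppose, toward a contradiction, that $c$ is not attained. Choose $\tau > c$ and a point $(p_*, t_*) \in M$ with $c < t_* < \tau$; let $K = \overline{B}_r(p_*)$ and $\Om_i = B_{R_i}(p_*)$ with $R_i \to \infty$. By Theorem \ref{wExist} there is a BV solution $u_i$ associated with $\mathrm{cap}_\tau(K, \Om_i)$, smooth minimal on $\Om_i \setminus K$, equal to $\tau$ on $K \setminus \partial K$, and vanishing outside $\overline{\Om}_i$. Lemma \ref{ConMR1} gives $u_i \to \tau$ locally in $C^0$, and in particular $\mathrm{cap}_\tau(K, \Om_i) \to 0$; by the representation in Theorem \ref{wExist}, the boundary integral $\int_{\partial \Om_i} u_i$ also tends to $0$. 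I now slide: for $s \in \R$ put $U_i^s = \{(x, t) \in \Om_i \times \R : t < u_i(x) + s\}$ and $s_i^* = \sup\{s : M \cap U_i^s = \emptyset\}$. Then $s_i^* \ge -\tau$ since $U_i^{-\tau} \subset \Om_i \times (-\infty, 0]$ is disjoint from $M \subset \Si \times \R^+$, and $s_i^* \le t_* - \tau < 0$ since $(p_*, t_*) \in U_i^s$ whenever $s > t_* - \tau$. Properness of $M$ in the slab $\overline{B}_{R_i}(p_*) \times [0, \tau]$ and Hopf-Rinow yield a first-contact point $(x^*, t^*) \in M$ with $t^* = u_i(x^*) + s_i^*$ and $M \cap U_i^{s_i^*} = \emptyset$.

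By a generic choice of $r$ or $p_*$ I arrange $x^* \in \Om_i \setminus \overline{K}$, so that the graph $V_i^{s_i^*} = \{(x, u_i(x) + s_i^*)\}$ is a smooth minimal hypersurface near $(x^*, t^*)$ and $M$ is tangent to it from above. The strong maximum principle, combined with the real-analyticity of smooth minimal hypersurfaces and the connectedness of $M$, forces $M$ to contain the entire smooth portion of $V_i^{s_i^*}$ over $\Om_i \setminus \overline{K}$. For $i$ large enough, $\int_{\partial \Om_i} u_i$ is so small that on a set of positive $\mathcal{H}^{n-1}$-measure in $\partial \Om_i$ the boundary trace of $u_i$ is strictly less than $-s_i^*$; at such boundary points, the graph $V_i^{s_i^*}$ reaches heights below $0$, and by closedness of $M$ those heights are realised in $M$. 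This contradicts $M \subset \Si \times \R^+$, which completes the proof.

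The main obstacle is the control of the first-contact location $(x^*, t^*)$: a priori it could lie on $\partial K$, where the subgraph boundary of $u_i$ may include a vertical segment, or on $\partial \Om_i$, where the BV trace is generally non-trivial for fixed $i$. A genericity argument on $r$ and $p_*$ is the most direct remedy; alternatively, one upgrades the strong maximum principle to the singular boundary pieces via the area-minimizing current property of the graph supplied by Theorem \ref{UniPlateau}. Lemma \ref{ConMR1} plays a double role: conceptually it ensures the barriers $u_i$ genuinely approach the slice at height $\tau$, and technically it forces the mean trace of $u_i$ on $\partial \Om_i$ to zero, which is precisely what guarantees that the negative-height portion of the graph is non-empty for large $i$.
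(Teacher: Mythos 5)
Your overall strategy is the right one and mirrors the paper's: translate $M$ so that it accumulates on but does not touch $\Si\times\{0\}$, build barriers $u_i$ from $\mathrm{cap}_\tau(K,\Om_i)$, let Lemma \ref{ConMR1} push the barriers up to $\Si\times\{\tau\}$, and slide to a first-contact point to apply the maximum principle. However, two steps in your argument have genuine gaps that the paper's proof is designed precisely to avoid.

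First, the location of the first-contact point $(x^*,t^*)$ is not controlled. Since you take $K=\overline{B}_r(p_*)$ with $(p_*,t_*)\in M$ and $t_*<\tau$, the barrier over $K$ sits just below $M$ from the start, and it is entirely possible that first contact happens over $\p K$, where $\p U_i^{s_i^*}$ contains a vertical segment that is not minimal and, for a geodesic ball $K$, can have strictly negative mean curvature toward $M$ — so no comparison principle applies. Invoking ``a generic choice of $r$ or $p_*$'' is not a proof; nothing in the construction rules this out. The paper sidesteps this cleanly by choosing $K$ and $\tau$ so that $S\cap(K\times[0,\tau])=\emptyset$ to begin with, which forces the contact to be at $\p\Om_i$, and then handles $\p\Om_i$-contact by observing that the minimizing property of $[|M_i|]$ gives $H_{M_i}\ge0$ on $M_i$ near $\p\Om_i$ (via one-sided variations as in \eqref{phitMiHMi}); comparison with the minimal $S$ then rules out that contact too. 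Your proposal does not use the $H_{M_i}\ge0$ observation at all, so contact on the vertical pieces over $\p(\Om_i\setminus K)$ is simply unaddressed.

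Second, the final contradiction — ``for $i$ large enough, on a positive $\mathcal{H}^{n-1}$-measure subset of $\p\Om_i$ the trace of $u_i$ is $<-s_i^*$'' — does not follow from $\int_{\p\Om_i}u_i\to0$. One has $-s_i^*\ge\tau-t_*>0$, but on an $M$-parabolic manifold $\mathcal{H}^{n-1}(\p\Om_i)$ may tend to $0$ along a subsequence (this is exactly what happens in the example of Proposition \ref{M-par-not-par}, where $f(r_i)\to0$), so it is consistent that $u_i\ge-s_i^*$ everywhere on $\p\Om_i$ with $\int_{\p\Om_i}u_i\to0$. In that scenario your analytic-continuation step produces no point of $M$ below height $0$ and there is no contradiction. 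The paper instead derives the contradiction at the other end: having shown $S\cap M_i=\emptyset$ for all $i$ and $M_i\to\Si\times\{\tau\}$, the maximum principle forces $S\cap(\Si\times\{\tau\})=\emptyset$, which contradicts the choice of the (arbitrarily small) level $\tau$ at which $S$ was assumed to meet the slice. I'd recommend replacing your boundary-trace argument with this and supplying the $H_{M_i}\ge0$ lemma, which are the two load-bearing devices your sketch omits.
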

\begin{proof}
Let $S$ be a minimal hypersurface properly immersed in $\Si\times\R^+$. Let us prove Theorem \ref{half-space} by contradiction.
Assume $S$ is not a slice.
By translating $S$ vertically, we can assume that $S\subset\Si\times\R^+$ and there is a constant $\de>0$ such that $S\cap (\Si\times\{\de'\})\neq\emptyset$ for any $\de'\in(0,\de)$.
Hence, there are a point $p\in\Si$, a compact set $K\subset B_1(p)$ with smooth boundary, and a positive constant $\tau\in(0,\de)$ such that $S\cap (K\times[0,\tau])=\emptyset$.
For each integer $i\ge1$, let $\Om_i$ be a sequence of open sets in $B_{i+1}(p)$ with smooth boundary and $\overline{B}_i(p)\subset\Om_i$.
Let $u_i$ be a BV solution on $\Si$ associated with $\mathrm{cap}_\tau(K,\Om_i)$ for each $i$. Let $M_i$ denote the boundary of the subgraph of $u_i$ in $\overline{\Om_i\setminus K}\times\R$, i.e.,
$$M_i=\p\{(x,s)|\, x\in\Si,\ s<u_i(x)\}\cap\left(\overline{\Om_i\setminus K}\times\R\right).$$

From Lemma \ref{C1a-reg}, $M_i$ is smooth to the boundary. Let $H_{M_i}$ denote the mean curvature of $M_i\setminus\p M_i$ w.r.t. the unit normal vector $\mathbf{n}_i$ with $\lan\mathbf{n}_i,E_{n+1}\ran\ge0$. Given $i$, let $Y=-f\mathbf{n}_i$ be a vector field with a $C^1$-function $f\ge0$ on $M_i\setminus(B_i(p)\times\R)$ and supp$f\subset M_i\setminus((B_i(p)\times\R)\cup\p M_i)$. Let $\Phi_t$ denote a one-parameter family of mappings satisfying by $\f{d}{dt}\Phi_t=Y\circ\Phi_t$ and $\Phi_0=identity$. Then $\Phi_t(M_i)\subset\overline{\Om_i\setminus K}\times\R$ for all small $t>0$, and
\begin{equation}\aligned\label{phitMiHMi}
\f{d}{dt}\Big|_{t=0}\mathcal{H}^n(\Phi_t(M_i))=-\int_{M_i}\lan Y,H_{M_i}\mathbf{n}_i\ran=\int_{M_i} fH_{M_i}.
\endaligned\end{equation}
From Theorem \ref{UniPlateau}, $[|M_i|]$ is a minimizing current in $\overline{\Om_i\setminus K}\times\R$.
Hence, \eqref{phitMiHMi} implies $H_{M_i}\ge0$ on $M_i\setminus((B_i(p)\times\R)\cup\p M_i)$ by choosing a suitable $f$ satisfying the above condition.

We claim 
$$S\cap M_i=\emptyset\qquad \mathrm{for\ each}\ i\ge1.$$
Let us prove the claim by contradiction. Assume $S\cap M_i\neq\emptyset$ for some $i$. Let $M_{i,t}=\{(x,s+t)|\, (x,s)\in M_i\}$ for every $t\in\R$. Then there are a constant $t'\le0$ and a point $z\in S\cap M_{i,t'}$ so that $S\cap M_{i,t}=\emptyset$ for all $t<t'$. In other words, $M_{i,t'}$ is on the one side of $S$ in a small neighborhood of $z$.
Noting $M_{i,t'}\setminus(\p(\Om_i\setminus K)\times\R)$ is minimal.
From $S\cap (K\times[0,\tau])=\emptyset$ and the maximum principle of minimal hypersurfaces (see (7.23) in \cite{CM} for instance), we conclude $z=(x,t_x)$ for some $x\in\p\Om_i$ and $t_x>0$. But, this is also impossibe by the maximum principle and $H_{M_i}\ge0$ in a small neighborhood of $z$. Hence, we get the claim.

From Lemma \ref{ConMR1},  $M_i$ converges to $\Si\times\{\tau\}$ as $i\rightarrow\infty$. From the maximum principle again, $S\cap (\Si\times\{\tau\})=\emptyset$.
However, this contradicts to that $\tau\in(0,\de)$ and $S\cap (\Si\times\{\de'\})\neq\emptyset$ for any $\de'\in(0,\de)$. We complete the proof.
\end{proof}
With the help of the maximum principle (see \cite{W} for instance), Theorem \ref{half-space} still holds clearly provided the minimal hypersurface there allows singularities of Hausdorff dimension $<n-1$. 
By following the proof of Theorem \ref{half-space} step by step, we have a slice theorem as follows.
\begin{corollary}\label{half-space*}
Let $\Si$ be a $M$-parabolic Riemannian manifold. Any smooth mean concave domain in $\Si\times[0,\infty)$ must be $\Si\times(c_1,c_2)$ for constants $0\le c_1< c_2\le\infty$.
\end{corollary}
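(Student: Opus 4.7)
My plan is to follow the proof of Theorem \ref{half-space} step by step, replacing the minimal hypersurface $S$ with the boundary $\partial U$ of the mean concave domain. The mean concavity hypothesis $H_{\partial U}\ge 0$ (with respect to the outward normal from $U$) supplies exactly the right orientation at the touching-point maximum principle, so the whole argument goes through unchanged modulo one careful orientation check.

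First I would carry out the reduction. Let $U$ be a smooth mean concave domain in $\Si\times[0,\infty)$, set
\begin{equation*}
c_1=\inf\{s\ge0:\,U\cap(\Si\times\{s\})\ne\emptyset\},\qquad c_2=\sup\{s\ge0:\,U\cap(\Si\times\{s\})\ne\emptyset\},
\end{equation*}
and suppose for contradiction that $U\ne\Si\times(c_1,c_2)$. Since $U$ is connected, its vertical projection is an interval, so $(c_1,c_2)\subset\{s:U\cap(\Si\times\{s\})\ne\emptyset\}$. By a connectedness argument on slices, there must then exist $s_0\in(c_1,c_2)$ and $p_0\in\Si$ with $(p_0,s_0)\in\partial U$, so the component of $\partial U\cap(\Si\times\R^+)$ through $(p_0,s_0)$ is not a horizontal slice. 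Exactly as in the opening of the proof of Theorem \ref{half-space}, a suitable vertical translation of $U$ then produces $\de>0$, a point $p\in\Si$, a compact $K\subset B_1(p)$ with smooth boundary, and $\tau\in(0,\de)$ such that $\partial U\cap(\Si\times\{\de'\})\ne\emptyset$ for every $\de'\in(0,\de)$ and $\partial U\cap(K\times[0,\tau])=\emptyset$.

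Next I would introduce the approximating family $M_i$ verbatim from Theorem \ref{half-space}: pick open $\Om_i\subset B_{i+1}(p)$ with $\overline{B}_i(p)\subset\Om_i$ and smooth boundary, let $u_i$ be a BV solution on $\Si$ associated with $\mathrm{cap}_\tau(K,\Om_i)$, and set $M_i=\partial\{(x,s):\,s<u_i(x)\}\cap(\overline{\Om_i\setminus K}\times\R)$. By Lemma \ref{C1a-reg} and Theorem \ref{UniPlateau}, $M_i$ is smooth up to the boundary and minimizing in $\overline{\Om_i\setminus K}\times\R$, and the one-sided variation argument yields $H_{M_i}\ge 0$ with respect to the upward normal on the boundary portion. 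The core claim is $M_i\cap\partial U=\emptyset$ for every $i$. Supposing otherwise, let $t'\le 0$ be the infimum of the $t$ for which the vertical translate $M_{i,t}$ meets $\partial U$, and pick $z\in M_{i,t'}\cap\partial U$. By the choice of $K$ and $\tau$, for $t<t'$ the translate $M_{i,t}$ is disjoint from $\partial U$ and in fact lies in the complement of $U$, so $M_{i,t'}$ lies on the complement side of $\partial U$ near $z$; hence the outward unit normal to $U$ at $z$ is parallel (not antiparallel) to the upward unit normal of $M_{i,t'}$. Both $H_{\partial U}$ and $H_{M_{i,t'}}$ are therefore nonnegative with respect to this common normal direction, so the interior (or boundary, if $z\in\partial M_{i,t'}$) touching maximum principle forces $\partial U=M_{i,t'}$ in a neighborhood of $z$, contradicting $\partial U\cap(K\times[0,\tau])=\emptyset$.

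Finally, Lemma \ref{ConMR1} gives $M_i\to\Si\times\{\tau\}$ locally, and a second application of the touching maximum principle (with $\Si\times\{\tau\}$ minimal and $\partial U$ having $H\ge 0$ with respect to the outward normal from $U$) yields $\partial U\cap(\Si\times\{\tau\})=\emptyset$, contradicting $\tau\in(0,\de)$. The main obstacle is the orientation bookkeeping at the first-touching point $z$: one must verify carefully that the outward-from-$U$ normal and the upward-on-$M_{i,t'}$ normal actually coincide at $z$, so that the two nonnegative mean curvature conditions legitimately combine in a single application of the maximum principle. Everything else is a direct transcription of the proof of Theorem \ref{half-space}.
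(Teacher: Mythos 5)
Your overall plan — transplant the proof of Theorem \ref{half-space} with $\partial U$ in place of $S$ — is exactly what the paper does, and your reduction to the case where $\partial U$ approaches $\Sigma\times\{0\}$ while missing $K\times[0,\tau]$ is sound (modulo the case where a slice $\Sigma\times\{0\}$ is itself a component of $\partial U$, which needs a separate word). However, the orientation check you yourself flag as ``the main obstacle'' is carried out incorrectly, and the maximum principle as you then invoke it does not close.

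Trace through the two disjointness facts available at the first touching time $t'$. Because $\partial V_{i,t}\cap\partial U=\emptyset$ for $t<t'$ (where $V_{i,t}$ is the subgraph), one has $\partial U\cap V_{i,t'}=\emptyset$; and because $M_{i,t}\cap\partial U=\emptyset$ for $t<t'$, one has $M_{i,t'}\cap U=\emptyset$. Write everything near $z$ as graphs over the common tangent plane, with coordinate $s$ in the direction $\mathbf{n}_i$ (the unit normal to $M_{i,t'}$ with $\langle\mathbf{n}_i,E_{n+1}\rangle\ge 0$): $M_{i,t'}=\{s=f_1\}$, $\partial U=\{s=f_2\}$, $V_{i,t'}=\{s<f_1\}$. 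The first fact forces $f_2\ge f_1$; the second then forces $U=\{s>f_2\}$ locally (otherwise $f_1\ge f_2$ as well, i.e.\ they already agree). Hence the outward normal of $U$ at $z$ points in the $-\mathbf{n}_i$ direction: $\mathbf{n}_{\mathrm{out}}=-\mathbf{n}_i$. This is the opposite of your claim that they are parallel.

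Antiparallelism is precisely what makes the maximum principle bite. It converts $H_{\partial U}\ge 0$ w.r.t.\ $\mathbf{n}_{\mathrm{out}}$ into $H_{\partial U}\le 0$ w.r.t.\ $\mathbf{n}_i$, while $H_{M_{i,t'}}\ge 0$ w.r.t.\ $\mathbf{n}_i$ (zero on the graphical part, $\ge 0$ on the obstacle part over $\partial\Omega_i$ exactly as in the theorem). With $\partial U$ lying on the $+\mathbf{n}_i$ side of $M_{i,t'}$ and the mean curvatures ordered $H_{\partial U}\le 0\le H_{M_{i,t'}}$ w.r.t.\ $\mathbf{n}_i$, the strong (or Hopf boundary) maximum principle for the quasilinear mean-curvature operator forces $\partial U\equiv M_{i,t'}$ near $z$, yielding the contradiction. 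By contrast, the statement you wrote --- both hypersurfaces have nonnegative mean curvature with respect to a single common normal --- carries no comparison and is not a hypothesis of any maximum principle: two internally tangent spheres of different radii share the outward normal at the tangent point and have mean curvature of the same sign there, yet they do not coincide. So your step ``both $\ge 0$ with respect to the common normal, hence coincidence'' is a non sequitur; the argument works only after fixing the orientation to antiparallel so that the two mean-curvature conditions pinch from opposite sides.
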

Theorem \ref{NONMeanCurv} implies that the $M$-nonparabolic condition is sharp in Corollary \ref{half-space*}.

\section{Boundary behavior of solutions associated with capacities}

Let $\Om$ be a bounded open set in $\Si$, and $K$ be a compact set in $\Om$. Suppose $\p\,\overline{\Om\setminus K}=\p(\Om\setminus K)$. 
Let $u$ be a BV solution associated with $\mathrm{cap}_t(K,\Om)$,
and $M$ be the boundary of the subgraph of $u$ in $\overline{\Om\setminus K}\times\R$, i.e.,
$$M=\p\{(x,\tau)\in\Si\times\R|\, \tau< u(x)\}\cap\left(\overline{\Om\setminus K}\times\R\right).$$
Let $\na$ denote the Levi-Civita connection on the regular part of $M$ and $\bn$ denote the Levi-Civita connection of $\Si\times\R$ w.r.t. its standard product metric as before.
Let $E_{n+1}$ denote the constant unit vector in $\Si\times\R$ perpendicular to $\Si\times\{0\}$. 
Let $\mathbf{h}$ denote the height function on $\Si\times\R$, i.e.,
\begin{equation}\aligned\label{height}
\mathbf{h}(x,t)=t\qquad \mathrm{for\ each}\ \ (x,t)\in\Si\times\R.
\endaligned\end{equation}
Then $E_{n+1}=\bn \mathbf{h}$.

In general, the gradient $Du$ in Theorem \ref{wExist} may not exist everywhere on the boundary. Instead, we have the following existence on the boundary in the measure sense. 
\begin{lemma}\label{ExistTh}
There is a Borel measure $\mu_{_{\p M}}$ on $\p M$ so that $\mu_{_{\p M}}\ge0$ on $\p K\times\{t\}$, $\mu_{_{\p M}}\le0$ on $\p\Om\times\{0\}$, and
\begin{equation}\aligned\label{Intphibyparts}
\int_{M}\lan\na\phi,\na \mathbf{h}\ran=\int \phi d\mu_{_{\p M}}\qquad\qquad \mathrm{for\ every}\ \phi\in C^1(\Si\times\R).
\endaligned\end{equation}
\end{lemma}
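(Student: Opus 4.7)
The plan is to realize the functional $L(\phi):=\int_M\langle\nabla^M\phi,\nabla^M\mathbf{h}\rangle$ as pairing with a signed Borel measure, via smooth approximation and weak compactness. The first observation is that $E_{n+1}$ is parallel in $\Sigma\times\mathbb{R}$, so for every $\phi\in C^1$ one has $\mathrm{div}_M(\phi E_{n+1})=\langle\nabla^M\phi,E_{n+1}\rangle=\langle\nabla^M\phi,\nabla^M\mathbf{h}\rangle$, since the normal component of $E_{n+1}$ is orthogonal to the tangential gradient. Thus $L(\phi)$ is the first variation $\delta[|M|](\phi E_{n+1})$ of the minimizing current from Theorem \ref{UniPlateau}, which already forces the sought-after measure to be supported on $\partial M$.

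I would approximate $(K,\Omega)$ by smooth pairs $(K_i,\Omega_i)$ with $K\subset K_i\setminus\partial K_i\subset\Omega_i\subset\subset\Omega$, $\partial K_i,\partial\Omega_i\in C^\infty$, and Hausdorff convergence $K_i\to K$, $\Omega_i\to\Omega$, as in the proof of Theorem \ref{UniPlateau}. Let $u_i$ be the associated BV solutions of Theorem \ref{wExist} and $M_i$ the boundaries of their subgraphs over $\overline{\Omega_i\setminus K_i}$. By Lemma \ref{C1a-reg}, each $M_i$ is $C^{1,\alpha}$ up to $\partial M_i\subset(\partial K_i\times\{t\})\cup(\partial\Omega_i\times\{0\})$, so classical Stokes yields
\begin{equation*}
L_i(\phi):=\int_{M_i}\langle\nabla^{M_i}\phi,\nabla^{M_i}\mathbf{h}\rangle=\int_{\partial M_i}\phi\,\langle\nu_i,E_{n+1}\rangle\,d\mathcal{H}^{n-1}=:\int\phi\,d\mu_i,
\end{equation*}
where $\nu_i$ is the outward unit conormal. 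Because $M_i\subset\Sigma\times[0,t]$ with top boundary on $\partial K_i\times\{t\}$ and bottom boundary on $\partial\Omega_i\times\{0\}$, geometrically $\langle\nu_i,E_{n+1}\rangle\ge0$ on the former and $\le0$ on the latter, giving the prescribed signs of $\mu_i$.

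The family $\{\mu_i\}$ has uniformly bounded total variation. Taking $\phi\equiv1$ gives $\mu_i(\Sigma\times\mathbb{R})=L_i(1)=0$, so $|\mu_i|(\Sigma\times\mathbb{R})=2\mu_i(\partial K_i\times\{t\})$; testing with $\phi(x,\tau)=(\tau/t)\chi(x)$ where $\chi\equiv1$ on $\overline{\Omega}$ yields
\begin{equation*}
\mu_i(\partial K_i\times\{t\})=L_i(\phi)=\tfrac{1}{t}\int_{M_i}|\nabla^{M_i}\mathbf{h}|^2\le\tfrac{1}{t}\mathcal{H}^n(M_i),
\end{equation*}
and $\mathcal{H}^n(M_i)=\mathrm{cap}_t(K_i,\Omega_i)+\mathcal{H}^n(\Omega_i\setminus K_i)$ from Theorem \ref{wExist} is uniformly bounded via Corollary \ref{captKKiOmOmi***}. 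Banach--Alaoglu then supplies a subsequence $\mu_i\to\mu_{\partial M}$ weakly as finite signed Borel measures on $\overline{\Omega}\times[0,t]$, and the sign conditions persist because, for continuous $\phi\ge0$ supported near $\partial K\times\{t\}$ (resp.\ $\partial\Omega\times\{0\}$), only the nonnegative (resp.\ nonpositive) part of $\mu_i$ contributes for large $i$.

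Finally I would identify the limit by proving $L_i(\phi)\to L(\phi)$ for each $\phi\in C^1(\Sigma\times\mathbb{R})$. From the proof of Theorem \ref{UniPlateau}, the currents $[|M_i|]$ converge to $[|M|]$ with mass convergence; for integer multiplicity rectifiable currents this upgrades to varifold convergence, so first variations converge and hence $L_i(\phi)=\delta[|M_i|](\phi E_{n+1})\to\delta[|M|](\phi E_{n+1})=L(\phi)$. Combined with weak convergence $\int\phi\,d\mu_i\to\int\phi\,d\mu_{\partial M}$, this gives $L(\phi)=\int\phi\,d\mu_{\partial M}$. The main obstacle is precisely this last step: passing from current convergence with mass convergence to convergence of first variations (through varifold convergence) needs a careful invocation of geometric measure theory, and one must simultaneously check that the sign structure survives the Hausdorff convergence of the approximating boundaries.
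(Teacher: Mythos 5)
Your proposal is correct and shares most of its skeleton with the paper's proof: both approximate $(K,\Omega)$ by smooth pairs $(K_i,\Omega_i)$, apply Lemma \ref{C1a-reg} to get $C^{1,\alpha}$ boundary regularity for $M_i$, use Stokes' formula to write $L_i(\phi)=\int_{\partial M_i}\phi\langle\nu^i,E_{n+1}\rangle\,d\mathcal{H}^{n-1}$, and both rely on the identity $\mathcal{H}^n(M\cap U)=\lim_i\mathcal{H}^n(M_i\cap U)$ (the paper's \eqref{MiconvMU}) together with the current convergence from Federer--Fleming to pass $\int_{M_i}\langle\nabla^{M_i}\phi,\nabla^{M_i}\mathbf{h}\rangle\to\int_M\langle\nabla\phi,\nabla\mathbf{h}\rangle$. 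Where you diverge is in how the measure is produced. You bound $\|\mu_i\|$ uniformly (testing with $\phi=1$ and $\phi=(\mathbf{h}/t)\chi$, plus the uniform control of $\mathcal{H}^n(M_i)$ from Corollary \ref{captKKiOmOmi***}) and then extract a weak-$*$ limit via Banach--Alaoglu, checking that the sign structure survives because the positive and negative parts of $\mu_i$ live on the Hausdorff-separated sets $\partial K_i\times\{t\}$ and $\partial\Omega_i\times\{0\}$. The paper instead bypasses any compactness-of-measures argument: it defines a supremum-type functional $L$ directly on the cone $\mathcal{K}$ of continuous functions with prescribed signs on $\Gamma_\pm$, proves it is nonnegative, bounded, and additive (this is the content of \eqref{|naphiEn+1|}--\eqref{Lf1f2L++}, which requires a short approximation argument with mollifiers to get additivity rather than mere superadditivity), and then invokes the Riesz representation theorem for nonnegative functionals. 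Your route is somewhat more direct and uses a more standard compactness black box; the paper's route avoids handling signed measures entirely by splitting into the two sign-definite pieces up front and building the measure constructively. Both approaches hinge on the same nontrivial convergence $L_i(\phi)\to L(\phi)$ and justify it at essentially the same level of detail (you invoke varifold convergence of the $|[\,|M_i|\,]|$, the paper cites mass convergence \eqref{MiconvMU}; these are the same point, and the graphical structure of the $M_i$ plus the smooth interior convergence $u_i\to u$ make it legitimate in both cases), so neither has a gap the other fills.
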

\begin{proof}
There are a sequence of open sets $\Om_i\subset\Om$ and a sequence of compact sets $K_i\supset K$ with $\p\Om_i,\p K_i\in C^\infty$ such that $\Om_i\to\Om$ and $K_i\to K$ both in the Hausdorff sense.
From Corollary \ref{captKKiOmOmi***}, we get
\begin{equation}\aligned\label{captKKi000**}
\mathrm{cap}_t(K,\Om)=\lim_{i\to\infty}\mathrm{cap}_t\left(K_i,\Om_i\right).
\endaligned\end{equation}
Let $u_i$ be a BV solution on $\Si$ associated with $\mathrm{cap}_t(K_i,\Om_i)$,
and $M_i$ be the boundary of the subgraph of $u_i$ in $\overline{\Om_i\setminus K_i}\times\R$.
From Theorem \ref{UniPlateau}, each $M_i$ is an area-minimizing hypersurface in $\overline{\Om_i\setminus K_i}\times\R$, and $M$ is area-minimizing in $\overline{\Om\setminus K}\times\R$. 
Let $\Om'$ be an open set in $\Si$ with $\p\Om\in C^\infty$ and $\Om\subset\subset\Om'$. Denote $M'$ be the subgraph of $u$ in $\overline{\Om'}\times\R$ and $M_i'$ be the subgraph of $u_i$ in $\overline{\Om_i}\times\R$ for each $i$. From Proposition \ref{UNIu}, we can assume $M_i'$ converges to $M'$ in the Hausdorff sense without loss of generality. By Federer-Fleming compactness theorem \cite{FF}, we conclude that the currents $[|M_i'|]$ converge as $i\to\infty$ to $[|M'|]$, which implies
\begin{equation}\aligned\label{HnMcapUMi}
\mathcal{H}^n\left(M\cap U\right)\le\lim_{i\to\infty}\mathcal{H}^n\left(M_i\cap U\right)
\endaligned\end{equation}
for any bounded open set $U\subset\subset \Si\times\R$.
From \eqref{OmOmiKiK} and \eqref{captKKi000**}, we have
\begin{equation}\aligned\label{HnMcapUMi*}
\lim_{i\to\infty}\mathcal{H}^n(M_i)=&\lim_{i\to\infty}\left(\mathrm{cap}_t\left(K_i,\Om_i\right)+\mathcal{H}^n(\Om_i\setminus K_i)\right)\\
=&\mathrm{cap}_t\left(K,\Om\right)+\mathcal{H}^n(\Om\setminus K)=\mathcal{H}^n(M).
\endaligned\end{equation}
Combining \eqref{HnMcapUMi}\eqref{HnMcapUMi*}, we obtain
\begin{equation}\aligned\label{MiconvMU}
\mathcal{H}^n\left(M\cap U\right)=\lim_{i\to\infty}\mathcal{H}^n\left(M_i\cap U\right)
\endaligned\end{equation}
for any bounded open set $U\subset\subset \Si\times\R$ with $\mathcal{H}^n\left(M\cap\p U\right)=0$.

For each $i$, let $\na^{M_i}$ denote the Levi-Civita connection of $M_i$ w.r.t. its induced metric.
We define a sequence of linear operators $L_i$ by
\begin{equation}\aligned
L_i\phi=\int_{M_i}\left\lan\na^{M_i}\phi,\na^{M_i} \mathbf{h}\right\ran\qquad \mathrm{for\ any}\ \phi\in \mathrm{Lip}(\Si\times\R).
\endaligned\end{equation}
Here, $\mathrm{Lip}(\Si\times\R)$ denotes the space containing all Lipschitz functions on $\Si\times\R$.
From Lemma \ref{C1a-reg}, $M_i$ is $C^{1,\a}$ in a small neighborhood of $\p M_i$. Let $\nu^i\in\G(TM_i)$ denote the co-normal vector to $\p M_i$ with $\lan \nu^i,E_{n+1}\ran\ge0$. 
Let $\De_{M_i}$ be the Laplacian on $M_i$. It's clear that $\De_{M_i}\mathbf{h}=0$. By Stokes' formula, we have
\begin{equation}\aligned\label{LiphiS}
L_i\phi=\int_{M_i}\mathrm{div}_{M_i}(\phi \na^{M_i} \mathbf{h})-\int_{M_i}\phi\De_{M_i}\mathbf{h}=\int_{\p M_i}\phi\lan \nu^i,\na^{M_i}\mathbf{h} \ran=\int_{\p M_i}\phi\lan \nu^i,E_{n+1}\ran.
\endaligned\end{equation}
Taking the limit, with the definition of $L_i\phi$ and \eqref{MiconvMU} we get
\begin{equation}\aligned\label{naphilimMi}
\int_{M}\lan \na\phi,\na\mathbf{h}\ran=\lim_{i\to\infty}\int_{\p M_i}\phi\lan \nu^i,E_{n+1}\ran.
\endaligned\end{equation}
Let $\e_i$ be a  Lipschitz function defined by letting $\e_i=1-\f{2\r_{\p K_i}}{d(\p K_i,\p\Om_i)}$ for $\r_{\p K_i}\le\f12d(\p K_i,\p\Om_i)$, and 
$\e_i=-1+\f{2\r_{\p \Om_i}}{d(\p K_i,\p\Om_i)}$ for $\r_{\p \Om_i}\le\f12d(\p K_i,\p\Om_i)$. Here, $d(\p K_i,\p\Om_i)=\inf_{x\in\p K_i,y\in\p\Om_i}d(x,y)$. Noting that $\lan \nu^i,E_{n+1}\ran\ge0$ on $\p K_i\times\{t\}$ and $\lan \nu^i,E_{n+1}\ran\le0$ on $\p\Om_i\times\{0\}$. 
Then combining with \eqref{HnMcapUMi*}\eqref{LiphiS}\eqref{naphilimMi}, we obtain
\begin{equation}\aligned\label{|naphiEn+1|}
\left|\int_{M}\lan \na\phi,\na\mathbf{h}\ran\right|=&\liminf_{i\to\infty}\left|\int_{\p M_i}\phi\lan \nu^i,E_{n+1}\ran\right|\le\sup_{\p M}|\phi|\liminf_{i\to\infty}\left|\int_{\p M_i}\e_i\lan \nu^i,E_{n+1}\ran\right|\\
=&\sup_{\p M}|\phi|\liminf_{i\to\infty}\left|\int_{M_i}\left\lan \na^{M_i}\e_i,\na^{M_i} \mathbf{h}\right\ran\right|\\
\le&\sup_{\p M}|\phi|\liminf_{i\to\infty}\f{2\mathcal{H}^n(M_i)}{d(\p K_i,\p\Om_i)}=\sup_{\p M}|\phi|\f{2\mathcal{H}^n(M)}{d(\p K,\p\Om)}.
\endaligned\end{equation}

Set $\G_+=\p K\times\{t\}$ and $\G_-=\p\Om\times\{0\}$ for convenience. Let $\mathcal{K}$ denote a space containing every function $f\in C^0(\Si\times\R)$ with $\pm f\ge0$ on $\G_\pm$. 
We define a functional $L$ on $\mathcal{K}$ by
\begin{equation}\aligned
Lf=\sup\left\{\int_{M}\lan\na\phi,\na \mathbf{h}\ran\bigg|\ \phi\in\mathrm{Lip}(\Si\times\R),\, 0\le\pm\phi\le\pm f\ \mathrm{on}\ \G_\pm\right\}
\endaligned\end{equation}
for each $f\in\mathcal{K}$.
From \eqref{|naphiEn+1|}, $L$ is bounded.
Let $f_1,f_2\in \mathcal{K}$ with $f_1\equiv f_2\equiv0$ on $\G_-$. By the definition, we get
\begin{equation}\aligned\label{Lf1f2**}
Lf_1+Lf_2\le L(f_1+f_2).
\endaligned\end{equation}
For any $\ep>0$, there is a function $\phi_\ep\in\mathrm{Lip}(\Si\times\R)$ with $0\le\phi_\ep\le f_1+f_2$ on $\G_+$ and $\phi_\ep=0$ on $\G_-$ such that 
$$L(f_1+f_2)\le\ep+\int_{M}\lan\na\phi_\ep,\na \mathbf{h}\ran.$$
By mollifiers, we could further assume $\phi_\ep+\ep'\le f_1+f_2$ on $\G_+$ without loss of generality for some positive $\ep'<\ep$. Let $\phi_{\ep,i}\in\mathrm{Lip}(\Si\times\R)$ with $f_i-\ep'/2\le\phi_{\ep,i}\le f_i$ on $\G_+$ and $\phi_{\ep,i}=0$ on $\G_-$ for $i=1,2$. Then $\phi_{\ep,1}+\phi_{\ep,2}\ge f_1+f_2-\ep'\ge\phi_\ep$ on $\G_+$.
From \eqref{LiphiS}, we have
\begin{equation}\aligned
\int_{M}\lan \na\phi_\ep,\na\mathbf{h}\ran=&\lim_{i\to\infty}\int_{\p M_i}\phi_\ep\lan \nu^i,E_{n+1}\ran\le\lim_{i\to\infty}\int_{\p M_i}(\phi_{\ep,1}+\phi_{\ep,2})\lan \nu^i,E_{n+1}\ran\\
=&\int_{M}\lan \na(\phi_{\ep,1}+\phi_{\ep,2}),E_{n+1}\ran\le Lf_1+Lf_2.
\endaligned\end{equation}
Combining \eqref{Lf1f2**}, we get
\begin{equation}\aligned\label{Lf1f2L++}
L(f_1+f_2)= Lf_1+Lf_2.
\endaligned\end{equation}
Clearly, \eqref{Lf1f2L++} holds for $f_1,f_2\in \mathcal{K}$.
It's not hard to see that $L(cf)=cLf \ge 0$ for any constant $c\ge0$ and $f\in \mathcal{K}$. 
Hence, from Riesz representation theorem for non-negative functionals (see Theorem 4.1 in \cite{S} for instance), there is a Borel measure $\mu_{_{\p M}}$ on $\p M$ so that $\mu_{_{\p M}}\ge0$ on $\p K\times\{t\}$, $\mu_{_{\p M}}\le0$ on $\p\Om\times\{0\}$, and
\begin{equation}\aligned
Lf=\int f d\mu_{_{\p M}}
\endaligned\end{equation}
for each $f\in \mathcal{K}$. 
In particular, \eqref{Intphibyparts} holds by decomposition of the $C^1$-function $\phi$ into the difference of two $C^1$-functions $\phi_1,\phi_2\in\mathcal{K}$.
\end{proof}
We call $\mu_{_{\p M}}$ \emph{the co-normal measure to $\p M$}.
It's clear that $\mu_{_{\p M}}$ is independent of the choice of the BV solutions associated with $\mathrm{cap}_t(K,\Om)$ even the solutions are not unique.
\begin{theorem}\label{RemfiniteHn-1pK}
We further assume that $\p(\Om\setminus K)$ has finite $\mathcal{H}^{n-1}$-measure. Then the co-normal vector to $\p M$  (pointing out $M$) exists a.e., denoted by $\nu$, satisfies $\mu_{_{\p M}}=\left\lan\nu,E_{n+1}\right\ran \mathcal{H}^{n-1}$. Namely,
\begin{equation}\aligned\label{defthu}
\int_{M}\lan\na\phi,\na \mathbf{h}\ran=\int_{\p M}\phi\left\lan\nu,E_{n+1}\right\ran d\mathcal{H}^{n-1}\qquad\qquad \mathrm{for\ every}\ \phi\in C^1(\Si\times\R).
\endaligned\end{equation}
\end{theorem}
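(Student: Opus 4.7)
The plan is to refine the Riesz-representation measure $\mu_{\partial M}$ from Lemma \ref{ExistTh} to an explicit $\mathcal{H}^{n-1}$-density equal to $\langle \nu, E_{n+1}\rangle$, in two stages: first prove absolute continuity with respect to $\mathcal{H}^{n-1}|_{\partial M}$, then identify the Radon--Nikodym density pointwise.

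\textbf{Step 1 (absolute continuity).} I would rerun the approximation of Lemma \ref{ExistTh}, now choosing smooth $K_i \supset K$ and $\Omega_i \subset \Omega$ so that, in addition to Hausdorff convergence, $\mathcal{H}^{n-1}(\partial K_i) \to \mathcal{H}^{n-1}(\partial K)$ and $\mathcal{H}^{n-1}(\partial \Omega_i) \to \mathcal{H}^{n-1}(\partial \Omega)$; this is a standard smoothing for sets of finite perimeter, made possible by $\mathcal{H}^{n-1}(\partial(\Omega\setminus K))<\infty$. By Lemma \ref{C1a-reg} each $\partial M_i$ is $C^{1,\alpha}$ and lies in $(\partial K_i \cup \partial \Omega_i) \times [0,t]$; comparing $M_i$ against the obvious horizontal competitor yields a uniform bound $\mathcal{H}^{n-1}(\partial M_i) \le \mathcal{H}^{n-1}(\partial K_i) + \mathcal{H}^{n-1}(\partial \Omega_i) + o(1)$. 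Combining with $|\nu^i|=1$ in \eqref{LiphiS} and localizing the test function $\phi$ near a Borel set $E \subset \partial M$, passing to the limit $i\to\infty$ gives
$$|\mu_{\partial M}|(E) \le \mathcal{H}^{n-1}(E) \qquad \text{for every Borel } E \subset \partial M.$$
The Radon--Nikodym theorem then produces a Borel function $g$ with $|g| \le 1$ and $\mu_{\partial M} = g\, \mathcal{H}^{n-1}|_{\partial M}$.

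\textbf{Step 2 (rectifiability of $\partial M$ and definition of $\nu$).} The uniform mass bound on $\partial[|M_i|]$ together with Federer--Fleming compactness implies that $\partial[|M|]$ is an integer multiplicity $(n-1)$-rectifiable current, so $\partial M$ is countably $(n-1)$-rectifiable and admits approximate tangent planes $\mathcal{H}^{n-1}$-a.e.; since $M$ itself is $n$-rectifiable, $T_{x_0}M$ is also defined a.e. on $\partial M$, and hence so is the unit co-normal $\nu(x_0) \in T_{x_0}M \cap (T_{x_0}\partial M)^\perp$ pointing out of $M$.

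\textbf{Step 3 (density identification).} At $\mathcal{H}^{n-1}$-a.e. $x_0 \in \partial M$, boundary regularity for area-minimizing integer currents (Allard / Hardt--Simon type theorems) gives that the tangent cone of $[|M|]$ at $x_0$ is a multiplicity-one half-plane $H \subset T_{x_0}M$ with $\partial H = T_{x_0}\partial M$. Testing Lemma \ref{ExistTh} with rescaled bump functions concentrated near $x_0$ and passing to the blow-up limit on $H$, a direct Stokes' calculation on the smooth half-plane yields
$$g(x_0) = \lim_{r\downarrow 0}\frac{\mu_{\partial M}(\overline{B}_r(x_0))}{\mathcal{H}^{n-1}(\overline{B}_r(x_0) \cap \partial M)} = \langle \nu(x_0), E_{n+1}\rangle,$$
by Lebesgue differentiation for Radon measures on the rectifiable set $\partial M$, which gives \eqref{defthu}. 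The main obstacle is ensuring the tangent cone is a multiplicity-one half-plane at $\mathcal{H}^{n-1}$-a.e. boundary point: the hypothesis provides only finite $\mathcal{H}^{n-1}$-measure of $\partial(\Omega\setminus K)$ without further regularity, so one must invoke Allard-style boundary regularity together with the rectifiability of $\partial[|M|]$ from Step 2 to exclude higher-multiplicity or singular tangent cones almost everywhere; with that in hand, the remaining Stokes computation closing the argument is routine.
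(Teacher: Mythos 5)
Your proposal shares the paper's top-level strategy (smooth approximation, blow-up at $\mathcal{H}^{n-1}$-a.e.\ boundary point, Lebesgue differentiation), but the route through Step 3 leaves a genuine gap. The paper does not establish absolute continuity (your Step 1) as a preliminary lemma at all: starting from \eqref{phiThtKOmpM} it localizes the representation with the cutoff $\eta_r$ and computes the blow-up density directly at $C^1$-points of $\partial K\times\R$ and $\partial\Omega\times\R$; Lebesgue differentiation then delivers absolute continuity and the density formula in one stroke. Your Step 1 is a harmless variant, although the assertion $\mathcal{H}^{n-1}(\partial K_i)\to\mathcal{H}^{n-1}(\partial K)$ is not quite what finite-perimeter smoothing provides --- it gives $\mathcal{H}^{n-1}(\partial K_i)\to\mathrm{Per}(K)=\mathcal{H}^{n-1}(\partial^\ast K)$, which may be strictly smaller; fortunately the inequality only tightens the upper bound you want.

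The genuine gap is Step 3. Allard's and Hardt--Simon's boundary regularity theorems require the prescribed boundary to be a $C^{1,\alpha}$ (or at least $C^1$) submanifold; knowing only that $\partial[|M|]$ is an integer-multiplicity rectifiable current does not put you in their hypotheses, so they cannot be used to exclude higher-multiplicity or singular tangent cones at a.e.\ point. You flag this obstacle yourself, but the fix you propose --- combining Allard-type regularity with the rectifiability of Step 2 --- is exactly what those theorems do not supply on a merely rectifiable boundary. The paper instead exploits the concrete structure of $M$ established in Theorem \ref{wExist}: $M$ is the union of the BV graph of $u$ over $\Omega\setminus K$ and vertical cylinders over $\partial_-K$ and $\partial_+\Omega$, and it is proved there (by slicing) that $\partial_-K$, $\partial_+\Omega$ are countably $(n-1)$-rectifiable, hence have approximate tangent planes $\mathcal{H}^{n-1}$-a.e. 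At such a point $p=(p',t)$ the blow-up is then forced by this explicit description: when $p'\in\partial_-K$ the vertical cylinder dominates, giving $M_\infty=T_{p'}\partial K\times(-\infty,0]$ with multiplicity one; when $u$ attaches to $t$ at $p'$ the blow-up is a minimal graph over a half-space with planar boundary, which is flat by elementary means. No abstract boundary-regularity theorem is needed, and $\nu(p)$ is read off immediately. If you replace your Step 3 appeal by this structural computation, your argument would align with the paper's; as written, Step 3 is not justified.
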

\begin{proof}
Let $\e_r$ be a Lipschitz function defined by $\e_r=1-\f1{r}\r_{\p M}$ on $B_{r}(\p M)$ and $\e_r=0$ outside $B_{r}(\p M)$. 
From \eqref{Intphibyparts}, we have
\begin{equation}\aligned\label{phiThtKOmpM}
\int\phi d\mu_{_{\p M}}=&\lim_{r\to0}\int_{M}\lan\na(\phi\e_r),\na \mathbf{h}\ran\\
=&\lim_{r\to0}\int_{M}\lan\phi\na\e_r,\na \mathbf{h}\ran=-\lim_{r\to0}\f1r\int_{M\cap B_r(\p M)}\lan\phi\na\r_{\p M},\na \mathbf{h}\ran
\endaligned\end{equation}
for any $\phi\in C^1(\Si\times\R)$.
Let $p\in\p K\times\{t\}$ be a $C^1$-point of $\p K\times\R$ (the case that $p\in\p \Om\times\{0\}$ is a $C^1$-point of $\p\Om\times\R$ is similar).
Since $M$ is area-minimizing in $\overline{\Om\setminus K}\times\R$, for any given $r_i\to0$, $\f1{r_i}(M,p)$ has a subsequence converging to an area-minimizing hypersurface $M_\infty$ in a closed half-space of $\R^{n+1}$. It's easy to verify that $M_\infty$ is flat (i.e., the boundary of the half-space). 
Let $\nu(p)$ denote the co-normal vector to $\p M_\infty$ (pointing out $M_\infty$).
By approaching $\chi_{_{B_{r_i}(p)}}$ via a sequence of functions $\phi_{i,j}\in C^1(\Si\times\R)$, from \eqref{phiThtKOmpM} with $\phi$ instead by $\phi_{i,j}$ we can obtain
\begin{equation}\aligned
\lim_{i\to\infty}\fint_{B_{r_i}(p)\cap\p M}d\mu_{_{\p M}}=-\lim_{i\to\infty}\fint_{B_{r_i}(p)\cap\p M}\lan\na\r_{\p M},\na \mathbf{h}\ran=\left\lan\nu(p),E_{n+1}\right\ran.
\endaligned\end{equation}
By Lebesgue's differential theorem, we get
\begin{equation}\aligned
\mu_{_{\p M}}=\left\lan\nu,E_{n+1}\right\ran \mathcal{H}^{n-1}\qquad \mathrm{for\ almost\ every\ point\ in}\ \p M.
\endaligned\end{equation}
Hence, $\nu(p)$ does not depend on the choice of $r_i$ for $\mathcal{H}^{n-1}$-a.e. $p\in\p M$. Moreover, the approxiamted tangent plane $T_p(\p M)$ exists for $\mathcal{H}^{n-1}$-a.e. $p\in\p M$.
So $\nu$ is well-defined a.e. on $\p M$ and $\nu(p)\in T_pM$ is the co-normal vector to $\p M$ for $\mathcal{H}^{n-1}$-a.e. $p\in\p M$. This completes the proof.
\end{proof}

\begin{remark}
Suppose $\p K,\p\Om\in C^{1,\a}$ further. From Lemma \ref{C1a-reg}, $M$ is $C^{1,\a}$ in a small neighborhood of $M\cap((\p\Om\cup\p K)\times\R)$. The vector $\nu$ in Theorem \ref{RemfiniteHn-1pK} exists everywhere on $\p M$ and satisfies
\begin{equation}\aligned
\left\lan\nu(p),E_{n+1}\right\ran=\f{\left\lan Du,\mathbf{n}_{\p(\Om\setminus K)}\right\ran}{\sqrt{1+|Du|^2}}(p')\qquad \mathrm{for}\ \mathrm{every}\ p\in\p M,
\endaligned\end{equation}
where $p=(p',s)$ for some $p'\in\p(\Om\setminus K)$, $s\in\R$, and $\mathbf{n}_{\p(\Om\setminus K)}\in\G(T\Si)$ denotes the outward normal vector field to $\p(\Om\setminus K)$. 
\end{remark}

\begin{proposition}
Let the notations as in Lemma \ref{ExistTh}. We have
\begin{equation}\aligned\label{captKOmuDu}
\mathrm{cap}_t(K,\Om)=\int_{\Om\setminus K}\left(\f1{\sqrt{1+|Du|^2}}-1\right)+t\mu_{_{\p M}}(\p K\times\{t\}),
\endaligned
\end{equation}
and 
\begin{equation}\aligned\label{uDunKup}
\f1t\mathrm{cap}_t(K,\Om)\le\mu_{_{\p M}}(\p K\times\{t\})\le\f2t\mathrm{cap}_t(K,\Om).
\endaligned\end{equation}
\end{proposition}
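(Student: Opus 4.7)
The plan is to apply the boundary identity \eqref{Intphibyparts} of Lemma \ref{ExistTh} with the test function $\phi=\mathbf{h}$ and to compute both sides explicitly. Since $\mathbf{h}$ takes the value $t$ on $\p K\times\{t\}$ and $0$ on $\p\Om\times\{0\}$, the right-hand side collapses to $t\,\mu_{_{\p M}}(\p K\times\{t\})$. The core task is therefore to identify $\int_M |\na\mathbf{h}|^2$ with $\mathrm{cap}_t(K,\Om)+\int_{\Om\setminus K}\left(1-\f{1}{\sqrt{1+|Du|^2}}\right)$, which will yield \eqref{captKOmuDu} at once.

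To evaluate $\int_M |\na\mathbf{h}|^2$ I would split $M=\p U\cap(\overline{\Om\setminus K}\times\R)$ into its graph portion over $\Om\setminus K$ and its vertical portions inside $\p K\times\R$ and $\p\Om\times\R$. On the graph portion, the normal $\mathbf{n}_M=\f{1}{\sqrt{1+|Du|^2}}(-Du+E_{n+1})$ together with $\na\mathbf{h}=E_{n+1}-\lan E_{n+1},\mathbf{n}_M\ran\mathbf{n}_M$ gives $|\na\mathbf{h}|^2=\f{|Du|^2}{1+|Du|^2}$; multiplying by the area element $\sqrt{1+|Du|^2}\,dx$ produces the contribution $\int_{\Om\setminus K}\left(\sqrt{1+|Du|^2}-\f{1}{\sqrt{1+|Du|^2}}\right)$. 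On the vertical portions, the approximate tangent plane of $M$ contains $E_{n+1}$ at $\mathcal{H}^n$-a.e. point (the countable $n$-rectifiability from Theorem \ref{wExist} suffices), so $|\na\mathbf{h}|=1$, and by \eqref{pOm+-} their combined $\mathcal{H}^n$-measure equals $\int_{\p_+\Om}u+\int_{\p_-K}(t-u)$. Summing these contributions and invoking the explicit formula for $\mathrm{cap}_t(K,\Om)$ in \eqref{captKOmu} yields \eqref{captKOmuDu}.

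The two-sided bound \eqref{uDunKup} then follows from the elementary inequality $0\le 1-\f{1}{\sqrt{1+s^2}}\le \sqrt{1+s^2}-1$ for $s\ge 0$ (valid since $\sqrt{1+s^2}\ge 1$). The lower bound gives $\int_{\Om\setminus K}\left(\f{1}{\sqrt{1+|Du|^2}}-1\right)\le 0$, so \eqref{captKOmuDu} implies $t\,\mu_{_{\p M}}(\p K\times\{t\})\ge \mathrm{cap}_t(K,\Om)$. The upper bound, together with Theorem \ref{wExist}, yields $\int_{\Om\setminus K}\left(1-\f{1}{\sqrt{1+|Du|^2}}\right)\le \int_{\Om\setminus K}\left(\sqrt{1+|Du|^2}-1\right)\le \mathrm{cap}_t(K,\Om)$, and hence $t\,\mu_{_{\p M}}(\p K\times\{t\})\le 2\,\mathrm{cap}_t(K,\Om)$. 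The main thing to keep straight is the accounting for the vertical portions of $M$ in the computation of $\int_M|\na\mathbf{h}|^2$; once \eqref{pOm+-} is brought in, both the identity and its estimates are purely algebraic consequences of Theorem \ref{wExist} and Lemma \ref{ExistTh}.
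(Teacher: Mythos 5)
Your proposal is correct and follows essentially the same route as the paper: both proofs take $\phi=\mathbf{h}$ in \eqref{Intphibyparts}, decompose $\int_M|\na\mathbf{h}|^2$ into the graph and vertical parts, match the vertical contribution against the boundary terms of \eqref{captKOmu}, and deduce the two-sided bound from the elementary inequality $1-\frac{1}{\sqrt{1+s^2}}\le\sqrt{1+s^2}-1$ together with $\int_{\Om\setminus K}(\sqrt{1+|Du|^2}-1)\le\mathrm{cap}_t(K,\Om)$. The only cosmetic difference is that the paper also records $\mu_{_{\p M}}(\p M)=0$ via $\phi=1$, which you correctly observe is unnecessary here since $\mathbf{h}$ already vanishes on $\p\Om\times\{0\}$.
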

\begin{proof}
If we choose $\phi=1$ in \eqref{Intphibyparts}, then
\begin{equation}\aligned\label{Stokes1}
\mu_{_{\p M}}(\p M)=0.
\endaligned\end{equation}
Moreover, we choose $\phi=\mathbf{h}$ in \eqref{Intphibyparts}, and get
\begin{equation}\aligned\label{Stokesu}
\int_{M}|\na \mathbf{h}|^2=\int_{M\cap\left((\Om\setminus K)\times\R\right)}|\na \mathbf{h}|^2+\int_{M\cap\left((\p K\cup\p\Om)\times\R\right)}|\na \mathbf{h}|^2=\int_{\p M}\mathbf{h}d\mu_{_{\p M}}.
\endaligned\end{equation}
Let $u$ be a BV solution on $\Si$ associated with $\mathrm{cap}_t(K,\Om)$, then
\begin{equation}\aligned\int_{\p K}(t-u)+\int_{\p\Om}u=\int_{M\cap\left((\p K\cup\p\Om)\times\R\right)}|\na \mathbf{h}|^2.
\endaligned
\end{equation}
Now we can rewrite \eqref{captKOmu} to get 
\begin{equation}\aligned
\mathrm{cap}_t(K,\Om)=\int_{\Om\setminus K}\left(\sqrt{1+|Du|^2}-1\right)+\int_{M\cap\left((\p K\cup\p\Om)\times\R\right)}|\na \mathbf{h}|^2.
\endaligned
\end{equation}
Noting $|\na \mathbf{h}|^2=|Du|^2(1+|Du|^2)^{-1}$.
With \eqref{Stokes1}\eqref{Stokesu}, we have
\begin{equation*}\aligned
\mathrm{cap}_t&(K,\Om)=\int_{\Om\setminus K}\left(\sqrt{1+|Du|^2}-1\right)+\int_{\p M}\mathbf{h}d\mu_{_{\p M}}-\int_{M\cap\left((\Om\setminus K)\times\R\right)}|\na \mathbf{h}|^2 \\
=&\int_{\Om\setminus K}\left(\sqrt{1+|Du|^2}-1-\f{|Du|^2}{\sqrt{1+|Du|^2}}\right)+\int_{\p M}\mathbf{h}d\mu_{_{\p M}}\\
=&\int_{\Om\setminus K}\left(\f1{\sqrt{1+|Du|^2}}-1\right)+t\mu_{_{\p M}}(\p K\times\{t\}),
\endaligned
\end{equation*}
which completes the proof of \eqref{captKOmuDu}.
In particular, the above equality implies
\begin{equation}\aligned\label{uDunKlow}
\mu_{_{\p M}}(\p K\times\{t\})\ge\f1t\mathrm{cap}_t(K,\Om).
\endaligned\end{equation}
On the other hand, the Cauchy inequality gives
\begin{equation}\aligned\label{Caychy1+Du2}
\f1{\sqrt{1+|Du|^2}}-1\ge1-\sqrt{1+|Du|^2}.
\endaligned
\end{equation}
Then from \eqref{captKOmuDu} we get
\begin{equation}\aligned
\mathrm{cap}_t(K,\Om)\ge&\int_{\Om\setminus K}\left(1-\sqrt{1+|Du|^2}\right)+t\mu_{_{\p M}}(\p K\times\{t\})\\
\ge&-\mathrm{cap}_t(K,\Om)+t\mu_{_{\p M}}(\p K\times\{t\}).
\endaligned
\end{equation}
With \eqref{uDunKlow}, we can get \eqref{uDunKup}. This completes the proof.
\end{proof}

\begin{proposition}\label{CompThOmOm'}
Let the notations be as in Lemma \ref{ExistTh}, and
$\Om'\supset\Om$ be a bounded open set in $\Si$ with $\p(\Si\setminus\Om')=\p\Om'$.
For each $0<t'\le t$, let $M'$ be the boundary of the subgraph of $u'+(t-t')$ in $\overline{\Om'\setminus K}\times\R$. Then the co-normal measure $\mu_{_{\p M'}}$ to $\p M'$ satisfies $\mu_{_{\p M}}(\cdot,t)\ge\mu_{_{\p M'}}(\cdot,t')$ a.e. on $\p K$.
\end{proposition}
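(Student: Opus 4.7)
The plan is to reduce the measure inequality to a pointwise Hopf-type comparison at smoothly approximated boundary points, and then pass to the limit. First, by Proposition \ref{KK'OmOm'UNIQ} applied to the inclusion $(K,\Om)\subset(K,\Om')$ with heights $t'\le t$, there is a BV solution $u'$ on $\Si$ associated with $\mathrm{cap}_{t'}(K,\Om')$ such that $u\le v:=u'+(t-t')$ on $\Si$, with $u=v=t$ on $K\setminus\p K$. It suffices to show $\int\phi\,d\mu_{_{\p M}}\ge\int\phi\,d\mu_{_{\p M'}}$ for every nonnegative $\phi\in C_c^1(\Om)$, regarded as a height-independent function on $\Si\times\R$; the a.e. inequality on $\p K$ then follows by Lebesgue differentiation.

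Applying \eqref{Intphibyparts} to both $M$ and $M'$ with such a $\phi$, a direct calculation of $\lan\na^M\phi,\na^M \mathbf{h}\ran$ on the graph and vertical portions shows that the vertical walls contribute zero since $\phi$ is independent of $\mathbf{h}$. Setting $F(w)=Dw/\sqrt{1+|Dw|^2}$, and using $Du'=Dv$ together with $\mathrm{spt}\,\phi\subset\Om\subset\Om'$, we obtain
\begin{equation*}
\int\phi\,d\mu_{_{\p M}}-\int\phi\,d\mu_{_{\p M'}}=\int_{\Om\setminus K}\lan D\phi,\,F(u)-F(v)\ran.
\end{equation*}
To establish nonnegativity of this integral, I would approximate $K,\Om,\Om'$ by smooth sets $K_i\supset K$ and $\Om_i\subset\Om\subset\Om_i'\subset\Om'$ with $K_i\to K$ in the Hausdorff sense, as in the proof of Lemma \ref{ExistTh}. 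Let $u_i,u_i'$ be the corresponding BV solutions. By Proposition \ref{KK'OmOm'UNIQ}, $u_i\le v_i:=u_i'+(t-t')$; by Lemma \ref{C1a-reg}, both $M_i$ and $M_i'$ are $C^{1,\a}$ up to $\p K_i\times\{t\}$, so the co-normals $\nu^{M_i},\nu^{M_i'}$ and their densities against $E_{n+1}$ are defined pointwise via the formula in the Remark following Theorem \ref{RemfiniteHn-1pK}.

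The heart of the argument is the pointwise comparison on $\p K_i\times\{t\}$. On the complement of $\p_- K_i$ (where $u_i=v_i=t$, using $u_i\le v_i\le t$), the tangential gradients $D^T u_i$ and $D^T v_i$ both vanish, so $|Du_i|=|\p_\mathbf{n} u_i|$ and $|Dv_i|=|\p_\mathbf{n} v_i|$ with $\mathbf{n}$ the outward normal to $K_i$. The Hopf boundary-point argument, applied to the nonnegative difference $v_i-u_i$ which solves a linear uniformly elliptic equation (obtained by integrating the linearization of the minimal hypersurface equation along a path from $u_i$ to $v_i$), yields $|\p_\mathbf{n} u_i|\ge|\p_\mathbf{n} v_i|$, and the monotonicity of $s\mapsto s/\sqrt{1+s^2}$ then gives $\lan\nu^{M_i},E_{n+1}\ran\ge\lan\nu^{M_i'},E_{n+1}\ran$. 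On $\p_- K_i$ (where $u_i<t$), the boundary of $M_i$ meets $\p K_i\times\{t\}$ along a vertical wall, forcing $\lan\nu^{M_i},E_{n+1}\ran=1$, which trivially dominates $\lan\nu^{M_i'},E_{n+1}\ran\le 1$. Hence $\int\phi\,d\mu_{_{\p M_i}}\ge\int\phi\,d\mu_{_{\p M_i'}}$ for every $i$. The main obstacle is passing to the limit: one needs $\int\phi\,d\mu_{_{\p M_i}}\to\int\phi\,d\mu_{_{\p M}}$ (and similarly for $M'$), which I expect to follow by combining Federer--Fleming compactness, the mass convergence $\mathcal{H}^n(M_i)\to\mathcal{H}^n(M)$ from \eqref{HnMcapUMi*}, and Stokes' formula in the spirit of \eqref{LiphiS}, paralleling the construction of $\mu_{_{\p M}}$ in Lemma \ref{ExistTh}.
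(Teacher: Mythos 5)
Your proposal is correct and follows essentially the same route as the paper's proof: smooth approximations $K_i,\Om_i,\Om_i'$, the comparison $u_i\le u_i'+(t-t')$ from Proposition \ref{KK'OmOm'UNIQ}, the pointwise co-normal inequality $\lan\nu^{M_i},E_{n+1}\ran\ge\lan\nu^{M_i'},E_{n+1}\ran$ on $\p K_i\times\{t\}$, and the limit passage via the machinery of Lemma \ref{ExistTh}. The displayed identity involving $F(u)-F(v)$ is a correct but unused detour, and the explicit Hopf argument elaborates a step the paper states without proof; otherwise the two arguments coincide.
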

\begin{proof}
There are two sequence of open sets $\Om_i\subset\Om_i'\subset\Om'$ with $\Om_i\subset\Om$, and a sequence of compact sets $K_i\supset K$ with $\p\Om_i,\p\Om_i',\p K_i\in C^\infty$ such that $\Om_i\to\Om$, $\Om_i'\to\Om'$, $K_i\to K$ all in the Hausdorff sense.
For each $i$, let $u_i$ be a BV solution associated with $\mathrm{cap}_{t}(K_i,\Om_i)$ and $u_i'$ be a BV solution associated with $\mathrm{cap}_{t'}(K_i,\Om_i')$. 
From Proposition \ref{KK'OmOm'UNIQ}, we may assume 
$u_i\le u_i'+t-t'$ on $\Si$ up to a choice of constants.

Let $M_i$ be the boundary of the subgraph of $u_i$ in $\overline{\Om\setminus K}\times\R$, and $M_i'$ be the boundary of the subgraph of $u_i'+(t-t')$ in $\overline{\Om\setminus K}\times\R$.
Let $\nu^i\in\G(TM_i)$ and $\nu^{'i}\in\G(TM_i')$ denote the co-normal vector to $\p M_i$ and $\p M_i'$ , respectively, so that $\lan \nu^i,E_{n+1}\ran\ge0$ and $\lan \nu^{'i},E_{n+1}\ran\ge0$. Then $u_i\le u_i'+t-t'$ on $\Si$ gives
\begin{equation}\aligned
\lan \nu^i,E_{n+1}\ran\ge\lan \nu^{'i},E_{n+1}\ran\qquad \mathrm{on}\ \p K\times\{t\}.
\endaligned
\end{equation}
We use the argument in Lemma \ref{ExistTh} and conclude that
\begin{equation}\aligned
\int_{M}\lan\na\phi,\na \mathbf{h}\ran\ge\int_{M'}\lan\na\phi,\na \mathbf{h}\ran
\endaligned\end{equation}
for any $\phi\in\mathrm{Lip}(\Si\times\R)$ with $\phi\ge0$ on $\p K\times\{t\}$ and the support of $\phi$ contained in a sufficiently small neighborhood of $\p K\times\{t\}$.
By the definition of the functional $L$ in Lemma \ref{ExistTh}, we complete the proof.
\end{proof}

\section{Existence of non-compact minimal graphs over manifolds}

Let $\Si$ be a complete noncompact Riemannian manifold and a compact set $K\subset\Si$ with $\p\,\overline{\Si\setminus K}=\p K$. 
Let $\Om_i$ be a sequence of bounded open sets $\Om_1\subset\cdots\subset\Om_i\subset\cdots\subset\Si$ satisfying $\p(\Si\setminus\Om_i)=\p \Om_i$ and $d(p,\p\Om_i)\rightarrow\infty$ for any fixed $p\in\Si$. Given $t>0$, let $u_i$ be a BV solution on $\Si$ associated with $\mathrm{cap}_t(K,\Om_i)$ for each $i$. Let 
$$M_i=\p\{(x,s)|\, x\in\Si,\ s<u_i(x)\}\cap(\overline{\Si\setminus K}\times\R).$$ 
From Proposition \ref{KK'OmOm'UNIQ}, there holds $u_i\le u_{i+1}$ on $\Si$
for each $i$. Hence, there is a BV function $u$ on $\Si$ so that $U\cap\p\{(x,s)|\, x\in\Si,\ s<u_i(x)\}$ converges as $i\to\infty$ to $U\cap\p\{(x,s)|\, x\in\Si,\ s<u(x)\}$ in the Hausdorff sense for any bounded open set $U\subset\Si\times\R$.
We say that $u$ is a \emph{(BV) solution on $\Si$ associated with} $\mathrm{cap}_t(K)$.

Set $M=\p\{(x,s)|\, x\in\Si,\ s<u(x)\}\cap(\overline{\Si\setminus K}\times\R)$.
We have a non-compact version of Lemma \ref{ExistTh} and \eqref{uDunKup}.
\begin{lemma}\label{ThtKomiThtK}
Let $\mu_{_{\p M_i}}$ be the co-normal measure to $\p M_i$ defined in Lemma \ref{ExistTh} for each $i$.
Then the limit $\mu_{_{\p M}}=\lim_{i\to\infty}\mu_{_{\p M_i}}$ exists, and it is a Radon measure on $\p M$ so that
\begin{equation}\aligned\label{Intphibyparts***}
\int_{M}\lan\na\phi,\na \mathbf{h}\ran =\int_{\p M}\phi d\mu_{_{\p M}}\qquad\qquad \mathrm{for\ every}\ \phi\in C^1_c(\Si\times\R).
\endaligned\end{equation}
Moreover, $||\mu_{_{\p M}}||:=\mu_{_{\p M}}(\p M)$ satisfies
\begin{equation}\aligned\label{capKulu}
\f1t\mathrm{cap}_t(K)\le||\mu_{_{\p M}}||\le\f2t\mathrm{cap}_t(K).
\endaligned\end{equation}
\end{lemma}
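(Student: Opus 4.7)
The plan is to apply the bounded-domain results (Lemma \ref{ExistTh} and Proposition \ref{CompThOmOm'}) on each pair $(K,\Om_i)$ to produce co-normal measures $\mu_i:=\mu_{_{\p M_i}}$, pass to the limit using monotonicity of $\mu_i|_{\p K\times\{t\}}$ and the escape of the opposite boundary $\p\Om_i\times\{0\}$ to infinity, and translate the two-sided bound \eqref{uDunKup} into \eqref{capKulu}.

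First I would set up and localize. By Lemma \ref{ExistTh}, each $\mu_i$ is a Borel measure on $\p M_i$ with $\mu_i\ge 0$ on $\p K\times\{t\}$, $\mu_i\le 0$ on $\p\Om_i\times\{0\}$, and $\mu_i(\p M_i)=0$ (testing with $\phi\equiv 1$ in \eqref{Intphibyparts}). Applied to $(K,\Om_i)$, \eqref{uDunKup} gives $\mu_i(\p K\times\{t\})\le\tfrac{2}{t}\mathrm{cap}_t(K,\Om_i)\le\tfrac{2}{t}\mathrm{cap}_t(K,\Om_1)$, a uniform bound. For $\phi\in C^1_c(\Si\times\R)$, fix $i_0$ with $\mathrm{spt}\,\phi\subset\Om_{i_0}$; then for every $i\ge i_0$ the function $\phi$ vanishes near $\p\Om_i\times\R$, so \eqref{Intphibyparts} collapses to $\int_{M_i}\lan\na\phi,\na\mathbf{h}\ran=\int_{\p K\times\{t\}}\phi\,d\mu_i$.

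Next I would take the monotone limit of the boundary measures on $\p K\times\{t\}$. Applying Proposition \ref{CompThOmOm'} with $K'=K$, $\Om'=\Om_{i+1}\supset\Om_i$, and $t'=t$ yields $\mu_i|_{\p K\times\{t\}}\ge\mu_{i+1}|_{\p K\times\{t\}}$ as Radon measures, so $\{\mu_i|_{\p K\times\{t\}}\}$ is a monotone non-increasing sequence of nonnegative, uniformly bounded Radon measures. Hence there is a Radon measure $\mu^+$ supported on $\p K\times\{t\}$ with $\int\phi\,d\mu_i\to\int\phi\,d\mu^+$ for every $\phi\in C_c(\Si\times\R)$, and $\mu^+(\p K\times\{t\})=\lim_{i\to\infty}\mu_i(\p K\times\{t\})$ by monotone convergence.

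Finally I would pass the left-hand side to the limit. Each $M_i$ is area-minimizing in $\overline{\Om_i\setminus K}\times\R$ by Theorem \ref{UniPlateau}, and the subgraphs of $u_i$ converge in the Hausdorff sense to the subgraph of $u$ on every bounded subset of $\Si\times\R$. Repeating the Federer--Fleming compactness and mass-comparison argument used in Lemma \ref{ExistTh} (see \eqref{HnMcapUMi}--\eqref{MiconvMU}), now with $\{\Om_i\}$ as the exhausting sequence, one obtains $[|M_i|]\to[|M|]$ as currents with mass convergence on every bounded open $U\subset\Si\times\R$ satisfying $\mathcal{H}^n(M\cap\p U)=0$. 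Therefore $\int_{M_i}\lan\na\phi,\na\mathbf{h}\ran\to\int_M\lan\na\phi,\na\mathbf{h}\ran$ for every $\phi\in C^1_c(\Si\times\R)$, and setting $\mu_{_{\p M}}:=\mu^+$ (extended by zero off $\p K\times\{t\}$) gives \eqref{Intphibyparts***}. Since $\mu^+$ is supported on $\p K\times\{t\}\subset\p M$, $\|\mu_{_{\p M}}\|=\mu^+(\p K\times\{t\})$, and letting $i\to\infty$ in \eqref{uDunKup} applied to $(K,\Om_i)$ together with $\mathrm{cap}_t(K,\Om_i)\to\mathrm{cap}_t(K)$ yields \eqref{capKulu}. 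The main obstacle is the current/mass convergence $[|M_i|]\to[|M|]$ near $\p K\times\R$, where the $M_i$ are not graphs but carry vertical pieces; this is precisely what the area-minimizing property combined with Hausdorff convergence of subgraphs is designed to handle.
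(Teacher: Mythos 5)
Your proposal is correct and follows essentially the same route as the paper: monotonicity from Proposition \ref{CompThOmOm'} gives a decreasing sequence of nonnegative Radon measures on $\p K\times\{t\}$ converging to a Radon measure, the integral identity passes to the limit via the local mass convergence of $[|M_i|]\to[|M|]$ as in \eqref{MiconvMU}, and \eqref{capKulu} comes from taking $i\to\infty$ in \eqref{uDunKup} together with $\mathrm{cap}_t(K,\Om_i)\to\mathrm{cap}_t(K)$. The only substantive nuance you add, correctly, is to flag explicitly that for $\phi\in C^1_c$ the $\p\Om_i\times\{0\}$ contribution drops out once $\mathrm{spt}\,\phi\subset\Om_i$, and that the mass convergence near the vertical wall $\p K\times\R$ is the delicate point that the area-minimizing property and Federer--Fleming compactness are there to secure.
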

\begin{proof}
From Lemma \ref{ExistTh}, there holds
\begin{equation}\aligned\label{MiphibfhmupMi}
\int_{M_i}\left\lan\na^{M_i}\phi,\na^{M_i} \mathbf{h}\right\ran =\int_{\p K\times\{t\}}\phi\mu_{_{\p M_i}}
\endaligned\end{equation}
for every $\phi\in C^1_c(\Si\times\R)$ and large $i$,
where $\na^{M_i}$ is the Levi-Civita connection on $M_i$ w.r.t. its induced metric for each $i$. 
From Proposition \ref{CompThOmOm'}, we have $\mu_{_{\p M_{i+1}}}\le\mu_{_{\p M_i}}$ a.e. on $\p M=\p K\times\{t\}$ for each integer $i\ge1$. This monotonicity implies that $\mu_{_{\p M}}=\lim_{i\to\infty}\mu_{_{\p M_i}}$ is a Radon measure on $\p M$.
Hence we have
\begin{equation}\aligned\label{FatouOmiSi}
\int_{\p M}\phi d\mu_{_{\p M}}=\lim_{i\to\infty}\int_{\p M}\phi d\mu_{_{\p M_i}}.
\endaligned\end{equation}
From \eqref{MiconvMU}, we get
\begin{equation}\aligned\label{naphinaMi***}
\int_{M}\left\lan\na\phi,\na \mathbf{h}\right\ran=\lim_{i\to\infty}\int_{M_i}\left\lan\na^{M_i}\phi,\na^{M_i} \mathbf{h}\right\ran.
\endaligned\end{equation}
Combining \eqref{MiphibfhmupMi}-\eqref{naphinaMi***}, we can obtain \eqref{Intphibyparts***}. We replace $M,\Om$ by $M_i,\Om_i$ in \eqref{uDunKup}, and then take limits of $M_i$ and $\Om_i$ as $i\to\infty$, which yields \eqref{capKulu}. This completes the proof.
\end{proof}
\begin{theorem}\label{Uniu<t}
The function $u$ on $\Si$ associated with $\mathrm{cap}_t(K)$ is unique, and $\inf_{\Si\setminus K}u<t$ provided $\mathrm{cap}_{t}(K)>0$.
\end{theorem}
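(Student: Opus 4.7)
The theorem makes two assertions; I address them in reverse order, since the strict inequality is the cleaner of the two and its proof produces structural information that is useful for uniqueness.

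For the strict inequality $\inf_{\Si\setminus K}u<t$ under $\mathrm{cap}_t(K)>0$, I argue by contrapositive. Suppose $\inf_{\Si\setminus K}u=t$; since $u\le t$ pointwise (as the monotone limit of $u_i\in[0,t]$), this forces $u\equiv t$ a.e.\ on $\Si\setminus K$. The limiting area-minimizing hypersurface
$$M=\p\{(x,s)|\, x\in\Si,\ s<u(x)\}\cap(\overline{\Si\setminus K}\times\R)$$
is then the horizontal slice $(\overline{\Si\setminus K})\times\{t\}$, whose tangent plane at every regular point is parallel to $T\Si$ and hence perpendicular to $E_{n+1}=\bn\mathbf{h}$. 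Consequently the tangential gradient $\na\mathbf{h}\equiv 0$ on $M$. Substituting $\na\mathbf{h}\equiv 0$ into identity \eqref{Intphibyparts***} of Lemma \ref{ThtKomiThtK} yields $\int_{\p M}\phi\,d\mu_{_{\p M}}=0$ for every $\phi\in C^1_c(\Si\times\R)$, so the Radon measure $\mu_{_{\p M}}$ is identically zero. The lower bound in \eqref{capKulu} then gives $\mathrm{cap}_t(K)\le t\,\|\mu_{_{\p M}}\|=0$, contradicting $\mathrm{cap}_t(K)>0$.

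For uniqueness, let $u^{(1)},u^{(2)}$ be two BV solutions associated with $\mathrm{cap}_t(K)$, arising as monotone limits of $\{u_i^{(1)}\},\{u_j^{(2)}\}$ along exhaustions $\{\Om_i^{(1)}\},\{\Om_j^{(2)}\}$. Given $i$, pick $j=j(i)$ with $\Om_i^{(1)}\subset\Om_{j(i)}^{(2)}$. Proposition \ref{KK'OmOm'UNIQ} (with $K'=K$ and $t'=t$) provides a BV solution $\tilde u_{j(i)}^{(2)}$ of $\mathrm{cap}_t(K,\Om_{j(i)}^{(2)})$ with $u_i^{(1)}\le\tilde u_{j(i)}^{(2)}$ on $\Si$. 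By the uniqueness clause of Proposition \ref{UNIu}, whenever $\mathcal{H}^{n-1}(\p\Om_{j(i)}^{(2)})\ne\mathcal{H}^{n-1}(\p K)$ one has $\tilde u_{j(i)}^{(2)}=u_{j(i)}^{(2)}$, so that $u_i^{(1)}\le u_{j(i)}^{(2)}$ directly. Choosing the exhaustion $\{\Om_j^{(2)}\}$ so that this measure inequality holds for every $j$ (e.g.\ $\Om_j^{(2)}=B_{r_j}(p)$ with $p\in K$ and $r_j$ generic) and passing to the monotone limit yields $u^{(1)}\le u^{(2)}$; by symmetry, $u^{(2)}\le u^{(1)}$, so $u^{(1)}=u^{(2)}$.

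The main obstacle is treating exhaustions for which the constant ambiguity of Proposition \ref{UNIu} cannot be eliminated by this choice of radii. In that case one must control the constants $c_{i,j}=\tilde u_{j(i)}^{(2)}-u_{j(i)}^{(2)}\in[-t,t]$ and show they tend to zero. Restricting $u_i^{(1)}\le u_{j(i)}^{(2)}+c_{i,j}$ to $K$ (where both sides essentially equal $t$) already gives $c_{i,j}\ge 0$; combining this with the conormal-measure monotonicity of Proposition \ref{CompThOmOm'} along nested exhaustions, together with the strict-convexity argument from the proof of Proposition \ref{UNIu} applied to $(u^{(1)}+u^{(2)})/2$ on any open set where $u^{(1)}\ne u^{(2)}$, should force $c=0$ in the limit. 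In the degenerate case $\mathrm{cap}_t(K)=0$ (with $\mathcal{H}^n(K)>0$), Proposition \ref{ParaKE} combined with the argument of Lemma \ref{ConMR1} shows every such monotone limit equals $t$ identically, so uniqueness is automatic.
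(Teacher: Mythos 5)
Your proof of the inequality $\inf_{\Si\setminus K}u<t$ is correct and is essentially the paper's argument: both rest on \eqref{capKulu}, and your contrapositive step ($u\equiv t$ forces $\na\mathbf{h}\equiv 0$ on $M$, hence $\mu_{_{\p M}}=0$ by \eqref{Intphibyparts***}, hence $\mathrm{cap}_t(K)=0$) is a useful unpacking of the implication the paper leaves implicit after deducing $t\|\mu_{_{\p M}}\|\ge\mathrm{cap}_t(K)>0$.

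The uniqueness part, however, has a genuine gap. You compare $u_i^{(1)}$ against the \emph{finite-domain} approximants $u_{j(i)}^{(2)}$, and to upgrade the comparison of Proposition \ref{KK'OmOm'UNIQ} from ``some solution $\tilde u_{j(i)}^{(2)}$'' to ``the chosen $u_{j(i)}^{(2)}$'' you invoke the uniqueness clause of Proposition \ref{UNIu}, which only applies when $\mathcal{H}^{n-1}(\p\Om_{j(i)}^{(2)})\ne\mathcal{H}^{n-1}(\p K)$. Your fix --- ``choosing the exhaustion $\{\Om_j^{(2)}\}$ so that this measure inequality holds'' --- is circular: $u^{(2)}$ is already defined as the monotone limit along a \emph{given} exhaustion, and you cannot re-choose that exhaustion without first knowing that the limit is exhaustion-independent, which is exactly what is to be proved. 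The closing paragraph, where you assert that conormal-measure monotonicity and strict convexity ``should force $c=0$ in the limit,'' is not an argument; in particular $c_{i,j}\ge0$ is the \emph{unfavourable} sign, and you give no mechanism that drives it to zero. The paper avoids this entirely by comparing $u_i$ directly against the \emph{global} object $u'$: since $\p\{s<u'\}$ is area-minimizing in $\overline{\Si\setminus K}\times\R$ (as a limit of area-minimizers) and each $M_i$ is area-minimizing in $\overline{\Om_i\setminus K}\times\R$, the cut-and-paste / strong maximum principle argument of Proposition \ref{KK'OmOm'UNIQ} (equivalently, the $T=[|M'|]+\p[|W_+|]$ swap used there and in Lemma \ref{Rig-M-Parabolic}) yields $u_i\le u'$ outright, with no constant ambiguity and no need to decompose $u'$ back into an approximating sequence. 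This is the step your write-up is missing. Your remark that the degenerate case $\mathrm{cap}_t(K)=0$ is handled by Lemma \ref{ConMR1} is fine.
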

\begin{proof}
Let $u'$ be a BV solution on $\Si$ associated with $\mathrm{cap}_t(K)$. Let $\Om_i$ be a sequence of bounded open sets on $\Si$ so that $\Om_i\subset\Om_{i+1}$, $\p(\Si\setminus\Om_i)=\p \Om_i$ and $d(p,\p\Om_i)\to\infty$. Let $u_i$ be a BV solution on $\Si$ associated with $\mathrm{cap}_t(K,\Om_i)$ for each $i\ge1$.
By the maximum principle and Proposition \ref{KK'OmOm'UNIQ}, we have $u_i\le u'$ on every open $\Om\subset\subset\Si$ for each $i$. Taking the limit implies $u\le u'$ on $\Si$. Similarly, $u'\le u$ on $\Si$. Hence, we conclude that the BV solution on $\Si$ associated with $\mathrm{cap}_t(K)$ is unique.

Suppose $\mathrm{cap}_{t}(K)>0$. 
From \eqref{captKOmuDu}, we get
\begin{equation}\aligned
t\mu_{_{\p M_i}}(\p M)=\int_{\Om_i\setminus K}\left(1-\f1{\sqrt{1+|Du_i|^2}}\right)+\mathrm{cap}_t(K,\Om_i)\ge\mathrm{cap}_t(K,\Om_i).
\endaligned\end{equation}
From Lemma \ref{ThtKomiThtK}, we get
\begin{equation}\aligned
t||\mu_{_{\p M}}||\ge\mathrm{cap}_t(K)>0.
\endaligned\end{equation}
This implies $\inf_{\Si\setminus K}u<t$. We complete the proof.
\end{proof}
\begin{remark}
In general, we actually do not have $\inf_{\Si\setminus K}u=0$ in view of Proposition \ref{captrCOV>0} in the appendix II.
Moreover, we do not have $\lim_{r\to\infty}\mathrm{cap}_t(B_r(p))=\infty$ provided $\mathrm{cap}_t(B_1(p))>0$ (see also Proposition \ref{captrCOV>0}).
\end{remark}

\begin{theorem}\label{NONMeanCurv}
Let $\Si$ be a M-nonparabolic Riemannian manifold. For any $p\in\Si$ and $t,\de>0$, there is a smooth graph $M\subset\Si\times[0,t]$ over $\Si$ so that $M$ has nonnegative mean curvature and $M\setminus(B_\de(p)\times\R)$ is a minimal hypersurface.
\end{theorem}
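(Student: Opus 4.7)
The plan is to combine the BV solution guaranteed by $M$-nonparabolicity with a smooth modification supported in $B_\de(p)$ that makes the graph smooth while preserving nonnegative mean curvature.

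First I would invoke the contrapositive of Proposition \ref{ParaKE}: $M$-nonparabolicity of $\Si$ gives $\mathrm{cap}_t(F)>0$ for every compact $F\subset\Si$ with $\mathcal{H}^n(F)>0$. Fix $\rho\in(0,\de/4)$, set $K=\overline{B}_\rho(p)$, and choose a smooth exhaustion $\{\Om_i\}_{i\ge 1}$ of $\Si$ with $K\subset\Om_i$. Let $u_i$ be the BV solution on $\Si$ associated with $\mathrm{cap}_t(K,\Om_i)$ from Theorem \ref{wExist}; Proposition \ref{KK'OmOm'UNIQ} shows the $u_i$ are monotone in $i$, so they converge to the unique BV solution $u$ associated with $\mathrm{cap}_t(K)$. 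By Theorem \ref{Uniu<t}, $\inf_{\Si\setminus K}u<t$. Thus $u\equiv t$ on $K$, $u$ is a smooth solution of \eqref{u} on $\Si\setminus K$ (in particular on the annulus $B_\de(p)\setminus K$), and $0\le u\le t$ with strict inequality somewhere.

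Next I would smoothly modify $u$ on $B_\de(p)$. Fix a smooth cutoff $\phi:\Si\to[0,1]$ with $\phi\equiv 0$ on $B_{\de/2}(p)$ and $\phi\equiv 1$ outside $B_{3\de/4}(p)$, and set
\[
v(x)\ :=\ \phi(x)\,u(x)+(1-\phi(x))\,t,\qquad x\in\Si.
\]
Because $K\subset B_{\de/4}(p)$ is strictly inside $\{\phi=0\}$, the convex combination only uses smooth data: $u$ is smooth on $B_{3\de/4}(p)\setminus B_{\de/2}(p)\subset\Si\setminus K$, and $t$ is a constant. Hence $v\in C^\infty(\Si)$, $v\in[0,t]$, and $v\equiv u$ outside $B_\de(p)$. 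The graph of $v$ therefore coincides with the minimal graph of $u$ on $\Si\setminus B_\de(p)$, so that $M=\mathrm{graph}(v)$ satisfies the requirement that $M\setminus(B_\de(p)\times\R)$ is minimal.

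The main obstacle will be verifying $H_{\mathrm{graph}(v)}\ge0$ on the transition annulus $B_{3\de/4}(p)\setminus B_{\de/2}(p)$. Heuristically this should hold because $v\ge u$ pointwise (recall $u\le t$), so the graph of $v$ sits above the minimal graph of $u$ and ``bulges upward,'' a geometric feature that forces nonnegative mean curvature with respect to the downward normal. I would make this rigorous by choosing $\phi$ depending only on $d(\cdot,p)$ with a convex radial profile, and computing $\mathrm{div}_\Si(Dv/\sqrt{1+|Dv|^2})$ explicitly in terms of $u$, $\phi$ and their derivatives, exploiting that $u$ solves \eqref{u}. If this direct computation turns out to be delicate, I would fall back on a variational formulation: minimize $\int_\Si(\sqrt{1+|Dw|^2}-1)$ among BV competitors $w\ge\psi$, where $\psi$ is an explicit smooth dome obstacle with $\psi\equiv t$ on $\overline{B}_{\de/4}(p)$, $\mathrm{spt}\,\psi\subset B_{\de/2}(p)$, and $H_{\mathrm{graph}(\psi)}\ge 0$. $M$-nonparabolicity (together with the capacity estimates of Section~2 and the compactness arguments used in Theorem \ref{wExist}) would ensure existence of a minimizer $v$; standard obstacle-problem regularity would yield smoothness, $v=\psi$ on the coincidence set (which by a barrier argument lies in $B_\de(p)$), and $v$ solving \eqref{u} elsewhere, giving the required smooth graph with $H\ge 0$.
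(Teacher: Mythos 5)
Your main construction, $v=\phi\,u+(1-\phi)\,t$, has no reason to produce a graph of nonnegative mean curvature on the transition annulus, and the supporting heuristic --- that $v\ge u$ pointwise should force $H_{\mathrm{graph}(v)}\ge 0$ --- is incorrect. Pointwise ordering against a minimal graph gives sign information on the mean curvature only at touching points (by the comparison principle), not on an open set. For example, with $\Si=\R^n$ and $u\equiv 0$, any nonnegative smooth bump $v$ satisfies $v\ge u$ everywhere, yet the graph of $v$ has mean curvature of both signs near the base of the bump. On your annulus the sign of $H_{\mathrm{graph}(v)}$ depends on the second derivatives of $u$ and $\phi$ and their interaction, and a generic radial cutoff will not make it nonnegative; this is precisely the obstruction the paper's construction is designed to circumvent, and your main route has no mechanism for it.

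Your fallback obstacle formulation is closer in spirit to what the paper actually does, but it omits the ideas that make the paper's argument work. The paper does not minimize the plain area functional above an obstacle; it minimizes the prescribed mean curvature functional $\mathbf{F}_{\Om,H}(\phi)=\int_\Om\sqrt{1+|D\phi|^2}-\int_\Om\phi H$, with a smooth nonnegative $H\not\equiv 0$ supported in $B_\de(p)$, among competitors bounded above by a carefully constructed mean-convex Lipschitz barrier $\tilde u$. The term $-\int\phi H$ is essential for two separate reasons. First, it makes the minimizer solve the smooth prescribed mean curvature equation, which is what yields $C^\infty$ regularity via interior gradient estimates and elliptic theory (following Theorem 14.13 of \cite{Gi} and \cite{Sp}); a minimizer of the plain area functional above an obstacle is in general only $C^{1,1}$ across the free boundary of the coincidence set, so your $v$ would not be the smooth graph the theorem demands. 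Second, $H\not\equiv 0$ forces the resulting graph to be non-flat, which is the actual content of the theorem: the flat slice $\Si\times\{c\}$ satisfies the theorem's literal conclusion on any $\Si$, and the point is to produce a non-flat example feeding into item (4) of Theorem \ref{main} and the sharpness of Corollary \ref{half-space*}. Finally, building an obstacle or barrier with the right ambient mean curvature sign is not automatic: the paper first shrinks $t,\de$ so that $D^2\r_p^2$ is close to the identity on $B_\de(p)$ and the BV solution satisfies $\sup_{\p B_\de(p)}|Du|<1$ (Lemma \ref{smalluG}), and then uses the explicit quadratic profile $t+\ep_0(\de^2-\r_p^2)$; your ``dome'' $\psi$ would need a comparable construction, and for arbitrary $\de$ on a general manifold the naive dome need not have the required sign (reducing to small $\de,t$ is harmless, but you have to actually do it).
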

\begin{proof}
Combining Theorem \ref{Uniu<t} and Lemma \ref{smalluG} in the Appendix I, we can choose $0<t,\de<<1$ so that $D^2\r_p^2\ge\f{\sqrt{5}}2(\de_{ij})_{n\times n}$ on $B_\de(p)$, and the BV solution $u$ on $\Si$ associated with $\mathrm{cap}_t(B_\de(p))$ satisfies $\sup_{\p B_\de(p)}|Du|<1$. By Hopf's lemma, there is a constant $\ep_0\in(0,\f1{4\de})$ so that $\inf_{\p B_\de(p)}|Du|\ge4\ep_0\de$.
Let $\tilde{u}$ be a locally Lipschitz function on $\Si$ defined by 
\begin{eqnarray}
   \tilde{u}= \left\{\begin{array}{cc}
           u   \quad\quad   \ \ \  \ \ \qquad\quad \ \ \mathrm{on}&\ \Si\setminus B_\de(p) \\ [3mm]
           t+\ep_0(\de^2-\r_p^2)      \quad\quad\ \ \   &  \mathrm{on} \quad B_\de(p)
     \end{array}\right..
\end{eqnarray}
Clearly, $|D\tilde{u}|=2\ep_0\r_p\le2\ep_0\de<\f12$ on $B_{\de}(p)$.
Let $\tilde{U}$ denote the subgraph of $\tilde{u}$ on $\Si$, i.e., $\tilde{U}=\{(x,s)\in\Si\times\R|\, s<\tilde{u}(x)\}$. Then $\tilde{U}$ is a (generalized) mean convex domain w.r.t. the unit vector pointing into $\tilde{U}$. From $D^2\tilde{u}=-\ep_0D^2\r_p^2\le-\f{\sqrt{5}\ep_0}2(\de_{ij})_{n\times n}$ on $B_\de(p)$ and $|D\tilde{u}|<\f12$ on $B_{\de}(p)$, $\p\tilde{U}\cap(B_\de(p)\times\R)$ has the mean curvature
\begin{equation}\aligned
\f{-1}{\sqrt{1+|D\tilde{u}|^2}}\left(\De_\Si \tilde{u}-D^2\tilde{u}(D\tilde{u},D\tilde{u})\right)\le\f{1}{\sqrt{1+|D\tilde{u}|^2}}\f{\sqrt{5}\ep_0}2(|D\tilde{u}|^2-n)<(1-n)\ep_0,
\endaligned\end{equation}
where we have used the diagonal argument for the matrix $D^2\tilde{u}$ in the above inequality.

Let $H\not\equiv0$ be a smooth nonnegative function on $\Si$ with compact support in $B_\de(p)$ and $0\le H\le\ep_0$ on $B_\de(p)$. We will also see $H$ as a function on $\Si\times\R$ by letting $H(x,\tau)=H(x)$ for any $(x,\tau)\in\Si\times\R$.
For each open bounded set $\Om\subset\Si$,
let $\mathrm{Lip}_0(\tilde{u},\Om)$ denote the space containing all Lipschitz functions $\phi\le\tilde{u}$ on $\Om$ with $\phi=0$ on $\p\Om$. We consider a functional $\mathbf{F}_{\Om,H}$ on $\mathrm{Lip}_0(\tilde{u},\Om)$ by letting
\begin{equation}\aligned
\mathbf{F}_{\Om,H}(\phi)=\int_\Om\sqrt{1+|D\phi|^2}-\int_\Om\phi H\qquad\qquad\mathrm{for\ each}\ \phi\in\mathrm{Lip}_0(\tilde{u},\Om).
\endaligned\end{equation}
There is a sequence of functions $\phi_i\in\mathrm{Lip}_0(\tilde{u},\Om)$ so that
\begin{equation}\aligned
\inf_{\phi\in\mathrm{Lip}_0(\tilde{u},\Om)}\mathbf{F}_{\Om,H}(\phi)=\lim_{i\to\infty}\mathbf{F}_{\Om,H}(\phi_i).
\endaligned\end{equation}
Since $\mathbf{F}_{\Om,H}(|\phi_i|)\le\mathbf{F}_{\Om,H}(\phi_i)$, we may assume $\phi_i\ge0$.
Analog to the proof of Theorem \ref{wExist},
there is a BV function $w\le\tilde{u}$ on $\Si$ with $w=0$ outside $\overline{\Om}$ such that the subgraph $W$ of $w$ satisfies
\begin{equation}\aligned
\int_{\overline{\Om}\times\R}\left|\overline{\na}\chi_{_W}\right|-\int_\Om wH=\lim_{i\to\infty}\mathbf{F}_{\Om,H}(\phi_i).
\endaligned\end{equation}
From Lemma \ref{wWFOm**}, $W$ minimizes the following functional (w.r.t. the prescribed mean curvature $H$)
\begin{equation}\aligned
\int_{\overline{\Om}\times\R}\left|\overline{\na}\chi_{_F}\right|-\int_{\Si\times\R^+} H\chi_{_F}
\endaligned\end{equation}
for every Caccioppoli set $F$ with $\Om'\times(-\infty,0)\subset F\subset\tilde{U}$ and $F=W$ outside $\Om'\times\R$.
From the construction of $\tilde{U}$, we conclude that $\p W$ is a smooth hypersurface with mean curvature $H$ outside a singular set of dimension $\le n-7$ by Federer's dimension reduction argument and the maximum principle (see \cite{DS} for instance). 

Let $S=\p W\cap(\Om\times\R)$, and $S_*$ be the regular part of $S$. 
Recall (see \cite{Sp} for instance)
\begin{equation}\aligned\label{DeSEn1nwH}
\De_S\lan E_{n+1},\n_S\ran=&\lan E_{n+1},\na^S H\ran-\left(|A_S|^2+\overline{Ric}(\n_S,\n_S)\right)\lan E_{n+1},\n_S\ran\\
=&-\lan E_{n+1},\n_S\ran\lan \n_S,DH\ran-\left(|A_S|^2+\overline{Ric}(\n_S,\n_S)\right)\lan E_{n+1},\n_S\ran,
\endaligned
\end{equation}
where $\De_S$ denotes the Laplacian on $S_*$, $A_S$ denotes the second fundamental form of $S_*$, and $\n_S=(1+|Dw|^2)^{-1/2}(-Dw+E_{n+1})$ denotes the unit normal vector field to $S_*$. 
Now we can follow the proof of Theorem 14.13 in \cite{Gi} with solvability of Dirichlet problem of prescribed mean curvature equation and gradient estimate both on a small ball replaced by Theorem 1.5 and Theorem 1.1 in \cite{Sp}. This infers the Lipschitz continuity of $w$, and hence $w$ is smooth. $H\not\equiv0$ implies the non-flatness of $\p W$.
This completes the proof.
\end{proof}

If a complete manifold has two $M$-nonparabolic ends at least, we can deduce the existence of entire minimal graphs over the manifold as follows.
\begin{theorem}
Let $\Si$ be a complete Riemannian manifold, which has two $M$-nonparabolic ends at least.
Then for any $t>0$ there is a non-flat entire minimal graph $M\subset\Si\times[0,t]$ over $\Si$.
\end{theorem}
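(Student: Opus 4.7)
The plan is to construct the entire minimal graph as a locally smooth limit of BV solutions to Dirichlet-type problems on an exhausting family $\Om_i\uparrow\Si$, with opposing boundary data $t$ and $0$ imposed on the boundary components lying in the $M$-nonparabolic ends $E_1$ and $E_2$ respectively. This parallels the classical Li--Tam construction of non-constant bounded harmonic functions on manifolds with two nonparabolic ends, adapted here to the minimal surface equation.

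First I pick a smooth compact $K_0\subset\Si$ such that $\Si\setminus K_0$ has two distinct connected components $E_1^0\subset E_1$ and $E_2^0\subset E_2$, together with a smooth bounded exhaustion $\{\Om_i\}$ satisfying $K_0\subset\Om_1$ and $\p(\Si\setminus\Om_i)=\p\Om_i$. For $i$ large, $\Gamma_i^j:=\p\Om_i\cap\overline{E_j^0}$ ($j=1,2$) are disjoint smooth closed hypersurfaces. For each such $i$ I produce a BV minimizer $u_i:\Om_i\to[0,t]$ of
\[\int_{\Om_i}\sqrt{1+|D\phi|^2}+\int_{\Gamma_i^1}|\mathrm{tr}\,\phi-t|+\int_{\Gamma_i^2}|\mathrm{tr}\,\phi|,\]
with free Neumann-type conditions on the remaining boundary components (if $\Si$ has additional ends); existence and interior smoothness follow in the spirit of Theorem~\ref{wExist} and Lemma~\ref{MiniSmooth}. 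Since the $u_i$ are uniformly bounded in $[0,t]$, the interior gradient estimate for \eqref{u} yields uniform local $C^{1}$ bounds, and higher regularity follows by Schauder; a diagonal subsequence then converges locally smoothly on $\Si$ to a smooth $u:\Si\to[0,t]$ solving \eqref{u}.

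To show $u$ is non-flat, suppose $u\equiv c$. On the domain $E_1^0\cap\Om_i$, compare $u_i$ with the BV solution $V_i$ of \eqref{u} satisfying $V_i=t$ on $\Gamma_i^1$ and $V_i=0$ on $\p E_1^0$. Since $u_i=t=V_i$ on $\Gamma_i^1$ and $u_i\ge0=V_i$ on $\p E_1^0$, the comparison principle for the MSE gives $u_i\ge V_i$ on $E_1^0\cap\Om_i$. The monotone limit $V:=\lim_i V_i$ is a bounded solution of \eqref{u} on $E_1^0$ vanishing on $\p E_1^0$, and by the $M$-nonparabolicity of $E_1$ (via Theorem~\ref{main} applied to an exhaustion of $E_1^0$) $V$ is non-constant with $\sup_{E_1^0}V=t$. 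Passing to the limit, $c=u\ge V$ on $E_1^0$ forces $c\ge t$, and a symmetric argument on $E_2^0$ using the $M$-nonparabolicity of $E_2$ forces $c\le 0$, a contradiction.

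The main obstacle is this last non-flatness step: converting the global capacity lower bound $\mathrm{cap}_t(\Si\setminus E_j)>0$ into the pointwise asymptotic statement $\sup_{E_1^0}V=t$ (respectively $\inf_{E_2^0}V'=0$) requires combining the characterizations of $M$-nonparabolic ends in Theorem~\ref{main} with a careful maximum principle for BV solutions of \eqref{u} along the exhaustion, together with the correct monotonicity of $V_i$ in $i$ (which is a consequence of Proposition~\ref{KK'OmOm'UNIQ}). Once this asymptotic behavior of the reference solutions is in hand, the rest of the argument is routine.
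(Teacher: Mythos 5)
Your overall construction matches the paper's: impose $t$ deep in one $M$-nonparabolic end and $0$ deep in the other, and take the limit of the BV minimizers to get a locally smooth entire minimal graph (the paper sets $K_i=E_-\setminus B_i(p)$, $\Om_i=B_i(p)\cup E_-$ and uses the BV solutions of $\mathrm{cap}_t(K_i,\Om_i)$; your penalized functional with free data on any further ends is a variant of the same idea).

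The gap is in the non-flatness step, specifically the claim $\sup_{E_1^0}V=t$ for $V=\lim_i V_i$. What $M$-nonparabolicity of $E_1$ gives, via Theorem~\ref{Uniu<t} applied on the end, is only that $V$ is non-constant, i.e.\ $\sup V>0$. Writing $W=t-V$, your assertion $\sup V=t$ is precisely the decay statement $\inf W=0$ at infinity, and the paper explicitly warns that this can fail for minimal graphs (the remark following Theorem~\ref{Uniu<t}): Proposition~\ref{captrCOV>0} constructs a rotationally symmetric $M$-nonparabolic manifold on which the BV solution $u_r$ associated with $\mathrm{cap}_3(\overline{B}_r(o))$ satisfies $\lim_{x\to\infty}u_r(x)>0$. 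Doubling that manifold produces a $\Si$ with two $M$-nonparabolic ends on which $\sup_{E_1^0}V<t$ and $\sup_{E_2^0}V'<t$; then the two inequalities you derive, $c\ge\sup_{E_1^0}V$ and $c\le t-\sup_{E_2^0}V'$, need not be contradictory, and the proof collapses. The paper's non-flatness argument is designed to avoid exactly this: assuming $u_i\to t_*$ constant, it sandwiches $t-u_i$ on $E_-\setminus B_R(p)$ between $w_i\pm t_i$ (Proposition~\ref{KK'OmOm'UNIQ}), where $w_i$ is the BV solution of $\mathrm{cap}_{t-t_*}(\overline{B}_R(p),\overline{B}_R(p)\cup(E_-\cap B_i(p)))$ and $t_i=\sup_{\p B_R(p)}|t_*-u_i|\to0$; the sandwich forces $\lim w_i\equiv t-t_*$, contradicting the non-constancy of $\lim w_i$ supplied by Theorem~\ref{Uniu<t}. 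The essential point is that only \emph{non-constancy} of the reference solution is needed, not a prescribed asymptotic value at infinity, and your argument should be repaired along these lines. (A secondary point: the comparison $u_i\ge V_i$ between two BV solutions whose Dirichlet data may not be attained pointwise at the boundary also requires justification, e.g.\ via the area-minimizing-current comparison used in Propositions~\ref{UNIu} and~\ref{KK'OmOm'UNIQ}.)
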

\begin{proof}
Let $E_+$ and $E_-$ be two $M$-nonparabolic ends of $\Si$, both of which are disconnected open sets with smooth boundaries. Fix a point $p\in\Si$.
Set $K_i=E_-\setminus B_i(p)$, and $\Om_i=B_i(p)\cup E_-$ for each $i$ with $\p E_-\subset B_i(p)$.
From Theorem \ref{wExist}, let $u_i$ be a BV solution on $\Si$ associated with $\mathrm{cap}_{t}(K_i,\Om_i)$. Let $M_i$ denote the boundary of the subgraph of $u_i$ in $\Si\times\R$. Up to choosing the subsequence, we may assume that $M_i$ converges to a closed set $M$ in $\Si\times[0,t]$. From Lemma \ref{MiniSmooth}, $M$ is an entire minimal graph over $\Si$.

Now, let us prove the non-flatness of $M$. 
Suppose $u_i$ converges to a constant $t_*\in[0,t]$. We choose $R>0$ so that $\p E_+\cup\p E_-\subset B_R(p)$. For $t_*<t$, let us deduce a contradiction.
Let $t_i=\sup_{\p B_R(p)}|t_*-u_i|$.
Let $w_i$ be a BV solution associated with $\mathrm{cap}_{t-t_*}(\overline{B}_R(p),\overline{B}_R(p)\cup (E_-\cap B_i(p)))$.
By Proposition \ref{KK'OmOm'UNIQ}, it follows that
\begin{equation}\aligned\label{wiwi+1uiE-}
w_i-t_i\le t-u_i\le w_i+t_i\qquad\ \ \mathrm{on}\ E_-\setminus B_R(p).
\endaligned
\end{equation}
From Theorem \ref{Uniu<t} and the $M$-nonparabolic $E_-$, the limit of $w_i$ is not constant on $E_-\setminus B_R(p)$.
Then \eqref{wiwi+1uiE-} contradicts to $t_i\to t-t_*$ and that $u_i$ converges to a constant $t_*\in[0,t]$. So we get $t_*=t$. 

On the other hand, $t_*=t$ is impossible by a similar argument as above from Theorem \ref{Uniu<t} again and the non-parabolic $E_+$. 
This completes the proof.
\end{proof}

Now let us study the connection of $M$-parabolicity and the rigidity of minimal graphs over $\Si$ outside compact sets.
\begin{lemma}\label{Rig-M-Parabolic}
Let $\Si$ be a complete Riemannian manifold, and $K\subset\Si$ be a compact set with $\p\overline{\Si\setminus K}=\p K$ and $\mathcal{H}^n(K)>0$. The following two statements are equivalent:
\begin{itemize}
\item[i)] $\Si$ is M-parabolic.
\item[ii)] For any BV function $u:\ \Si\to[0,t]$ with $u\equiv t$ on $K\setminus\p K$, if $\p\{(x,s)\in\Si\times\R|\, s< u(x)\}$ is area-minimizing in $\overline{\Si\setminus K}\times\R$, then $u\equiv t$ on $\Si$.
\end{itemize}
\end{lemma}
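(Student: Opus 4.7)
The plan is to tie the statement to the canonical BV solutions associated with $\mathrm{cap}_t(K,\Om_i)$ on a bounded exhaustion. Fix $p\in\Si$ and a sequence of bounded open sets $\Om_i\subset\Si$ with smooth boundary, $K\subset\Om_i\subset\Om_{i+1}$, $\p(\Si\setminus\Om_i)=\p\Om_i$, and $d(p,\p\Om_i)\to\infty$. Let $u_i$ be the BV solution on $\Si$ associated with $\mathrm{cap}_t(K,\Om_i)$ from Theorem \ref{wExist}, with subgraph $U_i$, and let $U$ denote the subgraph of the $u$ appearing in~(ii).

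For the direction (i)$\Rightarrow$(ii), assume $\Si$ is $M$-parabolic. Lemma \ref{ConMR1} gives $u_i\to t$ locally uniformly on $\Si$. The heart of the argument is the claim that $u_i\le u$ on $\Om_i$ for every $i$; granting this, passing to the limit yields $u\ge t$, and combined with $u\le t$ by hypothesis one obtains $u\equiv t$. To prove the claim, set $W:=\overline{\Om_i\setminus K}\times\R$ and consider $\min(u,u_i)$ and $\max(u,u_i)$, whose subgraphs are $U\cap U_i$ and $U\cup U_i$. Outside $\Om_i\times\R$ one has $u_i\equiv 0\le u$, so $U\cap U_i=U_i$ and $U\cup U_i=U$ there, while on $K\setminus\p K$ both functions equal $t$. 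The identity $\chi_U+\chi_{U_i}=\chi_{U\cap U_i}+\chi_{U\cup U_i}$ combined with the standard BV perimeter identity yields
\begin{equation*}
|D\chi_U|(W)+|D\chi_{U_i}|(W)=|D\chi_{U\cup U_i}|(W)+|D\chi_{U\cap U_i}|(W),
\end{equation*}
while the area-minimizing property of $\p U$ in $\overline{\Si\setminus K}\times\R$ (tested against $U\cup U_i$) and of $\p U_i$ in $W$ (tested against $U\cap U_i$) gives
\begin{equation*}
|D\chi_U|(W)\le|D\chi_{U\cup U_i}|(W),\qquad |D\chi_{U_i}|(W)\le|D\chi_{U\cap U_i}|(W).
\end{equation*}
The three relations force equality everywhere, so $U\cap U_i$ is itself minimizing in $W$ with the same outer data as $U_i$. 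Applying Theorem \ref{UniPlateau} (after a smooth inner approximation of $K$ justified by Corollary \ref{captKKiOmOmi***}) identifies $U\cap U_i=U_i$, i.e., $u_i\le u$ on $\Om_i$.

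For the converse (ii)$\Rightarrow$(i), argue by contradiction. Suppose $\Si$ is $M$-nonparabolic. Since $\mathcal{H}^n(K)>0$, the contrapositive of Proposition \ref{ParaKE} forces $\mathrm{cap}_t(K)>0$ for every $t>0$. Let $u$ denote the unique BV solution on $\Si$ associated with $\mathrm{cap}_t(K)$, obtained as the monotone limit $u_i\nearrow u$. Each $\p U_i$ being area-minimizing in $\overline{\Om_i\setminus K}\times\R$, the monotone convergence of subgraphs together with Federer--Fleming compactness (as in the proof of Lemma \ref{ExistTh}) implies that $\p U$ is area-minimizing in $\overline{\Si\setminus K}\times\R$. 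Clearly $u\in[0,t]$ with $u\equiv t$ on $K\setminus\p K$, so $u$ is a legitimate candidate for the hypothesis of (ii); however Theorem \ref{Uniu<t} gives $\inf_{\Si\setminus K}u<t$, so $u\not\equiv t$, contradicting~(ii).

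The principal obstacle is the comparison step $u_i\le u$ in (i)$\Rightarrow$(ii). The uniqueness input of Theorem \ref{UniPlateau} is stated for bounded domains with $\p\,\overline{\Om\setminus K}=\p(\Om\setminus K)$, so one must first work with smooth inner/outer approximations of $K$ and $\Om_i$, carry out the $\min$/$\max$ comparison there, and then recover the original comparison by the semicontinuity statements in Proposition \ref{semi-cont} and Corollary \ref{captKKiOmOmi***}. A secondary routine point is verifying that $\min(u,u_i)$ and $\max(u,u_i)$ are genuine competitors with the correct outer values, which is immediate from $u_i\equiv 0$ outside $\Om_i$ and the common value $t$ on $K\setminus\p K$.
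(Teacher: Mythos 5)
Your proof is correct and follows the same overall strategy as the paper's: establish the pointwise comparison $u_i\le u$ against the exhausting cap solutions, invoke Lemma \ref{ConMR1} to pass to the limit $u\ge t$, and for the converse appeal to Theorem \ref{Uniu<t}. The interesting difference is in how the comparison $u_i\le u$ is obtained. The paper pastes $[|M|]$ with $\p[|W_i|]$ (where $W_i$ is the region trapped between $M_i$ and $M$) to produce a current of the same mass and no boundary, then derives a contradiction from unique continuation of solutions to the minimal surface ODE, exactly as in Proposition \ref{KK'OmOm'UNIQ}. You instead invoke the submodularity of perimeter together with the two competitor tests, forcing all inequalities to be equalities, and then apply Theorem \ref{UniPlateau} to identify $U\cap U_i=U_i$. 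Both routes are sound; your version stays closer to pure GMT while the paper's leans on the PDE maximum principle, and your version arguably scales better when the regular parts of $M$ and $M_i$ need not be graphs locally. For the converse you supply one detail the paper leaves tacit, namely that the limit boundary $\p U$ of the monotone family is itself area-minimizing in $\overline{\Si\setminus K}\times\R$; this is needed and is correctly argued by Federer--Fleming compactness and lower semicontinuity of mass.

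One cosmetic correction: what you call ``the standard BV perimeter identity''
\begin{equation*}
|D\chi_U|(W)+|D\chi_{U_i}|(W)=|D\chi_{U\cup U_i}|(W)+|D\chi_{U\cap U_i}|(W)
\end{equation*}
is in general only the submodularity \emph{inequality}
$|D\chi_{U\cup U_i}|(W)+|D\chi_{U\cap U_i}|(W)\le|D\chi_U|(W)+|D\chi_{U_i}|(W)$;
equality fails when the reduced boundaries meet with opposite normals. Fortunately this does not affect your argument, since you only need that inequality together with the two minimality inequalities to force equality throughout. You should phrase it as an inequality rather than an identity.
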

\begin{proof}
Let us show $i)\Rightarrow ii)$.
Suppose $\Si$ is $M$-parabolic. Let $u:\ \Si\to[0,t]$ be a bounded BV function on $\Si$ with $u\equiv t$ on $K\setminus\p K$, and $U=\{(x,s)\in\Si\times\R|\, s< u(x)\}$ be the subgraph of $u$ so that $M:=\p U$ is area-minimizing in $\overline{\Si\setminus K}\times\R$. From Lemma \ref{MiniSmooth}, $u$ is smooth on $\Si\setminus K$ with locally bounded gradient.
For any integer $i\ge 1$ with $K\subset B_i(p)$, from Theorem \ref{wExist} let $u_i$ be a BV solution associated with $\mathrm{cap}_{t}(K,B_i(p))$. Let $M_i$ denote the boundary of the subgraph of $u_i$ in $\Si\times\R$.

Suppose $M_i\setminus\overline{U}\neq\emptyset$. Let $W_i$ be an open set enclosed by $M_i$ and $M$ in $(\Si\times\R)\setminus\overline{U}$. 
Let $T$ be an $n$-current defined by $T=[|M|]+\p[|W_i|]$. Then $T$ has multiplicity one with $\p T=0$.  For a point $z=(x,s)\in \p(M_i\setminus\overline{U})$ with $x\in\Si\setminus K$ and $s\in\R$, we can adopt the argument in the proof of Proposition \ref{KK'OmOm'UNIQ} to get a contradition.
Hence, we conclude that $M_i\subset\overline{U}$ for each $i$ with $K\subset B_i(p)$, which implies
$u\ge u_i$ on $\Si$.
From Lemma \ref{ConMR1}, $u_i\rightarrow t$ locally on $\Si$ in the $C^0$-sense. Letting $i\rightarrow\infty$ implies $u\ge t$ on $\Si$. By the definition, we get $u\equiv t$ on $\Si$.
Combining Theorem \ref{Uniu<t}, we complete the proof.
\end{proof}

Recalling that a complete manifold $\Si$ has \emph{nondegenerate boundary at infinity} if there are a constant $\ep>0$ and a closed set $K_\ep\subset\Si$ so that $\mathcal{H}^{n-1}(\p U)\ge\ep$ for any open set $U\supset K_\ep$ with non-empty $\Si\setminus U$.  
\begin{theorem}\label{EntiresolM-para}
Let $\Si$ be a $M$-parabolic manifold with nondegenerate boundary at infinity. For any $p\in\Si$, there is a constant $\de_p>0$ so that for any $\de\in(0,\de_p]$ and $\tau>0$ there is a nonnegative  solution $u\in C^\infty(\Si\setminus B_\de(p))$ to \eqref{u} on $\Si\setminus\overline{B}_\de(p)$ with $u=0$ on $\p B_\de(p)$, $\sup_{B_{2\de}(p)}u=\tau$ and $\limsup_{x\rightarrow\infty}u(x)=\infty$.
\end{theorem}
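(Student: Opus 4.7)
The plan is to construct $u$ as a subsequential limit of BV solutions on an exhaustion of $\Si$, where the outer boundary data is chosen via an intermediate value argument so that the supremum on $B_{2\de}(p)\setminus\overline{B}_\de(p)$ is exactly $\tau$. The $M$-parabolicity will then force the limit to be unbounded at infinity through Lemma \ref{Rig-M-Parabolic}, while the nondegenerate boundary at infinity provides the compactness to extract a nontrivial smooth limit.

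Fix $\de_p$ small enough that the local gradient control used for Theorem \ref{NONMeanCurv} is available for every $\de\le\de_p$. Let $\{\Om_i\}$ be a smooth exhaustion of $\Si$ with $B_{2\de}(p)\subset\subset\Om_i$, and for each $T>0$ let $u_T^i$ be the BV solution associated with $\mathrm{cap}_T(\overline{B}_\de(p),\Om_i)$ given by Theorem \ref{wExist}. Set $v_T^i:=T-u_T^i$, which is a smooth solution of \eqref{u} on $\Om_i\setminus\overline{B}_\de(p)$ satisfying $v_T^i=0$ on $\p B_\de(p)$ and $v_T^i=T$ on $\p\Om_i$. Define $m_i(T):=\sup_{B_{2\de}(p)\setminus\overline{B}_\de(p)}v_T^i$. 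Proposition \ref{KK'OmOm'UNIQ} gives continuity and monotonicity of $m_i$ in $T$ with $m_i(0)=0$, and a barrier comparison against a radial subsolution of \eqref{u} on a fixed interior ball of $\Om_i\setminus\overline{B}_{2\de}(p)$ shows $m_i(T)\to\infty$ as $T\to\infty$. Pick $T_i$ with $m_i(T_i)=\tau$. If $T_i$ were bounded along a subsequence by some $C_0$, then monotonicity in $T$ would give $v_{T_i}^i\le v_{C_0}^i=C_0-u_{C_0}^i$, and Lemma \ref{ConMR1} combined with the $M$-parabolicity of $\Si$ would yield $v_{C_0}^i\to 0$ locally uniformly, contradicting $m_i(T_i)=\tau>0$; hence $T_i\to\infty$.

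By Theorem \ref{UniPlateau} each graph of $v_{T_i}^i$ is area-minimizing in $\overline{\Om_i\setminus B_\de(p)}\times\R$, and the bound $v_{T_i}^i\le\tau$ on $B_{2\de}(p)\setminus\overline{B}_\de(p)$ combined with interior gradient estimates for \eqref{u} yields smooth subsequential convergence on this annulus. To propagate the $L^\infty$-bound to any $B_R(p)\setminus\overline{B}_\de(p)$, compare $v_{T_i}^i$ with the BV solution $\bar v^i$ on $\Om_i\setminus\overline{B}_{2\de}(p)$ having $\bar v^i=\tau$ on $\p B_{2\de}(p)$ and $\bar v^i=T_i$ on $\p\Om_i$: the maximum principle gives $v_{T_i}^i\le\bar v^i$ on $\Om_i\setminus B_{2\de}(p)$, and the nondegenerate boundary at infinity (through the isoperimetric lower bound $\mathcal{H}^{n-1}(\p U)\ge\ep$ for large $U$ and the co-normal measure $\mu_{\p M}$ analysis of Section 6) forces $\bar v^i\le C(R)$ uniformly in $i$. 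Standard compactness for Caccioppoli subgraphs then produces a function $u$ on $\Si$ whose subgraph boundary is area-minimizing in $\overline{\Si\setminus B_\de(p)}\times\R$, with $u=0$ on $\p B_\de(p)$ and $\sup_{B_{2\de}(p)}u=\tau$; smoothness on $\Si\setminus\overline{B}_\de(p)$ follows from Lemma \ref{MiniSmooth}.

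For the unboundedness at infinity, suppose $u\le L<\infty$ on $\Si\setminus\overline{B}_\de(p)$. Define $\hat u\colon\Si\to[0,L]$ by $\hat u:=L-u$ on $\Si\setminus\overline{B}_\de(p)$ and $\hat u\equiv L$ on $\overline{B}_\de(p)$. Then $\hat u\equiv L$ on $B_\de(p)=\overline{B}_\de(p)\setminus\p B_\de(p)$, and the boundary of the subgraph of $\hat u$ in $\overline{\Si\setminus B_\de(p)}\times\R$ is the isometric image of the graph of $u$ under the vertical reflection $s\mapsto L-s$, hence area-minimizing there. Lemma \ref{Rig-M-Parabolic} therefore forces $\hat u\equiv L$, that is $u\equiv 0$ on $\Si\setminus\overline{B}_\de(p)$, contradicting $\sup_{B_{2\de}(p)}u=\tau>0$; so $\limsup_{x\to\infty}u(x)=\infty$. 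The main obstacle is the uniform local $L^\infty$-bound in the third paragraph: since $T_i\to\infty$, ruling out interior blow-up of $v_{T_i}^i$ as $i\to\infty$ is delicate, and this is exactly where the nondegenerate boundary at infinity must be used to construct the needed barrier.
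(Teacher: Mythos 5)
Your overall skeleton matches the paper's: take BV solutions $w_{i,t}=t-u_{i,t}$ for $\mathrm{cap}_t(\overline B_\de(p),\Om_i)$, use an intermediate value argument in $t$ to normalize $\sup_{B_{2\de}(p)}w_{i,t_i}=\tau$, pass to a limit, and invoke Lemma \ref{Rig-M-Parabolic} to get $\limsup_{x\to\infty}u(x)=\infty$. Your observation that $T_i\to\infty$ follows from Lemma \ref{ConMR1} and $M$-parabolicity, and your vertical-reflection argument for the unboundedness at infinity, are both correct and match (and slightly sharpen) what the paper says. But there are two genuine gaps.

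First, the assertion that ``a barrier comparison against a radial subsolution on a fixed interior ball shows $m_i(T)\to\infty$'' is not justified, and I do not think a radial barrier can deliver it. The BV minimizer is allowed to detach from its prescribed trace $T$ on $\p\Om_i$: the penalty it pays is proportional to $T\cdot\mathcal H^{n-1}(\text{detachment set})$, and if some level set of $w_{i,t}$ has small $(n-1)$-measure the minimizer may prefer to jump there and stay bounded in the interior, so a barrier that only sees $\p B_{2\de}(p)$ and $\p\Om_i$ gives no lower bound. This is exactly where the paper first uses the nondegenerate-boundary hypothesis: with the co-area formula it gets $\int_{B_i(p)}|Dw_{i,t}|\ge\frac32\mathcal H^{n-1}(\p B_{2\de}(p))(t-\La)$, while the direct estimate $\mathrm{cap}_t(\overline B_{2\de}(p),B_i(p))\le t\,\mathcal H^{n-1}(\p B_{2\de}(p))$ caps the variation from above, and these contradict each other for large $t$ because $3/2>1$. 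Your proposal is missing this isoperimetric lower bound entirely, and the choice of $\de_p$ in the paper is made precisely to arrange the $3/2$-factor; tying $\de_p$ to the gradient control of Theorem \ref{NONMeanCurv} instead is not what drives the argument.

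Second, the route through a uniform local $L^\infty$ bound is structurally different from the paper's and is left open, as you acknowledge. Comparing $v_{T_i}^i$ with $\bar v^i$ (boundary data $\tau$ on $\p B_{2\de}(p)$, $T_i$ on $\p\Om_i$) just replaces one solution with unbounded outer data by another, and the reference to ``the co-normal measure analysis of Section 6'' does not produce the needed $C(R)$. The paper sidesteps this entirely: by Federer--Fleming compactness the subgraphs $[|W_i|]$ converge to a current whose graphical function $w_\infty$ is a priori only a \emph{quasi-solution}, possibly $+\infty$ on a set, and the nondegenerate boundary condition is then used a second time to rule out $\mathrm{dom}(w_\infty)\neq\Si$: if $S=\p(\mathrm{dom}(w_\infty))$ were nonempty it would be an area-minimizing hypersurface with $\mathcal H^{n-1}(S\cap B_R(p))\ge\frac32\mathcal H^{n-1}(\p B_\de(p))$, and estimating $\int_{\Om_i(T)}(\sqrt{1+|Dw_{i,t_i}|^2}-1)$ from below along $S$ and from above by $T\mathcal H^{n-1}(\p B_\de(p))$ gives a contradiction for large $T$. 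You would either have to reproduce this quasi-solution argument, or supply the uniform $L^\infty$ bound you appeal to, and neither is done. Finally, smoothness of $u$ up to $\p B_\de(p)$ (as opposed to interior smoothness from Lemma \ref{MiniSmooth}) requires the boundary-gradient control of Lemma \ref{smalluG}, which also should be cited explicitly.
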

\begin{proof}
We fix a point $p\in\Si$. 
There is a constant $\de_p\in(0,1/4]$ so that 
\begin{equation}\aligned\label{pOm2dep}
\mathcal{H}^{n-1}(\p\Om)\ge\f45\mathcal{H}^{n-1}(\p B_{4\de_p}(p))\ge \f32\mathcal{H}^{n-1}(\p B_{2\de}(p))
\endaligned
\end{equation}
for every $0<\de\le\de_p$ and every subset $\Om\supset B_{4\de_p}(p)$ with non-empty $\Si\setminus\Om$. Without loss of generality, we assume 
\begin{equation}\aligned\label{2depdep}
\mathcal{H}^{n-1}(\p B_{2\de}(p))\ge \f32\mathcal{H}^{n-1}(\p B_{\de}(p))\qquad \mathrm{for\ each}\ 0<\de\le\de_p.
\endaligned
\end{equation}
From Theorem \ref{wExist}, there is a (BV) solution $u_{i,t}$ on $\Si$ associated with $\mathrm{cap}_t(\overline{B}_{\de}(p),B_i(p))$ for every integer $i\ge1$ and $t>0$. Noting Proposition \ref{UNIu}, we can assume that $u_{i,t}$ is the minimizing solution, which is unique in this sense. This infers $\lim_{t'\to t}u_{i,t'}=u_{i,t}$. In particular, $u_{i,t}$ is not constant.

Let $w_{i,t}=t-u_{i,t}$. We claim
\begin{equation}\aligned\label{CLAwitinf}
\lim_{t\to\infty}\sup_{B_{2\de}(p)}w_{i,t}=\infty.
\endaligned
\end{equation}
If \eqref{CLAwitinf} fails, we assume there is a constant $\La>0$ so that $\sup_{B_{2\de}(p)}w_{i,t}\le\La$ for all $t>0$.
With co-area formula and \eqref{pOm2dep}, for all $t>\La$ we have
\begin{equation}\aligned
\int_{B_i(p)}\sqrt{1+|Dw_{i,t}|^2}\ge&\int_{B_i(p)}|Dw_{i,t}|
\ge\int_\La^t\mathcal{H}^{n-1}\left(\{x\in B_i(p)\setminus B_{2\de}(p)|\,u_{i,t}(x)=s\}\right)ds\\
\ge\int_\La^t \f32\mathcal{H}^{n-1}&(\p B_{2\de}(p)) ds=\f32\mathcal{H}^{n-1}(\p B_{2\de}(p))(t-\La).
\endaligned
\end{equation}
However,
\begin{equation}\aligned\label{captB2deippp}
\int_{B_i(p)}\left(\sqrt{1+|Du_{i,t}|^2}-1\right)\le\mathrm{cap}_t(\overline{B}_{2\de}(p),B_i(p))\le t\mathcal{H}^{n-1}(\p B_{2\de}(p)),
\endaligned
\end{equation}
where the second inequality in \eqref{captB2deippp} can be obtained by taking Lipschitz functions approaching to $\chi_{_{B_{2\de}(p)}}$.
The above two inequalities imply
\begin{equation}\aligned
\f32\mathcal{H}^{n-1}(\p B_{2\de}(p))(t-\La)-\mathcal{H}^n(B_i(p)\setminus B_{2\de}(p))\le t\mathcal{H}^{n-1}(\p B_{2\de}(p)),
\endaligned
\end{equation}
which is impossible for the large $t$. Hence, the claim \eqref{CLAwitinf} is true.

Noting $\lim_{t\to0}w_{i,t}\equiv0$ and $\lim_{t'\to t}u_{i,t'}=u_{i,t}$. Given a constant $\tau>0$, we can choose a constant $t_i>0$ for each $i\ge1$ so that
\begin{equation}\aligned\label{wititau}
\sup_{B_{2\de}(p)}w_{i,t_i}=\tau.
\endaligned
\end{equation}
Let $W_i$ denote the subgraph of $w_{i,t_i}$ in $\Si\times\R$ defined by
$$W_i=\{(x,\tau)\in\Si\times\R|\, \tau< w_{i,t_i}(x),\, x\in\Si\},$$
and $M_i=\p W_i$.
By Federer-Fleming compactness theorem \cite{FF}, up to choosing the subsequence, $[|W_i|]$ converges to a current $[|W_\infty|]$ for some open set $W_\infty\subset\Si\times\R$
such that $\p[|W_\infty|]$ is minimizing in $(\Si\setminus B_\de(p))\times\R$. Moreover, there is a function $w_\infty:\Si\to[0,\infty]$ with $w_\infty\big|_{B_\de(p)}=0$ such that 
$$W_\infty=\{(x,\tau)\in\Si\times\R|\, \tau<w_\infty\}.$$
In fact, $w_\infty$ is said to be a \emph{quasi-solution} on $\Si\setminus B_\de(p)$ (see $\S$16 in \cite{Gi} for more details). Set
$$\mathrm{dom}(w_{\infty})=\{x\in\Si|\, w_\infty(x)<\infty\}.$$
Let $M_\infty=\p W_\infty$. From Lemma \ref{MiniSmooth}, $w_\infty$ is smooth on $\mathrm{dom}(w_{\infty})\setminus \overline{B}_\de(p)$.
Hence, \eqref{wititau} implies
\begin{equation}\aligned\label{wtautau}
\sup_{B_{2\de}(p)}w_\infty=\tau.
\endaligned
\end{equation}

Now we assume $\mathrm{dom}(w_{\infty})\neq\Si$, or else we have completed the proof. Let $S=\p(\mathrm{dom}(w_{\infty}))$. 
Combining \eqref{pOm2dep}\eqref{2depdep}, we fix a suitable large $R>0$ so that
\begin{equation}\aligned\label{Rpn-1VOL*}
\mathcal{H}^{n-1}(S\cap B_R(p))=&\mathcal{H}^{n-1}(\p(\mathrm{dom}(w_{\infty}))\cap B_R(p))\\
\ge&\mathcal{H}^{n-1}(\p B_{2\de}(p))\ge\f32\mathcal{H}^{n-1}(\p B_{\de}(p)).
\endaligned
\end{equation}
For each $T>\tau$ and each $i$ with $t_i>T$, let $\Om_i(T)$ be a connected component in $\{x\in\Si|\, w_{i,t_i}(x)<T\}$ with $\p B_{2\de}(p)\subset\Om_i$. Let $B_{R,\ep}(S)=B_\ep(S)\cap B_R(p)$ denote the $\ep$-tubular neighborhood of $S$ restricted in $B_R(p)$.
Since $M_i$ converges as $i\to\infty$ to $M_\infty$ and $M_{\infty}-tE_{n+1}$ converges as $t\to\infty$ to $S\times\R$ locally in the measure sense, we conclude that $S$ is an area-minimizing hypersurface in $\Si$.
Hence, there is a constant $\La_{R,\ep}$ depending on $R,\ep$ (independent of $i$) so that 
\begin{equation}\aligned\label{BRepSOmiTVOL}
\liminf_{i\to\infty}\int_{B_{R,\ep}(S)\cap\Om_i(T)}\sqrt{1+|Dw_{i,t_i}|^2}\ge(T-\La_{R,\ep})\mathcal{H}^{n-1}(S\cap B_R(p))
\endaligned
\end{equation}
for all large $T>0$. 
Together with \eqref{Rpn-1VOL*}\eqref{BRepSOmiTVOL}, we have
\begin{equation}\aligned\label{OmiTlow}
\liminf_{i\to\infty}&\int_{\Om_i(T)}\left(\sqrt{1+|Dw_{i,t_i}|^2}-1\right)\ge\liminf_{i\to\infty}\int_{B_{R,\ep}(S)\cap\Om_i(T)}\left(\sqrt{1+|Dw_{i,t_i}|^2}-1\right)\\
\ge&\f32(T-\La_{R,\ep})\mathcal{H}^{n-1}(\p B_{\de}(p))-\mathcal{H}^{n}(B_{R,\ep}(S)).
\endaligned
\end{equation}
By the definition of $w_{i,t_i}$, we get
\begin{equation}\aligned\label{OmiTup}
\int_{\Om_i(T)}\left(\sqrt{1+|Dw_{i,t_i}|^2}-1\right)\le T\mathcal{H}^{n-1}(\p B_{\de}(p)).
\endaligned
\end{equation}
However, \eqref{OmiTlow} and \eqref{OmiTup} can not hold simultaneously for the sufficiently large $T>0$.
Hence, we deduce $\mathrm{dom}(w_{\infty})=\Si$. From the $M$-parabolic $\Si$, $\limsup_{x\rightarrow\infty}w_\infty(x)=\infty$ clearly (see Lemma \ref{Rig-M-Parabolic}).
Combining Lemma \ref{smalluG} in the Appendix I, we complete the proof.
\end{proof}
The condition of nondegenerate boundary at infinity is necessary(see Proposition \ref{non-const BV solutions} in Appendix II). After a suitable modification of the above proof, we can study the case of an end admitting nondegenerate boundary at infinity.
As a corollary of Theorem \ref{EntiresolM-para}, we immediately have the following conclusion.
\begin{corollary}
Let $\Si$ be a complete $M$-parabolic Riemannian manifold and a point $p\in\Si$ such that $\mathcal{H}^{n-1}(\p U)\to\infty$ as $d(p,\p U)\to\infty$ for any open set $U\ni p$ with non-empty $\Si\setminus U$. Then for any compact set $K\subset\Si$, there is a BV function $u$ on $\Si$ with $u\equiv0$ on $K\setminus\p K$ and $\limsup_{x\rightarrow\infty}u(x)=\infty$ such that $u$ is a smooth solution to \eqref{u} on $\Si\setminus K$, and $\p\{(x,s)\in\Si\times\R|\, s< u(x)\}$ is area-minimizing in $\overline{\Si\setminus K}\times\R$.
\end{corollary}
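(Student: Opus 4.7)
The proof follows the structure of Theorem \ref{EntiresolM-para} with the compact set $K$ playing the role of the small geodesic ball $\overline{B}_\de(p)$; the strengthened hypothesis $\mathcal{H}^{n-1}(\p U)\to\infty$ substitutes for the smallness constraint $\de\in(0,\de_p]$. First fix $R_0>0$ with $K\subset B_{R_0}(p)$ and, by the hypothesis, choose $R>R_0$ so that every open $\Om\supset B_R(p)$ with $\Si\setminus\Om\neq\emptyset$ satisfies $\mathcal{H}^{n-1}(\p\Om)\ge\f32\mathcal{H}^{n-1}(\p B_{R_0}(p))$. For each large integer $i$ and each $t>0$, let $u_{i,t}$ be the minimizing BV solution associated with $\mathrm{cap}_t(K,B_i(p))$ furnished by Theorem \ref{wExist} and Proposition \ref{UNIu}, and set $w_{i,t}=t-u_{i,t}$; thus $w_{i,t}\equiv 0$ on $K\setminus\p K$ and $w_{i,t}\equiv t$ outside $B_i(p)$.

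The quantitative heart of the argument is the claim $\lim_{t\to\infty}\sup_{B_R(p)}w_{i,t}=\infty$ for each fixed $i$. Suppose the contrary, so $\sup_{B_R(p)}w_{i,t}\le\La$ uniformly in $t$. For a.e.\ $s\in(\La,t)$ the open sublevel $\Om_s:=\{w_{i,t}<s\}$ contains $B_R(p)$ (since $w_{i,t}\le\La<s$ there) and has non-empty complement (containing $\Si\setminus B_i(p)$, where $w_{i,t}=t>s$), so the choice of $R$ forces $\mathcal{H}^{n-1}(\p\Om_s)\ge\f32\mathcal{H}^{n-1}(\p B_{R_0}(p))$. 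The co-area formula then gives $\int|Dw_{i,t}|\ge\f32(t-\La)\mathcal{H}^{n-1}(\p B_{R_0}(p))$. On the other hand, the monotonicity of capacity together with a Lipschitz cutoff approximating $\chi_{B_{R_0}(p)}$ produces $\mathrm{cap}_t(K,B_i(p))\le\mathrm{cap}_t(\overline{B}_{R_0}(p),B_i(p))\le t\mathcal{H}^{n-1}(\p B_{R_0}(p))$, hence $\int|Dw_{i,t}|\le t\mathcal{H}^{n-1}(\p B_{R_0}(p))+\mathcal{H}^n(B_i(p))$. For $t$ large these two bounds are incompatible.

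Now fix $\tau>0$, pick $t_i$ with $\sup_{B_R(p)}w_{i,t_i}=\tau$, and let $W_i$ denote the subgraph of $w_{i,t_i}$. By Federer--Fleming compactness and Lemma \ref{MiniSmooth}, a subsequence of $[|W_i|]$ converges to $[|W_\infty|]$, where $W_\infty$ is the subgraph of a quasi-solution $w_\infty:\Si\to[0,\infty]$ with $w_\infty\equiv 0$ on $K\setminus\p K$ and $\sup_{B_R(p)}w_\infty=\tau$, whose subgraph boundary is area-minimizing in $(\Si\setminus K)\times\R$ and which is smooth on $\mathrm{dom}(w_\infty)\setminus K$. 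To see $\mathrm{dom}(w_\infty)=\Si$, assume not: then $S:=\p(\mathrm{dom}(w_\infty))$ is a non-empty area-minimizing hypersurface in $\Si$. Choosing $R'\gg R$ so that $S\cap B_{R'}(p)$ separates $B_R(p)$ from $\p B_{R'}(p)$ and applying the hypothesis to $\mathrm{dom}(w_\infty)\cap B_{R'}(p)$ gives $\mathcal{H}^{n-1}(S\cap B_{R'}(p))\ge\f32\mathcal{H}^{n-1}(\p B_{R_0}(p))$; replaying the comparison between the co-area lower bound on the components $\Om_i(T)$ of $\{w_{i,t_i}<T\}$ and the capacity-based upper bound, exactly as in Theorem \ref{EntiresolM-para}, yields a contradiction for large $T$.

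Setting $u:=w_\infty$ then gives a smooth solution to \eqref{u} on $\Si\setminus K$ with $u\equiv 0$ on $K\setminus\p K$ and area-minimizing subgraph boundary; moreover $\sup_{B_R(p)}u=\tau>0$, so $u\not\equiv 0$. Were $u\le T<\infty$ on $\Si$, then $T-u$ would be a bounded BV function equal to $T$ on $K\setminus\p K$ whose subgraph boundary is still area-minimizing (by vertical reflection), and Lemma \ref{Rig-M-Parabolic} would force $T-u\equiv T$, i.e.\ $u\equiv 0$, a contradiction. Hence $\limsup_{x\to\infty}u(x)=\infty$. The main obstacle is the domain-filling step in the third paragraph: propagating the separation/comparison argument of Theorem \ref{EntiresolM-para} from a small geodesic ball to an arbitrary compact $K$ is precisely where the strengthened hypothesis $\mathcal{H}^{n-1}(\p U)\to\infty$ is essential.
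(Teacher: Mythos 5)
Your proof is correct and takes essentially the same approach the paper intends: since the paper labels this an "immediate" corollary of Theorem \ref{EntiresolM-para} without writing out a proof, the expected argument is precisely to rerun that proof with $\overline{B}_\de(p)$ replaced by the arbitrary compact set $K$, and you correctly identify that the strengthened hypothesis $\mathcal{H}^{n-1}(\p U)\to\infty$ is what supplies the analogue of \eqref{pOm2dep} for a neighborhood of $K$ rather than of a small geodesic ball.
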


\section{The relationship between $M$-parabolic and parabolic manifolds}

Let $\Si$ be an $n$-dimensional complete Riemannian manifold.
Suppose that $\Si$ has \emph{the local volume doubling property}, i.e., there exists a constant $C_D>1$ such that 
\begin{equation}\aligned\label{VD}
\mathcal{H}^n(B_{2r}(p))\le C_D\mathcal{H}^n(B_{r}(p))\qquad\qquad \ \ \mathrm{for\ any}\ p\in\Si\  \mathrm{and\ any}\ r\in(0,1].
\endaligned
\end{equation}
Suppose that $\Si$ satisfies \emph{a local Neumann-Poincar$\mathrm{\acute{e}}$ inequality}, i.e., there exists a constant $C_N\ge1$ such that 
\begin{equation}\aligned\label{NP}
\int_{B_r(p)}|f-\bar{f}_{p,r}|\le C_N r\int_{B_r(p)}|Df|\qquad \mathrm{for\ any}\ p\in\Si\  \mathrm{and\ any}\ r\in(0,1],
\endaligned
\end{equation}
whenever $f$ is a function in the Sobolev space $W^{1,1}(B_r(p))$ and $\bar{f}_{p,r}=\fint_{B_r(p)}f$.

\begin{lemma}\label{Mollify}
Let $\Si$ be a complete $n$-dimensional Riemannian manifold satisfying \eqref{VD}\eqref{NP}. Let $f$ be a function of bounded variation on $\Si$. For each $\la\in(0,2]$, we define a function $f_\la$ on $\Si$ by
$$f_\la(x)=\f1{\int_0^\la\mathcal{H}^n(B_\tau(x))d\tau}\int_0^\la\left(\int_{B_\tau(x)}f\right)d\tau\qquad \mathrm{for\ each}\ x\in\Si.$$
Then there is a constant $C_{D,N}>0$ depending only on $C_D,C_N$ such that 
\begin{equation}\aligned
|Df_\la(x)|\le& C_{D,N}\la^{-1}\sup_{B_\la(x)}|f|,\\
\sqrt{1+|Df_\la|^2(x)}-1\le& C_{D,N}\fint_{B_\la(x)}\left(\sqrt{1+|Df|^2}-1\right).
\endaligned
\end{equation}
\end{lemma}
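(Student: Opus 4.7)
The plan is to rewrite $f_\la$ via the tent-kernel representation obtained from Fubini:
\begin{equation*}
f_\la(x)=\f{F(x)}{G(x)},\qquad F(x)=\int_{B_\la(x)}f(y)(\la-d(x,y))dy,\qquad G(x)=\int_{B_\la(x)}(\la-d(x,y))dy.
\end{equation*}
The denominator is bounded below by volume doubling: for $\la\in(0,2]$, applying \eqref{VD} at most twice gives $G(x)\ge\f\la2\mathcal{H}^n(B_{\la/2}(x))\ge\f{\la}{2C_D^2}\mathcal{H}^n(B_\la(x))$. Since the kernel $(\la-d(x,y))_+$ is Lipschitz in $x$, vanishes on $\p B_\la(x)$, and satisfies $|\nabla_x d(x,y)|=1$ a.e., a direct differentiation (after smoothing $f$ and passing to a limit) yields
\begin{equation*}
Df_\la(x)=\f{1}{G(x)}\int_{B_\la(x)}(f_\la(x)-f(y))\nabla_x d(x,y)dy,
\end{equation*}
so $|Df_\la(x)|\le G(x)^{-1}\int_{B_\la(x)}|f_\la(x)-f(y)|dy$. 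The first estimate is then immediate from $|f_\la(x)-f(y)|\le 2\sup_{B_\la(x)}|f|$ together with the lower bound on $G$.

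For the second estimate I would split $|f_\la(x)-f(y)|\le|f_\la(x)-\bar f_{x,\la}|+|f(y)-\bar f_{x,\la}|$, then apply Neumann-Poincar\'e \eqref{NP} to each piece. (For $\la\in(1,2]$ the inequality is not stated directly; I would extend it at the cost of a $C_D,C_N$-dependent constant by covering $B_\la(x)$ by a controlled number of unit balls and chaining.) Observing that $|f_\la(x)-\bar f_{x,\la}|$ is itself a weighted average of $|f(z)-\bar f_{x,\la}|$ against the tent kernel, both contributions produce a factor $C_N\la\fint_{B_\la(x)}|Df|$; combined with the lower bound on $G$, the $\la$-factors cancel and one obtains
\begin{equation*}
|Df_\la(x)|\le C_1\fint_{B_\la(x)}|Df|
\end{equation*}
for some $C_1=C_1(C_D,C_N)$.

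The final step is to convert the $L^1$-norm of $Df$ into an integral of $\Phi(s):=\sqrt{1+s^2}-1$. For this I would exploit the algebraic identity $s^2=\Phi(s)(\Phi(s)+2)$ together with Cauchy-Schwarz:
\begin{equation*}
\Bigl(\fint_{B_\la(x)}|Df|\Bigr)^2\le\fint_{B_\la(x)}\Phi(|Df|)\cdot\Bigl(\fint_{B_\la(x)}\Phi(|Df|)+2\Bigr),
\end{equation*}
which gives $\fint|Df|\le\fint\Phi(|Df|)+\sqrt{2\fint\Phi(|Df|)}$. Combined with $\Phi(|Df_\la|)\le|Df_\la|\le C_1\fint|Df|$, a short case analysis finishes the proof: when $\fint\Phi(|Df|)\ge1$ the square root is absorbed linearly, and when $\fint\Phi(|Df|)<1$ the bound forces $|Df_\la|$ itself to be small so one can use $\Phi(s)\le\f12 s^2$; in either sub-case the output is linear in $\fint\Phi(|Df|)$ with constant depending only on $C_1$, hence on $C_D,C_N$.

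The main obstacle is the nonlinearity of $\Phi$ in the second estimate: a naive bound of $|Df_\la|$ by a mean of $|Df|$ does not interpolate between the quadratic regime ($|Df|$ small) and the linear regime ($|Df|$ large). The identity $s^2=\Phi(s)(\Phi(s)+2)$, Cauchy-Schwarz, and a regime split are precisely what reconciles these two behaviours into the single bound stated.
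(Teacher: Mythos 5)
Your proposal is correct, and the first half of it---deriving $|Df_\la(x)|\le C_1\fint_{B_\la(x)}|Df|$ from the quotient structure of $f_\la$, the doubling lower bound on the weight, and the Neumann--Poincar\'e inequality---runs essentially parallel to the paper's proof, only recast through Fubini as a tent-kernel average; the paper differentiates the quotient $\left(\int_0^\la\int_{B_\tau(x)}f\right)/V_\la(x)$ directly and obtains the same bound $|Df_\la|(x)\le C^{1/4}\la^{-1}\fint_{B_\la(x)}|f-\bar f_{x,\la}|\le C^{1/2}\fint_{B_\la(x)}|Df|$. (Both you and the paper quietly use \eqref{NP} for radii up to $2$ rather than $1$; your ``cover-and-chain'' remark is a reasonable way to discharge this, at the cost of inflating the constant.)

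Where you genuinely diverge from the paper is the conversion of $\fint|Df|$ into $\fint\Phi(|Df|)$ with $\Phi(s)=\sqrt{1+s^2}-1$. The paper's route is shorter: since $\Phi$ is convex, Jensen's inequality gives $\Phi\left(\fint_{B_\la(x)}|Df|\right)\le\fint_{B_\la(x)}\Phi(|Df|)$ outright, and the constant $C_1$ in $|Df_\la|\le C_1\fint|Df|$ is absorbed by the elementary scaling bound $\Phi(st)\ge t^2\Phi(s)$ for $t\in(0,1]$ (applied with $t=C_1^{-1}$), yielding $C_1^{-2}\Phi(|Df_\la|)\le\Phi(C_1^{-1}|Df_\la|)\le\fint\Phi(|Df|)$. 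Your alternative---the factorization $s^2=\Phi(s)\left(\Phi(s)+2\right)$, Cauchy--Schwarz, and the two-regime split according to whether $\fint\Phi(|Df|)\gtrless 1$---also closes the argument (I checked both sub-cases produce a bound linear in $\fint\Phi$ with a $C_1$-dependent constant), but it is more laborious and, notably, the ``obstacle'' you flag at the end (interpolating between the quadratic and linear behaviour of $\Phi$) is precisely what the convexity-plus-scaling observation resolves in one line. So the result is the same, but the paper's final step is the cleaner mechanism and is worth internalizing.
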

\begin{proof}
Given a constant $\la\in(0,2]$, let $V_{\la}(x)=\int_0^\la\mathcal{H}^n(B_\tau(x))d\tau$ for any $x\in\Si$. 
Since
\begin{equation}\aligned
\mathcal{H}^{n}(B_\tau(y))\le \mathcal{H}^{n}(B_{\tau+d(x,y)}(x)),
\endaligned
\end{equation}
with \eqref{VD} we have
\begin{equation}\aligned\label{pVtx}
|DV_{\la}|(x)\le\int_0^\la\mathcal{H}^{n-1}(\p B_\tau(x))d\tau=\mathcal{H}^{n}(B_\la(x)),
\endaligned
\end{equation}
and
\begin{equation}\aligned\label{Vtxge}
V_{\la}(x)\ge\int_{\la/2}^\la\mathcal{H}^{n}(B_\tau(x))d\tau\ge\f{\la}{2C_D}\mathcal{H}^{n}(B_\la(x)).
\endaligned
\end{equation}
Set $\bar{f}_{x,\la}=\fint_{B_\la(x)}f$ for any $\la>0$ and $x\in\Si$. 
By the definition of $f_\la$, we have 
\begin{equation}\aligned
f_\la(x)-f_\la(y)=&\f1{V_{\la}(x)}\int_0^\la\left(\int_{B_\tau(x)}(f-\bar{f}_{x,\la})-\int_{B_\tau(y)}(f-\bar{f}_{x,\la})\right)d\tau\\
&+\left(\f1{V_{\la}(x)}-\f1{V_{\la}(y)}\right)\int_0^\la\left(\int_{B_\tau(y)}(f-\bar{f}_{x,\la})\right)d\tau.
\endaligned
\end{equation}
It infers that
\begin{equation}\aligned\label{flax-y}
|f_\la(x)-f_\la(y)|\le&\f1{V_{\la}(x)}\int_0^\la\left(\int_{B_{\tau+d(x,y)}(x)\setminus B_{\tau-d(x,y)}(x)}|f-\bar{f}_{x,\la}|\right)d\tau\\
&+\f{|V_{\la}(x)-V_{\la}(y)|}{V_{\la}(x)V_{\la}(y)}\int_0^\la\left(\int_{B_\tau(y)}|f-\bar{f}_{x,\la}|\right)d\tau.
\endaligned
\end{equation}
Combining \eqref{pVtx}\eqref{flax-y}, we get
\begin{equation}\aligned
|Df_\la|(x)\le&\f2{V_{\la}(x)}\int_0^\la\left(\int_{\p B_\tau(x)}|f-\bar{f}_{x,\la}|\right)d\tau+\f{|DV_{\la}|(x)}{V_{\la}^2(x)}\int_0^\la\left(\int_{B_\tau(x)}|f-\bar{f}_{x,\la}|\right)d\tau\\
\le&\left(\f2{V_{\la}(x)}+\f{\la|DV_{\la}|(x)}{V_{\la}^2(x)}\right)\int_{B_\la(x)}|f-\bar{f}_{x,\la}|.
\endaligned
\end{equation}
With \eqref{NP}\eqref{pVtx}\eqref{Vtxge}, there is a constant $C\ge C_N^4$ (depending only on $C_D,C_N$) such that
\begin{equation}\aligned\label{DflaCDN12fDf}
|Df_\la|(x)\le C^{1/4}\la^{-1}\fint_{B_\la(x)}|f-\bar{f}_{x,\la}|\le C^{1/2}\fint_{B_\la(x)}|Df|.
\endaligned
\end{equation}
Let $\phi(s)=\sqrt{1+s^2}-1$ for each $s\in\R$. Since $\phi$ is convex on $\R$, there holds Jensen's inequality
$$\phi\left(\fint_{B_\la(x)}|Df|\right)\le\fint_{B_\la(x)}\phi(|Df|).$$
With \eqref{DflaCDN12fDf}, we have
\begin{equation}\aligned\label{CDNflafDD}
\sqrt{1+C^{-1}|Df_\la|^2(x)}-1=&\phi\left(C^{-1/2}|Df_\la|(x)\right)\le\phi\left(\fint_{B_\la(x)}|Df|\right)\\
\le&\fint_{B_\la(x)}\phi(|Df|)=\fint_{B_\la(x)}\left(\sqrt{1+|Df|^2}-1\right).
\endaligned
\end{equation}
The convexity of $\phi$ implies that
$$\phi(st)=\int_0^{st}\f{\tau}{\sqrt{1+\tau^2}}d\tau=t^2\int_0^{s}\f{r}{\sqrt{1+t^2r^2}}dr\ge t^2\int_0^{s}\phi'(r)dr=t^2\phi(s)$$ for any $s>0$, $t\in(0,1]$. From \eqref{CDNflafDD} and $C^{-1}\le1$, it follows that
\begin{equation}\aligned
C^{-1}\left(\sqrt{1+|Df_\la|^2(x)}-1\right)\le\sqrt{1+C^{-1}|Df_\la|^2(x)}-1\le \fint_{B_\la(x)}\left(\sqrt{1+|Df|^2}-1\right).
\endaligned
\end{equation}
With \eqref{DflaCDN12fDf}, we complete the proof.
\end{proof}

\begin{theorem}\label{SiVDNPpara}
Let $\Si$ be a complete Riemannian manifold satisfying \eqref{VD}\eqref{NP}. Then $\Si$ is parabolic if and only if $\Si$ is M-parabolic.
\end{theorem}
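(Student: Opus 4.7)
The direction ``parabolic $\Rightarrow$ $M$-parabolic'' is immediate from \eqref{capt*KOm}: if $\mathrm{cap}(K)=0$ for some compact $K\subset\Si$ with $\mathcal{H}^n(K)>0$, then $\mathrm{cap}_t(K)\le\tfrac{t^2}{2}\mathrm{cap}(K)=0$ for every $t>0$. For the converse I plan to use the averaging operator $f\mapsto f_\la$ of Lemma \ref{Mollify}, which is exactly where the local doubling \eqref{VD} and local Poincar\'e inequality \eqref{NP} enter, to convert a minimizing sequence for $\mathrm{cap}_1$ into a minimizing sequence for the classical Dirichlet capacity on a slightly interior set.

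Assume $\Si$ is $M$-parabolic. By Proposition \ref{ParaKE} one has $\mathrm{cap}_1(F)=0$ for every compact $F\subset\Si$. Fix $p\in\Si$, choose $R>1$ and $\la\in(0,1]$ with $\la<R$, and set $K=\overline{B}_R(p)$. Pick locally Lipschitz $\phi_i\in\mathcal{L}_1(K,B_i(p))$ with $\int_\Si(\sqrt{1+|D\phi_i|^2}-1)\to 0$, and form the averages $\phi_{i,\la}$ supplied by Lemma \ref{Mollify}. Since $0\le\phi_i\le 1$, the first inequality of that lemma yields a uniform ceiling $|D\phi_{i,\la}|\le M:=C_{D,N}\la^{-1}$. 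Integrating the second inequality over $\Si$ and applying Fubini, using the doubling \eqref{VD} to bound $\mathcal{H}^n(B_\la(x))^{-1}\le C_D\,\mathcal{H}^n(B_\la(y))^{-1}$ whenever $d(x,y)<\la$, gives
\begin{equation*}
\int_\Si\left(\sqrt{1+|D\phi_{i,\la}|^2}-1\right)\le C\int_\Si\left(\sqrt{1+|D\phi_i|^2}-1\right)\longrightarrow 0
\end{equation*}
for a constant $C=C(C_D,C_N)$. Combined with the elementary inequality $s^2\le(\sqrt{1+M^2}+1)(\sqrt{1+s^2}-1)$ valid for $0\le s\le M$, this forces $\int_\Si|D\phi_{i,\la}|^2\to 0$.

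To turn this into vanishing of a classical capacity I shrink the boundary set: put $K_\la:=\{x\in\Si:\overline{B}_\la(x)\subset K\}$, which contains $\overline{B}_{R-\la}(p)$ and so has positive $\mathcal{H}^n$-measure. Because the averaging in Lemma \ref{Mollify} reads values of $\phi_i$ only inside $\overline{B}_\la(x)$, the identity $\phi_i\equiv 1$ on $K$ forces $\phi_{i,\la}\equiv 1$ on $K_\la$; together with $0\le\phi_{i,\la}\le 1$ and the compact support of $\phi_{i,\la}$ in $B_{i+\la}(p)$ this gives $\phi_{i,\la}\in\mathcal{L}_1(K_\la,B_{i+\la}(p))$. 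Hence $\mathrm{cap}(K_\la)\le\int_\Si|D\phi_{i,\la}|^2\to 0$, so $\mathrm{cap}(K_\la)=0$ and $\Si$ is parabolic.

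The only delicate point I foresee is this boundary-value issue: the averaging necessarily destroys the prescription $\phi_i=1$ on $\p K$, so one must work on the interior set $K_\la$ rather than on $K$ itself. That $K_\la$ still has positive $n$-measure is why I take $K$ to be a closed ball of radius $R>\la$; Proposition \ref{ParaKE} is precisely what permits this choice without loss of generality, since it upgrades $M$-parabolicity to $\mathrm{cap}_1(F)=0$ for \emph{every} compact $F$.
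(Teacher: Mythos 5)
Your proof is correct and follows essentially the same strategy as the paper: mollify a near-minimizing sequence via Lemma \ref{Mollify}, use Fubini together with doubling to preserve smallness of the relative-volume energy, exploit the uniform gradient bound $|D\phi_{i,\la}|\le C_{D,N}\la^{-1}$ to pass from $\sqrt{1+s^2}-1$ to $s^2$, and conclude that a classical capacity vanishes. The only cosmetic differences are that the paper runs the argument on the BV extremal $u_i$ for $\mathrm{cap}_1(\overline{B}_1(p),B_i(p))$ with $\la=1/2$ (relying on \eqref{capttru}/\eqref{cap1Bii+1p} to identify the energy with $\mathrm{cap}_1$), whereas you work directly with Lipschitz test functions and explicitly shrink the obstacle to $K_\la$ to handle the boundary adjustment, which if anything avoids having to invoke the BV trace identity.
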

\begin{proof}
For each $i\ge2$, let $u_i$ be a BV solution on $\Si$ associated with $\mathrm{cap}_1(\overline{B}_1(p),B_{i}(p))$ from Theorem \ref{wExist}. In particular,
\begin{equation}\aligned\label{cap1Bii+1p}
\mathrm{cap}_1(\overline{B}_1(p),B_{i}(p))=\int_{B_{i}(p)\setminus \overline{B}_1(p)}\left(\sqrt{1+|Du_i|^2}-1\right)+\int_{\p B_{1}(p)}(1-u_i)+\int_{\p B_{i}(p)}u_i.
\endaligned
\end{equation}
Let
$$u_i^*(x)=\f1{\int_0^{1/2}\mathcal{H}^n(B_t(x))dt}\int_0^{1/2}\left(\int_{B_t(x)}u_i\right)dt,$$
then spt$\,u^*_i\subset B_{i+1}(p)$ and $u^*_i\equiv1$ on $B_{1/2}(p)$. From Lemma \ref{Mollify}, there is a constant $C_{D,N}\ge1$ depending only on $C_D,C_N$ such that $|Du^*_i|\le 2C_{D,N}$ on $\Si$ and
\begin{equation}\aligned
\sqrt{1+|Du^*_i|^2}(x)-1\le C_{D,N}\fint_{B_{1/2}(x)}\left(\sqrt{1+|Du_i|^2}-1\right)\qquad \mathrm{for\ any}\ x\in\Si.
\endaligned
\end{equation}
With Fubini's theorem, we have
\begin{equation}\aligned
\int_\Si&\left(\sqrt{1+|Du^*_i|^2}-1\right)\le C_{D,N}\int_{x\in\Si}\fint_{B_{1/2}(x)}\left(\sqrt{1+|Du_i|^2}-1\right)\\
=&C_{D,N}\int_{y\in\Si}\int_{x\in B_{1/2}(y)}\f1{\mathcal{H}^n(B_{1/2}(x))}\left(\sqrt{1+|Du_i(y)|^2}-1\right).
\endaligned
\end{equation}
From \eqref{VD}, $C_D\mathcal{H}^n(B_{1/2}(x))\ge \mathcal{H}^n(B_1(x))\ge\mathcal{H}^n(B_{1/2}(y))$. Then 
\begin{equation}\aligned\label{ui*ui}
\int_\Si\left(\sqrt{1+|Du^*_i|^2}-1\right)\le C_DC_{D,N}\int_{\Si}\left(\sqrt{1+|Du_i|^2}-1\right).
\endaligned
\end{equation}

Suppose that $\Si$ is $M$-parabolic. Then $\mathrm{cap}_1(B_1(p),B_{i}(p))\to0$ as $i\to\infty$, and $u_i\rightarrow 1$ locally on $\Si$ in the $C^0$-sense from Lemma \ref{ConMR1}.
Combining \eqref{cap1Bii+1p} and \eqref{ui*ui}, we get
\begin{equation}\aligned\label{Du*i2to0}
\lim_{i\to\infty}\int_\Si\left(\sqrt{1+|Du^*_i|^2}-1\right)=0.
\endaligned
\end{equation}
Since $|Du^*_i|\le 2C_{D,N}$ on $\Si$, it follows that
\begin{equation*}\aligned
\sqrt{1+|Du^*_i|^2}-1\ge\sqrt{1+\f1{\sqrt{2}}C_{D,N}^{-1}|Du^*_i|^2+\f18C_{D,N}^{-2}|Du^*_i|^4}-1=\f1{2\sqrt{2}}C_{D,N}^{-1}|Du^*_i|^2.
\endaligned
\end{equation*}
With \eqref{Du*i2to0}, we get
\begin{equation}\aligned
\lim_{i\to\infty}\int_\Si|Du^*_i|^2=0,
\endaligned
\end{equation}
which implies the parabolicity of $\Si$. Combining \eqref{capt*KOm}, we complete the proof.
\end{proof}

It's well-known that a complete manifold of Ricci curvature uniformly bounded below satisfies \eqref{VD} and \eqref{NP} up to constants $C_D,C_N$ (see Buser \cite{B} or Cheeger-Colding \cite{CC}\cite{CC1}).
As a corollary, we immediately have the following equivalence.
\begin{corollary}\label{RicMparpar}
Let $\Si$ be a complete Riemannian manifold of Ricci curvature uniformly bounded below. Then $\Si$ is parabolic if and only if $\Si$ is M-parabolic.
\end{corollary}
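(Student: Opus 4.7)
The plan is to reduce Corollary \ref{RicMparpar} directly to Theorem \ref{SiVDNPpara}, which already establishes the equivalence of parabolicity and $M$-parabolicity under the analytic hypotheses \eqref{VD}\eqref{NP}. One implication requires no curvature assumption at all: by \eqref{capt*KOm}, for every bounded open $\Om$ and compact $K\subset\Om$ one has $\mathrm{cap}_t(K,\Om)\le \tfrac{t^2}{2}\mathrm{cap}(K,\Om)$, so vanishing of the classical capacity forces vanishing of the minimal-graph capacity. Hence parabolicity always implies $M$-parabolicity, as was already observed just after the definitions.

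For the reverse implication, the strategy is to verify that a complete Riemannian manifold with $\mathrm{Ric}\ge -(n-1)\k$ for some constant $\k\ge 0$ satisfies both \eqref{VD} and \eqref{NP}, with constants $C_D,C_N$ depending only on $n$ and $\k$. The local volume doubling follows from the Bishop--Gromov volume comparison theorem: for each $p\in\Si$ the ratio $\mathcal{H}^n(B_r(p))/V_\k(r)$ is non-increasing in $r$, where $V_\k(r)$ denotes the volume of the geodesic $r$-ball in the $n$-dimensional model space of constant sectional curvature $-\k$. Since $V_\k(2r)\le C(n,\k)V_\k(r)$ uniformly for $r\in(0,1]$, one obtains \eqref{VD} with a constant depending only on $n$ and $\k$.

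For the local $(1,1)$-Poincar\'e inequality \eqref{NP}, I would invoke Buser's inequality \cite{B}, which produces a Neumann--Poincar\'e inequality on geodesic balls of radius at most $1$ under a uniform lower Ricci bound, with a constant depending only on $n$ and $\k$; equivalently, one may quote the more general segment/Poincar\'e inequality machinery of Cheeger--Colding \cite{CC,CC1}. This gives \eqref{NP} in the form required by the preceding section.

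With both \eqref{VD} and \eqref{NP} established, Theorem \ref{SiVDNPpara} applies to $\Si$ and yields that $M$-parabolicity implies parabolicity, completing the equivalence. There is no serious obstacle: the corollary is, in effect, a citation reduction, and the only point that deserves care is that the constants appearing in Bishop--Gromov and Buser are uniform in $p\in\Si$ on the scale $r\in(0,1]$, which is precisely the scale on which \eqref{VD}\eqref{NP} are posed.
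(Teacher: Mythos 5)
Your proof is correct and takes essentially the same route as the paper: the paper's one-line proof also cites Buser and Cheeger--Colding for the validity of \eqref{VD} and \eqref{NP} under a uniform lower Ricci bound and then applies Theorem \ref{SiVDNPpara}. You merely spell out the Bishop--Gromov argument for \eqref{VD} and recall the easy direction from \eqref{capt*KOm}, both of which the paper treats implicitly.
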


\section{Estimates on capacities and minimal graphic functions}

Let $\Si$ be a complete $n$-dimensional Riemannian manifold, and $p\in\Si$ be a fixed point. 
Assume there are constants $C_D,C_N$ so that $\Si$ satisfies volume doubling property
\begin{equation}\aligned\label{VDg}
\mathcal{H}^n(B_{2r}(x))\le C_D\mathcal{H}^n(B_{r}(x))
\endaligned
\end{equation}
and (1,1)-Poincar\'e inequality
\begin{equation}\aligned\label{NPBrxg}
\int_{B_r(x)}|f-\bar{f}_{x,r}|\le C_N r\int_{B_r(x)}|Df|
\endaligned
\end{equation}
for each $x\in\Si$, $r>0$ and $f\in W^{1,1}(\Si)$.

\begin{proposition}\label{Compcap}
There is a constant $\be\ge2$ depending only on $C_D,C_N$ such that for any $R>r\ge t>0$ and $p\in\Si$ there holds
\begin{equation}\aligned\label{captBrmintr}
\f{t^2}{\be}\mathrm{cap}(\overline{B}_{r/2}(p),B_{R+r/2}(p))\le\mathrm{cap}_t(\overline{B}_r(p),B_R(p))
\le \f{t^2}2\mathrm{cap}(\overline{B}_{r}(p),B_R(p)).
\endaligned\end{equation}
\end{proposition}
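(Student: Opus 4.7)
The upper bound $\mathrm{cap}_t(\overline{B}_r(p),B_R(p))\le\tfrac{t^2}{2}\mathrm{cap}(\overline{B}_r(p),B_R(p))$ is immediate from \eqref{capt*KOm} applied to $K=\overline{B}_r(p)$, $\Om=B_R(p)$. For the lower bound the strategy is to produce, from a near-minimizer of $\mathrm{cap}_t(\overline{B}_r(p),B_R(p))$, a competitor for the classical capacity $\mathrm{cap}(\overline{B}_{r/2}(p),B_{R+r/2}(p))$ via the averaging operator of Lemma \ref{Mollify} (whose proof extends verbatim to any scale $\la>0$ under the global hypotheses \eqref{VDg}\eqref{NPBrxg}).

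First I would fix $\varepsilon>0$ and select $\phi\in\mathcal{L}_t(\overline{B}_r(p),B_R(p))$ with $\int_\Si(\sqrt{1+|D\phi|^2}-1)\le\mathrm{cap}_t(\overline{B}_r(p),B_R(p))+\varepsilon$. Since $\mathrm{spt}\,\phi$ is a compact subset of $B_R(p)$, it lies in some $\overline{B}_{R'}(p)$ with $R'<R$. Let $\phi^*$ denote the mollification of $\phi$ at scale $\la=r/2$ produced by Lemma \ref{Mollify}. Two elementary observations show $\phi^*/t\in\mathcal{L}_1(\overline{B}_{r/2}(p),B_{R+r/2}(p))$: for $x\in\overline{B}_{r/2}(p)$ every ball $B_\tau(x)$ with $\tau\le r/2$ lies inside $\overline{B}_r(p)$, on which $\phi\equiv t$, so $\phi^*(x)=t$; and since $\phi$ vanishes outside $\overline{B}_{R'}(p)$, the support of $\phi^*$ lies in $\overline{B}_{R'+r/2}(p)$, compact inside $B_{R+r/2}(p)$. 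Consequently
$$\mathrm{cap}(\overline{B}_{r/2}(p),B_{R+r/2}(p))\le t^{-2}\int_\Si|D\phi^*|^2.$$

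Next I would control $\int_\Si|D\phi^*|^2$ by the area-type functional of $\phi$. This is where the hypothesis $r\ge t$ enters in a crucial way: Lemma \ref{Mollify} gives $|D\phi^*|\le 2C_{D,N}\,t/r\le 2C_{D,N}=:M$, and the elementary inequality $s^2\le(1+\sqrt{1+M^2})(\sqrt{1+s^2}-1)$ for $|s|\le M$ then yields $|D\phi^*|^2\le C_1(\sqrt{1+|D\phi^*|^2}-1)$ for some $C_1=C_1(C_D,C_N)$. Inserting the second estimate of Lemma \ref{Mollify}, interchanging the order of integration, and bounding $\int_{B_{r/2}(y)}\mathcal{H}^n(B_{r/2}(x))^{-1}\,dx\le C_D$ by volume doubling (the same manipulation leading to \eqref{ui*ui}), one obtains $\int_\Si|D\phi^*|^2\le C(C_D,C_N)(\mathrm{cap}_t(\overline{B}_r(p),B_R(p))+\varepsilon)$. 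Letting $\varepsilon\to 0$ and taking $\be=C(C_D,C_N)$ produces the lower bound.

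The only genuine obstacle is the bookkeeping around support and boundary values, not any analytic difficulty. The averaging scale $\la=r/2$ is the largest choice that both forces $\phi^*\equiv t$ throughout $\overline{B}_{r/2}(p)$ and keeps the enlargement of $\mathrm{spt}\,\phi^*$ strictly inside $B_{R+r/2}(p)$, which is why $r/2$ appears symmetrically on both sides of the statement. The assumption $r\ge t$ is needed at precisely one step, to ensure that $|D\phi^*|$ is bounded by a constant depending only on $C_D,C_N$, so that $|D\phi^*|^2$ is pointwise comparable to the minimal-surface-type integrand $\sqrt{1+|D\phi^*|^2}-1$.
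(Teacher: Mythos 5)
Your proof is correct and follows essentially the same route as the paper's: mollify a near-minimizer at scale $\la=r/2$, verify that the mollification is a classical-capacity competitor for the shrunk/expanded pair $(\overline{B}_{r/2}(p),B_{R+r/2}(p))$, and exploit $r\ge t$ to get a pointwise bound $|D\phi^*|\le 2C_{D,N}$ that makes $|D\phi^*|^2$ comparable to $\sqrt{1+|D\phi^*|^2}-1$. The only cosmetic difference is that the paper works with the BV solution rather than an $\varepsilon$-near-minimizer, and is terser about the support and boundary-value bookkeeping which you spell out explicitly.
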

\begin{proof}
Since \eqref{capt*KOm}, we only need to prove the first inequality in \eqref{captBrmintr}.
For any BV solution $f$ on $\Si$ with $0\le f\le t$ on $\Si$, we define the function $f_\la$ as in Lemma \ref{Mollify} with $\la=r/2$. Noting that the restriction of $\la\in(0,2]$ in Lemma \ref{Mollify} has been removed here since \eqref{VDg}\eqref{NPBrxg} holds for all $r>0$.
Then $|Df_\la|\le\f C\la\sup_\Si|f|\le Ct/\la$, and
\begin{equation}\aligned
\sqrt{1+|Df_\la|^2}(x)-1\le C\fint_{B_\la(x)}\left(\sqrt{1+|Df|^2}-1\right)
\endaligned
\end{equation}
from Lemma \ref{Mollify}, where $C\ge1$ is a constant depending only on $C_D,C_N$.
From \eqref{VDg}, there holds 
$$C_D\mathcal{H}^n(B_{\la}(x))\ge \mathcal{H}^n(B_r(x))\ge\mathcal{H}^n(B_{\la}(y))$$ 
for any $y\in B_\la(x)$. 
Analog to the proof of \eqref{ui*ui}, we get
\begin{equation}\aligned
\int_\Si\left(\sqrt{1+|Df_\la|^2}-1\right)\le CC_D\int_{\Si}\left(\sqrt{1+|Df|^2}-1\right).
\endaligned
\end{equation}
From $C\ge1$ and $|Df_\la|\le Ct/\la\le2C$ as $\la=r/2\ge t/2$, we get
\begin{equation}\aligned
\sqrt{1+|Df_\la|^2}-1\ge\sqrt{1+\f1{2C}|Df_\la|^2+\f1{16C^2}|Df_\la|^4}-1=\f1{4C}|Df_\la|^2.
\endaligned
\end{equation}
Combining the definition of capacities, we complete the proof.
\end{proof}

Let $G(x,p)$ be the minimal positive Green function on $\Si$ with $\De_\Si G(\cdot,p)=-\de_p$, where $\de_p$ is the delta function. Recalling that (see Proposition 2 in \cite{G0} for instance)
\begin{equation}\aligned\label{GxpBrpsi}
\inf_{x\in\p B_r(p)}G(x,p)\le \f1{\mathrm{cap}(\overline{B}_r(p))}\le\sup_{x\in\p B_r(p)}G(x,p).
\endaligned\end{equation}
Combining Harnack's inequality for $G$ and \eqref{Green} on $\Si$ by Holopainen \cite{H}, we have the following estimate from Proposition \ref{Compcap}.
\begin{corollary}\label{captthEST}
There is a constant $\vartheta\ge1$ depending only on $C_D,C_N$ such that for any ball $B_r(p)\subset\Si$ and $r\ge t>0$ there holds
\begin{equation}\aligned
\vartheta^{-1}\int_r^\infty\f{s ds}{\mathcal{H}^n(B_s(p))}\le \f{t^2}{\mathrm{cap}_t(\overline{B}_r(p))}\le\vartheta\int_r^\infty\f{s ds}{\mathcal{H}^n(B_s(p))}.
\endaligned\end{equation}
\end{corollary}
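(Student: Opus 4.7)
The plan is to combine Proposition \ref{Compcap} with Holopainen's estimate on the minimal positive Green function $G(\cdot,p)$ on $\Si$. First I would pass to the limit $R\to\infty$ in \eqref{captBrmintr}: since both $\mathrm{cap}_t(\overline B_r(p),B_R(p))$ and $\mathrm{cap}(\overline B_s(p),B_{R+s}(p))$ are monotone non-decreasing in $R$ and converge to $\mathrm{cap}_t(\overline B_r(p))$ and $\mathrm{cap}(\overline B_s(p))$, this yields
\begin{equation*}
\f{t^2}{\be}\,\mathrm{cap}\bigl(\overline B_{r/2}(p)\bigr)\le \mathrm{cap}_t(\overline B_r(p))\le \f{t^2}{2}\,\mathrm{cap}(\overline B_r(p))
\end{equation*}
for all $r\ge t>0$, so the problem reduces to the classical capacity of balls.

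Next I would invoke \eqref{GxpBrpsi} to bracket $1/\mathrm{cap}(\overline B_\rho(p))$ by the infimum and supremum of $G(\cdot,p)$ on $\p B_\rho(p)$, and then apply Harnack's inequality for positive harmonic functions on $\Si$ (available under \eqref{VDg}\eqref{NPBrxg} by \cite{D}) to replace $\sup_{\p B_\rho(p)}G$ and $\inf_{\p B_\rho(p)}G$ by a single value $G(x,p)$ for $x\in\p B_\rho(p)$, up to a multiplicative constant depending only on $C_D,C_N$. Holopainen's extension of the Li--Yau estimate \eqref{Green} to manifolds satisfying \eqref{VDg}\eqref{NPBrxg} (the case $q=2$ of \cite{H}) then gives
\begin{equation*}
G(x,p)\asymp\int_{d(x,p)}^\infty\f{s\,ds}{\mathcal{H}^n(B_s(p))}
\end{equation*}
with constants depending only on $C_D,C_N$. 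Applied at $x\in\p B_r(p)$ this immediately controls $\mathrm{cap}(\overline B_r(p))$ by $\int_r^\infty s\,ds/\mathcal{H}^n(B_s(p))$, and applied at $x\in\p B_{r/2}(p)$ it controls $\mathrm{cap}(\overline B_{r/2}(p))$ by $\int_{r/2}^\infty s\,ds/\mathcal{H}^n(B_s(p))$.

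The last routine step is to absorb the factor $r/2$ into $r$ in the integral from the lower-bound side: using \eqref{VDg} twice,
\begin{equation*}
\int_{r/2}^{r}\f{s\,ds}{\mathcal{H}^n(B_s(p))}\le\f{r^2/2}{\mathcal{H}^n(B_{r/2}(p))}\le\f{C_D^2\,r^2/2}{\mathcal{H}^n(B_{2r}(p))}\le C_D^2\int_r^{2r}\f{s\,ds}{\mathcal{H}^n(B_s(p))},
\end{equation*}
so $\int_{r/2}^\infty$ and $\int_r^\infty$ are comparable with constant depending only on $C_D$. Collecting the constants from Proposition \ref{Compcap}, \eqref{GxpBrpsi}, Harnack's inequality, Holopainen's two-sided bound, and the doubling comparison above yields the asserted $\vartheta$. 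I do not expect a serious obstacle here: every ingredient is already at hand, and the only delicate point is making sure that the Harnack constant on $\p B_\rho(p)$ (rather than on a single ball) is indeed dimension-free under \eqref{VDg}\eqref{NPBrxg}, which is standard since any two points of $\p B_\rho(p)$ can be joined by a chain of balls of radius $\rho/2$ whose length is controlled by $C_D$.
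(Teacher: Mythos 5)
Your proof is correct and follows essentially the same route the paper has in mind: pass to the limit in Proposition \ref{Compcap}, compare the classical capacity to $G$ via \eqref{GxpBrpsi}, invoke Harnack for $G$ and Holopainen's two-sided bound \eqref{Green}, then absorb the $r/2$ into $r$ using doubling. Two small slips worth noting: (i) the functions $R\mapsto\mathrm{cap}_t(\overline B_r(p),B_R(p))$ and $R\mapsto\mathrm{cap}(\overline B_s(p),B_{R+s}(p))$ are monotone \emph{non-increasing} in $R$ by \eqref{captmonot}, not non-decreasing — this does not affect the limit, which still exists and equals the full-space capacity; (ii) the Harnack inequality you need here is for the Green function $G$, which is harmonic, so the relevant reference is the standard elliptic Harnack inequality under doubling and Poincar\'e (as in \cite{H} or the Grigor'yan/Saloff-Coste framework), not \cite{D}, which gives Harnack for solutions of the minimal hypersurface equation.
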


We further assume that $\Si$ is nonparabolic. 
Given a constant $t>0$ and a compact set $K\subset\Si$ with $\p\overline{\Si\setminus K}=K$, let $u$ be a BV solution on $\Si$ associated with $\mathrm{cap}_t(K)$. 
Let $M$ denote the boundary of the subgraph of $u$ in $\overline{\Si\setminus K}\times\R$,
and $\mu_{_{\p M}}$ be the co-normal measure to $\p M$ defined in Lemma \ref{ThtKomiThtK}.
Now we give a global version of \eqref{captKOmu} and \eqref{captKOmuDu}.
\begin{proposition}\label{utKprop}
$\lim_{r\to\infty}\mathrm{cap}_1(\overline{B}_r(p))=\infty$, $\lim_{x\to\infty}u(x)=0$ and 
\begin{equation}\aligned\label{capumupM**}
\mathrm{cap}_t(K)=&\int_{\Si\setminus K}\left(\sqrt{1+|Du|^2}-1\right)+\int_{\p K}(t-u)\\
=&\int_{\Si\setminus K}\left(\f1{\sqrt{1+|Du|^2}}-1\right)+t||\mu_{_{\p M}}||.
\endaligned\end{equation}
\end{proposition}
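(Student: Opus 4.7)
The proof splits into three assertions.

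First, $\lim_{r\to\infty}\mathrm{cap}_1(\overline{B}_r(p))=\infty$ is immediate from Corollary \ref{captthEST} with $t=1$, which yields $\mathrm{cap}_1(\overline{B}_r(p))\ge\vartheta^{-1}\bigl(\int_r^\infty s\,ds/\mathcal{H}^n(B_s(p))\bigr)^{-1}$; the nonparabolicity of $\Si$ under \eqref{VDg}\eqref{NPBrxg} forces the tail integral to tend to $0$ as $r\to\infty$.

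Second, to establish $\lim_{x\to\infty}u(x)=0$, I would fix $R$ with $K\subset\overline{B}_R(p)$ and let $v_R$ denote the BV solution on $\Si$ associated with $\mathrm{cap}_t(\overline{B}_R(p))$. Applying Proposition \ref{KK'OmOm'UNIQ} to the approximating pairs $(u_i,v_{R,i})$ on $B_i(p)$ and passing to $i\to\infty$ gives $u\le v_R$ on $\Si$, so it suffices to show $v_R(x)\to 0$. For this I would combine the Harnack inequality for positive solutions of the minimal hypersurface equation from \cite{D} (valid under \eqref{VDg}\eqref{NPBrxg}) with the finite-area bound $\int_{\Si\setminus\overline{B}_R(p)}(\sqrt{1+|Dv_R|^2}-1)\le\mathrm{cap}_t(\overline{B}_R(p))<\infty$, which is inherited from the bounded-domain identity \eqref{captKOmu} by passage to the limit; as an alternative route, one may compare $v_R$ to the harmonic capacitary potential of $\overline{B}_R(p)$, which decays like $\int_{d(\cdot,p)}^\infty s\,ds/\mathcal{H}^n(B_s(p))$ by \eqref{GxpBrpsi} together with Holopainen's Green-function estimate, and then transfer the decay to $v_R$ using Proposition \ref{Compcap}.

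For the two identities in \eqref{capumupM**}, I would take the limit $i\to\infty$ of \eqref{captKOmu} and \eqref{captKOmuDu} applied on $\Om_i=B_i(p)$. The left side tends to $\mathrm{cap}_t(K)$ by definition. The area integral $\int_{B_i(p)\setminus K}(\sqrt{1+|Du_i|^2}-1)$ converges to $\int_{\Si\setminus K}(\sqrt{1+|Du|^2}-1)$ by BV lower semicontinuity combined with the matching upper bound $\le\mathrm{cap}_t(K)$; the trace $\int_{\p K}(t-u_i)\to\int_{\p K}(t-u)$ is handled by the monotonicity $u_i\nearrow u$ and the lower semicontinuity of $u_i$ near $\p K$ from Theorem \ref{wExist}; the escape term $\int_{\p B_i(p)}u_i$ vanishes by the decay of $u$ proved in the previous step, quantitatively against the perimeter of $\p B_i(p)$. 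Finally, Proposition \ref{CompThOmOm'} together with Lemma \ref{ThtKomiThtK} yields $t\mu_{_{\p M_i}}(\p K\times\{t\})\to t\|\mu_{_{\p M}}\|$, producing the second form.

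The main obstacle is the second step, where the integrated finite-energy control must be upgraded to pointwise vanishing at infinity using a scale-invariant Harnack estimate for MSE solutions under \eqref{VDg}\eqref{NPBrxg}; a secondary but delicate technical point is the vanishing of the escape integral $\int_{\p B_i(p)}u_i$, where the quantitative decay rate of $u$ must be balanced against the possibly unbounded growth of $\mathcal{H}^{n-1}(\p B_i(p))$.
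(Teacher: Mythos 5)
Your treatment of the first assertion is correct and, given Corollary~\ref{captthEST}, actually more direct than the paper's proof, which argues by contradiction using only subadditivity of $\mathrm{cap}$ and nonparabolicity, without invoking the Green-function comparison. The second and third assertions, however, contain genuine gaps that you partially flag but do not close, and in both cases the missing idea is the same: replace pointwise estimates by capacity comparisons against facts already in hand.

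For $\lim_{x\to\infty}u(x)=0$: neither of your routes works. The Harnack inequality of \cite{D} shows that $\lim_{x\to\infty}u(x)$ exists and equals $\inf_{\Sigma}u$, but this does not by itself force the limit to vanish, and the finite relative area $\int(\sqrt{1+|Du|^2}-1)<\infty$ is compatible with $u$ tending to a positive constant (an asymptotic slice carries no extra area). Your alternative route is circular: Proposition~\ref{Compcap} compares capacities of balls, not solutions, and the pointwise transfer from the harmonic potential is essentially Theorem~\ref{BdUpLowercapu*}, which is proved \emph{after} and \emph{using} the present proposition. The paper's actual argument is short: set $\gamma=\inf_{\Sigma}u$; if $\gamma>0$, the super-level sets $K_i=\{u_i\ge\gamma/2\}$ eventually contain any fixed $\overline{B}_r(p)$, and since $u_i$ restricted to $B_i(p)\setminus K_i$ is exactly the BV minimizer for $\mathrm{cap}_{\gamma/2}(K_i,B_i(p))$, one gets
\begin{equation*}
\mathrm{cap}_t(K,B_i(p))\ge\mathrm{cap}_{\gamma/2}(K_i,B_i(p))\ge\mathrm{cap}_{\gamma/2}(\overline{B}_r(p)),
\end{equation*}
which on letting $r\to\infty$ contradicts the first assertion and \eqref{captTKOm}, because $\mathrm{cap}_t(K,B_i(p))$ is bounded uniformly in $i$.

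For the escape term $\int_{\partial B_i(p)}u_i$: ``quantitative decay against perimeter'' is not available at this stage (the quantitative rate \emph{is} Theorem~\ref{BdUpLowercapu*}, which comes later). The paper instead first establishes the inequality $\le$ in \eqref{capumupM**}, from which it chooses $\delta>0$ small so that $\int_{\{0<u<\delta\}}(\sqrt{1+|Du|^2}-1)<\epsilon$, and then controls the entire sub-$\delta$ contribution by a single capacity comparison: writing $\Omega_{\tau,i}=\{u_i>\tau\}$ and $\Omega_\tau=\{u>\tau\}$,
\begin{equation*}
\int_{B_{r_i}(p)\setminus\Omega_{\delta,i}}\left(\sqrt{1+|Du_i|^2}-1\right)+\int_{\partial B_{r_i}(p)}u_i
\le\mathrm{cap}_{\delta}(\overline{\Omega_{\delta,i}},B_{r_i}(p))
\le\frac{\delta}{\delta-\delta'}\,\mathrm{cap}_{\delta-\delta'}(\overline{\Omega_\delta},\Omega_{\delta'})<2\epsilon
\end{equation*}
for $\delta'<\delta/2$, again recognizing $u_i$ restricted to the sub-level region as a small capacitary minimizer. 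This, rather than any pointwise estimate on $u$ against $\mathcal{H}^{n-1}(\partial B_i(p))$, is what makes the escape term vanish.
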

\begin{proof}
Let us prove $\lim_{r\to\infty}\mathrm{cap}_1(\overline{B}_r(p))=\infty$ by contradiction. If $\lim_{r\to\infty}\mathrm{cap}_1(\overline{B}_r(p))<\infty$, then we have $\lim_{r\to\infty}\mathrm{cap}(\overline{B}_r(p))<\infty$ by Proposition \ref{Compcap}. By the linearity of $'\mathrm{cap}'$, it follows that
\begin{equation*}\aligned
\mathrm{cap}(\overline{B}_{r}(p))=&\inf_{\mathcal{L}_1(\overline{B}_{r}(p),\Si)}\int_{\Si}|D\phi|^2\le\inf_{\mathcal{L}_{\f12}(\overline{B}_{r}(p),B_R(p))}\int_{\Si}|D\phi|^2+\inf_{\mathcal{L}_{\f12}(\overline{B}_{R}(p),\Si)}\int_{\Si}|D\phi|^2\\
=&\f14\mathrm{cap}(\overline{B}_{r}(p),B_R(p))+\f14\mathrm{cap}(\overline{B}_{R}(p)).
\endaligned\end{equation*}
From $\lim_{R\to\infty}\mathrm{cap}(\overline{B}_R(p))<\infty$ and the above inequality for the large $R>0$, it follows that $\lim_{r\to\infty}\mathrm{cap}(\overline{B}_{r}(p))=0$. It contradicts to the assumption that $\Si$ is nonparabolic. Hence, we have proven $\lim_{r\to\infty}\mathrm{cap}_1(\overline{B}_r(p))=\infty$ for any point $p\in\Si$. 

From Harnack's inequality in Theorem 4.3 of \cite{D}, we clearly have $\lim_{x\to\infty}u(x)=\inf_\Si u$. Suppose $\g:=\inf_\Si u>0$. Let $u_i$ be a BV solution on $\Si$ associated with $\mathrm{cap}_t(K,B_i(p))$ for each $i$ with $K\subset B_i(p)$. Let $K_i=\{x\in B_i(p)|\, u_i(x)\ge\g/2\}$. Then $u_i\to u$ locally uniformly infers $\lim_{i\to\infty} d(p,K_i)=\infty$ and $K\subset K_i$ for large $i$. Hence, given any $r>0$ for large $i$ we have
\begin{equation}\aligned
\mathrm{cap}_t(K,B_i(p))\ge&\int_{B_i(p)\setminus K_i}\left(\sqrt{1+|Du_i|^2}-1\right)+\int_{\p B_i(p)}u_i\\
=&\mathrm{cap}_{\g/2}(K_i,B_i(p))\ge\mathrm{cap}_{\g/2}(\overline{B}_r(p)).
\endaligned\end{equation}
However, this violates $\lim_{r\to\infty}\mathrm{cap}_{1}(\overline{B}_r(p))=\infty$ and \eqref{captTKOm} for the sufficiently large $r>0$.

Now let us prove \eqref{capumupM**}. Let $M_i$ denote the boundary of the subgraph of $u_i$ in $\overline{B_i(p)\setminus K}\times\R$.
Denote $\Om_\tau=\{x\in\Si| u(x)>\tau\}$ and $\Om_{\tau,i}=\{x\in\Si| u_i(x)>\tau\}$ for each $\tau\in[0,t]$.
Then $\Om_{\tau,i}\subset\Om_\tau$ and $\lim_{i\to\infty}\Om_{\tau,i}=\Om_\tau$ from $u_i\le u$ and $u_i\to u$.
Combining \eqref{MiconvMU}, we obtain
\begin{equation}\aligned\label{MOmtauMiOmtaui}
\mathcal{H}^n\left(M\cap(\Om_\tau\times\R)\right)=\lim_{i\to\infty}\mathcal{H}^n\left(M_i\cap(\Om_{\tau}\times\R)\right)=\lim_{i\to\infty}\mathcal{H}^n\left(M_i\cap(\Om_{\tau,i}\times\R)\right).
\endaligned\end{equation}
With $\lim_{x\to\infty}u(x)=0$, the above limit infers
\begin{equation}\aligned\label{inequnauMiui2}
&\int_{\Si\setminus K}\left(\sqrt{1+|Du|^2}-1\right)+\int_{\p K}(t-u)=\lim_{\tau\to0}\left(\mathcal{H}^n\left(M\cap(\Om_\tau\times\R)\right)-\mathcal{H}^n(\Om_\tau)\right)\\
=&\lim_{\tau\to0}\lim_{i\to\infty}\left(\mathcal{H}^n\left(M_i\cap(\Om_{\tau,i}\times\R)\right)-\mathcal{H}^n(\Om_{\tau,i})\right)
\le\lim_{i\to\infty}\mathrm{cap}_{t}(K,B_i(p))=\mathrm{cap}_{t}(K).
\endaligned\end{equation}
By the maximum principle, $\inf_{\p K}u>0$. 
For any $\ep>0$, from \eqref{inequnauMiui2} there is a positive constant $\de<\inf_{\p K}u$ so that
\begin{equation}\aligned
\int_{\{0<u<\de\}}\left(\sqrt{1+|Du|^2}-1\right)<\ep.
\endaligned\end{equation}
We fix a positive constant $\de'<\de/2$. 
Combining \eqref{captTKOm} and $\Om_{\de,i}\subset\Om_\de$, for the sufficiently large $i$ we have
\begin{equation}\aligned\label{BriOmdeiuide'ep}
\int_{B_{r_i}(p)\setminus\Om_{\de,i}}&\left(\sqrt{1+|Du_i|^2}-1\right)+\int_{\p B_{r_i}(p)}u_i\le\mathrm{cap}_{\de}(\overline{\Om_{\de,i}},B_{r_i}(p))\\
\le&\f{\de}{\de-\de'}\mathrm{cap}_{\de-\de'}(\overline{\Om_{\de,i}},B_{r_i}(p))\le\f{\de}{\de-\de'}\mathrm{cap}_{\de-\de'}(\overline{\Om_\de},\Om_{\de'})\\
=&\f{\de}{\de-\de'}\int_{\{\de'<u<\de\}}\left(\sqrt{1+|Du|^2}-1\right)<\f{\de\ep}{\de-\de'}<2\ep.
\endaligned\end{equation}
Combining \eqref{MOmtauMiOmtaui}\eqref{inequnauMiui2} and \eqref{BriOmdeiuide'ep}, we get
\begin{equation}\aligned
\mathrm{cap}_{t}(K)=\int_{\Si\setminus K}\left(\sqrt{1+|Du|^2}-1\right)+\int_{\p K}(t-u).
\endaligned\end{equation}
From \eqref{Caychy1+Du2}, it follows that
\begin{equation}\aligned
\int_{B_{r_i}(p)\setminus\Om_{\de,i}}&\left(1-\f1{\sqrt{1+|Du_i|^2}}\right)\le\int_{B_{r_i}(p)\setminus\Om_{\de,i}}&\left(\sqrt{1+|Du_i|^2}-1\right)<2\ep.
\endaligned\end{equation}
Combining the above inequality, \eqref{captKOmuDu} and Lemma \ref{ThtKomiThtK}, we can get 
\begin{equation}\aligned
\mathrm{cap}_t(K)=\int_{\Si\setminus K}\left(\f1{\sqrt{1+|Du|^2}}-1\right)+t||\mu_{_{\p M}}||.
\endaligned\end{equation}
This completes the proof.
\end{proof}

Let $\na$ be the Levi-Civita connection on $M$ w.r.t. its induced metric.
Recalling that $\mathbf{h}$ denotes the height function defined in \eqref{height}.
\begin{lemma}
\begin{equation}\aligned\label{lanaumupM}
\int_{M\cap\{\mathbf{h}<\la\}}|\na u|^2=\la||\mu_{_{\p M}}||\qquad \mathrm{for\ each}\ \la\in(0,t].
\endaligned\end{equation}
\end{lemma}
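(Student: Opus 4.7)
The plan is to apply the integration-by-parts identity of Lemma \ref{ThtKomiThtK} with a suitably truncated test function built from the height coordinate $\mathbf{h}$. Formally, plugging $\phi = \min(\mathbf{h}, \la)$ would give, via the chain rule on $M$, $\na \phi = \chi_{_{\{\mathbf{h} < \la\}}} \na \mathbf{h}$, so the left-hand side becomes $\int_{M \cap \{\mathbf{h} < \la\}} |\na \mathbf{h}|^2$, which coincides with $\int_{M \cap \{\mathbf{h} < \la\}} |\na u|^2$ since $u = \mathbf{h}$ on $M$. Since $\p M \subset \p K \times \{t\}$ and $t \ge \la$, the right-hand side becomes $\int_{\p M} \phi\, d\mu_{_{\p M}} = \la\, ||\mu_{_{\p M}}||$. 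Two obstacles must be overcome: the function $\min(\mathbf{h}, \la)$ is only Lipschitz (not $C^1$), and it is not compactly supported in $\Si \times \R$.

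Both issues can be handled simultaneously. First, I would extend Lemma \ref{ThtKomiThtK} from $C^1_c(\Si\times\R)$ to Lipschitz functions with compact support, by standard mollification: the extension is immediate because $M$ has locally finite $n$-dimensional area and $\mu_{_{\p M}}$ is a finite Radon measure by \eqref{capKulu}, so dominated convergence applies on both sides. Next, fix $\mu \in (0, \la)$ and smooth cutoffs $\eta_R(x)$ (equal to $1$ on $B_R(p)$, vanishing outside $B_{2R}(p)$) and $\z(s)$ (equal to $1$ on $[0, t]$, vanishing outside $[-1, t+1]$), and consider
\[
\phi_{\mu, R}(x, s) := \bigl( \min(s, \la) - \mu \bigr)_+ \eta_R(x) \z(s),
\]
which is Lipschitz with compact support in $\Si\times\R$. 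By Proposition \ref{utKprop}, $\lim_{x \to \infty} u(x) = 0$, so $\{u \ge \mu\} \cup K$ is contained in some $B_{R_\mu}(p)$. For $R \ge R_\mu$, the support of $\phi_{\mu, R}|_M$ lies inside $B_{R_\mu}(p) \times [\mu, t]$, where $\eta_R \equiv 1$ and $\z \equiv 1$, so the cutoffs contribute nothing to $\na \phi_{\mu, R}|_M$ and one has $\na \phi_{\mu, R}|_M = \chi_{_{\{\mu < \mathbf{h} < \la\}}} \na \mathbf{h}$ a.e. Applying the extended identity with $\phi = \phi_{\mu, R}$ yields
\[
\int_{M \cap \{\mu < \mathbf{h} < \la\}} |\na \mathbf{h}|^2 = (\la - \mu)\, ||\mu_{_{\p M}}||,
\]
using that $\phi_{\mu, R} \equiv \la - \mu$ on $\p M$.

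To conclude, let $\mu \to 0^+$: monotone convergence gives $\int_{M \cap \{0 < \mathbf{h} < \la\}} |\na \mathbf{h}|^2 = \la\, ||\mu_{_{\p M}}||$. Since $u \ge 0$, we have $\mathbf{h} \ge 0$ on $M$; the strong maximum principle applied to the smooth solution $u$ of \eqref{u} on $\Si \setminus K$, together with $u = t > 0$ on $K \setminus \p K$, forces $\{u = 0\}$ to lie inside $\p K$, so $\{\mathbf{h} = 0\} \cap M \subset \p K \times \{0\}$ has $\mathcal{H}^n$-measure zero and contributes nothing. The main obstacle is the interplay of the two truncations: the decay $u \to 0$ at infinity makes the spatial cutoff $\eta_R$ irrelevant on the effective support of $\phi_{\mu, R}|_M$, which is what eliminates a potential boundary-at-infinity contribution from $\na \eta_R$ that would otherwise spoil the identity.
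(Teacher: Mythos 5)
Your proof is correct but follows a genuinely different route from the paper's. The paper establishes the identity first on the compact approximations $M_i$ (the graphs over $B_i(p)\setminus K$): it chooses good slicing levels $\la_i\to\la$, decomposes $M_i$ along $\{\mathbf{h}=\la_i\}$, applies $\phi\equiv1$ to the upper piece and $\phi=\mathbf{h}$ to the lower piece to express $\int_{M_i\cap\{\mathbf{h}<\la_i\}}|\na^{M_i}\mathbf{h}|^2$ in terms of $\mu_{_{\p M_i}}(\p K\times\{t\})$, and then passes to the limit $i\to\infty$ with a careful splitting into the vertical part over $\p K$ and the graph part, together with the tail estimate \eqref{BriOmdeiuide'ep}. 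You instead apply the non-compact formula \eqref{Intphibyparts***} directly to $M$ with a truncated test function $\phi_{\mu,R}$ built from $\mathbf{h}$, letting the decay $u\to0$ at infinity (Proposition \ref{utKprop}) nullify the spatial cutoff on the effective support, and then sending $\mu\to0^+$. Your version sidesteps both the compact exhaustion and the choice of good slicing levels, which is a real simplification; what it costs is the extension of Lemma \ref{ThtKomiThtK} from $C^1_c$ to Lipschitz test functions, which you describe as ``immediate by dominated convergence'' but which deserves one more sentence: a.e.\ convergence of $\na\phi_\ep\to\na\phi$ in $\Si\times\R$ does not automatically give $\mathcal{H}^n$-a.e.\ convergence on the $\mathcal{H}^{n+1}$-null set $M$. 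For your specific $\phi_{\mu,R}$ this is fine because the singular set of $\na\phi_{\mu,R}$ is $\{s=\mu\}\cup\{s=\la\}$, whose intersection with $M$ is a union of a level set of the smooth non-constant solution $u$ (thin, by unique continuation and the nodal set estimates) and an $(n-1)$-dimensional slice of the vertical part over $\p K$; both are $\mathcal{H}^n$-null, so the mollified gradients converge $\mathcal{H}^n$-a.e.\ on $M$ and the exchange of limits is justified. With that noted, the argument is complete.
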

\begin{proof}
Let $M_i$ denote the boundary of the subgraph of $u_i$ in $\overline{B_i(p)\setminus K}\times\R$,
and $\na^{M_i}$ be the Levi-Civita connection on $M_i$ w.r.t. its induced metric.
Denote $\Om_{\la,i}=\{x\in\Si| u_i(x)>\la\}$ for each $\la\in[0,t]$.
The strong maximum principle for $u_i$ and $\p\overline{\Si\setminus K}=\p K$ infer that
$\p(\Si\setminus\Om_{\la,i})=\p\Om_{\la,i}$.
By the slicing theorem for the current $[|M_i|]$ (see \cite{S} for instance), $\{x\in\Si|\, u_i(x)=s\}$ is countably $(n-1)$-rectifiable for a.e. $s\in\R$. Since $\mathcal{H}^n(M_i\cap(\Si\times(0,t)))<\infty$, by the co-area formula there is a sequence $\la_i\to\la$ with $\la_i<t$ so that $\p\Om_{\la_i,i}=\left\{x\in\Si|\, u_i(x)=\la_i\right\}$
is countably $(n-1)$-rectifiable with $\mathcal{H}^{n-1}(\p\Om_{\la_i,i})<\infty$ for each $i$.

Denote $M_{*i}=M_i\cap\{\mathbf{h}<\la_i\}$ and $M^*_{i}=M_i\cap\{\la_i<\mathbf{h}\}$ for all $0<\tau\le t$.
If we choose $\phi=1$ in \eqref{Intphibyparts} on $M^*_{i}$, then
\begin{equation}\aligned\label{muMi******}
\mu_{_{\p M^*_{i}}}(\p K\times\{t\})+\mu_{_{\p M^*_{i}}}(\p \Om_{\la_i,i}\times\{\la_i\})=0.
\endaligned\end{equation}
Here, $\mu_{_{\p M_i^*}}$ denotes the co-normal measure to $\p M_i^*$. From Theorem \ref{RemfiniteHn-1pK}, we have $\mu_{_{\p M_i^*}}=\lan\nu_{i}^*,E_{n+1}\ran\mathcal{H}^{n-1}$ on $\p M_i^*$ for the co-normal vector $\nu_{i}^*$ to $\p M_{i}^*$ (pointing out $M_{i}^*$) .
Let
$\mu_{_{\p M_{*i}}}$ be the co-normal measure to $\p M_{*i}$, then $\mu_{_{\p M_{*i}}}=-\lan\nu_{i}^*,E_{n+1}\ran\mathcal{H}^{n-1}$ on $\p \Om_{\la_i,i}\times\{\la_i\}$. 
Moreover, we choose $\phi=\mathbf{h}$ in \eqref{Intphibyparts} for $M_{*i}$, and get
\begin{equation}\aligned\label{M*inaMibfh**}
\int_{M_{*i}}|\na^{M_i} \mathbf{h}|^2=\int_{\p M_{*i}}\mathbf{h}d\mu_{_{\p M_{*i}}}=\la_i\mu_{_{\p M_{*i}}}(\p \Om_{\la_i,i}\times\{\la_i\})=-\la_i\mu_{_{\p M^*_{i}}}(\p \Om_{\la_i,i}\times\{\la_i\}).
\endaligned\end{equation}
Let $\mu_{_{\p M_i}}$ be the co-normal measure to $\p M_i$.
Combining \eqref{muMi******}\eqref{M*inaMibfh**}, we obtain
\begin{equation}\aligned\label{M*iMih2}
\int_{M_{*i}}|\na^{M_i} \mathbf{h}|^2=\la_i\mu_{_{\p M^*_{i}}}(\p K\times\{t\})=\la_i\mu_{_{\p M_{i}}}(\p K\times\{t\}).
\endaligned\end{equation}

Since $u_i\le u$ and $u_i\to u$ on $\p K$, by Lebesgue's dominated convergence theorem, it follows that
\begin{equation}\aligned
\mathcal{H}^n(M\cap(\p K\times\R)\cap\{\mathbf{h}<\la\})=\lim_{i\to\infty}\mathcal{H}^n(M_i\cap(\p K\times\R)\cap\{\mathbf{h}<\la_i\}).
\endaligned\end{equation}
Noting $|\na^{M_i} \mathbf{h}|=1$ a.e. on $M_i\cap(\p K\times\R)$ and $|\na^{M} \mathbf{h}|=1$ a.e. on $M\cap(\p K\times\R)$. Hence,
\begin{equation}\aligned\label{MpKhlai***}
\int_{M\cap(\p K\times\R)\cap\{\mathbf{h}<\la\}}|\na \mathbf{h}|^2=\lim_{i\to\infty}\int_{M_i\cap(\p K\times\R)\cap\{\mathbf{h}<\la_i\}}|\na^{M_i} \mathbf{h}|^2.
\endaligned\end{equation}
For any $\de>0$, let $M^\de=M\setminus(B_\de(K)\times\R)$ and $M^\de_i=M_i\setminus(B_\de(K)\times\R)$ for each $i$.
Let $\tau_0$ be a postive constant and $\tau_0<\min\{\la,\inf_{\p K}u\}$. 
From the estimates of nodal sets for solutions of elliptic equations (see Hardt-Simon \cite{HS1}), for any $\tau\in(0,\tau_0]$ and small $\de>0$ there is a constant $c_{\tau,\de}>0$ so that
\begin{equation}\aligned
\mathcal{H}^{n-1}(M^\de\cap\{\mathbf{h}=\tau\})+\mathcal{H}^{n-1}(M^\de_i\cap\{\mathbf{h}=\tau\})\le c_{\tau,\de}.
\endaligned\end{equation}
Since $u_i\to u$ smoothly on $B_R(p)\setminus B_\de(K)$ for any fixed $R>0$, we conclude that
\begin{equation}\aligned\label{Mdehlai***}
\int_{M^\de\cap\{\tau<\mathbf{h}<\la\}}|\na \mathbf{h}|^2=\lim_{i\to\infty}\int_{M_i^\de\cap\{\tau<\mathbf{h}<\la_i\}}|\na^{M_i} \mathbf{h}|^2.
\endaligned\end{equation} 
The fact that $M_i\to M$ locally in the measure sense (see \eqref{MiconvMU}) implies
\begin{equation}\aligned
\lim_{i\to\infty}\mathcal{H}^n(M_i\cap(B_\de(K)\times\R)\cap\{\mathbf{h}<\la_i\})=\mathcal{H}^n(M\cap(B_\de(K)\times\R)\cap\{\mathbf{h}<\la\}).
\endaligned\end{equation} 
Noting $\mathcal{H}^n(M\cap((B_\de(K)\setminus K)\times\R))\to 0$ as $\de\to0$. With \eqref{MpKhlai***}\eqref{Mdehlai***}, we get
\begin{equation}\aligned\label{Mtauhlanahlai}
\int_{M\cap\{\tau<\mathbf{h}<\la\}}|\na \mathbf{h}|^2=\lim_{i\to\infty}\int_{M_i\cap\{\tau<\mathbf{h}<\la_i\}}|\na^{M_i} \mathbf{h}|^2.
\endaligned\end{equation} 
From $|\na u|^2=|Du|^2(1+|Du|^2)^{-1}$, it follows that
\begin{equation}\aligned\label{Dnaucomp}
\sqrt{1+|Du|^2}-1\le|\na u|^2\sqrt{1+|Du|^2}\le2\left(\sqrt{1+|Du|^2}-1\right).
\endaligned\end{equation}
With \eqref{BriOmdeiuide'ep}, letting $\tau\to0$ in \eqref{Mtauhlanahlai} infers
\begin{equation}\aligned\label{Mtauhlanahlai*}
\int_{M\cap\{\mathbf{h}<\la\}}|\na \mathbf{h}|^2=\lim_{i\to\infty}\int_{M_i\cap\{\mathbf{h}<\la_i\}}|\na^{M_i} \mathbf{h}|^2.
\endaligned\end{equation} 
Combining Lemma \ref{ThtKomiThtK}, \eqref{M*iMih2} and \eqref{Mtauhlanahlai*}, we can deduce \eqref{lanaumupM}.
\end{proof}

Let us prove the asymptotic estimates for solutions associated with $\mathrm{cap}_t(K)$, which are sharp up to constants as follows.
\begin{theorem}\label{BdUpLowercapu*}
Given a compact set $K\subset\Si$ and $t>0$, let $u$ be a BV solution on $\Si$ associated with $\mathrm{cap}_t(K)$.
There is a constant $\Th\ge1$ depending only on $C_D,C_N$ such that for any $r\ge\max\{t,2\mathrm{diam}(K)\}$, $p\in K$ and $x\in \p B_r(p)$ there holds
\begin{equation}\aligned\label{ESTucaptK}
\f{\mathrm{cap}_t(K)}{\Th t}\int_r^\infty\f{s ds}{\mathcal{H}^n(B_s(p))}\le u(x)\le\f{\Th\mathrm{cap}_t(K)}{t}\int_r^\infty\f{s ds}{\mathcal{H}^n(B_s(p))}.
\endaligned\end{equation}
\end{theorem}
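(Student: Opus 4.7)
The plan is to deduce both bounds from Corollary \ref{captthEST} and the flux identity \eqref{lanaumupM}, using Harnack's inequality (Theorem 4.3 of \cite{D}) and Holopainen's Green function estimate. By the gradient estimate for bounded MSE solutions, together with $r\ge t$ and $r\ge 2\mathrm{diam}(K)$, we get $|Du|\le\Lambda_0$ on $\Si\setminus B_{r/2}(p)$ for a constant $\Lambda_0=\Lambda_0(C_D,C_N)$, and Harnack on the annulus $B_{2r}(p)\setminus\overline{B}_{r/2}(p)$ gives $\sup_{\p B_r(p)}u\le C_H\inf_{\p B_r(p)}u$ with $C_H=C_H(C_D,C_N)$. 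Set $s_r=\inf_{\p B_r(p)}u$, $S_r=\sup_{\p B_r(p)}u$, $F=\|\mu_{\p M}\|$ (so $F\in[\mathrm{cap}_t(K)/t,\,2\mathrm{cap}_t(K)/t]$ by \eqref{capKulu}), and $I(r)=\int_r^\infty \frac{s\,ds}{\mathcal{H}^n(B_s(p))}$; it suffices to estimate $s_r$ and $S_r$ from the appropriate side.

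For the upper bound, the maximum principle applied to $u$ on $B_r(p)\setminus K$ (modulo a standard approximation handling possible detachment of $u$ from $\p K$) confines $\{u<s_r\}$ to $\Si\setminus\overline{B}_{r/2}(p)$, where $|Du|\le\Lambda_0$ and hence $|Du|^2\le\sqrt{1+\Lambda_0^2}\,|Du|^2/\sqrt{1+|Du|^2}$. The truncation $s_r^{-1}\min(u,s_r)$, with a cutoff at infinity justified by $u\to 0$ from Proposition \ref{utKprop}, is admissible for the harmonic capacity $\mathrm{cap}(\{u\ge s_r\})$; combining this with \eqref{lanaumupM} at $\lambda=s_r$ yields
\begin{equation*}
\mathrm{cap}(\overline{B}_r(p))\le\mathrm{cap}(\{u\ge s_r\})\le s_r^{-2}\int_{\{u<s_r\}}|Du|^2\le\sqrt{1+\Lambda_0^2}\,F/s_r,
\end{equation*}
where we used $\overline{B}_r(p)\subset\{u\ge s_r\}$ by the maximum principle. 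Holopainen's inequality (the $q=2$ case of \cite{H}) gives $\mathrm{cap}(\overline{B}_r(p))\ge c/I(r)$ with $c=c(C_D,C_N)$, so $s_r\le CFI(r)\le 2C(\mathrm{cap}_t(K)/t)I(r)$, and Harnack extends this to the claimed upper bound on $u(x)$ for every $x\in\p B_r(p)$.

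For the lower bound, define $\hat u:=\min(u,S_r)$ and $A:=\{u\ge S_r\}$. The maximum principle applied to $u$ on $\Si\setminus\overline{B}_r(p)$ (with boundary value $\le S_r$ and decay at infinity) forces $A\subset\overline{B}_r(p)$. The key observation is that $\hat u$ is a BV solution on $\Si$ associated with $\mathrm{cap}_{S_r}(A)$: its subgraph $U\cap\{\mathbf{h}<S_r\}$ has boundary in $\overline{\Si\setminus A}\times\R$ equal to $M\cap(\overline{\Si\setminus A}\times\R)$, which is area-minimizing as a restriction of the minimizing $M$ with trace $S_r$ on $\p A$. Divergence-freeness of $Du/\sqrt{1+|Du|^2}$ on $A\setminus K$ identifies the outward flux of $\hat u$ from $A$ as $F$; hence \eqref{capKulu} applied to $\hat u$ gives $F\le 2\mathrm{cap}_{S_r}(A)/S_r$. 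Monotonicity of capacity yields $\mathrm{cap}_{S_r}(A)\le\mathrm{cap}_{S_r}(\overline{B}_r(p))$, and Corollary \ref{captthEST} (using $r\ge t\ge S_r$) gives $\mathrm{cap}_{S_r}(\overline{B}_r(p))\le\vartheta S_r^2/I(r)$. Chaining, $S_r\ge FI(r)/(2\vartheta)\ge \mathrm{cap}_t(K)I(r)/(2\vartheta t)$, and Harnack yields $u(x)\ge s_r\ge S_r/C_H$, completing the lower bound.

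The main obstacle is justifying that $\hat u$ is genuinely a BV solution for $\mathrm{cap}_{S_r}(A)$ (requiring the regularity $\p\overline{\Si\setminus A}=\p A$, which follows from elliptic regularity of $u$ and Sard-type reasoning at the value $S_r$) and that the flux conservation identifies its outward flux with $F$; a parallel technicality is the sublevel-set confinement in the upper bound, handled by the same regularity plus an approximation of $K$ if needed. Once these are in place the proof is a direct chain of the flux-capacity inequalities combined with Corollary \ref{captthEST} and Holopainen's estimate.
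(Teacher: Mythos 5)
Your overall plan (Harnack from \cite{D}, the flux identity \eqref{lanaumupM}, the bound \eqref{capKulu} on $\|\mu_{\p M}\|$, and a capacity comparison on $\overline{B}_r(p)$) matches the paper's strategy, and the lower‑bound step is essentially identical to the paper's: there, one takes $\la'=\sup_{\p B_r(p)}u$, $F'=\overline{\{u\ge\la'\}}\subset\overline{B}_r(p)$, uses \eqref{Dnaucomp} to get $\la'\|\mu_{\p M}\|=\int_{M\cap\{\mathbf{h}<\la'\}}|\na u|^2\le 2\,\mathrm{cap}_{\la'}(F')\le 2\,\mathrm{cap}_{\la'}(\overline{B}_r(p))$, and then invokes Corollary \ref{captthEST}; this accomplishes exactly what your $\hat u=\min(u,S_r)$ truncation and flux‑conservation argument aim for, without needing to verify directly that $\hat u$ is the BV solution associated with $\mathrm{cap}_{S_r}(A)$ or that $\p\overline{\Si\setminus A}=\p A$.

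The genuine issue is in your upper bound. You route through the \emph{harmonic} capacity $\mathrm{cap}(\overline{B}_r(p))$ via the competitor $s_r^{-1}\min(u,s_r)$, which forces you to compare $\int_{\{u<s_r\}}|Du|^2$ with the surface Dirichlet energy $\int_{\{u<s_r\}}\f{|Du|^2}{\sqrt{1+|Du|^2}}$. Because $\int|Du|^2$ can exceed $\int|Du|^2/\sqrt{1+|Du|^2}$ by a factor $\sqrt{1+|Du|^2}$ that is not a priori bounded, you are compelled to invoke a uniform interior gradient bound $|Du|\le\Lambda_0(C_D,C_N)$ for bounded minimal graphs. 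That bound is not established anywhere in the paper, and it is not clear it holds under the hypotheses of \S9 (only volume doubling \eqref{VDg} and $(1,1)$-Poincar\'e \eqref{NPBrxg}, with no lower Ricci bound — the standard Korevaar/Spruck-type gradient estimates do require curvature control). The paper avoids this entirely: with $\la=\inf_{\p B_r(p)}u$ and $F=\{u\ge\la\}\supset\overline{B}_r(p)$, the unconditional pointwise inequality $\sqrt{1+|Du|^2}-1\le|Du|^2/\sqrt{1+|Du|^2}$ from \eqref{Dnaucomp} gives directly
$$\mathrm{cap}_{\la}(\overline{B}_r(p))\le\mathrm{cap}_{\la}(F)=\int_{\Si\setminus F}\Bigl(\sqrt{1+|Du|^2}-1\Bigr)\le\int_{M\cap\{\mathbf{h}<\la\}}|\na u|^2=\la\|\mu_{\p M}\|,$$
and then the lower bound in Corollary \ref{captthEST}, $\mathrm{cap}_{\la}(\overline{B}_r(p))\ge\vartheta^{-1}\la^2/\Phi(r)$ (valid since $r\ge t\ge\la$), yields $\la\le\vartheta\|\mu_{\p M}\|\Phi(r)$ with no gradient estimate required. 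Holopainen's Green‑function bound still enters, but only through Proposition \ref{Compcap} and Corollary \ref{captthEST}, i.e.\ after passing to the $M$‑capacity via mollification rather than via a $C^1$ a priori bound. If you replace your harmonic‑capacity step by the comparison with $\mathrm{cap}_{s_r}(\overline{B}_r(p))$ and Corollary \ref{captthEST}, the gap disappears and your argument coincides with the paper's.
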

\begin{proof}
Given $r\ge\max\{t,2\mathrm{diam}(K)\}$ and $p\in K$. Noting $\inf_\Si u=0$ from Proposition \ref{utKprop}. From \cite{D}, there is a constant $C\ge1$ depending only on $C_D,C_N$ so that 
\begin{equation}\aligned\label{supinfuinfu}
\sup_{\p B_r(p)}u\le C\inf_{\p B_r(p)}u.
\endaligned\end{equation}
Let 
$\la=\inf_{\p B_r(p)}u$, and $\la'=\sup_{\p B_r(p)}u$.
Let $F,F'$ are two closed sets defined by $F=\{x\in\Si|\, u(x)\ge\la\}$ and $F'=\overline{\{x\in\Si|\, u(x)\ge\la'\}}$. 
Then $F'\subset\overline{B}_r(p)\subset F$ clearly.
Combining Proposition \ref{utKprop} and \eqref{Dnaucomp}, we have
\begin{equation}\aligned
\mathrm{cap}_{\la}(\overline{B}_r(p))\le\mathrm{cap}_{\la}(F)=\int_{\Si\setminus F}\left(\sqrt{1+|Du|^2}-1\right)
\le\int_{M\cap\{\mathbf{h}<\la\}}|\na u|^2
\endaligned\end{equation}
and
\begin{equation}\aligned
\int_{M\cap\{\mathbf{h}<\la'\}}|\na u|^2\le&2\int_{\Si\setminus F'}\left(\sqrt{1+|Du|^2}-1\right)+\int_{\p K}\max\{0,\la'-u\}\\
\le&2\mathrm{cap}_{\la'}(F')\le 2\mathrm{cap}_{\la'}(\overline{B}_r(p)).
\endaligned\end{equation}
Since $r\ge t$, combining Corollary \ref{captthEST} and \eqref{lanaumupM} we get
\begin{equation}\aligned
\la\le||\mu_{_{\p M}}||{\vartheta}\int_r^\infty\f{s ds}{\mathcal{H}^n(B_s(p))},
\endaligned\end{equation}
and
\begin{equation}\aligned
||\mu_{_{\p M}}||\int_r^\infty\f{s ds}{\mathcal{H}^n(B_s(p))}\le{\vartheta}\la',
\endaligned\end{equation}
where $\vartheta\ge1$ is the constant depending only on $C_D,C_N$.
Combining \eqref{capKulu} and \eqref{supinfuinfu}, we can get \eqref{ESTucaptK} from the above two inequalities.
\end{proof}

For each $r>0$, we define a function $$\Phi(r)=\int_r^\infty\f{s ds}{\mathcal{H}^n(B_s(p))}.$$
\begin{corollary}
Given a ball $B_r(p)\subset\Si$, let $f$ be a smooth solution to the minimal hypersurface equation \eqref{u} on $\Si\setminus B_r(p)$ with $0<f\le r$ on $\Si\setminus B_r(p)$ and $\lim_{x\to\infty}f=0$. Let $\La=\inf_{\p B_{2r}(p)}f$. Then there is a constant $c\ge1$ depending only on $C_D,C_N$ such that for any $x\in \p B_R(p)$ with $R\ge8r$ there holds
\begin{equation}\aligned\label{EstfPhiRr}
c^{-1}\La\f{\Phi(R)}{\Phi(r)}\le f(x)\le c\La\f{\Phi(R)}{\Phi(r)}.
\endaligned\end{equation}
\end{corollary}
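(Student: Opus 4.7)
The plan is to sandwich $f$ between two BV solutions associated with capacities of the annulus' inner ball $\overline{B}_{2r}(p)$, and then invoke Theorem \ref{BdUpLowercapu*} together with Corollary \ref{captthEST}. I will write $\tau'=\sup_{\p B_{2r}(p)}f$. As a preliminary step, I would apply Harnack's inequality from \cite{D} (the same one already used in the proof of Theorem \ref{BdUpLowercapu*}) to obtain $\tau'\le C_H\La$ with $C_H$ depending only on $C_D,C_N$; the maximum principle for \eqref{u} on $\Si\setminus B_{2r}(p)$ together with $\lim_{x\to\infty}f=0$ will also give $f\le\tau'$ on $\Si\setminus B_{2r}(p)$.

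Next, let $u$ be the BV solution on $\Si$ associated with $\mathrm{cap}_{\tau'}(\overline{B}_{2r}(p))$ and $v$ the one associated with $\mathrm{cap}_\La(\overline{B}_{2r}(p))$. Theorem \ref{wExist} tells us that $u,v$ are smooth solutions to \eqref{u} on $\Si\setminus\overline{B}_{2r}(p)$, are bounded above by $\tau'$ resp.\ $\La$, and equal those constants on $B_{2r}(p)\setminus\p B_{2r}(p)$; Proposition \ref{utKprop} yields $u,v\to 0$ at infinity. The lower-semicontinuity near $\p K$ in Theorem \ref{wExist}, combined with the one-sided pointwise bound, will force $u\to\tau'$ and $v\to\La$ on approach to $\p B_{2r}(p)$ from outside. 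I would then run the standard comparison principle for the quasi-linear operator \eqref{u} on the open sets $\{f>u\}$ and $\{v>f\}$: each such set has boundary only on $\p B_{2r}(p)$ (where the relevant inequality holds in the limit) or at infinity (where all functions vanish), hence is empty. Thus $v\le f\le u$ on $\Si\setminus B_{2r}(p)$.

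For the final step, I would apply Theorem \ref{BdUpLowercapu*} to $K=\overline{B}_{2r}(p)$, so $2\,\mathrm{diam}(K)\le 8r\le R$, and with $t=\tau'$ or $t=\La$ (both bounded by $r\le R$). Coupled with Corollary \ref{captthEST} applied to $B_{2r}(p)$ (valid since $\tau',\La\le r\le 2r$), this produces
\begin{equation*}
\mathrm{cap}_{\tau'}(\overline{B}_{2r}(p))\asymp\frac{(\tau')^2}{\Phi(2r)},\qquad \mathrm{cap}_{\La}(\overline{B}_{2r}(p))\asymp\frac{\La^2}{\Phi(2r)},
\end{equation*}
and hence, via \eqref{ESTucaptK}, the bounds $u(x)\le c\,\tau'\,\Phi(R)/\Phi(2r)$ and $v(x)\ge c^{-1}\La\,\Phi(R)/\Phi(2r)$ at $x\in\p B_R(p)$. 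A routine splitting of the defining integral of $\Phi$ on $[r,2r]$ and $[2r,4r]$ combined with volume doubling \eqref{VDg} gives $\Phi(2r)\asymp\Phi(r)$, and the Harnack bound $\tau'\le C_H\La$ upgrades the upper bound to $c\La\,\Phi(R)/\Phi(r)$. Combining with the sandwich $v\le f\le u$ yields \eqref{EstfPhiRr}.

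The chief obstacle I foresee is justifying the maximum principle comparison at $\p B_{2r}(p)$: the BV solutions $u,v$ need not be continuous up to $\p K$, and a priori their boundary trace could jump. The ingredient that rescues the argument is precisely the one-sided semi-continuity asserted in Theorem \ref{wExist}, which (together with $u\le\tau'$, $v\le\La$ on the exterior) pins the outward limits to $\tau'$ and $\La$. Absent this, one would have to transport the comparison to the level of area-minimizing currents using Theorem \ref{UniPlateau}, which is possible but considerably heavier.
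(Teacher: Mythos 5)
Your lower bound $v\le f$ is fine and matches the paper's: since $v\le\La\le f$ on $\p B_{2r}(p)$ and both decay to zero at infinity, the comparison principle applies directly. The Harnack step $\tau'\le C_H\La$, the volume-doubling estimate $\Phi(2r)\asymp\Phi(r)$, and the final invocation of Theorem \ref{BdUpLowercapu*} together with Corollary \ref{captthEST} are all sound.

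The upper bound, however, has a genuine gap exactly where you foresaw the obstacle, and your proposed rescue does not work. The lower semi-continuity in Theorem \ref{wExist} says $\liminf_{x\to x_0}u(x)\ge u(x_0)$ for $x_0\in\p K$, where $u(x_0)$ is the \emph{trace} of the BV solution, not the prescribed value $t=\tau'$. The trace can be strictly less than $\tau'$; this failure is precisely what the defect term $\int_{\p K}(t-u)$ in \eqref{captKOmu} measures, and Proposition \ref{non-bded BV solutions} shows it is sometimes unavoidable. So $u\le\tau'$ together with lower semi-continuity does \emph{not} pin the outward limit of $u$ to $\tau'$, and the comparison inequality on $\p B_{2r}(p)$ needed to empty $\{f>u\}$ can fail. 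The paper instead picks a smooth compact $K'$ with $\overline{B}_r(p)\subset K'\subset B_{2r}(p)$, sets $\La'=\sup_{\p K'}f\le c_*\La$ by Harnack, takes $u'$ the BV solution associated with $\mathrm{cap}_{\La'}(K')$ so that $M'=\p(\text{subgraph of }u')$ is $C^{1,\a}$ up to $\p K'$ by Lemma \ref{C1a-reg}, and then \emph{slides} the translated surface $M'_{\La'}$ (which lies above $\mathrm{graph}(f)$, as $u'\ge0$ and $f\le\La'$) downward to $M'_0=M'$: the boundary $\p M'_t=\p K'\times\{\La'+t\}$ stays at height $\ge\La'\ge f$ for all $t\ge 0$, interior touchings on the graph part are excluded by the strong maximum principle, and touchings on the vertical wall over $\p K'$ are impossible because the tangent plane of $\mathrm{graph}(f)$ is never vertical. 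This gives $u'\ge f$ without any continuity of the trace. Your sketch acknowledges this current-level route as a fallback but incorrectly asserts it can be avoided; in fact it is the mechanism that makes the upper bound go through.
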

\begin{proof}
We consider the BV solution $u$ on $\Si$ associated with $\mathrm{cap}_\La(\overline{B}_{2r}(p))$.
From Proposition \ref{utKprop}, $\lim_{x\to\infty}u(x)=0$ .
From the maximum principle, $f+\ep>u$ on $\Si\setminus B_{2r}(p)$ for any $\ep>0$, which infers $f\ge u$ on $\Si\setminus B_{2r}(p)$ by letting $\ep\to0$. Combining Corollary \ref{captthEST} and Theorem \ref{BdUpLowercapu*}, we get
\begin{equation}\aligned
f(x)\ge \f{\mathrm{cap}_\La(\overline{B}_{2r}(p))}{\La\Th}\Phi(R)\ge\f{\La\Phi(R)}{\Th\vartheta\Phi(r)}
\endaligned\end{equation}
for any $x\in \p B_R(p)$ with $R\ge8r$.

On the other hand, from \eqref{supinfuinfu} there are a constant $c_*\ge1$ depending only on $C_D,C_N$, and a closed set $K'\subset B_{2r}(p)$ with $\p K'\in C^\infty$ such that $\La':=\sup_{\p K'}f\le c_*\La$. We consider the BV solution $u'$ on $\Si$ associated with $\mathrm{cap}_{\La'}(K')$. Let $M'$ be the boundary of the subgraph of $u'$ in $\overline{\Si\setminus K'}\times\R$, which is smooth from Lemma \ref{C1a-reg}. Denote $M'_t=\{(x,s+t)\in\Si\times\R|\, (x,s)\in M'\}$. By moving $M'_{\La'}$ to $M'_0=M'$ downward along the vector $-E_{n+1}$, we get $u'\ge f$ on $\Si\setminus B_{2r}(p)$ from the maximum principle.
Combining Corollary \ref{captthEST} and Theorem \ref{BdUpLowercapu*}, we can get the right hand of \eqref{EstfPhiRr}. This completes the proof.
\end{proof}

\section{Appendix I: some estimates}

Let $\Si$ be an $n$-dimensional complete noncompact Riemannian manifold.

\begin{lemma}\label{smalluG}
Let $U$ be a bounded open set with smooth boundary $\p U$ and $\G$ be a connected component of $\p U$ so that there is an open set $U_\G$ with $\p U_\G=\G$ and $U_\G\cap U=\emptyset$. For any $\ep>0$, there is a constant $\La_\ep>0$ so that
if $u$ is a BV function on $U\cup\overline{U_\G}$ with $u=0$ on $U_\G$ and $0\le u\le\La_\ep$ on $U$, and $\p\{(x,s)|\, s<u(x),\, x\in U\cup\overline{U_\G}\}$ is area-minimizing in $\overline{U}\times\R$, then $u\in C^\infty(U\cup\G)$ and $\sup_\G|Du|\le\ep$.
\end{lemma}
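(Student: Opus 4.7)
The plan is to argue by contradiction. Suppose the conclusion fails: there exist $\ep>0$, a sequence $\La_i\to 0^+$, and BV functions $u_i$ on $U\cup\overline{U_\G}$ satisfying the hypotheses with $\La_i$ replacing $\La_\ep$, such that for each $i$ either $u_i\notin C^\infty(U\cup\G)$ or $\sup_\G|Du_i|>\ep$. Write $V_i:=\{(x,s):s<u_i(x),\,x\in U\cup\overline{U_\G}\}$; by hypothesis $\p V_i$ is area-minimizing in $\overline{U}\times\R$.

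The \emph{key step} is to upgrade the minimality of $\p V_i$ from $\overline{U}\times\R$ to the larger open set $(U\cup\overline{U_\G})\times\R$. Since $u_i\equiv 0$ on $U_\G$, we have $V_i\cap(U_\G\times\R)=U_\G\times(-\infty,0)$, whose boundary is the flat slice $U_\G\times\{0\}$; as a calibrated piece of a hyperplane, it is trivially area-minimizing in $U_\G\times\R$. For any Caccioppoli competitor $W$ with $W\triangle V_i\subset\subset(U\cup\overline{U_\G})\times\R$, a standard cut-and-paste along the smooth interface $\G\times\R$ decomposes the comparison into two parts: the restriction to $\overline{U}\times\R$ is controlled by the given minimality, and the restriction to $\overline{U_\G}\times\R$ by the trivial minimality just noted. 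Summing the two inequalities yields $\mathrm{Per}(W,(U\cup\overline{U_\G})\times\R)\ge\mathrm{Per}(V_i,(U\cup\overline{U_\G})\times\R)$.

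Because $\G$ is a connected component of $\p U$ with $\p U_\G=\G$, the set $U\cup\overline{U_\G}=U\cup\G\cup U_\G$ is open in $\Si$, so every point of $\G$ is an \emph{interior} point of the region on which $\p V_i$ minimizes area. Lemma \ref{MiniSmooth} then applies on a neighborhood of each $p\in\G$, showing $u_i$ is smooth with locally bounded gradient there; in particular $u_i\in C^\infty(U\cup\G)$, which disposes of the first failure mode.

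For the gradient bound, we invoke compactness. With $0\le u_i\le\La_i\to 0$ on $U$ and $u_i\equiv 0$ on $U_\G$, the locally uniform gradient bound just obtained together with the quasilinear elliptic structure of \eqref{u} let us apply Arzelà--Ascoli and standard Schauder estimates to extract a subsequence with $u_i\to u_\infty$ in $C^\infty_{\mathrm{loc}}(U\cup\G\cup U_\G)$. The uniform height bound forces $u_\infty\equiv 0$, and smooth convergence then gives $|Du_i|\to 0$ uniformly on the compact set $\G$, contradicting $\sup_\G|Du_i|>\ep$. The main technical point is the cut-and-paste decomposition along $\G\times\R$ in the second paragraph; once the enlarged minimality is secured, the remainder reduces to the interior regularity and compactness already in use elsewhere in the paper.
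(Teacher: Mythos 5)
Your proposal takes a genuinely different route from the paper's — the paper constructs an explicit radial barrier $\phi_\G=\phi\circ\r_\G$ satisfying a supersolution ODE and applies the maximum principle together with the boundary regularity of Lemma~\ref{C1a-reg}, whereas you argue by compactness and contradiction along a sequence $\La_i\to 0$. Unfortunately, the load-bearing step in your argument is false. Area-minimality of $\p V_i$ in the closed set $\overline{U}\times\R$ does \emph{not} upgrade to minimality in the open set $(U\cup\G\cup U_\G)\times\R$, and the ``standard cut-and-paste along $\G\times\R$'' you invoke does not produce the inequality you claim. Here is a concrete counterexample already in dimension $n=1$: take $\Si=\R$, $U=(-1,0)$, $U_\G=(0,1)$, $\G=\{0\}$, and $u(x)=-\a x$ on $(-1,0]$, $u\equiv 0$ on $[0,1)$ for a small $\a>0$. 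The restriction of $\p V$ to $\overline{U}\times\R=[-1,0]\times\R$ is a straight segment from $(-1,\a)$ to $(0,0)$, which is area-minimizing there (it is a geodesic, and competitors are forced to fix the flat piece over $(0,1)$). But the full boundary $\p V$ in $(-1,1)\times\R$ has a corner at $(0,0)$, which can be shortened by a chord that crosses into $U_\G\times\R$; so $\p V$ is \emph{not} minimizing in the larger open set. Thus $\G$ need not be an interior point of a region where $\p V_i$ minimizes, Lemma~\ref{MiniSmooth} is not applicable at $\G$, and the locally uniform gradient bound near $\G$ that your compactness step rests on is not secured. (For the record, the cut-and-paste perimeter inequality you would need, $\mathrm{Per}(W_1)+\mathrm{Per}(W_2)\le\mathrm{Per}(W)+\mathrm{Per}(V_i)$ after swapping halves across $\G\times\R$, also fails pointwise: the contribution on $\G\times\R$ can grow, e.g., when the one-sided traces satisfy $w^+=w^-$ but $w^\pm\ne v_i^\pm$.)

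What is true, and what the paper actually uses, is that the surface is $C^{1,\a}$ (hence smooth, by bootstrap) up to $\G\times\R$ by F.\,H.~Lin's boundary regularity (Lemma~\ref{C1a-reg}), and then the quantitative gradient bound on $\G$ comes from a direct comparison with the explicit one-dimensional barrier $\phi(\r_\G)$ adapted to the mean curvature of $\G$; this gives $u\le\phi_\G$ on a collar $B_\tau^+(\G)$ once $\La_\ep\le\phi(\tau)$, and since $\phi'(0)=\ep$ and $u|_\G=0=\phi_\G|_\G$, Hopf's comparison at the boundary yields $\sup_\G|Du|\le\ep$. If you want to salvage a compactness-style argument, you would still need a uniform (in $i$) gradient estimate up to $\G$ coming either from a barrier or from a uniform version of the boundary regularity, at which point you have essentially reproduced the paper's barrier step anyway.
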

\begin{proof}
Let $H_\G$ denote the mean curvature of $\G$ w.r.t. the normal vector pointing into $U$. Denote $\La=2\max\{1,-\inf_\G H_\G\}$. Let $\r_\G$ denote the distance function to $\G$ and $B_s^+(\G):=B_s(\G)\cap U$ with $B_s(\G)=\{x\in\Si|\, \r_\G(x)<s\}$ for any $s>0$.
Then there is a constant $\tau>0$ so that $\r_\G$ is smooth on $B_\tau^+(\G)$ and 
\begin{equation}\aligned\label{DeSirGLa}
\De_\Si\r_\G<\La\qquad \mathrm{on}\ \ B_\tau^+(\G).
\endaligned
\end{equation}
Let $\phi$ be a function on $[0,\infty)$ defined by
\begin{equation}\aligned
\phi(t)=\int_0^t\left((1+\ep^{-2})e^{2\La s}-1\right)^{-\f12}ds.
\endaligned
\end{equation}
By a direct calculation, $\phi'(t)=\left((1+\ep^{-2})e^{2\La t}-1\right)^{-\f12}$ and 
\begin{equation}\aligned\label{Laphi'phi''}
\La\phi'+\f{\phi''}{1+(\phi')^2}=0\qquad \mathrm{on}\ (0,\infty).
\endaligned
\end{equation}
Set $\La_\ep=(1+\ep^{-2})^{-\f12}\La^{-1}(1-e^{-\La\tau})$. We have
\begin{equation}\aligned\label{phitaugede}
\phi(\tau)=\int_0^\tau\phi'(s)ds\ge\int_0^\tau(1+\ep^{-2})^{-\f12}e^{-\La s}ds=\La_\ep.
\endaligned
\end{equation}

Set $\phi_\G=\phi\circ\r_\G$. 
Let us prove $0\le u\le\phi_\G$ on $B_\tau^+(\G)$ by contradiction.
If $\sup_{B_\tau^+(\G)}(u-\phi_\G)>0$, then we consider a point $y\in\overline{B_\tau^+(\G)}$ so that 
$$(u-\phi_\G)(y)=\sup_{B_\tau^+(\G)}(u-\phi_\G)>0.$$ 
From \eqref{phitaugede}, $y\in\G\cup B_\tau^+(\G)$. 
Since $\p\{(x,s)|\, s<u(x),\, x\in U\cup\overline{U_\G}\}$ is area-minimizing in $\overline{U}\times\R$, $u$ satisfies the minimal hypersurface equation \eqref{u} on $U$ from Lemma \ref{MiniSmooth}.
Combining Lemma \ref{C1a-reg}  and the smooth $\G$, we conclude that $\p\{(x,s)|\, s<u(x),\, x\in U\cup\overline{U_\G}\}$ is smooth in $\overline{B_\tau^+(\G)}\times\R$. Hence, we must have $y\in B_\tau^+(\G)$. 
From \eqref{DeSirGLa} and \eqref{Laphi'phi''}, we get
\begin{equation}\aligned
\mathrm{div}\left(\f{D\phi_\G}{\sqrt{1+|D\phi_\G|}}\right)=&\mathrm{div}\left(\f{\phi'D\r_\G}{\sqrt{1+(\phi')^2}}\right)=\f{\phi'\De_\Si\r_\G}{\sqrt{1+(\phi')^2}}+\f{\phi''}{(1+(\phi')^2)^{3/2}}\\
<&\La\phi'+\f{\phi''}{1+(\phi')^2}=0.
\endaligned
\end{equation}
Since $Du(y)=D\phi_\G(y)$ and $u$ satisfies \eqref{u}, we deduce a contradiction by the maximum principle. Hence, $0\le u\le\phi_\G$ on $B_\tau^+(\G)$.
With $\phi'(0)=\ep$ and the comparison principle, we get $|Du|\le\ep$ on $\G$.
This completes the proof.
\end{proof}

Let $\Om\subset\Si$ be an open bounded set.
Let $H$ be a smooth nonnegative function on $\Si$ with compact support in $\Om$. We extend the function $H$ to $\Si\times\R$ by letting $H(x,\tau)=H(x)$ for any $(x,\tau)\in\Si\times\R$.
We define a functional $\mathbf{F}_{\Om,H}$ on $BV(\Om)$ by letting
\begin{equation}\aligned
\mathbf{F}_{\Om,H}(\phi)=\int_\Om\sqrt{1+|D\phi|^2}-\int_\Om\phi H\qquad\qquad \mathrm{for\ each}\ \phi\in BV(\Om).
\endaligned\end{equation}
Let $W$ be a measurable set in $\Om\times\R$ so that $\p W\cap(\Om\times\R)$ is $n$-rectifiable with finite measure in $\Om\times[0,T]$ for some $T>0$. 
We define a function $w$ on $\Si$ by
$$w(x)=\int_0^\infty\chi_{_W}(x,\tau)d\tau\qquad\qquad \mathrm{for\ each}\ x\in\Si.$$
Analog to Lemma 14.7 in \cite{Gi}, we have the following version of prescribed mean curvature.
\begin{lemma}\label{wWFOm**}
The function $w\in BV(\Om)$, and
\begin{equation}\aligned
\mathbf{F}_{\Om,H}(w)\le\int_{\Om\times\R}\left|\overline{\na}\chi_{_W}\right|-\int_{\Om\times\R^+} H\chi_{_W}.
\endaligned\end{equation}
\end{lemma}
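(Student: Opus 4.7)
The plan is to mirror the proof of Lemma 14.7 in Giusti, adapted to the product manifold $\Om\times\R$. The argument splits into a Fubini-type cancellation handling the prescribed mean-curvature term $H$, and the geometric ``area versus perimeter'' inequality comparing the BV norm of $w$ to the perimeter of $W$.

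Since $H(x,\tau)=H(x)$ is vertically constant by extension, Fubini immediately yields $\int_\Om wH\,dx=\int_{\Om\times\R^+}H\chi_W$, so the $H$-terms cancel on the two sides and it remains to prove $\int_\Om\sqrt{1+|Dw|^2}\le\int_{\Om\times\R}|\overline{\na}\chi_W|$; this in turn yields $w\in BV(\Om)$, using $w\in L^1(\Om)$ from the finite vertical extent of $W\cap(\Om\times[0,T])$. For the area estimate I would use the dual characterization
\begin{equation*}
\int_\Om\sqrt{1+|Dw|^2}=\sup\Bigl\{\int_\Om\bigl(w\,\mathrm{div}(Y)+\nu\bigr):(Y,\nu)\in\Gamma^1_c(\Om;T\Om\oplus\R),\ |Y|^2+\nu^2\le 1\Bigr\},
\end{equation*}
and, for a fixed competitor $(Y,\nu)$, lift to a test vector field on $\Om\times\R$ of the product form
\begin{equation*}
Z(x,\tau)=\eta(\tau)Y(x)+\nu(x)\psi(\tau)E_{n+1},
\end{equation*}
where $\eta,\psi\in C^\infty_c(\R)$ satisfy $0\le\eta,\psi\le 1$, $\eta\equiv\psi\equiv 1$ on $[0,N]$ for $N>T$, and $\psi$ rises from $0$ on $(-\infty,-\delta]$ to $1$ on $[0,\infty)$ and then falls back to $0$ on $[N,N+1]$. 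Then $|Z|^2\le\eta^2|Y|^2+\psi^2\nu^2\le 1$, and the horizontality and $\tau$-parallelism of $Y$ give $\overline{\mathrm{div}}(Z)=\eta(\tau)\,\mathrm{div}_\Om Y+\nu(x)\psi'(\tau)$.

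Pairing with $\chi_W$ and applying Fubini gives
\begin{equation*}
\int_{\Om\times\R}\chi_W\overline{\mathrm{div}}(Z)=\int_\Om\mathrm{div}(Y)\Bigl(\int_\R\chi_W\eta\,d\tau\Bigr)dx+\int_\Om\nu\Bigl(\int_\R\chi_W\psi'\,d\tau\Bigr)dx.
\end{equation*}
As $N\to\infty$ and $\delta\to 0^+$, the first inner integral tends to $w(x)$ by monotone convergence, while the second converges to the vertical trace of $\chi_W$ at $\tau=0^-$, which equals $1$ a.e.\ in the setting intended by the lemma, where $W\supset\Om\times(-\infty,0)$ (as in the application to Theorem~\ref{NONMeanCurv}, in which $W$ is a subgraph of a nonnegative function). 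Consequently, in the limit,
\begin{equation*}
\int_\Om w\,\mathrm{div}(Y)+\int_\Om\nu\ \le\ \int_{\Om\times\R}|\overline{\na}\chi_W|,
\end{equation*}
and taking the supremum over $(Y,\nu)$ yields the desired area estimate, completing the proof. The main obstacle is the rigorous justification of the trace convergence $\int_\R\chi_W\psi'\,d\tau\to 1$ as $\delta\to 0^+$: this follows from the BV slicing theorem applied to $\chi_W$ along $\tau$-fibers, using the finite mass of $\p W\cap(\Om\times[0,T])$ so that $\chi_W(x,\cdot)$ is of bounded variation in $\tau$ for a.e.\ $x\in\Om$, with left trace equal to $1$ wherever $W$ contains the lower column $\{x\}\times(-\infty,0)$. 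A symmetric analysis at the upper cutoff $\tau=N$ yields a vanishing contribution for $N>T$, since $\p W$ has no mass beyond $\Om\times[0,T]$.
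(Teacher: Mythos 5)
Your proposal is correct and follows the same two-step structure as the paper: the area estimate $\int_\Om\sqrt{1+|Dw|^2}\le\int_{\Om\times\R}|\overline{\na}\chi_{_W}|$ and the Fubini cancellation of the $H$-term. The only difference is that the paper simply cites the proof of Lemma~14.7 in Giusti for the area estimate, whereas you unpack that argument (dual characterization of the area functional, lifting $(Y,\nu)$ to a product-form test field $Z$ on $\Om\times\R$, and passing to the limit in the cutoffs $\eta,\psi$), which is precisely the content of Giusti's proof adapted to $\Om\times\R$. One small point worth noting: your description leaves $\eta$ unspecified on $(-\infty,0)$, but this is harmless since $\chi_{_W}\equiv 1$ there (under the standing hypothesis $W\supset\Om\times(-\infty,0)$), so any contribution from $\tau<0$ is an $x$-independent constant which is annihilated by $\int_\Om\mathrm{div}(Y)=0$ for compactly supported $Y$.
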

\begin{proof}
From the proof of Lemma 14.7 in \cite{Gi}, we get
\begin{equation}\aligned\label{AppwW}
\int_\Om\sqrt{1+|Dw|^2}\le \int_{\Om\times\R}\left|\overline{\na}\chi_{_W}\right|.
\endaligned\end{equation}
In particular, the above inequality implies $w\in BV(\Om)$.
On the other hand,
\begin{equation}\aligned\label{AppHwW}
\int_{\Om\times\R^+} H\chi_{_W}=\int_{\Om}\int_0^\infty H(x)\chi_{_W}(x,\tau)d\tau d\mathcal{H}^n(x)=\int_\Om H(x)w(x)d\mathcal{H}^n(x).
\endaligned\end{equation}
Combining \eqref{AppwW}\eqref{AppHwW}, we complete the proof.
\end{proof}

\section{Appendix II:  minimal graphs over rotationally symmetric manifolds}

let $\Si$ be an $n$-dimensional complete noncompact manifold with Riemannian metric $\si_\Si=dr^2+e^{-2f(r)}ds^2$ in the polar coordinate, where $ds^2$ is the Riemannian metric on the standard sphere $\mathbb{S}^{n-1}$. The 'complete' condition of $\Si$ is equivalent to that
\begin{equation}\aligned\label{Appcompleteness}
\int_1^\infty e^{-f(r)}dr=\infty.
\endaligned
\end{equation}
Let $o$ denote the origin of $\Si$.
Let $M$ be a rotationally symmetric minimal graph over $B_{r_2}(o)\setminus B_{r_1}(o)$ with the graphic function $u(r,\omega)$  such that $u(r,\omega)=u(r,\omega')$ for all $\omega,\omega'\in\mathbb{S}^{n-1}$. Denote $u'=\f{\p}{\p r}u$, and $u''=\f{\p^2}{\p r^2}u$.
Then from
$$\De_\Si u=e^{(n-1)f}\f{\p}{\p r}\left(e^{-(n-1)f}\f{\p}{\p r}u\right)=u''-(n-1)f'u',$$
and the minimal hypersurface equation \eqref{u}, we get
\begin{equation}\aligned\label{rotSiu}
0=\De_\Si u-\f{D|Du|^2\cdot Du}{2(1+|Du|^2)}=u''-(n-1)f'u'-\f{(u')^2u''}{1+(u')^2}=\f{u''}{1+(u')^2}-(n-1)f'u'
\endaligned
\end{equation}
on $[r_1,r_2]$.
Since the graph of $w$ is area-minimizing in $(B_{r_2}(o)\setminus \overline{B}_{r_1}(o))\times\R$ (see Lemma 2.1 in \cite{DJX0} for instance), we may assume $u'\ge0$ (or $u'\le0$) on $B_{r_2}(o)\setminus \overline{B}_{r_1}(o)$.
We solve the ODE \eqref{rotSiu} on $(r_1,r_2)$ and get
\begin{equation}\aligned\label{u'fc**}
(n-1)(f-c)=\log\f{u'}{\sqrt{1+(u')^2}}\qquad \mathrm{for\ some\ constant}\ c\ge f(r_2).
\endaligned
\end{equation}
Namely,
\begin{equation}\aligned\label{u'fc-12}
u'=\left(e^{-2(n-1)(f-c)}-1\right)^{-\f12}\qquad \mathrm{for\ some\ constant}\ c\ge f(r_2).
\endaligned
\end{equation}

\begin{proposition}\label{non-const BV solutions}
We further assume $\lim_{R\to\infty}\f{f(R)}R=\infty$. Then $\Si$ is $M$-parabolic. Moreover, for any compact set $K\subset\Si$, there are no non-constant nonnegative BV solutions to \eqref{u} on $\Si\setminus K$ taking value 0 on $K$.
\end{proposition}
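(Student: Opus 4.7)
The plan is to establish the two conclusions in turn: first the $M$-parabolicity of $\Sigma$ via an explicit test-function estimate, and then the non-existence of non-constant nonnegative BV solutions by combining the $M$-parabolicity with a truncation argument and Lemma \ref{Rig-M-Parabolic}.

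For the $M$-parabolicity, fix $t>0$ and set $K=\overline{B}_1(o)$. For each $R>1$, I take the rotationally symmetric, locally Lipschitz function $\phi_R\in\mathcal{L}_t(K,\Sigma)$ defined by $\phi_R\equiv t$ on $\overline{B}_1(o)$, $\phi_R(r,\omega)=t(R-r)/(R-1)$ on the annulus $\{1\le r\le R\}$, and $\phi_R\equiv 0$ for $r\ge R$. Since $|D\phi_R|=t/(R-1)$ on the annulus and the polar volume element is $e^{-(n-1)f(r)}\,dr\,d\omega$,
\begin{equation*}
\mathrm{cap}_t(\overline{B}_1(o))\le\int_{\Sigma}\left(\sqrt{1+|D\phi_R|^2}-1\right)\le\frac{\mathcal{H}^{n-1}(\mathbb{S}^{n-1})\,t^2}{2(R-1)^2}\int_1^R e^{-(n-1)f(r)}\,dr.
\end{equation*}
The hypothesis $f(r)/r\to\infty$ gives $f(r)\ge 2r$ for $r$ large, so $\int_1^\infty e^{-(n-1)f(r)}\,dr<\infty$; letting $R\to\infty$ yields $\mathrm{cap}_t(\overline{B}_1(o))=0$. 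Since $\mathcal{H}^n(\overline{B}_1(o))>0$, Proposition \ref{ParaKE} implies that $\Sigma$ is $M$-parabolic.

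For the second assertion, let $u$ be a nonnegative BV function on $\Sigma$ with $u\equiv 0$ on $K\setminus\partial K$ whose subgraph $E:=\{s<u\}$ has area-minimizing boundary in $\overline{\Sigma\setminus K}\times\mathbb{R}$. Pick $\tau>0$ outside the at-most-countable set of heights with $\mathcal{H}^n(\{u=\tau\})>0$, and set $u_\tau:=\min(u,\tau)$, $B:=\{s<\tau\}$, $E_\tau:=E\cap B=\{s<u_\tau\}$. Given any competitor $F\subset B$ for $E_\tau$ agreeing with $E_\tau$ outside a compact set $W$, put $F':=F\cup(E\setminus B)$; then $F'$ agrees with $E$ outside $W$, and $F'\cap B=F$, $F'\cup B=E\cup B$. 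By the choice of $\tau$, the perimeter additivity identity $\mathrm{Per}(X,W)+\mathrm{Per}(Y,W)=\mathrm{Per}(X\cap Y,W)+\mathrm{Per}(X\cup Y,W)$ holds for the pairs $(E,B)$ and $(F',B)$; subtracting the two instances gives
\begin{equation*}
\mathrm{Per}(F,W)-\mathrm{Per}(E_\tau,W)=\mathrm{Per}(F',W)-\mathrm{Per}(E,W)\ge 0
\end{equation*}
by area-minimality of $E$, so $E_\tau$ is area-minimizing. The reflection $s\mapsto\tau-s$ is an ambient isometry and preserves area-minimality, so $v_\tau:=\tau-u_\tau\in[0,\tau]$ has area-minimizing subgraph boundary as well, with $v_\tau\equiv\tau$ on $K\setminus\partial K$. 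Applying Lemma \ref{Rig-M-Parabolic} with the $M$-parabolicity just established forces $v_\tau\equiv\tau$, so $u_\tau\equiv 0$; letting $\tau\to\infty$ through admissible values yields $u\equiv 0$.

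The main obstacle is the truncation step: the perimeter additivity identity is not universal (it fails, for instance, for two ``opposite'' half-spaces), but it holds here because the slice $\Sigma\times\{\tau\}$ meets $\partial E$ in an $\mathcal{H}^n$-null set for a.e.\ $\tau$ by coarea, which is enough to carry the argument through. A minor point is that Lemma \ref{Rig-M-Parabolic} is stated under the regularity conditions $\mathcal{H}^n(K)>0$ and $\partial\overline{\Sigma\setminus K}=\partial K$; when the given $K$ does not satisfy these, one reduces to the nice case by enlarging $K$ slightly, which is a routine modification.
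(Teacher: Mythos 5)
Your proof of the $M$-parabolicity is correct and is the same test-function computation the paper sketches, so that part is fine. The second assertion is where the argument breaks. The perimeter additivity identity you invoke for the pair $(F',B)$ is not a consequence of your choice of $\tau$: for Caccioppoli sets, $\mathrm{Per}(X\cap Y,W)+\mathrm{Per}(X\cup Y,W)=\mathrm{Per}(X,W)+\mathrm{Per}(Y,W)$ holds exactly when $\{\nu_X=-\nu_Y\}\cap\partial^*X\cap\partial^*Y$ is $\mathcal{H}^n$-null in $W$. For $(E,B)$ this is automatic, since $E$ is a subgraph and $\langle\nu_E,E_{n+1}\rangle\ge 0$ never equals $-1$; but for $(F',B)$ the bad set consists of those $(x,\tau)\in W$ with $u(x)>\tau$ at which the competitor $F$ has density $0$ just below level $\tau$. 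This depends on the arbitrary $F$ and generically has positive measure, and the coarea remark about $\partial E\cap(\Sigma\times\{\tau\})$ says nothing about it. With only submodularity available for $(F',B)$, subtracting the $(E,B)$ identity gives $\mathrm{Per}(F,W)-\mathrm{Per}(E_\tau,W)\le\mathrm{Per}(F',W)-\mathrm{Per}(E,W)$, which is the wrong direction. Worse, the intermediate claim is simply false: by Lemma \ref{MiniSmooth}, $u$ is smooth on $\Sigma\setminus K$ with locally bounded gradient, so for any $\tau\in(0,\sup u)$ there is $x_0\in\Sigma\setminus K$ with $u(x_0)=\tau$ and $0<|Du|(x_0)<\infty$. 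Near $(x_0,\tau)$, $\partial E_\tau$ is the union of the graph of $u$ over $\{u<\tau\}$ and the flat slab $\{u>\tau\}\times\{\tau\}$, meeting along $\{u=\tau\}\times\{\tau\}$ at a reflex angle $\pi+\arctan|Du|(x_0)$; cutting this corner by a chord (which stays inside $B$) strictly decreases perimeter, so $\partial E_\tau$ is not even stationary, let alone area-minimizing. Hence $v_\tau$ does not satisfy the hypothesis of Lemma \ref{Rig-M-Parabolic}, and the reduction collapses.

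The paper avoids truncation entirely and instead proves \emph{a priori} that a nonnegative solution $w$ with $w=0$ on $K$ is bounded. Using the rotationally symmetric ODE \eqref{u'fc-12}, one builds explicit supersolutions $\phi_i$ of \eqref{u} on the annuli $B_{R_i}(o)\setminus B_{r_0}(o)$, normalized by $\phi_i(r_0)=\Lambda:=\sup_{\partial B_{r_0}(o)}w$ and with outer slope $\phi_i'(R_i^-)$ strictly exceeding $\sup_{\partial B_{R_i}(o)}|Dw|$. The sliding maximum principle (interior touching is excluded by \eqref{u}; touching at $\partial B_{r_0}(o)$ by the normalization; touching at $\partial B_{R_i}(o)$ by the slope condition and Hopf's lemma) then gives $w\le\phi_i\circ\rho$ on each annulus, and since $f(R_i)\to\infty$ the integrand defining $\phi_i$ tends pointwise to $0$, so $\phi_i\to\Lambda$ and $w\le\Lambda$ on $\Sigma\setminus B_{r_0}(o)$. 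Only after this boundedness is established does one apply Lemma \ref{Rig-M-Parabolic}, to the bounded function $\Lambda-w:\Sigma\to[0,\Lambda]$ (which equals $\Lambda$ on $K$ and has area-minimizing subgraph boundary because $s\mapsto\Lambda-s$ is an isometry). This barrier step is the ingredient your argument is missing, and it is not optional: some device must first replace the truncation with a genuine uniform bound on $u$.
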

\begin{proof}
Let $\{R_i\}_{i\ge1}$ be a sequence with $\lim_{i\to\infty}\f{f(R_i)}{R_i}=\infty$. For each $i$, we can approach $\chi_{_{B_{R_i}(o)}}$ by locally Lipschitz functions, which infers that $\Si$ is $M$-parabolic.
We suppose that there is a non-constant nonnegative BV solution $w$ to \eqref{u} on $\Si\setminus K$ with $w=0$ on $K$.
By the equation \eqref{u}, $w$ is locally bounded. Let $r_0>0$ so that $K\subset B_{r_0}(o)$. Without loss of generality, we assume $R_1\ge r_0$ and $f(R_i)=\sup_{B_{R_i}(o)\setminus B_{r_0}(o)}f$.
Denote $\La=\sup_{\p B_{r_0}(o)}w>0$. Let $a_i>0$ be a sequence so that $\left(e^{a_i}-1\right)^{-\f12}>\sup_{\p B_{R_i}(o)}|Dw|$ for each $i\ge1$.
Let $\phi_i$ be a smooth functoin on $[r_0,R_i)$ defined by
\begin{equation}\aligned
\phi_i(r)=\La+\int_{r_0}^{r}\left(e^{-2(n-1)(f(t)-f(R_i))+a_i}-1\right)^{-\f12}dt\qquad \mathrm{for\ each}\ r\in[r_0,R_i).
\endaligned
\end{equation}
Let $\r$ denote the distance function to $o$. From \eqref{u'fc-12}, $\phi_i\circ\r$ satisfies \eqref{u} on $B_{R_i}(o)\setminus B_{r_0}(o)$. Since $\lim_{r\to R_i}\phi'_{i}(r)=\left(e^{a_i}-1\right)^{-\f12}>\sup_{\p B_{R_i}(o)}|Dw|$, by the maximum principle we get
\begin{equation}\aligned
w\le\phi_i\circ\r\qquad\qquad \mathrm{on}\  B_{R_i}(o)\setminus B_{r_0}(o).
\endaligned
\end{equation}
Letting $i\to\infty$ implies that $w\le\La$ on $\Si\setminus B_{r_0}(0)$. The $M$-parabolic $\Si$ infers that $w$ is constant (see Lemma \ref{Rig-M-Parabolic}). It's a contradiction. We have finished the proof.
\end{proof}

\begin{proposition}\label{non-bded BV solutions}
Suppose $f(r)=\f{\la}{n-1} r$ for each $r\in[r_1,r_2]$ and some constant $\la>0$.
Then there are no classic solutions to \eqref{u} on $B_{r_2}(o)\setminus B_{r_1}(o)$ with boundary data $\psi$ satisfying $\p_\omega\psi=0$ on $\p B_{r_1}\cup\p B_{r_2}$ and $\mathrm{osc}_{_{\p B_{r_1}\cup\p B_{r_2}}}\psi>\f{\pi}{2\la}$.
\end{proposition}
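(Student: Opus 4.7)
The plan is to reduce the problem to a one-dimensional computation and then exhibit an explicit, sharp bound on the oscillation of any rotationally symmetric solution. First I would observe that the hypothesis $\partial_\omega\psi=0$ on $\partial B_{r_1}\cup\partial B_{r_2}$ forces $\psi$ to be constant on each of the two connected boundary spheres, so $\psi$ is rotationally invariant. If $u$ is a classical solution to \eqref{u} on $B_{r_2}(o)\setminus\overline{B_{r_1}(o)}$ with boundary data $\psi$, then for every rotation $R$ of $\mathbb{S}^{n-1}$ the pullback $u\circ R$ is another classical solution with the same boundary data, and the comparison principle for the quasilinear operator in \eqref{u} forces $u\circ R=u$. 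Hence $u=u(r)$ is radial.

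Once $u$ is radial it satisfies \eqref{rotSiu}. If $u$ is constant then $\mathrm{osc}\,\psi=0$, contradicting the hypothesis, so $u'$ has constant sign on $(r_1,r_2)$ and \eqref{u'fc-12} applies with some constant $c\ge f(r_2)=\lambda r_2/(n-1)$. Writing $a:=2(n-1)c\ge 2\lambda r_2$, this gives
\[
\bigl|u(r_2)-u(r_1)\bigr|=\int_{r_1}^{r_2}\bigl(e^{a-2\lambda r}-1\bigr)^{-1/2}\,dr.
\]
The key computation is to evaluate this integral by the substitution $s=\sqrt{e^{a-2\lambda r}-1}$, which yields $dr=-\frac{s\,ds}{\lambda(s^2+1)}$ and hence
\[
\int_{r_1}^{r_2}\bigl(e^{a-2\lambda r}-1\bigr)^{-1/2}\,dr=\frac{1}{\lambda}\bigl(\arctan s_1-\arctan s_2\bigr),
\]
where $s_i:=\sqrt{e^{a-2\lambda r_i}-1}$ satisfy $0\le s_2<s_1<\infty$. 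Since $\arctan$ is strictly increasing into $[0,\pi/2)$ on $[0,\infty)$, the right-hand side is strictly less than $\pi/(2\lambda)$, uniformly in the choice of $c$.

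Combining the two steps, $|u(r_2)-u(r_1)|<\pi/(2\lambda)$; but because $\psi$ is constant on each boundary component, $\mathrm{osc}_{\partial B_{r_1}\cup\partial B_{r_2}}\psi=|u(r_2)-u(r_1)|$, which contradicts $\mathrm{osc}_{\partial B_{r_1}\cup\partial B_{r_2}}\psi>\pi/(2\lambda)$. The substantive step is the uniqueness/rotational symmetry reduction at the start, which rests on the strong maximum principle for the mean-curvature-type operator in \eqref{u} on a bounded annular domain with continuous boundary data; the remainder is a self-contained integration, and the critical constant $\pi/(2\lambda)$ arises exactly as $1/\lambda$ times the full range of $\arctan$ on $[0,\infty)$.
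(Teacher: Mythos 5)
Your proof is correct, but the rotational-symmetry reduction at the start takes a genuinely different route from the paper. The paper averages the given solution $w$ over geodesic spheres, applies Jensen's inequality to the convex integrand $\sqrt{1+s^2}$, and then invokes the fact that minimal graphs over bounded domains are area-minimizing (Lemma 2.1 in \cite{DJX0}) to force equality in Jensen and hence conclude that $w$ is radial. You instead observe that the metric and the boundary data are $SO(n)$-invariant, so each rotation $R$ of $\mathbb{S}^{n-1}$ carries a solution to another solution with the same Dirichlet data, and you appeal to uniqueness for the Dirichlet problem to get $u\circ R=u$. Both reductions are valid and of comparable depth — uniqueness for the Dirichlet problem and the area-minimizing/strict-convexity picture are two faces of the same coin — but note one caveat: for solutions that are merely $C^2$ in the open annulus and $C^0$ up to the boundary (and may have unbounded gradient near $\partial B_{r_1}\cup\partial B_{r_2}$), the classical pointwise maximum-principle version of the comparison theorem does not apply directly; the cleanest justification in that generality is exactly the calibration/area-minimizing argument, i.e.\ what the paper does. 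The ODE computation afterward is the same computation under two substitutions: your $\tfrac1\lambda(\arctan s_1-\arctan s_2)$ with $s_i=\sqrt{e^{2\lambda(T-r_i)}-1}$ agrees with the paper's $\tfrac1\lambda(\arccos e^{-\lambda(T-r_1)}-\arccos e^{-\lambda(T-r_2)})$ via the identity $\arctan\sqrt{y^2-1}=\arccos(1/y)$, and both give the strict bound $<\pi/(2\lambda)$.
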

\begin{proof} 
Let $w$ be a classic solution to \eqref{u} on $B_{r_2}(o)\setminus B_{r_1}(o)$ with boundary data $\psi$ satisfying $\p_\omega\psi=0$ on $\p B_{r_1}\cup\p B_{r_2}$. Let $u$ be a rotationally symmetric function defined by
\begin{equation}\aligned
u(r,\omega)=\fint_{\p B_1(o)}w(r,\cdot)
\endaligned
\end{equation}
for any $r\in[r_1,r_2]$ and any $\omega\in\mathbb{S}^{n-1}$.
Let $\p_\r$ denote the radial vector field on $\Si$. Then
\begin{equation}\aligned
|u'|(r,\omega)=\left|\fint_{\p B_1(o)}\lan Dw(r,\cdot),\p_\r\ran\right|\le\fint_{\p B_1(o)}|Dw|(r,\cdot).
\endaligned
\end{equation}
With Jensen's inequality for the convex function $\sqrt{1+s^2}$, it follows that 
\begin{equation}\aligned
\int_{B_{r_2}(o)\setminus B_{r_1}(o)}\sqrt{1+(u')^2}\le&\int_{B_{r_2}(o)\setminus B_{r_1}(o)}\fint_{\p B_1(o)}\sqrt{1+|Dw|^2}\\
=&\int_{B_{r_2}(o)\setminus B_{r_1}(o)}\sqrt{1+|Dw|^2}.
\endaligned
\end{equation}
Since the graph of $w$ is area-minimizing in $(B_{r_2}(o)\setminus \overline{B}_{r_1}(o))\times\R$, the above inequality implies that $w$ is rotationally symmetric, i.e., $u(r,\omega)=w(r,\omega)=w(r,\omega')$ for all $\omega,\omega'\in\mathbb{S}^{n-1}$ and all $r\in[r_1,r_2]$.

Without loss of generality, we assume $u'(r_1)\ge0$. 
We substitute $f(r)=\f{\la}{n-1} r$ into \eqref{u'fc-12} and obtain
\begin{equation}\aligned
u'(r,\omega)=\f1{\sqrt{e^{2\la(T-r)}-1}}\qquad \mathrm{for\ some}\ T\ge r_2.
\endaligned
\end{equation}
Then it follows that
\begin{equation}\aligned
u(r_2,\omega)-u(r_1,\omega)=\int_{r_1}^{r_2}&\f1{\sqrt{e^{2\la(T-r)}-1}}dr=-\f1{\la}\int_{e^{\la(T-r_1)}}^{e^{\la(T-r_2)}}\f{dy}{y\sqrt{y^2-1}}\\
=\f1{\la}\int_{e^{-\la(T-r_1)}}^{e^{-\la(T-r_2)}}\f{ds}{\sqrt{1-s^2}}
=&\f1{\la}\left(\arccos e^{-\la(T-r_1)}-\arccos e^{-\la(T-r_2)}\right)\le\f{\pi}{2\la}
\endaligned
\end{equation}
for any $r\in(r_1,r_2)$.
In other words,
\begin{equation}\aligned\label{noclassicsol}
\mathrm{osc}_{_{B_{r_2}(o)\setminus B_{r_1}(o)}}u\le\f{\pi}{2\la}.
\endaligned
\end{equation}
This completes the proof.
\end{proof}

\begin{proposition}\label{captrCOV>0}
There is a smooth positive function $f$ on $\R^+$ so that $\lim_{r\to\infty}\mathrm{cap}_3(\overline{B}_r(o))$ is a positive finite constant, and the BV solution $u_r$ on $\Si$ associated with $\mathrm{cap}_t(\overline{B}_r(o))$ satisfies $\lim_{x\to\infty}u_r(x)>0$ for $r>>1$.
\end{proposition}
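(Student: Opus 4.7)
By the rotational symmetry of $\Si$ and the uniqueness of BV solutions (Proposition \ref{UNIu} together with the limit argument preceding Lemma \ref{ThtKomiThtK}), the BV solution $u_r$ associated with $\mathrm{cap}_t(\overline{B}_r(o))$ is itself rotationally symmetric, so the entire problem reduces to an ODE on $[r,\infty)$. On $\Si\setminus\overline{B}_r$, writing $a(s):=e^{(n-1)f(s)}$, the function $u_r(s)$ satisfies
\begin{equation*}
u_r'(s)=-\frac{C_r\,a(s)}{\sqrt{1-C_r^2 a(s)^2}},\qquad C_r\in\Bigl[0,\Bigl(\sup_{s\geq r}a(s)\Bigr)^{-1}\Bigr],
\end{equation*}
as in \eqref{u'fc-12}, and using $\sqrt{1+(u_r')^2}\,e^{-(n-1)f}=(a(s)\sqrt{1-C_r^2a(s)^2})^{-1}$, the expression for the capacity furnished by Theorem \ref{wExist} becomes an explicit functional in the single scalar parameter $C_r$ and the traces $u_r(r^+)$, $u_r(R^-)$, which can be minimized.

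The plan is then to design $f$ so that, after letting $R\to\infty$ and taking $C_r$ to its optimal (boundary) value $C_r^*=(\sup_{s\geq r}a(s))^{-1}$, the ``maximal total drop''
\begin{equation*}
D(r):=\int_r^\infty\frac{C_r^*\,a(s)}{\sqrt{1-(C_r^*)^2 a(s)^2}}\,ds
\end{equation*}
is finite, is a \emph{strictly decreasing} function of $r$, and is strictly less than $3$ for all $r$ exceeding some $r_0$; this immediately gives $\lim_{x\to\infty}u_r(x)=t-D(r)>0$ for $t=3$ and $r\geq r_0$. The capacity then reads
\begin{equation*}
\mathrm{cap}_t(\overline{B}_r(o))=n\omega_n\int_r^\infty\frac{1}{a(s)}\Bigl(\frac{1}{\sqrt{1-(C_r^*)^2a(s)^2}}-1\Bigr)ds+\text{(boundary contribution)},
\end{equation*}
and by the designed growth of $f$, each term is uniformly bounded in $r$. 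Positivity of the limit follows from monotonicity \eqref{captmonot} together with the exhibition of a single $r_1$ with $\mathrm{cap}_3(\overline{B}_{r_1})>0$ (equivalently, some nontrivial smooth rotationally symmetric $\phi\in\mathcal{L}_3$ with finite energy).

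Concretely, one takes $f$ smooth, positive, with $\int_1^\infty e^{-f}\,dr=\infty$ (ensuring completeness via \eqref{Appcompleteness}) but with a sequence of tall, well-spaced ``peaks'' chosen so that $a=e^{(n-1)f}$ saturates $(C_r^*)^{-1}$ at those peaks, making the integrand in $D(r)$ have integrable square-root singularities (analogous to the substitution $y=(C_r^*)^2a^2$ which turns $\int a/\sqrt{1-(C_r^*)^2a^2}$ into a bounded integral of the form $\int dy/(|f'|\sqrt{y(1-y)})$). The spikes of $f$ should be tall enough to keep $D(r)$ uniformly below $3$, but not so aggressive as to make the capacity grow with $r$ or the completeness condition fail. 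The candidate $f$ is in the spirit of the oscillation construction of Proposition \ref{M-par-not-par}, but tuned in the opposite direction: one designs $f$ so that $\Si$ is $M$-\emph{non}parabolic (positive capacity), yet the ``end at infinity'' is narrow enough to trap a definite amount of mass, giving bounded capacity.

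The main obstacle is this simultaneous balance: the positivity constraint $f>0$ forces $a(s)\geq 1$, which conflicts with the requirement that the tail integral $D(r)$ converge; making the peaks of $f$ tall enough to force $C_r^*\to 0$ tends to kill the capacity, while making them too mild lets the capacity blow up (as happens in Euclidean or hyperbolic-type examples). The construction must navigate precisely between these two failure modes, and verifying that the explicit $f$ achieves this is a delicate calculation using the ODE representation and the capacity formula from Theorem \ref{wExist}.
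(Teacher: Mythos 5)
Your proposal captures the right strategy — rotational symmetry reduces the problem to the ODE \eqref{rotSiu}, whose first integral \eqref{u'fc-12} turns both the capacity and the total drop of $u_r$ into explicit one-dimensional integrals in $f$ and one scalar parameter, and one must design $f$ so that the drop stays bounded away from $3$ while the capacity stays finite. This is indeed the skeleton of the paper's proof. But the proposal stops at the plan: it neither exhibits a concrete $f$ nor verifies any of the required estimates, and the final paragraph even concedes that carrying out the ``delicate calculation'' is left undone. The whole content of the proposition is the explicit construction and the numerical bookkeeping that makes the balance work, and those are missing. For comparison, the paper builds $\mathbf{f}=\sum_j\mathbf{f}_j$ with plateau heights $1+2^{-j}$ on shrinking intervals around $2^j$ and huge values $2^{j^2}$ in between, proves $\lim_{i\to\infty}\lim_{k\to\infty}S_{i,k}=2$, chooses the integration constant $\a^*_{k,i}$ to make the full drop exactly $3$ while the tail drop exceeds $2$, mollifies, and then computes $\mathrm{cap}_3(\overline{B}_{s_i}(o))\le 3n\omega_n$; none of these steps can be skipped.

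There is also a misdirection in your final paragraph that likely contributed to the gap. You argue that the positivity of $f$ forces $a=e^{(n-1)f}\geq 1$ and that this conflicts with convergence of $D(r)$, treating this as a genuine constraint to navigate. In fact, the paper's constructed $f$ satisfies $\mathbf{f}_*=e^{-(n-1)f}\geq 1+2^{-j}>1$ for $s\geq s_1$, hence $f<0$ there, so $a<1$: the ``obstacle'' you describe does not arise. (The word ``positive'' in the statement is a harmless imprecision; what must be positive is the warping function $e^{-f}$, which always is, and $f>0$ is only forced near $r=0$ for smoothness at the origin, outside the region where the construction is carried out.) Chasing the spurious constraint $f>0$ appears to have prevented you from simply taking $\mathbf{f}_*$ close to $1$ on sparse thin shells and very large on the intervening annuli, which is exactly what makes the two integrals in \eqref{Appurri}--\eqref{Appcaprri} behave as required.
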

\begin{proof}
For each integer $j\ge1$, let $\mathbf{f}_{2j-1}=1+2^{-j}$ on $(2^j-2^{-j},2^j+2^{-j})$ and $\mathbf{f}_{2j-1}=0$ on others; let $\mathbf{f}_{2j}=2^{j^2}$ on $[2^j+2^{-j},2^{j+1}-2^{-j-1}]$ and $\mathbf{f}_{2j}=0$ on others. Set $\mathbf{f}=\sum_{j\ge1}\mathbf{f}_{j}$.
Denote $s_i=2^i-2^{-i}$ and $r_i=2^{i}+2^{-i}$. For each integer $k>i\ge1$ and $\a\in((1+2^{-k})^{-2},1]$, we define
\begin{equation}\aligned
I_{i,k}(\a)=\int_{s_i}^{r_k}\left(\a\mathbf{f}^{2}(s)-1\right)^{-1/2}ds\quad\mathrm{and}\quad
\mathcal{E}_{k}(\a)=\int_{s_k}^{r_k}\left(\a\mathbf{f}^{2}(s)-1\right)^{-1/2}ds.
\endaligned
\end{equation}
Denote $\a_i=(1+2^{-2i})(1+2^{-i})^{-2}$ for each integer $i\ge1$.
Let $S_{i,k}=I_{i,k}(\a_k)$, then
\begin{equation}\aligned
S_{i,k}=\sum_{j=i}^{k}2^{1-j}\left(\a_k(1+2^{-j})^{2}-1\right)^{-\f12}+\sum_{j=i}^{k-1}(2^{j}-2^{-j-1}-2^{-j})\left(\a_k 2^{2j^2}-1\right)^{-\f12}.
\endaligned
\end{equation}
Noting
\begin{equation}\aligned
\mathcal{E}_{k}(\a_k)=2^{1-k}\left(\a_k(1+2^{-k})^{2}-1\right)^{-\f12}=2,
\endaligned
\end{equation}
and $(1+x^2)(1+x)^{-2}(1+y)^2\ge1+y-x$ for $0<x<<1$ and $x<y<1$ by Taylor's expansion.
For large $i$, we have
\begin{equation}\aligned
0<S_{i,k}-2<&\sum_{j=i}^{k-1}2^{1-j}\left((1+2^{-2k})\left(\f{1+2^{-j}}{1+2^{-k}}\right)^{2}-1\right)^{-\f12}+\sum_{j=i}^{k-1}2^{j}\left(\a_k 2^{2j^2}-1\right)^{-\f12}\\
<&\sum_{j=i}^{k-1}2^{1-j}\left(2^{-j}-2^{-k}\right)^{-\f12}+2\sum_{j=i}^{k-1}2^{j} 2^{-j^2}\\
<&\sum_{j=i}^{\infty}2^{1-j}2^{\f{j+1}2}+\sum_{j=i}^{\infty} 2^{-j-1}=\f4{\sqrt{2}-1}2^{-i/2}+2^{-i}.
\endaligned
\end{equation}
Namely, $\lim_{i\to\infty}\lim_{k\to\infty}S_{i,k}=2$. 
From $\f{\p}{\p\a}I_{i,k}(\a)<0$ and $\lim_{\a\to(1+2^{-k})^{-2}}I_{i,k}(\a)=\infty$, for all $k>i>>1$ there is a constant $\a_{k,i}<\a_k<1$ with $\lim_{k\to\infty}\a_{k,i}=1$ so that $I_{i,k}(\a_{k,i})=3$.

Let $\e$ be a smooth symmetric function in $\R$ with support in $[-1,1]$ and $\int_{\R^n}\e(|x|)dx=1$. Denote $\e_\ep=\ep^{-n}\e(\ep^{-1}\cdot)$ for all $\ep>0$.
We can mollify each function $\mathbf{f}_j$ by convolution $\mathbf{f}_j*\e_{\ep_j}$ for the sufficiently small $\ep_j>0$. From the above argument, there are a smooth function $\mathbf{f}_*=\sum_{j\ge1}\mathbf{f}_j*\e_{\ep_j}$ and a positive constant $\a_{k,i}^*<\a_k<1$ with $\lim_{k\to\infty}\a_{k,i}^*=1$ so that
\begin{equation}\aligned\label{Appurri}
\int_{s_i}^{r_k}\left(\a_{k,i}^*\mathbf{f}_*^{2}(s)-1\right)^{-1/2}ds=3\quad\mathrm{and}\quad
\int_{s_k}^{r_k}\left(\a_{k,i}^*\mathbf{f}_*^{2}(s)-1\right)^{-1/2}ds>2.
\endaligned
\end{equation}
Let $f$ satisfy $\mathbf{f}_*=e^{-(n-1)f}$ on $\Si\setminus B_{s_1}(o)$. Clearly, \eqref{Appcompleteness} holds.
Let $u_{i,k}$ be a function on $B_{r_k}(o)\setminus B_{s_i}(o)$ given by
\begin{equation}\aligned\label{Appuik3sir}
u_{i,k}(r,\omega)=3-\int_{s_i}^r\left(\a_{k,i}^*\mathbf{f}_*^{2}(s)-1\right)^{-1/2}ds\qquad \mathrm{for\ each}\ (r,\omega)\in B_{r_k}(o)\setminus B_{s_i}(o).
\endaligned
\end{equation}
We extend $u_{i,k}$ to $\Si$ by letting $u_{i,k}=3$ on $B_{s_i}(o)$ and $u_{i,k}=0$ on $\Si\setminus B_{r_k}(o)$. Then $u_{i,k}$ is the BV solution on $\Si$ associated with $\mathrm{cap}_3(\overline{B}_{s_i}(o),B_{r_k}(o))$ from \eqref{u'fc-12} (up to a sign). Since $u_{i,k}'=\f{\p}{\p r}u_{i,k}\le0$, with \eqref{Appurri}\eqref{Appuik3sir} we have
\begin{equation}\aligned
u_{i,k}(r,\omega)\ge u_{i,k}(s_k,\omega)>2\qquad\qquad\mathrm{for\ each}\ (r,\omega)\in B_{s_k}(o)\setminus B_{s_i}(o).
\endaligned
\end{equation}
Hence, the BV solution $u_i$ on $\Si$ associated with $\mathrm{cap}_3(\overline{B}_{s_i}(o))$
satisfies $$u_i(r,\omega)=\lim_{k\to\infty}u_{i,k}(r,\omega)\ge2\qquad\qquad\mathrm{for\ each}\
(r,\omega)\in \Si\setminus B_{s_i}(o).$$

On the other hand, from
\begin{equation}\aligned
(1+|u'_{i,k}|^2)^{1/2}=(\a^*_{k,i})^{1/2}\mathbf{f}_*\left(\a^*_{k,i}\mathbf{f}_*^2-1\right)^{-1/2},
\endaligned
\end{equation}
and $\mathbf{f}_*=e^{-(n-1)f}$ on $\Si\setminus B_{s_1}(o)$, we have
\begin{equation}\aligned\label{Appcaprri}
&\mathrm{cap}_3(\overline{B}_{s_i}(o),B_{r_k}(o))=\int_{B_{r_k}(o)\setminus B_{s_i}(o)}\left((1+|D u_{i,k}|^2)^{1/2}-1\right)\\
=& n\omega_n\int_{s_i}^{r_k}\left((\a^*_{k,i})^{1/2}\mathbf{f}_*(s)\left(\a^*_{k,i}\mathbf{f}_*^2(s)-1\right)^{-1/2}-1\right)\mathbf{f}_*(s)ds\\
=& n\omega_n\int_{s_i}^{r_k}\left((\a^*_{k,i})^{1/2}\mathbf{f}_*(s)-\left(\a^*_{k,i}\mathbf{f}_*^{2}(s)-1\right)^{1/2}\right)\mathbf{f}_*(s)\left(\a^*_{k,i}\mathbf{f}_*^{2}(s)-1\right)^{-1/2}ds\\
=&n\omega_n\int_{s_i}^{r_k}\mathbf{f}_*(s)\left((\a^*_{k,i})^{1/2}\mathbf{f}_*(s)+\left(\a^*_{k,i}\mathbf{f}_*^{2}(s)-1\right)^{1/2}\right)^{-1}\left(\a^*_{k,i}\mathbf{f}_*^{2}(s)-1\right)^{-1/2}ds\\
\le&n\omega_n(\a^*_{k,i})^{-1/2}\int_{s_i}^{r_k}\left(\a^*_{k,i}\mathbf{f}_*^{2}(s)-1\right)^{-1/2}ds.
\endaligned
\end{equation}
Combining \eqref{Appurri} and $\lim_{k\to\infty}\a_{k,i}^*=1$, from \eqref{Appcaprri} we have
\begin{equation}\aligned
\mathrm{cap}_3(\overline{B}_{s_i}(o))\le\lim_{k\to\infty} n\omega_n(\a^*_{k,i})^{-1/2}\int_{s_i}^{r_k}\left(\a^*_{k,i}\mathbf{f}_*^{2}(s)-1\right)^{-1/2}ds=3n\omega_n.
\endaligned
\end{equation}
This implies $\lim_{r\to\infty}\mathrm{cap}_3(\overline{B}_r(o))\le3n\omega_n$ from the monotonicity of $\mathrm{cap}_3(\overline{B}_r(o))$ on $r$.
\end{proof}

\bibliographystyle{amsplain}

\end{document}